\newtheorem{theorem}{Theorem}[section]
\newtheorem{conjecture}[theorem]{Conjecture}
\newtheorem{lemma}[theorem]{Lemma}
\newtheorem{corollary}[theorem]{Corollary}
\theoremstyle{definition}
\newtheorem{remark}[theorem]{Remark}
\newtheorem{definition}[theorem]{Definition}
\newtheorem{example}[theorem]{Example}
\newtheorem*{rep@theorem}{\rep@title}\newcommand{\newreptheorem}[2]{%
\newenvironment{rep#1}[1]{%
\def\rep@title{\bf #2 \ref{##1}}%
\begin{rep@theorem}}%
{\end{rep@theorem}}}
\newcommand{\addresseshere}{%
  \enddoc@text\let\enddoc@text\relax
}
\renewcommand\emptyset{\varnothing}
\newcommand{\R}{\mathbb{R}}
\newcommand{\calC}{\mathcal{C}}
\newcommand{\calG}{\mathcal{G}}
\newcommand{\calO}{\mathcal{O}}
\newcommand{\cO}{\mathcal{O}}
\newcommand{\cT}{\mathcal{T}}
\newcommand{\frakT}{\mathfrak{T}}
\newcommand{\calV}{\mathcal{V}}
\newcommand{\be}{\mathbf{e}}
\newcommand{\bp}{\mathbf{p}}
\newcommand{\bu}{\mathbf{u}}
\newcommand{\bv}{\mathbf{v}}
\newcommand{\bw}{\mathbf{w}}
\newcommand{\bx}{\mathbf{x}}
\newcommand\hatP{\widehat{P}}
\newcommand\Irr{\mathrm{Irr}}
\newcommand\sgn{\mathrm{sgn}}
\newcommand\Sq{\mathrm{Sq}}
\newcommand{\mL}{\mathcal{L}}
\newcommand{\udots}{\rotatebox[origin=c]{80}{$\ddots$}}
\newcommand{\Ddots}{\rotatebox[origin=c]{170}{$\ddots$}}
\DeclareMathOperator{\vol}{vol}
\newcommand\commentout[1]{}
\definecolor{green}{RGB}{34, 139, 34}
\definecolor{darkblue}{rgb}{0.0, 0.0, 1}
\definecolor{teal}{rgb}{0.0, 0.5, 0.5}
\tikzstyle{pvx}=[circle, draw, fill=black, inner sep=1.3pt, minimum size=4pt]
\newcommand{\pvx}{\node[pvx]}
\tikzstyle{gvx}=[circle, draw=blue, fill=blue!25, inner sep=1.3pt, minimum size=4pt]
\newcommand{\gvx}{\node[gvx]}
\begin{document}

\title{Triangulations, Order Polytopes, and Generalized Snake Posets}

\author{Matias von Bell}
\address{Department of Mathematics\\
         University of Kentucky\\
\url{http://www.ms.uky.edu/~mkvo222/}}
\email{matias.vonbell@uky.edu}

\author{Benjamin Braun}
\address{Department of Mathematics\\
         University of Kentucky\\
\url{https://sites.google.com/view/braunmath/}}
\email{benjamin.braun@uky.edu}

\author{Derek Hanely}
\address{Department of Mathematics\\
         University of Kentucky\\
\url{https://math.as.uky.edu/users/dwha232}}
\email{derek.hanely@uky.edu}

\author{Khrystyna Serhiyenko}
\address{Department of Mathematics\\
         University of Kentucky\\
\url{https://math.as.uky.edu/users/kse246}}
\email{khrystyna.serhiyenko@uky.edu}

\author{Julianne Vega}
\address{Department of Mathematics\\
         Kennesaw State University\\
\url{https://sites.google.com/view/julievega/}}
\email{jvega30@kennesaw.edu}

\author{Andr\'es R. Vindas-Mel\'endez}
\address{Department of Mathematics\\
         University of Kentucky\\
\url{https://ms.uky.edu/\~arvi222}}
\email{andres.vindas@uky.edu}

\author{Martha Yip}
\address{Department of Mathematics\\
         University of Kentucky\\
\url{http://www.ms.uky.edu/~myip/}}
\email{martha.yip@uky.edu}

\date{\today}

\begin{abstract}
This work regards the order polytopes arising from the class of generalized snake posets and their posets of meet-irreducible elements.
Among generalized snake posets of the same rank, we characterize those whose order polytopes have minimal and maximal volume.
We give a combinatorial characterization of the circuits in these order polytopes and then conclude that every triangulation is unimodular.
For a generalized snake word, we count the number of flips for the canonical triangulation of these order polytopes. 
We determine that the flip graph of the order polytope of the poset whose lattice of filters comes from a ladder is the Cayley graph of a symmetric group.
Lastly, we introduce an operation on triangulations called twists and prove that twists preserve regular triangulations.
\end{abstract}

\thanks{The authors thank Raman Sanyal for helpful discussions about triangulations of order polytopes, and Paco Santos for helpful comments on a preliminary version of this article.
MB was partially supported by a University of Kentucky Mathematics Steckler Fellowship.
BB was partially supported by National Science Foundation award DMS-1953785.
DH was partially supported by National Science Foundation award DUE-1356253.
KS was partially supported by funding from the University of Kentucky College of Arts and Sciences.
ARVM was partially supported by National Science Foundation Graduate Research Fellowship DGE-1247392 and National Science Foundation
KY-WV LSAMP Bridge to Doctorate Fellowship HRD-2004710.
MY was partially supported
by Simons Collaboration Grant 429920.}

\maketitle


\section{Introduction}
In 1986, Richard Stanley~\cite{StanleyTwoPosetPolytopes} introduced two geometric objects associated to a finite partially ordered set, or \emph{poset}, known as the order polytope and the chain polytope. 
Since then, the study of order polytopes has been an active area of research, e.g., geometric and algebraic properties~\cite{DoignonRexhep, HaaseKohlTsuchiya, HibiLiLiMuTsuchiya, HibiMatsuda, HibiMatsudaKazunoriOhsugiHidefumiShibata}, connections between flow polytopes and order polytopes~\cite{LiuMeszarosStDizier, MeszarosMoralesStriker}, and lattice-point enumeration~\cite{ChappellFriedlSanyal, LiuTsuchiya}.

One of Stanley's fundamental observations is that the arrangement given by all hyperplanes of the form $x_i=x_j$ for $i\neq j$ induces a regular unimodular triangulation of the order polytope for any poset.
This triangulation is known as the \emph{canonical triangulation} of an order polytope, see Subsection~\ref{sec:orderpolys}.
It is well-known that the set of all regular triangulations of a polytope  correspond to the vertices of its secondary polytope, and that these triangulations are connected via flips; definitions are given in Subsection~\ref{sec:triangulations} and further information can be found in~\cite{DeLoeraRambauSantos}.
Various triangulations of order polytopes have been constructed or considered, often for special classes of posets. See, for example, Santos, Stump, and Welker for products of chains~\cite{SantosStumpWelker}, F\'eray and Reiner for non-unimodular triangulations related to graph-associahedra~\cite{FerayReiner}, Reiner and Welker for graded posets~\cite{ReinerWelker}, Br\"anden and Solus for $\mathbf{s}$-lecture hall order polytopes~\cite{BrandenSolusLectureHall}, disjoint unions of chains~\cite[Section 6.2]{DeLoeraRambauSantos}, and others.
However, the general space of regular triangulations of an order polytope, i.e., the $1$-skeleton of the secondary polytope of an order polytope, does not appear to have been studied in detail and motivates our work.



Our contributions in this paper add to the literature on order polytopes and further the study of the general space of regular triangulations of order polytopes.
Specifically, we investigate circuits, flips, and regular triangulations of order polytopes arising from a certain class of posets, called generalized snake posets.  These posets are constructed recursively by adding a square face at the bottom and gluing it to an edge of the previous square. First, we prove results regarding the volumes of their corresponding order polytopes.  In particular, for generalized snake posets of the same rank, Theorem~\ref{thm:minmaxvolumes} characterizes those with minimal and maximal normalized volumes.  

Next, we turn our attention to the poset $Q$, whose lattice of filters is a generalized snake poset $P$, and study the combinatorial properties of the corresponding order polytope $\mathcal{O}(Q)$.  Thus, the vertices of $\mathcal{O}(Q)$ are given by the elements of $P$.
In Theorem \ref{thm.bijection}, we prove that there exists a bijection between the set of nonempty connected induced subgraphs associated to the faces of $P$ and the set of circuits of $\mathcal{O}(Q)$.   As a consequence, we obtain Theorem \ref{thm:unimodular}, which states that every vertex of the secondary polytope of $\mathcal{O}(Q)$ corresponds to a unimodular triangulation.
The combinatorial characterization of the circuits also implies that the canonical triangulation of $\mathcal{O}(Q)$ admits the same number of flips as there are faces in $P$, see Theorem \ref{thm:flipsfromcanonical}.
Then, in Theorem~\ref{thm.Cayleygraph} we determine that the flip graph of the order polytope $\mathcal{O}(Q)$, when $P$ is a ladder, is in fact the Cayley graph of a symmetric group.

Finally, we introduce an action on the vertices of $P$ given by the so-called twists.  It extends to an action on regular triangulations, and in Theorem \ref{thm:regularity} we prove that twists of a canonical triangulation of $\mathcal{O}(Q)$ are again regular triangulations.  Moreover, twists preserve circuits of $\mathcal{O}(Q)$, and hence they commute with applying flips, see Theorem~\ref{thm:square}.
In particular, this implies that twists give an action on the component of the flip graph of $\mathcal{O}(Q)$ containing all regular triangulations.

The article is organized as follows.  In Section~\ref{sec:background}, we review some background and establish notation for triangulations and order polytopes.   In Section~\ref{sec:generalizedsnakes}, we introduce the family of generalized snake posets $P$ and study volumes of their corresponding order polytopes.  The characterization of circuits of the order polytope $\mathcal{O}(Q)$ of the poset of filters of $P$ is given in Section~\ref{sec:circuits}.  
Section \ref{sec:flips_twists} is devoted to introducing twists, and then proving four theorems regarding twists, flips, and triangulations of $\mathcal{O}(Q)$.   Lastly, in Section \ref{sec:future} we conclude the paper with conjectures for future work.

\section{Background and Notation}\label{sec:background}
\subsection{Triangulations}\label{sec:triangulations}

Our primary focus in this paper is the study of triangulations for a particular family of order polytopes. 
We begin by providing the necessary background for triangulations following the presentation in De Loera, Rambau, and Santos~\cite[Section 2.4]{DeLoeraRambauSantos}.


\begin{definition}
Given a point configuration $\mathbf{A} \subseteq \mathbb{R}^d$, let $\mathrm{conv}(\mathbf{A})$ denote the convex hull of $\mathbf{A}$.
A \emph{triangulation} of $\mathbf{A}$ is a collection $\mathcal{T}$ of $d$-simplices all of whose vertices are points in $\mathbf{A}$ that satisfies the following two properties: 
\begin{enumerate}
    \item The union of all of these simplices equals $\operatorname{conv}(\mathbf{A})$. (\emph{Union Property}) 
    \item Any pair of these simplices intersects in a (possibly empty) common face. (\emph{Intersection property})
\end{enumerate}
A triangulation is \emph{unimodular} if every simplex has normalized volume one.
A triangulation  of a point configuration $\mathbf{A} \subseteq \mathbb{R}^d$ is \emph{regular} if it can be obtained by projecting the lower envelope of a lifting of $\mathbf{A}$ from $\mathbb{R}^{d+1}.$ 
\end{definition}

\begin{example}
\label{ex:triangulation}
Consider the polytope generated by the convex hull of the points $(0,0,0,0),$ $(1,0,0,0),$ $(1,1,0,0), (1,0,1,0), (1,1,1,0),$ and $(1,1,1,1)$. A triangulation of this point configuration consists of the simplices: 
\begin{align*}
\sigma_{4231} &= \mathrm{conv}\{(0,0,0,0), (1,0,0,0), (1,0,1,0), (1,1,1,0), (1,1,1,1) \}\\
\sigma_{4321} &= \mathrm{conv}\{(0,0,0,0), (1,0,0,0), (1,1,0,0), (1,1,1,0), (1,1,1,1) \}.    
\end{align*}
\end{example}

\begin{definition}
A point configuration $\mathbf{A}$ with index set $J$ has \emph{corank one} if and only if it has an affine dependence relation $\sum_{j \in J} \lambda_j\mathbf{v}_j = 0$ with $\sum_{j \in J} \lambda_j = 0$ that is unique up to multiplication by a constant. 
This affine dependence partitions $J$ into three subsets: 
\[ J_+ := \{j \in J: \lambda_j >0\}, J_0 := \{j \in J: \lambda_j = 0\}, \text{ and } J_- := \{j \in J: \lambda_j <0\}.\]

In the case when $A$ has corank one, $J_+$ and $J_-$ are the only disjoint subsets of $J$ with the property that their relative interiors intersect at the point 
\[ \sum\limits_{j \in J_+} \lambda_j \mathbf{v}_j = \sum\limits_{j \in J_-} |\lambda_j| \mathbf{v}_j,\]
where the $\lambda_j$ are assumed to be normalized so that $
\sum_{j\in J_+}\lambda_j= \sum_{j\in J_-}|\lambda_j|=1$.
The set $J_+ \bigcup J_-$ is called a \emph{circuit} in $J$ and the pair $(J_+, J_-)$ is called the {\em oriented circuit}, or {\em Radon partition} , of $\mathbf{A}$.  
\end{definition}

\begin{definition}
\label{def:circuit} 
Let $\mathbf{A}$ be a point configuration with index set $J$.
In general, a subset $Z$ of $J$ is a \emph{circuit} if it is a minimal dependent set (that is, it is dependent but every proper subset is independent).
Let $(Z_+,Z_-)$ be a partition of $Z$, 
such that $\mathrm{conv}(Z_+) \cap \mathrm{conv}(Z_-)$ is nonempty. 
The partition $(Z_+, Z_-)$ is called an \emph{oriented circuit}.
We say the circuit is of \emph{type} $(|Z_+|,|Z_-|)$.
\end{definition}



From the circuits we can generate triangulations by using flips to locally transform one triangulation into another.

\begin{lemma}{\cite[Lemma 2.4.2]{DeLoeraRambauSantos} } 
Let $\mathbf{A}$ be a point configuration of corank one and $J = J_+ \cup J_0 \cup J_-$ be its label set, partitioned by the unique oriented circuit of $\mathbf{A}$. 
Then the following are the only two triangulations of $\mathbf{A}:$ 

\[ \mathcal{T}_+ = \{J \smallsetminus \{j\}: j \in J_+\}, \text{ and } \mathcal{T}_-= \{J \smallsetminus \{j\} : j \in J_-\}.\]
\end{lemma}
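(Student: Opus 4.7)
The plan is to exploit the unique affine dependence (normalized so that $\sum_{k\in J_+}\lambda_k = \sum_{k\in J_-}|\lambda_k| = 1$) to tightly constrain which subsets of $\mathbf{A}$ can occur as simplices in a triangulation. Write $\sigma_j := \mathrm{conv}(\mathbf{A} \setminus \{\mathbf{v}_j\})$. Because $\mathbf{A}$ has corank one, $|J| = d+2$ where $d$ is the affine dimension, so every $d$-simplex with vertices in $\mathbf{A}$ is some $\sigma_j$. If $j \in J_0$ then $\lambda_j = 0$ and the affine dependence $\sum_k \lambda_k \mathbf{v}_k = 0$ still holds on $\mathbf{A} \setminus \{\mathbf{v}_j\}$, making $\sigma_j$ degenerate; thus only $j \in J_+ \cup J_-$ yield admissible simplices.

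Next I would rule out triangulations that simultaneously use simplices from $J_+$ and from $J_-$. Let $i \in J_+$ and $j \in J_-$, and set $p := \sum_{k \in J_+} \lambda_k \mathbf{v}_k = \sum_{k \in J_-} |\lambda_k|\, \mathbf{v}_k$. Then $p$ lies in the relative interior of $\mathrm{conv}(J_+)$ and of $\mathrm{conv}(J_-)$ since all coefficients in both expressions are strictly positive. Because $\mathrm{conv}(J_-)$ is a face of $\sigma_i$ and $\mathrm{conv}(J_+)$ is a face of $\sigma_j$, if $\sigma_i \cap \sigma_j$ were a common face $F$, then $F$ would contain the minimal carrier of $p$ on each side, forcing $F \supseteq \mathrm{conv}(J_+) \cup \mathrm{conv}(J_-)$. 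But $\mathbf{v}_i$ is not a vertex of $\sigma_i$, so no face of $\sigma_i$ contains $\mathrm{conv}(J_+)$, a contradiction.

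Finally I would verify that $\mathcal{T}_+$ (and symmetrically $\mathcal{T}_-$) satisfies both properties, and that no proper subfamily suffices. For the union property, given $\mathbf{x} = \sum_k \mu_k \mathbf{v}_k$ with $\mu_k \geq 0$ and $\sum_k \mu_k = 1$, the perturbed family $\mu_k(t) := \mu_k + t\lambda_k$ still represents $\mathbf{x}$ for every $t$, and $\mu_k(t) = \mu_k$ on $J_0$ since $\lambda_k = 0$ there. Decreasing $t$ into negative values decreases $\mu_k(t)$ precisely on $J_+$, so for some $t^{*} < 0$ one has $\mu_j(t^{*}) = 0$ for some $j \in J_+$ with all other $\mu_k(t^{*}) \geq 0$, exhibiting $\mathbf{x} \in \sigma_j$. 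For the intersection property, given distinct $i, i' \in J_+$, any two nonnegative representations of a point in $\sigma_i \cap \sigma_{i'}$ differ by a scalar multiple $c(\lambda_k)$ of the unique dependence; evaluating at index $i$ gives $-c\lambda_i \leq 0$, forcing $c \leq 0$, and evaluating at $i'$ then forces $c = 0$. Hence $\sigma_i \cap \sigma_{i'} = \mathrm{conv}(\mathbf{A} \setminus \{\mathbf{v}_i, \mathbf{v}_{i'}\})$, a common face of both, and consequently the interiors of distinct $\sigma_j$ within $\mathcal{T}_+$ are pairwise disjoint. Combined with the union property, this forces every $\sigma_j$ with $j \in J_+$ to appear in any triangulation contained in $\mathcal{T}_+$. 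The main technical hurdle is the covering step, where the affine dependence must be used explicitly to redistribute coefficients; the remaining arguments amount to careful bookkeeping on the signs of the $\lambda_k$.
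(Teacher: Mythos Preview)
The paper does not prove this lemma; it is stated with a citation to \cite[Lemma 2.4.2]{DeLoeraRambauSantos} and used as background, so there is no in-paper argument to compare against. Your outline is essentially the standard proof found in that reference: reduce to the $d+2$ candidate sets $\sigma_j$, discard $j\in J_0$ as degenerate, use the common interior point $p=\sum_{k\in J_+}\lambda_k\mathbf{v}_k=\sum_{k\in J_-}|\lambda_k|\mathbf{v}_k$ to show $\sigma_i$ and $\sigma_j$ cannot both occur when $i\in J_+$, $j\in J_-$, and then verify the union and intersection properties for $\mathcal{T}_\pm$ by sliding along the unique dependence. All of these steps are sound as written; in particular, your covering argument (perturbing $\mu_k$ by $t\lambda_k$ and stopping at the first zero in $J_+$) and your uniqueness-of-representation argument for $\sigma_i\cap\sigma_{i'}$ are exactly the right uses of the corank-one hypothesis.
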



\begin{example} \label{ex:triangulation_2}
We return to Example~\ref{ex:triangulation}, where we considered the convex hull of the corank one point configuration
\[
\mathbf{v}_0 = (0,0,0,0),
\mathbf{v}_1 = (1,0,0,0),
\mathbf{v}_2 = (1,1,0,0),
\mathbf{v}_3 = (1,0,1,0), 
\mathbf{v}_4 = (1,1,1,0),
\mathbf{v}_5 = (1,1,1,1).
\]
Since $\mathbf{v}_1 - \mathbf{v}_2 - \mathbf{v}_3 + \mathbf{v}_4 = \mathbf{0}$, then an oriented circuit is $J_+ = \{1,4\}$ and $J_- = \{2,3\}$.
The circuit type is $(2,2)$ and the two triangulations are:  
\begin{align*}
 \mathcal{T}_+ &= \{J \setminus\{j\} : j \in J_+\} = \{\{0,2,3,4,5\},\{0,1,2,3,5\}\}\\ 
\mathcal{T}_- &= \{J \setminus\{j\} : j \in J_-\} = \{\{0,1,3,4,5\},\{0,1,2,4,5\}\}. 
\end{align*}
\end{example}

Triangulation of $\mathbf{A}$ is a simplicial complex on $\mathbf{A}$. Recall that an \emph{(abstract) simplicial complex} $\Delta$ on a set $X$ is a collection of subsets of $X$ such that if $\sigma \in \Delta$ and $\tau \subseteq \sigma$, then $\tau \in \Delta.$ 
The elements of a simplicial complex are called \emph{faces} and a subcomplex $\Delta'$ of $\Delta$ is a subcollection of $\Delta$ which is also a simplicial complex. 
The \emph{link} of a face $\sigma \in \Delta$ is the simplicial complex
$$\mathrm{lk}_\Delta(\sigma) = \{\tau \in \Delta : \sigma \cup \tau \in \Delta \text{ and } \sigma \cap \tau = \emptyset \}.
$$
If $\Delta$ and $\Delta'$ are simplicial complexes, then their {\it join} is $\Delta\ast\Delta'=\{\sigma\cup\sigma' : \sigma\in\Delta \text{ and }  \sigma'\in\Delta'\}$.




\begin{theorem}{\cite[Theorem 4.4.1]{DeLoeraRambauSantos}}\label{thm:flip}
Let $\mathcal{T}_1$ and $\mathcal{T}_2$ be two triangulations of a point configuration $\mathbf{A}$. 
Then $\mathcal{T}_1$ and $\mathcal{T}_2$ differ by a flip if and only if there is a circuit $Z$ of $\mathbf{A}$ such that 
\begin{enumerate}
    \item[(i)] They contain, respectively, the two triangulations $\mathcal{T}_Z^+$ and $\mathcal{T}_Z^-$ of $Z$. 
    \item[(ii)] All the maximal simplices of $\mathcal{T}_Z^+$ and $\mathcal{T}_Z^-$ have the same link $\mathrm{L}$ in $\mathcal{T}_1.$
    \item[(iii)] 
    Removing the subcomplex $\mathcal{T}_Z^+ \ast \mathrm{L}$ from $\mathcal{T}_1$ and replacing it by $\mathcal{T}_Z^- \ast \mathrm{L}$ gives $\mathcal{T}_2.$ 
\end{enumerate}
\end{theorem}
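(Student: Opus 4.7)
The plan is to prove the equivalence in both directions, working from a definition of a flip as a minimal local modification of a triangulation supported by a single circuit (equivalently, an edge of the secondary polytope). For the backward implication I would directly verify that the prescribed surgery yields a valid triangulation $\mathcal{T}_2$; for the forward implication I would extract the circuit $Z$ and the common link $L$ from the structure of $\mathcal{T}_1 \triangle \mathcal{T}_2$.

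For the backward direction, assume (i)--(iii) and set $\mathcal{T}_2 := (\mathcal{T}_1 \setminus (\mathcal{T}_Z^+ \ast L)) \cup (\mathcal{T}_Z^- \ast L)$. The union property follows immediately because $\mathcal{T}_Z^+$ and $\mathcal{T}_Z^-$ both triangulate $\mathrm{conv}(Z)$, so joining with $L$ produces two subcomplexes with identical underlying space $\mathrm{conv}(Z) \ast |L|$. The intersection property is the key point: the boundary subcomplex of $\mathcal{T}_Z^+ \ast L$ inside $\mathcal{T}_1$ is determined by the boundary simplices of $\mathcal{T}_Z^+$ joined with $L$, and since $\partial \mathcal{T}_Z^+ = \partial \mathcal{T}_Z^-$ (both equal the boundary of $\mathrm{conv}(Z)$), the outer interface with the surrounding simplices of $\mathcal{T}_1 \setminus (\mathcal{T}_Z^+ \ast L)$ is preserved. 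Under the chosen definition of a flip, this local swap across a circuit is precisely what a flip is, so $\mathcal{T}_1$ and $\mathcal{T}_2$ differ by a flip.

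For the forward direction, suppose $\mathcal{T}_1$ and $\mathcal{T}_2$ differ by a flip. Consider the symmetric difference $\mathcal{T}_1 \triangle \mathcal{T}_2$ and let $V$ denote the union of vertex sets of its maximal simplices. The key combinatorial step is to argue that $V$ decomposes as $Z \sqcup V_0$ where $Z$ is a circuit of $\mathbf{A}$ (the vertices where the triangulations disagree affinely) and $V_0$ indexes a subcomplex $L$ (the link, consisting of vertices shared by every changed simplex). This decomposition can be read off from the secondary polytope: an edge of the secondary polytope is labeled by a unique circuit, and the simplices incident to that edge are exactly the joins of $\mathcal{T}_Z^\pm$ with a common link. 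Once the decomposition is in hand, the lemma stating that a corank-one configuration admits exactly the two triangulations $\mathcal{T}_Z^+$ and $\mathcal{T}_Z^-$ forces the restrictions of $\mathcal{T}_1$ and $\mathcal{T}_2$ to $Z$ to be these two triangulations, which yields (i), (ii), and (iii).

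The main obstacle is the careful verification in the backward direction that the surgery respects the intersection property globally, and in the forward direction that one can cleanly separate the vertices of the changed simplices into a circuit part $Z$ and a link part $L$. Two subtleties deserve attention. First, the circuit $Z$ is a subset of the ambient point configuration $\mathbf{A}$, not merely of the vertex set of a single maximal simplex, so the link $L$ may involve vertices that lie outside $\mathrm{conv}(Z)$ in a nontrivial way; this must be handled when checking that joining with $L$ preserves simpliciality. Second, the proof hinges on a working definition of flip, and I would adopt the secondary polytope formulation so that the combinatorial content of (i)--(iii) matches the geometric content of an edge of the secondary polytope. With these conventions fixed, each step reduces to a direct geometric or combinatorial verification.
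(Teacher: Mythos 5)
This statement is quoted verbatim from De Loera, Rambau, and Santos \cite[Theorem 4.4.1]{DeLoeraRambauSantos}; the paper states it with a citation and supplies no proof of its own, so there is no in-paper argument to compare your proposal against. Any assessment therefore has to be of your sketch on its own terms, measured against the proof strategy in that reference.

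On that basis, there is a genuine gap, and it is the one you half-acknowledge at the end: the theorem has content only relative to a definition of ``flip'' that is \emph{not} already the circuit surgery described in (i)--(iii). In the source, a flip is defined through the refinement poset of subdivisions (two triangulations differ by a flip when they are the only two triangulations refining a common almost-trivial subdivision, equivalently when they span an edge of the secondary polytope), and Theorem 4.4.1 is precisely the nontrivial equivalence between that definition and the local circuit-swap description. By ``adopting the secondary polytope formulation'' and then asserting that the local swap ``is precisely what a flip is,'' your backward direction becomes circular: you have defined the flip to be the thing you are trying to characterize. Separately, your forward direction hides the hardest step in the phrase that the decomposition $V = Z \sqcup V_0$ ``can be read off from the secondary polytope.'' Establishing that the simplices in $\mathcal{T}_1 \triangle \mathcal{T}_2$ all share a common link $L$, and that the residual vertex set supports a single circuit $Z$ whose two triangulations are the restrictions of $\mathcal{T}_1$ and $\mathcal{T}_2$, is exactly where the corank-one analysis (the analogue of Lemma 2.4.2 quoted earlier in the paper) must be deployed, and your sketch does not indicate how. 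A correct proof would fix the refinement-poset definition as primary, exhibit the almost-trivial subdivision obtained by coarsening $\mathcal{T}_Z^{\pm} \ast L$ to $\{\mathrm{conv}(Z \cup \sigma) : \sigma \in L\}$, and verify that this subdivision has exactly the two refinements claimed.
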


Two triangulations of $\mathbf{A}$ are \emph{adjacent} if they differ by a flip.
The set of all triangulations of $\mathbf{A}$, under adjacency by flips, forms the \emph{graph of triangulations}, or \emph{flip graph}, of $\mathbf{A}$.

\begin{example}
Continuing from Example~\ref{ex:triangulation_2} we demonstrate a flip from $\mathcal{T}_+$ to $\mathcal{T}_-$. 
We have 
\[
\mathcal{T}_+ = \{\{0,2,3,4,5\}, \{0,1,2,3,5\}\} \in \mathcal{T}.
\]
The circuit $Z =\{1,2,3,4\}$ has triangulations $\mathcal{T}_Z^+ = \{\{2,3,4\}, \{1,2,3\} \}$ and $\mathcal{T}_Z^- = \{\{1,3,4\}, \{1,2,4\} \}$.
The link of the simplices is
$\mathrm{L}= \{\{\mathbf{v}_0,\mathbf{v}_5\}\}$,
so flipping at the circuit supported at $Z$ gives the triangulation
$$\mathcal{T}_Z^- \ast \mathrm{L} = \{\{0,1,3,4,5\},\{0,1,2,4,5\}\}=  \mathcal{T}_-.$$ 
\end{example}



In Sections \ref{sec:circuits} and \ref{sec:flips_twists}, we will take a look at the \emph{secondary polytope} whose vertices are in bijection with regular triangulations of a point configuration.
Recall that we can define for each triangulation of a point configuration $\mathbf{A}$ a GKZ-vector.
As stated in the following definition, the convex hull of the GKZ-vectors for $\mathbf{A}$ is the secondary polytope. 
See De Loera, Rambau, and Santos~\cite[Section 5.1]{DeLoeraRambauSantos} for a further discussion of secondary polytopes and GKZ-vectors. 

\begin{definition}[Secondary Polytope.] For a point configuration $\mathbf{A}$ the secondary polytope of $\mathbf{A}$ is conv\{$\varphi_{\mathbf{A}}(\mathcal{T})\mid \mathcal{T}\text{ triangulation of } \mathcal{A}$\}, 
where $\varphi_{\mathbf{A}}(\mathcal{T})$ represents the GKZ-vector of $\mathcal{T}$ in $\mathbf{A}$.
\end{definition}

 The flip graph, which is the graph of all triangulations connected by flips, is in general not connected, but the flip graph of regular triangulations is connected and contains the 1-skeleton of the secondary polytope as a spanning subgraph. 

\subsection{Order polytopes}\label{sec:orderpolys}
Let $P$ be a partially ordered set on the set of elements $[d]:=\{1,\ldots,d\}$. 
We abuse notation and write $P$ to denote the elements of $P$.
The \emph{order polytope} of $P$, introduced by Stanley~\cite{StanleyTwoPosetPolytopes}, is defined as
\[
\mathcal{O}(P) = \left\{\bx=(x_1,\ldots, x_d)\in [0,1]^d: x_i\leq x_j \text{ for }i<_Pj\right\}.
\]
See Example~\ref{ex:order_polytope}.
An \emph{upper order ideal of $P$}, also called a \emph{filter}, is a set $A\subseteq P$ such that if $i\in A$ and $i<_Pj$, then $j\in A$.
Let $J(P)$ denote the poset of upper order ideals of $P$, ordered by reverse inclusion. We use $\langle p_1,...,p_k\rangle$ to denote the ideal generated by elements $p_1,...,p_k \in P$.
Let $\be_1,\ldots,\be_d$ denote the standard basis vectors of $\R^d$.
For an upper order ideal $A\in J(P)$, define the \emph{characteristic vector} $\bv_A:=\sum_{i\in A}\be_i$.
The vertices of $\mathcal{O}(P)$ are given by
\[
V(\mathcal{O}(P))=\left\{\bv_A:A\in J(P)\right\}.
\]
Define a hyperplane $\mathcal{H}_{i,j}= \{ \bx\in \R^d : x_i = x_j \}$ for $1\leq i < j \leq d$. 
The set of all such hyperplanes, called the $d$-dimensional braid arrangement of type A, induces a triangulation $\mathcal{T}$ of $\mathcal{O}(P)$ known as the \emph{canonical triangulation}, which has the following three fundamental properties:
\begin{enumerate}
    \item $\mathcal{T}$ is unimodular,
    \item the simplices are in bijection with the linear extensions of $P$, so the normalized volume of the order polytope is
    $$ \vol (\mathcal{O}(P)) = \# \text{ of linear extensions of } P, \text{ and} $$
    \item the simplex corresponding to a linear extension $(a_1,\ldots,a_d)$ of $P$ is
    $$ \sigma_{a_1,...,a_d} = \left\{\bx \in [0,1]^d : x_{a_1} \leq x_{a_2} \leq \cdots \leq x_{a_d}\right\}, $$
    with vertex set 
    $\{\mathbf{0}, \mathbf{e}_{a_d}, \mathbf{e}_{a_{d-1}}+ \mathbf{e}_{a_d}, \ldots ,  \mathbf{e}_{a_1}+\cdots+\mathbf{e}_{a_d} = \mathbf{1}\}.$
\end{enumerate}


\begin{example}[Order polytope and triangulations]\label{ex:order_polytope}

Let $P$ be the diamond poset 
$$
\begin{tikzpicture}
\begin{scope}[xshift=0, yshift=0, scale=0.5]
	\pvx[]  at (0,2)  {};
	\pvx[] at (1,1)  {};
	\pvx[] at (-1,1)  {};
	\pvx[] at (0,0)  {};
	
	\draw[line width = 1pt] (0,0) -- (-1,1) -- (0,2) -- (1,1) -- (0,0);

	\node[anchor = north] at (0,0)  {$4$};
	\node[anchor = east] at (-1,1)  {$2$};	
	\node[anchor = west] at (1,1)  {$3$};
	\node[anchor = south] at (0,2)  {$1$};
\end{scope}
\end{tikzpicture}
$$
Then $\mathcal{O}(P) = \{(x_1,x_2,x_3,x_4) \in [0,1]^4  : x_4\leq x_2\leq x_1 \text{ and } x_4 \leq x_3 \leq x_1\}$.
The six upper order ideals of $P$ are
\begin{center}
\begin{tikzpicture}
\begin{scope}[xshift=-10, yshift=-5, scale=0.5]
	\node ()  at (0,0)  {};
	
	\node[anchor = south] at (0,0)  {$\emptyset$};
\end{scope}
\begin{scope}[xshift=30, yshift=0, scale=0.5]
	\pvx[] at (0,0)  {};
	
	\node[anchor = south] at (0,0)  {$1$};
\end{scope}
\begin{scope}[xshift=80, yshift=0, scale=0.5]
	\pvx[] at (1,1)  {};
	\pvx[] at (0,0)  {};
	
	\draw[line width = 1pt] (1,1) -- (0,0);

	\node[anchor = east] at (0,0)  {$2$};
	\node[anchor = south] at (1,1)  {$1$};

\end{scope}
\begin{scope}[xshift=150, yshift=0, scale=0.5]
	\pvx[] at (-1,1)  {};
	\pvx[] at (0,0)  {};
	
	\draw[line width = 1pt] (0,0) -- (-1,1);

	\node[anchor = west] at (0,0)  {$3$};
	\node[anchor = south] at (-1,1)  {$1$};	
\end{scope}
\begin{scope}[xshift=210, yshift=-17, scale=0.5]
	\pvx[] at (1,1)  {};
	\pvx[] at (-1,1)  {};
	\pvx[] at (0,2)  {};
	
	\draw[line width = 1pt] (1,1) -- (0,2) -- (-1,1);

	\node[anchor = south] at (0,2)  {$1$};
	\node[anchor = east] at (-1,1)  {$2$};	
	\node[anchor = west] at (1,1)  {$3$};
\end{scope}
\begin{scope}[xshift=290, yshift=0, scale=0.5]

	\pvx[] at (0,2)  {};
	\pvx[] at (1,1)  {};
	\pvx[] at (-1,1)  {};
	\pvx[] at (0,0)  {};
	
	\draw[line width = 1pt] (0,0) -- (-1,1) -- (0,2) -- (1,1) -- (0,0);

	\node[anchor = north] at (0,0)  {$4$};
	\node[anchor = east] at (-1,1)  {$2$};	
	\node[anchor = west] at (1,1)  {$3$};
	\node[anchor = south] at (0,2)  {$1$};
\end{scope}
\end{tikzpicture}
\end{center}
so $\mathcal{O}(P)$ is the convex hull of the points $(0,0,0,0), (1,0,0,0), (1,1,0,0), (1,0,1,0)$, $(1,1,1,0)$ and $(1,1,1,1)$. 
The poset $P$ has two linear extensions, namely $4,2,3,1$ and $4,3,2,1$.
The canonical triangulation of $\cO(P)$ then consists of the following simplices:
\begin{align*}
\sigma_{4231} &= \mathrm{conv}\{(0,0,0,0), (1,0,0,0), (1,0,1,0), (1,1,1,0), (1,1,1,1) \}\\
\sigma_{4321} &= \mathrm{conv}\{(0,0,0,0), (1,0,0,0), (1,1,0,0), (1,1,1,0), (1,1,1,1) \}.
\end{align*}
\end{example}




\section{Generalized snake posets}\label{sec:generalizedsnakes}

We introduce the family of generalized snake posets $P(\bw)$, which are distributive lattices with width two, and give a recursive formula for the normalized volume of the order polytope of $P(\bw)$. 
For generalized snake posets of the same rank, we characterize those with minimal and maximal normalized volumes.

\begin{definition}
For $n\in \mathbb{Z}_{\geq0}$, a \emph{generalized snake word} is a word of the form $\bw=w_0 w_1 \cdots w_n$ where $w_0 =\varepsilon$ is the empty letter and $w_i$ is in the alphabet $\{L,R\}$ for $i=1,\ldots, n$.
The \emph{length} of the word is $n$, which is the number of letters in $\{L,R\}$.
\end{definition}

\begin{definition}
Given a generalized snake word $\bw=w_0w_1\cdots w_n$, we define the \emph{generalized snake poset} $P(\bw)$ recursively in the following way:
\begin{itemize}
    \item $P(w_0) = P(\varepsilon)$ is the poset on elements $\{0,1,2,3\}$ with cover relations $1\prec 0$, $2\prec 0$, $3\prec 1$ and $3\prec 2$. 

    \item $P(w_0w_1\cdots w_n)$ is the poset $P(w_0w_1\cdots w_{n-1}) \cup \{2n+2,2n+3\}$ with the added cover relations $2n+3 \prec 2n+1$, $2n+3 \prec 2n+2$, and 
    $$\begin{cases}
    2n+2 \prec 2n-1, 
        & \text{ if } n=1 \text{ and } w_n = L, \text{ or } n \geq 2 \text{ and }  w_{n-1}w_n \in \{RL,LR\},\\
    2n+2 \prec 2n, 
        & \text{ if } n=1 \text{ and } w_n = R, \text{ or } n \geq 2 \text{ and }  w_{n-1}w_n \in \{LL,RR\}.
    \end{cases}$$
\end{itemize}
In this definition, the minimal element of the poset $P(\bw)$ is $\widehat0=2n+3$, and the maximal element of the poset is $\widehat1 = 0$.
\end{definition} 

If $\bw=w_0w_1\cdots w_n$ is a generalized snake word of length $n$, then $P(\bw)$ is a distributive lattice of width two and rank $n+2$.
We point out two special cases of generalized snake posets.
For the length $n$ word $\varepsilon LRLR\cdots$, $S_n:=P(\varepsilon LRLR\cdots)$ is the \emph{snake poset}, and for the length $n$ word $\varepsilon LLLL\cdots$, $\mL_n:=P(\varepsilon LLLL\cdots)$ is the \emph{ladder poset}. 
For an example, refer to Figure \ref{fig.snake_and_ladder}.

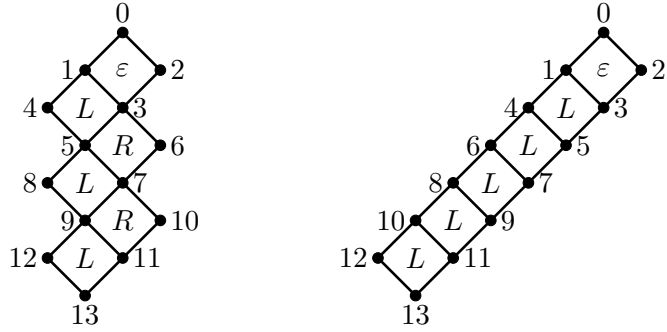
\begin{figure}[ht!]
\begin{center}
\begin{tikzpicture}[scale=.5]
\begin{scope}[xshift=0, yshift=0]
	\pvx[] at (1,7)  {};
	\pvx[] at (2,6)  {};		
	\pvx[] at (0,6)  {};
	\pvx[] at (-1,5)  {};	
	\pvx[] at (1,5)  {};
	\pvx[] at (2,4)  {};	
	\pvx[] at (-1,3)  {};
    \pvx[] at (0,4)  {};	
	\pvx[] at (1,3)  {};
	\pvx[] at (2,2)  {};
	\pvx[] at (0,2)  {};
	\pvx[] at (1,1)  {};
	\pvx[] at (-1,1)  {};
	\pvx[] at (0,0)  {};
	
	\draw[line width = 1pt] (0,0) -- (-1,1) -- (0,2) -- (1,1) -- (0,0);
	\draw[line width = 1pt] (0,2) -- (1,3) -- (2,2) -- (1,1);
	\draw[line width = 1pt] (0,2) -- (-1,3) -- (0,4) -- (1,3);
	\draw[line width = 1pt] (0,4) -- (1,5) -- (2,4) -- (1,3);
	\draw[line width = 1pt] (0,4) -- (-1,5) -- (0,6) -- (1,5);
	\draw[line width = 1pt] (0,6) -- (1,7) -- (2,6) -- (1,5);
	
	\node[anchor = south] at (1,7)  {$0$};
	\node[anchor = east] at (0,6)  {$1$};
	\node[anchor = west] at (2,6)  {$2$};
	\node[anchor = west] at (1,5)  {$3$};
	\node[anchor = east] at (-1,5)  {$4$};
	\node[anchor = east] at (0,4)  {$5$};
	\node[anchor = west] at (2,4)  {$6$};
	\node[anchor = west] at (1,3)  {$7$};
	\node[anchor = east] at (-1,3)  {$8$};
	\node[anchor = east] at (0,2)  {$9$};
	\node[anchor = west] at (2,2)  {$10$};
	\node[anchor = west] at (1,1)  {$11$};
	\node[anchor = east] at (-1,1)  {$12$};
	\node[anchor = north] at (0,0)  {$13$};
	
	\node[] at (1,6)  {$\varepsilon$};	
	\node[] at (0,5)  {$L$};
	\node[] at (1,4)  {$R$};
	\node[] at (0,3)  {$L$};
	\node[] at (1,2)  {$R$};
	\node[] at (0,1)  {$L$};
\end{scope}
\begin{scope}[xshift=250, yshift=0]
	\pvx[] at (5,7)  {};
	\pvx[] at (6,6)  {};	
	\pvx[] at (4,6)  {};
	\pvx[] at (5,5)  {};	
	\pvx[] at (3,5)  {};
	\pvx[] at (4,4)  {};
	\pvx[] at (2,4)  {};
	\pvx[] at (3,3)  {};
	\pvx[] at (1,3)  {};
	\pvx[] at (2,2)  {};
	\pvx[] at (0,2)  {};
	\pvx[] at (1,1)  {};
	\pvx[] at (-1,1)  {};
	\pvx[] at (0,0)  {};
	
	\draw[line width = 1pt] (0,0) -- (-1,1) -- (0,2) -- (1,1) -- (0,0);
	\draw[line width = 1pt] (0,2) -- (1,3) -- (2,2) -- (1,1);
	\draw[line width = 1pt] (1,3) -- (2,4) -- (3,3) -- (2,2);
    \draw[line width = 1pt] (2,4) -- (3,5) -- (4,4) -- (3,3);	
    \draw[line width = 1pt] (3,5) -- (4,6) -- (5,5) -- (4,4);
    \draw[line width = 1pt] (4,6) -- (5,7) -- (6,6) -- (5,5);	
	\node[anchor = south] at (5,7)  {$0$};
	\node[anchor = east] at (4,6)  {$1$};
	\node[anchor = east] at (3,5)  {$4$};
	\node[anchor = east] at (2,4)  {$6$};
	\node[anchor = east] at (1,3)  {$8$};
	\node[anchor = east] at (0,2)  {$10$};
	\node[anchor = east] at (-1,1)  {$12$};	
	\node[anchor = west] at (6,6)  {$2$};
	\node[anchor = west] at (5,5)  {$3$};	
	\node[anchor = west] at (4,4)  {$5$};	
	\node[anchor = west] at (3,3)  {$7$};	
	\node[anchor = west] at (2,2)  {$9$};	
	\node[anchor = west] at (1,1)  {$11$};
	\node[anchor = north] at (0,0)  {$13$};		
	\node[] at (5,6)  {$\varepsilon$};	
	\node[] at (4,5)  {$L$};	
	\node[] at (3,4)  {$L$};
	\node[] at (2,3)  {$L$};
	\node[] at (1,2)  {$L$};
	\node[] at (0,1)  {$L$};
\end{scope}
\end{tikzpicture}
\end{center}
\caption{The snake poset $S_5 = P(\varepsilon LRLRL)$ and the ladder poset $\mL_5 = P(\varepsilon LLLLL)$.}
\label{fig.snake_and_ladder}
\end{figure}

In this article, we consider the generalized snake posets in two contexts.
In the next subsection, we consider the order polytope of the generalized snake posets, $\calO(P(\bw))$. 
More precisely, we give a recursive formula for the volume and obtain tight lower and upper bounds for the volumes of $\calO(P(\bw))$ when $\bw$ is of fixed length.
In the remaining sections, we study the order polytope of a related poset $Q_{\bw}$, which is the poset of join-irreducibles of the generalized snake poset.

\subsection{Volume of the order polytope of generalized snake posets}
Recall that the volume of an order polytope $\calO(P)$ is determined by the number of linear extensions of the poset $P$. 
Thus, to study the volume of $\calO(P(\bw))$ we consider the recursive structure of the poset of upper order ideals of $P(\bw)$.  
Because of the definition of the generalized snake poset $P(\bw)$, the minimal element of $J(P(\bw))$ is $\widehat0= \langle 2n+3 \rangle= P(\bw)$ and the maximal element is $\widehat1 = \emptyset$.
\begin{lemma}\label{lem.JPw}
 Let $\bw=w_0w_1\cdots w_n$ be a generalized snake word.
 If $k\geq0$ is the largest index such that $w_k\neq w_n$, then $J(P(\bw))=$
 $$J(P(w_0w_1\cdots w_{n-1})) \cup 
    \left\{ \langle 2n+3\rangle, \langle 2n+2\rangle, \langle 2n+2,2k+2 \rangle \right\} \cup 
    \left\{\langle 2n+2, 2k+2i+1\rangle \right\}_{i=1}^{n-k}.$$ 
\end{lemma}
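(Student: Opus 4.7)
The plan is to partition $J(P(\bw))$ by a filter's intersection with $\{2n+2,2n+3\}$, matching the three groups in the stated decomposition. Since $2n+2$ and $2n+3$ are introduced at step $n$ strictly below every element of $P(w_0\cdots w_{n-1})$, a filter containing neither is precisely a filter of $P(w_0\cdots w_{n-1})$, giving the first summand. Because $2n+3=\widehat 0$ is the global minimum of $P(\bw)$, the unique filter containing $2n+3$ is $P(\bw)=\langle 2n+3\rangle$. It therefore suffices to enumerate the filters $F$ with $2n+2\in F$ and $2n+3\notin F$.

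For such $F$, I would first identify the forced elements, namely those weakly above $2n+2$. Using the recursive cover relations and the maximality of $k$ (so that $w_{k+1}=\cdots=w_n$ is a maximal run of identical letters), the $n-k-1$ ``straight'' covers from steps $k+2,\ldots,n$ assemble into a chain $2n+2\prec 2n\prec\cdots\prec 2k+4$, followed by the single ``zigzag'' cover $2k+4\prec 2k+1$ from step $k+1$. Since $2k+1$ is the minimum of $P(w_0\cdots w_{k-1})$, the forced set is
\[
 U=\{0,1,\ldots,2k+1\}\cup\{2k+4,2k+6,\ldots,2n+2\},
\]
which is exactly the filter $\langle 2n+2\rangle$. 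The remaining optional elements are $\{c_0,c_1,\ldots,c_{n-k}\}$, where $c_0:=2k+2$ and $c_i:=2k+2i+1$ for $i\geq 1$.

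The crux of the argument is the claim that the optional elements form a chain $c_0\succ c_1\succ\cdots\succ c_{n-k}$ in $P(\bw)$, with the non-chain cover of each $c_i$ lying inside $U$. Indeed, $c_1\prec c_0$ is the relation $2k+3\prec 2k+2$ added at step $k$, and for $j\geq 1$ the relation $c_{j+1}\prec c_j$ is $2(k+j)+3\prec 2(k+j)+1$ added at step $k+j$; meanwhile the other cover of $c_i$ is the ``left-column'' element $2k+2i\in U$ (or $2k+1\in U$ when $i=1$). The constant-run hypothesis on $w_{k+1},\ldots,w_n$ is exactly what keeps those non-chain covers on the forced side; a break in the run would introduce an additional optional element and destroy the chain structure. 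Granted the chain claim, upward closure of $F$ forces $F\cap\{c_0,\ldots,c_{n-k}\}$ to be a prefix of length $0,1,\ldots,n-k+1$.

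Translating prefixes back into generator notation yields $\langle 2n+2\rangle$ for the empty prefix, $\langle 2n+2,c_0\rangle=\langle 2n+2,2k+2\rangle$ for the prefix $\{c_0\}$, and $\langle 2n+2,c_i\rangle=\langle 2n+2,2k+2i+1\rangle$ for $i=1,\ldots,n-k$, using that including $c_i$ in $F$ automatically pulls $c_0,\ldots,c_{i-1}$ into $F$ by upward closure. Combined with $\langle 2n+3\rangle$ and $J(P(w_0\cdots w_{n-1}))$, this reproduces the decomposition in the lemma. The main technical step is the chain claim above; its verification is a direct unwinding of the recursive cover relations, and the $L\leftrightarrow R$ symmetry of the construction handles both choices of $w_n$ simultaneously.
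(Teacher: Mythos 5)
Your proof is correct and follows essentially the same route as the paper's: both arguments come down to observing that the elements of $P(w_0\cdots w_{n-1})$ incomparable to $2n+2$ form the chain $2k+2 \succ 2k+3 \succ 2k+5 \succ \cdots \succ 2n+1$ whose non-chain covers all lie above $2n+2$, so a filter containing $2n+2$ but not $2n+3$ is determined by an up-closed prefix of that chain. Your forced-set/optional-prefix bookkeeping is just a more explicit packaging of the paper's enumeration of minimal generating sets, and it inherits the same implicit $L\leftrightarrow R$ labeling convention in the edge case $k=0$.
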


\begin{proof}
First note that $P(\bw) = P(w_0w_1\cdots w_{n-1}) \cup \{2n+2,2n+3\}$, where $2n+3 \prec 2n+1$, $2n+3\prec 2n+2$, and $2n+2\prec 2n$ or $2n+2 \prec 2n-1$.
One can see that $J(P(w_0w_1\cdots w_{n-1}))$ is contained in $J(P(\bw))$.
The added elements $2n+3$ and $2n+2$ generate the upper order ideals $\langle 2n+3\rangle$ and $\langle 2n+2\rangle$, respectively.
Since $2n+3$ is comparable with every other element of $P(\bw)$, it is not in the minimal generating set of any other upper order ideal. 
The only elements of $P(w_0w_1\cdots w_{n-1})$ which are not comparable with $2n+2$ are $2k+2$ and $\{2k+2i+1\}_{i=1}^{n-k}$.
Hence, each pair $\{2n+2,2k+2\}$ and $\{2n+2,2k+2i+1\}_{i=1}^{n-k}$ generates an upper order ideal of $P(\bw)$.
Since $2n+1 \prec \cdots \prec 2k+5 \prec 2k+3 \prec 2k+2$, no additional minimal generating sets of upper order ideals are possible. 
\end{proof}

\begin{remark}\label{rem:J(P)construction}
Thus, we see that $J(P(\bw))$ can be constructed by adding a chain of $n-k+3$ elements to the bottom of $J(P(w_0w_1\cdots w_{n-1}))$.
In the Hasse diagram for $J(P(\bw))$, this corresponds to drawing a strip of $n-k+1$ squares.
See Figure~\ref{fig:PandJ(P)} for an illustration.

Notice that in the strip of the $n-k+1$ newly added squares in $J(P(\bw))$, the lowest square (consisting of the four elements $\langle 2n+2,2n+1\rangle, \langle 2n+1\rangle, \langle 2n+2, 2n-1\rangle,  \langle 2n,2n-1\rangle$) lies directly below the topmost square of $J(P(\bw))$ (consisting of the four elements $\langle 0\rangle, \langle 1\rangle, \langle 2\rangle, \langle1,2\rangle$).
Hence from Lemma~\ref{lem.JPw}, we see that the Hasse diagram of $J(P(\bw))$ contains exactly $n$ squares which are lined up directly below the topmost square of $J(P(\bw))$.
We will refer to these squares as the \emph{central squares} of $J(P(\bw))$.
Swapping every letter from $R$ to $L$ and vice versa in $\bw$ corresponds to reflecting $J(P(\bw))$ about this central line of squares.
\end{remark}

\begin{figure}[th!]
\begin{center}
\begin{tikzpicture}
\begin{scope}[xshift=0, yshift=0, scale=0.5]

	\draw[line width = 1pt, color=red] (1,1) -- (0,0) -- (-1,1) -- (0,2);

    \pvx[] at (3,7)  {};
    \pvx[] at (2,6)  {};
	\pvx[label=right:{\tiny$2k$}] at (4,6)  {};
	\pvx[label=right:{\tiny$2k+2$}] at (5,5)  {};	
	\pvx[label=left:{\tiny$2k+1$}] at (3,5)  {};
	\pvx[label=right:{\tiny$2k+3$}] at (4,4)  {};
	\pvx[label=left:{\tiny$2k+4$}] at (2,4)  {};
	\pvx[label=right:{\tiny$2k+5$}] at (3,3)  {};
	\pvx[label=left:{\tiny$2n$}] at (0,2)  {};
	\pvx[label=right:{\tiny$2n+1$}] at (1,1)  {};
	\pvx[label=left:{\tiny\textcolor{red}{$2n+2$}}, color=red] at (-1,1)  {};
	\pvx[label=right:{\tiny\textcolor{red}{$2n+3$}}, color=red] at (0,0)  {};
	
	\draw[line width = 1pt] (1,1) -- (0,2);
	\draw[line width = 1pt] (2,4) -- (3,5) -- (4,4) -- (3,3) -- (2,4);	
    \draw[line width = 1pt] (3,5) -- (4,6) -- (5,5) -- (4,4);
    \draw[line width = 1pt] (3,5) -- (2,6) -- (3,7) -- (4,6);    
    \draw[line width = 1pt] (2,6) -- (1.5,6.5);        
    \draw[line width = 1pt] (3,7) -- (2.5,7.5);  
    \draw[line width = 1pt] (0,2) -- (0.5,2.5);  
    \draw[line width = 1pt] (1,1) -- (1.5,1.5);      
    \draw[line width = 1pt] (1.5,3.5) -- (2,4);  
    \draw[line width = 1pt] (2.5,2.5) -- (3,3);          
	
	\node[] at (3,6)  {\scriptsize $w_{k-1}$};	
	\node[] at (4,5)  {\scriptsize $w_k$};	
	\node[] at (3,4)  {\scriptsize $w_{k+1}$};
	\node[] at (1.3,2.5)  {$\udots$};
	\node[] at (1.4,7.4)  {$\Ddots$};
	\node[] at (1,2)  {};
	\node[] at (0,1)  {\scriptsize \textcolor{red}{$w_n$}};
\end{scope}

\begin{scope}[xshift=160, yshift=120, scale=0.5]
    \draw[line width = 1pt, color = red] (1,-4) -- (0,-5) -- (1.5,-6.5);   
    \draw[line width = 1pt, color = red] (1,-6) -- (2,-5);   
    \draw[line width = 1pt, color = red] (2.5,-7.5) -- (5,-10) -- (6,-9); 
    \draw[line width = 1pt, color = red] (3,-8) -- (4,-7);       
    \draw[line width = 1pt, color = red] (4,-9) -- (5,-8);
    \draw[line width = 1pt, color = red] (5,-10) -- (6,-11);
    \pvx[label=left:{\tiny$\langle 2k, 2k-1\rangle$}] at (5,0)  {};
    \pvx[label=left:{\tiny$\langle 2k+1\rangle$}] at (4,-1)  {};    
    \pvx[] at (6,-1)  {};
    \pvx[label=right:{\tiny$\langle 2k+1, 2k+2\rangle$}] at (5,-2)  {};
    \pvx[] at (2,-3)  {};
    \pvx[] at (3,-4)  {};    
    \pvx[label=left:{\tiny$\langle 2n\rangle$}] at (1,-4)  {};
    \pvx[] at (2,-5)  {};
    \pvx[label=left:{\tiny\textcolor{red}{$\langle 2n+2\rangle$}}, color=red] at (0,-5)  {};
    \pvx[label=left:{\tiny\textcolor{red}{$\langle 2n+2, 2k+2\rangle$}}, color=red] at (1,-6)  {};
    \pvx[label=left:{\tiny\textcolor{red}{$\langle 2n+2, 2n-3\rangle$}}, color=red] at (3,-8)  {};    
    \pvx[] at (4,-7)  {};
    \pvx[] at (5,-6)  {};
    \pvx[label=left:{\tiny\textcolor{red}{$\langle 2n+2, 2n-1\rangle$}}, color=red] at (4,-9)  {};
    \pvx[label=right:{\tiny$\langle 2n, 2n-1\rangle$}] at (5,-8)  {};  
    \pvx[label=left:{\tiny\textcolor{red}{$\langle 2n+2, 2n+1\rangle$}}, color=red] at (5,-10)  {};    
    \pvx[label=right:{\tiny$\langle 2n+1\rangle$}] at (6,-9)  {};   
    \pvx[] at (6,-7)  {};    
    \pvx[label=left:{\tiny\textcolor{red}{$\langle 2n+3\rangle$}}, color=red] at (6,-11)  {};    
    
	\draw[line width = 1pt] (5,0) -- (4,-1) -- (5,-2) -- (6,-1) -- (5,0);
	\draw[line width = 1pt] (1,-4) -- (2,-3) -- (3,-4) -- (2,-5) -- (1,-4);
	\draw[line width = 1pt] (5,-8) -- (4,-7) -- (5,-6) -- (6,-7) -- (5,-8) -- (6,-9);	    
    
    \draw[line width = 1pt] (5,-2) -- (4.5,-2.5);
    \draw[line width = 1pt] (5,-2) -- (5.5,-2.5);    
    \draw[line width = 1pt] (3,-4) -- (3.5,-3.5);    
    \draw[line width = 1pt] (2,-3) -- (2.5,-2.5);     
    \draw[line width = 1pt] (3,-4) -- (3.5,-4.5);
    \draw[line width = 1pt] (2,-5) -- (2.5,-5.5);     
    \draw[line width = 1pt] (5,-6) -- (4.5,-5.5);     
    \draw[line width = 1pt] (4,-7) -- (3.5,-6.5);         
    \draw[line width = 1pt] (5,-6) -- (5.5,-5.5); 
    \draw[line width = 1pt] (5,0) -- (4.5,0.5);    
    \draw[line width = 1pt] (6,-1) -- (6.5,-0.5);    
    \draw[line width = 1pt] (5,0) -- (5.5,0.5);  
    \draw[line width = 1pt] (4,-1) -- (3.5,-1.5);      
    

	\node[] at (2.4,-6.7)  {\textcolor{red}{$\Ddots$}};
	\node[] at (3.4,-5.7)  {$\Ddots$};
	\node[] at (6.4,0.6)  {$\udots$};
	\node[] at (3.4,-2.4)  {$\udots$};
	\node[] at (5,-4)  {$\vdots$};
	\node[] at (5,1.5)  {$\vdots$};
\end{scope}
\end{tikzpicture}
\end{center}
\caption{An illustration of Lemma~\ref{lem.JPw}. On the left is a portion of a generalized snake poset $P(\bw)$ and on the right is the corresponding poset of upper order ideals $J(P(\bw))$. To construct $J(P(\bw))$ from $J(P(w_0\cdots w_{n-1})$ is to add $n-k+3$ elements, with cover relations shown in red in the Hasse diagram on the right.
}
\label{fig:PandJ(P)}
\end{figure}
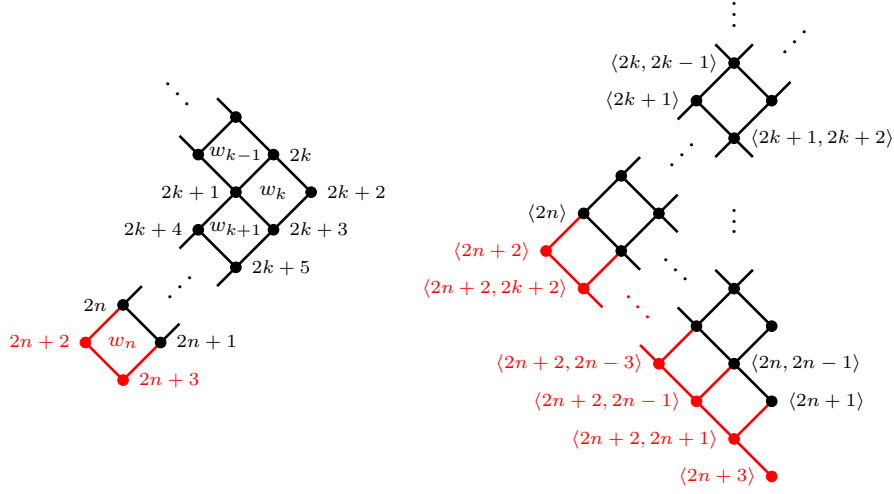

The normalized volume of the order polytope $\mathcal{O}(P(\bw))$ can be computed by a recursive formula involving Catalan numbers.

\begin{definition}
For $m\geq0$, the \emph{$m$-th Catalan number} is $\mathrm{Cat}(m)=\frac{1}{m+1}\binom{2m}{m}$.
\end{definition}
The Catalan number $\mathrm{Cat}(m)$ enumerates \emph{Dyck paths}, which are lattice paths from $(0,0)$ to $(m,m)$ that do not fall below the line $y=x$.

\begin{theorem}
For $n\geq0$, let $\bw=w_0 w_1\cdots w_n$ be a generalized snake word.
If $k\geq0$ is the largest index such that $w_k\neq w_n$, then the normalized volume $v_n$ of $\mathcal{O}(P(\bw))$ is given recursively by 
$$v_n = \mathrm{Cat}(n-k+1)v_{k}
    +\left(\mathrm{Cat}(n-k+2)-2\cdot \mathrm{Cat}(n-k+1)\right)v_{k-1}$$
with $v_{-1}=1$ and $v_0=2$.
\end{theorem}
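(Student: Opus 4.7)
The normalized volume $v_n$ of $\mathcal{O}(P(\bw))$ equals the number of linear extensions of $P(\bw)$, which in turn equals the number of maximal chains in the distributive lattice $J(P(\bw))$ between $\widehat{1}=\emptyset$ (top) and $\widehat{0}=P(\bw)$ (bottom). The plan is to count these maximal chains by analyzing the planar region $R$ of new filters that is added to $J(P(w_0\cdots w_k))$ when the snake word is extended by the run $w_{k+1}w_{k+2}\cdots w_n$ of $m:=n-k$ identical letters (all different from $w_k$). Iterating Lemma~\ref{lem.JPw} a total of $n-k$ times gives an explicit description of these new filters, which together form a \emph{staircase} region attached to $J(P(w_0\cdots w_k))$ along a specific shared boundary. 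The Catalan numbers in the formula will emerge from counting monotone lattice paths through $R$.

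Every maximal chain in $J(P(\bw))$ decomposes uniquely as a maximal chain in an upper sublattice concatenated with a monotone path through $R$ ending at $P(\bw)$. I partition chains into two types according to whether the chain passes through the filter $P(w_0\cdots w_k) \in J(P(\bw))$. A \textbf{Type 1} chain does pass through $P(w_0\cdots w_k)$, and so splits as a maximal chain in $J(P(w_0\cdots w_k))$ (contributing $v_k$) followed by a path in $R$ starting at the top corner of $R$ and ending at $P(\bw)$; I expect to show that these paths are in bijection with Dyck paths of length $2(m+1)$, contributing a factor of $\mathrm{Cat}(m+1)$ and hence $\mathrm{Cat}(m+1)\cdot v_k$ chains overall. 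A \textbf{Type 2} chain bypasses the filter $P(w_0\cdots w_k)$, entering $R$ from an alternate point on the boundary it shares with $J(P(w_0\cdots w_{k-1}))$, and so splits as a maximal chain in $J(P(w_0\cdots w_{k-1}))$ (contributing $v_{k-1}$) followed by a restricted path in $R$ of a corresponding shape; I expect that these restricted paths correspond bijectively to a set of Dyck paths of length $2(m+2)$ counted by $\mathrm{Cat}(m+2) - 2\mathrm{Cat}(m+1)$, yielding $(\mathrm{Cat}(m+2)-2\mathrm{Cat}(m+1))\cdot v_{k-1}$ chains. Summing the two types gives the stated recursion, and the base cases $v_{-1}=1$ and $v_0=2$ follow from direct inspection of the empty poset and the diamond poset.

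The hardest step is establishing the two Dyck-path bijections, since this requires an explicit, level-by-level description of the cover relations among the new filters in $R$, obtained by iterating Lemma~\ref{lem.JPw}. The classical Catalan recurrence $\mathrm{Cat}(m+2)=\sum_{i=0}^{m+1}\mathrm{Cat}(i)\mathrm{Cat}(m+1-i)$ rewrites as $\mathrm{Cat}(m+2)-2\mathrm{Cat}(m+1) = \sum_{i=1}^{m}\mathrm{Cat}(i)\mathrm{Cat}(m+1-i)$, and this decomposition should correspond naturally to splitting Type~2 paths at their first return to a distinguished reference level in $R$, confirming the coefficient of $v_{k-1}$. The unimodular character of the central column of squares in $J(P(\bw))$ (Remark~\ref{rem:J(P)construction}) makes the staircase region $R$ sufficiently rigid that such a bijection can be constructed recursively on $m$, with the case $m=1$ providing a transparent base case where the formula reduces to $v_n = 2v_{n-1}+v_{n-2}$.
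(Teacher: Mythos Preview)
Your plan has a concrete error in the Type~1/Type~2 split. The element you partition on is the filter $P(w_0\cdots w_k)=\langle 2k+3\rangle\in J(P(\bw))$, and you claim that the interval $[\langle 2n+3\rangle,\langle 2k+3\rangle]$ below it (your ``region $R$ starting at the top corner'') is counted by $\mathrm{Cat}(m+1)$ with $m=n-k$. It is not: that interval is isomorphic to the lattice of filters of the induced subposet on $\{2k+4,\ldots,2n+3\}$, which has a unique maximum $2k+4$ sitting over a ladder with $m-1$ squares, and so has $\mathrm{Cat}(m)$ maximal chains, not $\mathrm{Cat}(m+1)$. Already for $m=1$ your Type~1 count would give $2v_k$, while in fact only one chain lies below $\langle 2k+3\rangle$, giving $v_k$. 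Since the final recursion is $v_{k+1}=2v_k+v_{k-1}$ in that case, your Type~2 count must also be wrong (it should then be $v_k+v_{k-1}$, which does not factor as (something)$\cdot v_{k-1}$). The root problem is that chains avoiding $\langle 2k+3\rangle$ do \emph{not} all exit through the bottom of $J(P(w_0\cdots w_{k-1}))$; many still pass through $\langle 2k+1,2k+2\rangle$ and hence contribute a $v_k$ factor, not $v_{k-1}$.

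The paper does not partition on a single element. Instead it uses inclusion--exclusion on the pair of \emph{central-square} elements $A=\langle 2k+1,2k+2\rangle$ and $B=\langle 2k,2k-1\rangle$: every maximal chain contains at least one of them, so
\[
v_n=c(A)+c(B)-c(A,B).
\]
The intervals $[\langle 2n+3\rangle,A]$ and $[\langle 2n+3\rangle,B]$ are genuinely staircase (Dyck) regions of the correct sizes, yielding $c(A)=\mathrm{Cat}(m+1)v_k$ and $c(B)=\mathrm{Cat}(m+2)v_{k-1}$; the overlap $c(A,B)=2\mathrm{Cat}(m+1)v_{k-1}$ since there are exactly two saturated chains between $A$ and $B$. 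Your Catalan-recurrence idea for rewriting $\mathrm{Cat}(m+2)-2\mathrm{Cat}(m+1)$ is unnecessary once the three terms are isolated this way. If you want to salvage a two-type partition, you would have to cut at $A$ (not at $\langle 2k+3\rangle$) and then further split the ``not through $A$'' chains according to whether they pass through $\langle 2k+1\rangle$ or $\langle 2k+2\rangle$; that essentially reproduces the paper's inclusion--exclusion.
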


\begin{proof}
The normalized volume of $\mathcal{O}(P(\bw))$ is the number of linear extensions of $P(\bw)$, and the set of linear extensions of $P(\bw)$ is in bijection with the set of maximal chains in $J(P(\bw))$, so we enumerate the latter.

Let $c(p_1,..,p_j)$ denote the number of maximal chains in $J(P(\bw))$ which contain the elements $p_1,...,p_j$. 
Each maximal chain in $J(P(\bw))$ contains at least one of $\langle 2k, 2k-1\rangle$ or $\langle 2k+1, 2k+2\rangle$, as can be seen in Figure \ref{fig:PandJ(P)}, so the total number of maximal chains in $J(P(\bw))$ is then $c(\emptyset)$ and is given by 
$$ c(\emptyset) = c(\langle 2k+1, 2k+2\rangle) + c(\langle 2k, 2k-1\rangle ) 
	- c(\langle 2k+1, 2k+2\rangle, \langle 2k, 2k-1\rangle). $$ 
Note that $c(\langle 2k+1, 2k+2\rangle)$ is the product of the number of maximal chains in the interval $[\langle 2n+3\rangle, \langle 2k+1,2k+2\rangle]$ and the number of maximal chains in the interval $[\langle 2k+1,2k+2\rangle, \emptyset]$. 
There are $v_{k}$ many maximal chains in $[\langle 2k+1,2k+2\rangle, \emptyset]$, and the maximal chains in $[\langle 2n+3\rangle,\langle 2k+1,2k+2\rangle]$ are counted by $\mathrm{Cat}(n-k+1)$, as they can be viewed as Dyck paths. 
Therefore, $c(\langle 2k+1, 2k+2\rangle) =  \mathrm{Cat}(n-k+1) v_k$. 
Similarly, one sees that $c(\langle 2k, 2k-1\rangle) = \mathrm{Cat}(n-k+2)v_{k-1}$. 

Finally, $c(\langle 2k+1, 2k+2\rangle, \langle 2k, 2k-1\rangle)$ is given by $2 \mathrm{Cat}(n-k+1) v_{k-1}$, as there are two ways to form a maximal chain in $J(P(\bw))$ from a maximal chain in $[\langle 2k,2k-1\rangle, \emptyset]$ and a maximal chain in $[\langle 2n+3\rangle, \langle 2k+1,2k+2\rangle]$. 
Therefore,
$$c(\emptyset) = \mathrm{Cat}(n-k+1)v_k + \mathrm{Cat}(n-k+2)v_{k-1} - 2\mathrm{Cat}(n-k+1)v_{k-1}. $$
\end{proof}

Focusing our attention on the snake poset $S_n=P(\varepsilon LRLR\cdots)$, the letters alternate so we have $n-k=1$ at every step, which leads to the following corollary.

\begin{corollary}
The normalized volume of $\mathcal{O}(S_n)$ with $n\geq 0$ is given recursively by
$$ v_n = 2v_{n-1} + v_{n-2},$$
with $v_{-1}=1$ and $v_0 = 2$.  
These are the Pell numbers.
\qed
\end{corollary}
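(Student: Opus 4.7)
The plan is to obtain this as a direct specialization of the preceding theorem, since the only thing that needs to be checked is how the indices collapse for the alternating word.

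First I would observe that for the snake poset $S_n = P(\varepsilon LRLR\cdots)$, the letters alternate, so for every $n \geq 1$ we have $w_n \neq w_{n-1}$. Hence the largest index $k$ satisfying $w_k \neq w_n$ is simply $k = n - 1$, and thus $n - k = 1$. Substituting $n-k+1 = 2$ and $n-k+2 = 3$ into the recursion of the preceding theorem yields
\[
v_n = \mathrm{Cat}(2)\, v_{n-1} + \bigl(\mathrm{Cat}(3) - 2\,\mathrm{Cat}(2)\bigr)\, v_{n-2}.
\]
Then I would plug in $\mathrm{Cat}(2) = 2$ and $\mathrm{Cat}(3) = 5$, giving $5 - 2 \cdot 2 = 1$, so the recursion becomes $v_n = 2 v_{n-1} + v_{n-2}$, as claimed.

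The base cases $v_{-1} = 1$ and $v_0 = 2$ are inherited directly from the preceding theorem (they correspond to $P(\varepsilon)$ being the diamond poset with exactly two linear extensions). Finally, to justify the last sentence, I would note that with $v_{-1}=1$ and $v_0=2$ the recursion $v_n = 2v_{n-1}+v_{n-2}$ produces the sequence $1, 2, 5, 12, 29, 70, \ldots$, which is the Pell number sequence (up to an index shift from the standard convention $P_0 = 0$, $P_1 = 1$). There is no real obstacle here; the corollary is essentially a substitution, and the only point of care is the off-by-one in identifying $k = n-1$ from the alternating pattern.
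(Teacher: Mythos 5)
Your proposal is correct and follows exactly the same route as the paper: the paper's justification is precisely the observation that the alternating letters force $n-k=1$, after which substituting $\mathrm{Cat}(2)=2$ and $\mathrm{Cat}(3)=5$ into the recursive volume formula gives $v_n = 2v_{n-1}+v_{n-2}$. Your check of the base cases and the identification with the Pell numbers is also consistent with the paper.
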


In the case of the ladder poset $\mL_n=P(\varepsilon LLLL\cdots)$, we have $k=0$ at every step, and hence we have the following well-known result as a corollary.

\begin{corollary}
The normalized volume of $\mathcal{O}(\mL_n)$ with $n\geq 0$ is given by 
$$v_n = \mathrm{Cat}(n+2).$$
\qed
\end{corollary}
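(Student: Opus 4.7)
The plan is to apply the recursive formula from the preceding theorem directly to the ladder word. First I would unpack what "$k=0$ at every step" means for the ladder: the word is $\bw=\varepsilon LLL\cdots L$ of length $n$, so $w_i=L$ for $i=1,\ldots,n$ and only $w_0=\varepsilon$ differs from $w_n=L$. Therefore, for every $n\geq 1$, the largest index $k\geq 0$ with $w_k\neq w_n$ is exactly $k=0$. This identification of $k$ is the only content-bearing step.

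Next I would substitute $k=0$ into the recursion
\[
v_n = \mathrm{Cat}(n-k+1)v_k + \bigl(\mathrm{Cat}(n-k+2)-2\,\mathrm{Cat}(n-k+1)\bigr)v_{k-1},
\]
together with the base cases $v_0=2$ and $v_{-1}=1$ from the theorem. This gives
\[
v_n = 2\,\mathrm{Cat}(n+1) + \mathrm{Cat}(n+2) - 2\,\mathrm{Cat}(n+1) = \mathrm{Cat}(n+2),
\]
valid for all $n\geq 1$. The base case $n=0$ needs a separate sanity check: directly computing, $\mathrm{Cat}(2) = \frac{1}{3}\binom{4}{2} = 2 = v_0$, so the closed formula extends to $n=0$.

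There is no serious obstacle, only a choice of framing: since the identity reduces to a single line of arithmetic once $k=0$ is identified, the only thing worth emphasizing is why the $v_{k-1}$-term cancels against $2\,\mathrm{Cat}(n+1)v_0$, which is precisely the reason the answer lands on a single Catalan number rather than a two-term expression. If desired, one could alternatively give a purely combinatorial proof by observing that $J(\mL_n)$ is a grid-like lattice whose maximal chains biject with Dyck paths of semilength $n+2$, but this is more machinery than the recursion warrants.
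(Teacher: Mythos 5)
Your proof is correct and matches the paper's approach exactly: the paper derives this corollary by noting that $k=0$ at every step for the ladder word and substituting into the recursion of the preceding theorem, which is precisely your computation (including the cancellation $2\,\mathrm{Cat}(n+1)v_0 - 2\,\mathrm{Cat}(n+1)v_{-1}\cdot\,$... reducing to $\mathrm{Cat}(n+2)$). The $n=0$ sanity check is a nice touch but the argument is the same.
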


We end this section by showing that the normalized volume of an order polytope $\mathcal{O}(P(\bw))$ of a generalized snake poset is bounded above and below by the volume of the order polytope of the ladder poset and the snake poset, respectively.

Let $W_n$ denote the set of generalized snake words of length $n$.
For $i=1,\ldots, n$, define a \emph{swap operation} $f_i: W_n \rightarrow W_n$ by letting $f_i(\bw)$ be the word obtained from $\bw$ by swapping all letters with indices greater than or equal to $i$ to the opposite letter.

\begin{lemma}\label{swap-vol}
Let $\bw$ be a generalized snake word. 
Then
$$\vol(\mathcal{O}(P(f_i(\bw)))) \leq \vol(\mathcal{O}(P(\bw)))$$ 
whenever $w_{i-1}=w_i$ or $i=1$. 
Furthermore, equality occurs only when $i=1$.  
\end{lemma}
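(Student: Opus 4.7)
I plan to split into two cases according to whether $i=1$ or $i\geq 2$ with $w_{i-1}=w_i$. For $i=1$, the operation $f_1$ swaps every letter of $\bw$, and by Remark~\ref{rem:J(P)construction} this corresponds to reflecting the Hasse diagram of $J(P(\bw))$ about its central line of squares, producing a poset isomorphism $J(P(\bw))\cong J(P(f_1(\bw)))$. Since the normalized volume of $\calO(P(\bu))$ equals the number of maximal chains in $J(P(\bu))$, this gives $\vol(\calO(P(\bw)))=\vol(\calO(P(f_1(\bw))))$, accounting for the equality case.

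For $i\geq 2$ with $w_{i-1}=w_i$, I would prove strict inequality by strong induction on the length $n$ of $\bw$, using the recursive volume formula established above. Set $\bw':=f_i(\bw)$, and for each $j$ let $v_j$ denote the normalized volume of $\calO(P(w_0 w_1\cdots w_j))$ and $v'_j$ the analogous quantity for $\bw'$; note that $v'_j=v_j$ for $j<i$ and $v'_j=v(f_i(w_0 w_1\cdots w_j))$ for $j\geq i$, so the inductive hypothesis applies to any shorter truncation the swap touches. Let $k$ denote the largest index with $w_k\neq w_n$. The induction splits by the location of $k$ relative to $i$. When $k\geq i$, one checks that $k$ is also the corresponding turn index for $\bw'$, so the recursion applies with a common $k$-value to both words, and subtracting yields
$$
v_n-v'_n=\mathrm{Cat}(n-k+1)(v_k-v'_k)+\bigl(\mathrm{Cat}(n-k+2)-2\mathrm{Cat}(n-k+1)\bigr)(v_{k-1}-v'_{k-1}).
$$
Both coefficients are non-negative, and by the inductive hypothesis applied to the shorter truncations both differences on the right are non-negative with $v_k-v'_k>0$, so the strict inequality $v_n>v'_n$ follows. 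When $k\leq i-2$, the letters $w_{k+1},\ldots,w_n$ all equal $w_n$ while the turn index for $\bw'$ is $i-1$, and expanding $v_{i-1},v_{i-2}$ through the recursion writes $v_n-v'_n=\alpha(r,s)\,v_k+\beta(r,s)\,v_{k-1}$ with $r=n-i$ and $s=i-k\geq 2$. The $s=2$ instance collapses to $\bigl[\mathrm{Cat}(m+2)-2\mathrm{Cat}(m+1)-\mathrm{Cat}(m)\bigr]v_{k-1}$ with $m=n-k$, which equals $\mathrm{Cat}(m)(7m+6)(m-1)/\bigl((m+2)(m+3)\bigr)>0$ for $m\geq 2$ via the recurrence $\mathrm{Cat}(m+1)/\mathrm{Cat}(m)=2(2m+1)/(m+2)$. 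For $s\geq 3$, the Catalan convolution $\mathrm{Cat}(m+1)=\sum_{j=0}^{m}\mathrm{Cat}(j)\mathrm{Cat}(m-j)$ identifies $\alpha(r,s)$ as a strictly positive sum of Catalan products, since the two terms $\mathrm{Cat}(r+2)\mathrm{Cat}(s)$ and $\mathrm{Cat}(r+3)\mathrm{Cat}(s-1)$ appearing in $v'_n$ are only two of the $m+1$ products in the convolution.

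The main obstacle will be establishing the non-negativity of $\beta(r,s)$ in the sub-case $k\leq i-2$ with $s\geq 3$: while $\alpha(r,s)$ follows immediately from the standard Catalan convolution, $\beta(r,s)$ involves the more intricate quantity $\mathrm{Cat}(m+2)-2\mathrm{Cat}(m+1)$, and I expect to handle it either via the generating-function identity $\sum_n\bigl(\mathrm{Cat}(n+2)-2\mathrm{Cat}(n+1)\bigr)x^n=x\,c(x)^4$, where $c(x)=\sum_{n\geq 0}\mathrm{Cat}(n)x^n$ is the Catalan generating function, or by a combinatorial decomposition of $\beta$ parallel to the convolution argument used for $\alpha$.
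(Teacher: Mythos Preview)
Your plan diverges substantially from the paper's argument, and it has a genuine gap.

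\textbf{The paper's approach.} The paper does not use the volume recursion at all. For $i\geq 2$ with $w_{i-1}=w_i$, it compares maximal chains in $J(P(\bw))$ and $J(P(f_i(\bw)))$ directly. Two central elements $A=\langle 2i,2i-1\rangle$ and $B=\langle 2i-2,2i-3\rangle$ are identified; by Remark~\ref{rem:J(P)construction}, $J(P(f_i(\bw)))$ is obtained from $J(P(\bw))$ by reflecting the part below $B$. Every maximal chain in $J(P(f_i(\bw)))$ passes through $A$ or $B$, and reflecting the portion below gives a bijection with the maximal chains in $J(P(\bw))$ that pass through $A$ or $B$. Since $w_{i-1}=w_i$, the strip in $J(P(\bw))$ containing $A$ and $B$ has width at least two, so there exist maximal chains avoiding both $A$ and $B$; these witness the strict inequality. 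This is a two-paragraph bijective argument with no Catalan identities.

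\textbf{The gap in your plan.} Your sub-case $k\le i-2$, $s\ge 3$ relies on the claim that $\alpha(r,s)>0$ ``follows immediately'' from the Catalan convolution $C_{m+1}=\sum_{j=0}^{m}C_jC_{m-j}$ with $m=r+s$, because the terms $C_{r+2}C_s$ and $C_{r+3}C_{s-1}$ coming from $v'_n$ are among the summands. But the products in that convolution satisfy $i+j=r+s$, whereas $(r+2)+s=(r+3)+(s-1)=r+s+2$. So neither term appears in the convolution for $C_{r+s+1}$, and the argument collapses. Numerically $\alpha(r,s)$ is indeed positive for $s\ge 3$, but you have not given a valid reason. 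The term $+2C_{r+2}C_{s-1}$ in $\alpha$, which you do not mention, also has index sum $r+s+1$ and does not sit inside the same convolution, so simply ``cancelling two summands out of $m+1$'' is not what is happening. You would need a different identity or inequality here; the obvious candidates involve convolutions for $C_{r+s+3}$ combined with bounds relating $C_{r+s+1}$ to it, which is considerably more work than you suggest.

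Combined with the admittedly unresolved $\beta(r,s)\ge 0$, your inductive scheme currently proves only the sub-case $k\ge i$ (and $s=2$). Given that the paper's bijective argument handles all cases uniformly in a few lines, I would recommend abandoning the recursion in favour of that route.
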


\begin{proof}
Consider the maximal chains in $J(P(\bw))$ and for the moment assume  $i\neq 1$. 
Without loss of generality assume that $w_i = w_{i-1} = L$ (the case $w_i = w_{i-1} = R$ is symmetric). 

Let $A = \langle 2i, 2i-1 \rangle$ and $B= \langle 2i-2,2i-3\rangle$.
In light of Remark~\ref{rem:J(P)construction} we see that $J(P(f_i(\bw)))$ consists of a union of the upper order ideal generated by $A$ in $J(P(\bw))$ and a reflected lower order ideal generated by $B$.
Figure \ref{fig:proofByPicture} provides an illustration.

In $J(P(f_i(\bw)))$, all maximal chains contain at least one of $A$ or $B$.
Maximal chains containing $A$ in $J(P(\bw))$ corresponds bijectively to chains containing $A$ in $J(P(f_i(\bw)))$ by reflecting the portion of the chain after $A$. 
Similarly, the chains containing $B$ are in bijection in both posets.
Since $J(P(\bw))$ contains maximal chains that pass through neither $A$ nor $B$, there are strictly more maximal chains in $J(P(\bw))$. 
Hence, $\vol(\mathcal{O}(P(f_i(\bw)))) \le  \vol(\mathcal{O}(P(\bw)))$.

Finally, in the case $i=1$, $P(\bw)$ and $P(f_1(\bw))$ are isomorphic via a reflection, and so their posets of upper order ideals are isomorphic.
\end{proof}

\begin{theorem}\label{thm:minmaxvolumes}
For any generalized snake word $\bw= w_0w_1\cdots w_n$ of length $n$,
$$\vol \mathcal{O}(S_n) \leq 
\vol \mathcal{O}(P(\bw)) \leq 
\vol \mathcal{O}(\mL_n).$$
\end{theorem}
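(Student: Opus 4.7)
The plan is to prove both inequalities by iterating the swap operation $f_i$ from Lemma~\ref{swap-vol}. The key observation is its contrapositive: since $f_i$ is an involution that acts only on letters at positions $\geq i$, for $i\geq 2$ the condition $w_{i-1}=w_i$ holds in $\bw$ if and only if the $(i-1)$st and $i$th letters of $f_i(\bw)$ disagree. Applying Lemma~\ref{swap-vol} with $f_i(\bw)$ in place of $\bw$ therefore yields the ``reverse'' strict inequality
$$\vol\mathcal{O}(P(\bw))<\vol\mathcal{O}(P(f_i(\bw)))\quad\text{whenever } i\geq 2 \text{ and } w_{i-1}\neq w_i.$$
Thus for each $i\geq 2$ the swap $f_i$ strictly increases volume in the direction that equalizes positions $i-1$ and $i$, and strictly decreases volume in the opposite direction.

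For the upper bound, I would start from an arbitrary word $\bw$ and iteratively perform $f_i$ at any index $i\geq 2$ with $w_{i-1}\neq w_i$. Each such step strictly increases $\vol\mathcal{O}(P(\bw))$, so no word is revisited and, because $W_n$ is finite, the process halts after finitely many steps. The only possible halting states are words satisfying $w_{i-1}=w_i$ for all $i\geq 2$, namely $\varepsilon LL\cdots L$ and $\varepsilon RR\cdots R$. Both yield (up to the reflection $f_1$, which by the $i=1$ case of Lemma~\ref{swap-vol} preserves the poset up to isomorphism) the ladder poset $\mathcal{L}_n$. This gives $\vol\mathcal{O}(P(\bw))\leq\vol\mathcal{O}(\mathcal{L}_n)$.

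The lower bound is proved by the dual process: starting from $\bw$, repeatedly apply $f_i$ at some $i\geq 2$ with $w_{i-1}=w_i$, which by Lemma~\ref{swap-vol} strictly decreases the volume. Termination now produces a word with $w_{i-1}\neq w_i$ for every $i\geq 2$, i.e.\ $\varepsilon LRLR\cdots$ or $\varepsilon RLRL\cdots$, both corresponding (again up to $f_1$) to the snake poset $S_n$. Hence $\vol\mathcal{O}(S_n)\leq\vol\mathcal{O}(P(\bw))$.

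The only real subtlety is the contrapositive step used to invert Lemma~\ref{swap-vol} via the involution $f_i$; once this is in place, both bounds reduce to a monotone termination argument, with the two ``fixed points'' of each monotone process being exactly the two ladder words for the upper bound and the two snake words for the lower bound.
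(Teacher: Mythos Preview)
Your proof is correct and follows essentially the same approach as the paper: both arguments iterate Lemma~\ref{swap-vol} (using the involution $f_i^2=\mathrm{id}$ to reverse the inequality for the upper bound) until reaching an alternating word for the lower bound and a constant word for the upper bound. The only cosmetic difference is that the paper specifies an explicit increasing sequence of swap indices and tracks the resulting word directly, whereas you use a non-deterministic process and appeal to strict monotonicity of the volume plus finiteness of $W_n$ for termination; both work equally well.
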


 \begin{proof}
First, we show that $\vol \mathcal{O}(S_n) \leq \vol \mathcal{O}(P(\bw))$.  
Let $2\leq i_1<i_2<\dots<i_k\leq n$ be the set of indices such that $w_{i_j}=w_{i_j-1}$.  
Applying a swap operation at any index strictly smaller then $i_j$ yields a word whose letters indexed by $i_j$ and $i_j-1$ are still the same.   
Then for any $j\in[k]$, the letters indexed by $i_j$ and $i_{j}-1$ are also the same in the word $f_{i_{j-1}}f_{i_{j-2}}\dots f_{i_1}(\bw)$.   
By Lemma~\ref{swap-vol}, we can conclude that 
 \[\vol \mathcal{O} (P (f_{i_{k}}f_{i_{k-1}}\dots f_{i_1}(\bw))) \leq \vol \mathcal{O}(P(\bw)).\]
Moreover, by the construction of $i_j$'s and the definition of the swap operation, no two adjacent letters with indices up to $i_j$ are the same in $f_{i_{j}}f_{i_{j-1}}\dots f_{i_1}(\bw)$.  
This shows that  $P(f_{i_{k}}f_{i_{k-1}}\dots f_{i_1}(\bw))$ equals $P(\varepsilon LRLR \dots)=S_n$ or $P(\varepsilon RLRL \dots)=f_1(S_n)$.  
By Lemma~\ref{swap-vol}, applying $f_1$ does not change the volume of the order polytope, so we conclude that  $\vol \mathcal{O}(S_n) \leq \vol \mathcal{O}(P(\bw))$.

Now, we show the second part of the inequality that $\vol \mathcal{O}(P(\bw)) \leq \vol \mathcal{O}(\mL_n)$.  Let $2\leq i_1<i_2<\dots<i_k\leq n$ be the set of indices such that $w_{i_j}\not=w_{i_j-1}$. Then for any $j\in [k]$ the letters in $f_{i_{j-1}}\dots f_{i_1}(\bw)$ with indices strictly smaller then $i_j$ are the same, but the the letters with indices $i_j$ and $i_j-1$ are different.  Then the letters in $f_{i_j}f_{i_{j-1}}\dots f_{i_1}(\bw)$ with indices $i_j$ and $i_j-1$ are the same and Lemma~\ref{swap-vol} implies that 
\[  \vol \mathcal{O}(P(\bw)) \leq \vol \mathcal{O} (P (f_{i_{k}}f_{i_{k-1}}\dots f_{i_1}(\bw))). \]
Furthermore, by construction all letters in $f_{i_{k}}f_{i_{k-1}}\dots f_{i_1}(\bw)$ are the same so its generalized snake poset equals $P(\varepsilon LLL\dots ) = \mL_n$ or $P(\varepsilon RRR\dots) =f_1(L_n)$.  
By the same reasoning as above we conclude that $\vol \mathcal{O}(P(\bw)) \leq \vol \mathcal{O}(\mL_n)$.
 \end{proof}

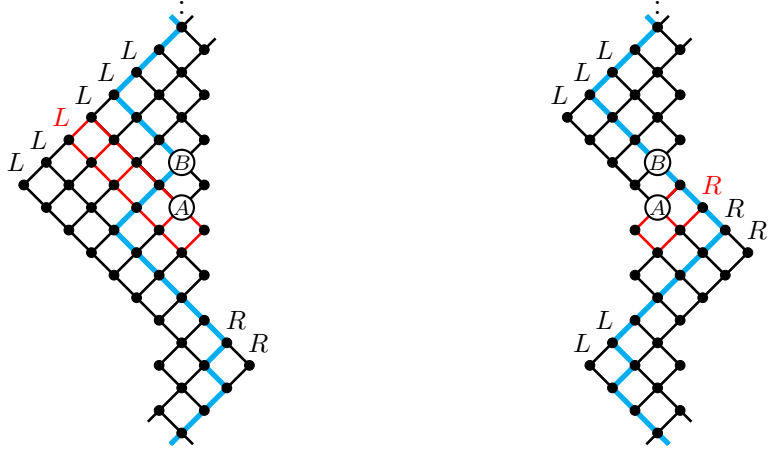
\begin{figure}[ht!]
\begin{center}
\begin{tikzpicture}
\begin{scope}[xshift=0, yshift=0, scale=0.30]

    \draw[line width = 1pt, color = red] (1,-4) -- (0,-5) -- (2,-7) -- (3,-6);   
    \draw[line width = 1pt, color = red] (1,-6) -- (2,-5);   
    \draw[line width = 1pt, color = red] (2,-7) -- (5,-10) -- (6,-9) -- (5,-8); 
    \draw[line width = 1pt, color = red] (3,-8) -- (4,-7);       
    \draw[line width = 1pt, color = red] (4,-9) -- (5,-8);
    \draw[line width = 1pt, color = red] (1,-4) -- (5,-8);

    
    \draw[line width = 1pt] (5,-10) -- (6,-11);
    \draw[line width = 1pt] (-1,-6) -- (8,-15);    
    \draw[line width = 1pt] (-2,-7) -- (7,-16);

    \draw[line width = 1pt] (-2,-7) -- (0,-5);
    \draw[line width = 1pt] (-1,-8) -- (1,-6);
    \draw[line width = 1pt] (0,-9) -- (2,-7);
    \draw[line width = 1pt] (1,-10) -- (3,-8);
    \draw[line width = 1pt] (2,-11) -- (4,-9);
    \draw[line width = 1pt] (3,-12) -- (5,-10);    
    \draw[line width = 1pt] (4,-13) -- (6,-11);
    \draw[line width = 1pt] (5,-14) -- (6,-13);    

    \draw[line width = 1pt] (5,-14) -- (4,-15) -- (6,-17) -- (8,-15);    
    \draw[line width = 1pt] (7,-14) -- (4,-17) -- (5,-18) -- (6,-17);  

    \draw[line width = 1pt] (5,-18) -- (4.5,-18.5);  
    \draw[line width = 1pt] (5,-18) -- (5.5,-18.5);
    \draw[line width = 1pt] (4,-17) -- (3.5,-17.5);

	\draw[line width = 1pt] (5,0) -- (4,-1) -- (5,-2) -- (6,-1) -- (5,0);
	\draw[line width = 1pt] (1,-4) -- (2,-3) -- (3,-4) -- (2,-5) -- (1,-4);
	\draw[line width = 1pt] (5,-8) -- (4,-7) -- (5,-6) -- (6,-7) -- (5,-8);	    
    
    \draw[line width = 1pt] (5,-2) -- (3,-4);
    \draw[line width = 1pt] (5,-2) -- (6,-3) -- (5,-4) -- (6,-5) -- (5,-6);    
    \draw[line width = 1pt] (2,-3) -- (4,-1);     
    \draw[line width = 1pt] (3,-4) -- (4,-5);
    \draw[line width = 1pt] (2,-5) -- (3,-6);     
    \draw[line width = 1pt] (5,-6) -- (4,-5);     
    \draw[line width = 1pt] (4,-7) -- (3,-6);         
    \draw[line width = 1pt] (3,-2) -- (4,-3) -- (5,-4) -- (4,-5) -- (3,-6) -- (4,-7); 
    \draw[line width = 1pt] (5,0) -- (4.5,-0.5);    
    \draw[line width = 1pt] (6,-1) -- (6.5,-0.5);    
    \draw[line width = 1pt] (5,0) -- (5.5,0.5);  
    \draw[line width = 1pt] (4,-1) -- (3.5,-1.5);    
    
    \draw[line width = 1pt, color = red] (1,-4) -- (5,-8);
    
    \draw[line width = 2pt, color = cyan] (4.5,0.5) -- (5,0) -- (2,-3) -- (5,-6) -- (2,-9) -- (4,-11) -- (7,-14) -- (6,-15) -- (7,-16) -- (4.5,-18.5);
    
    \node[style={circle,draw,inner sep=1.3pt, fill=black}] ()  at (5,0)  {};
    \node[style={circle,draw,inner sep=1.3pt, fill=black}] ()  at (4,-1)  {};    
    \node[style={circle,draw,inner sep=1.3pt, fill=black}] ()  at (6,-1)  {};
    \node[style={circle,draw,inner sep=1.3pt, fill=black}] ()  at (5,-2)  {};
    \node[style={circle,draw,inner sep=1.3pt, fill=black}] ()  at (2,-3)  {};
    \node[style={circle,draw,inner sep=1.3pt, fill=black}] ()  at (3,-4)  {};    
    \node[style={circle,draw,inner sep=1.3pt, fill=black}] ()  at (1,-4)  {};
    \node[style={circle,draw,inner sep=1.3pt, fill=black}] ()  at (2,-5)  {};
    \node[style={circle,draw,inner sep=1.3pt, fill=black}] ()  at (0,-5)  {};
    \node[style={circle,draw,inner sep=1.3pt, fill=black}] ()  at (1,-6)  {};
    \node[style={circle,draw,inner sep=1.3pt, fill=black}] ()  at (3,-8)  {};    
    \node[style={circle,draw,inner sep=1.3pt, fill=black}] ()  at (4,-7)  {};
    \node[style={circle,draw,inner sep=1.3pt, fill=black}] ()  at (4,-9)  {};
    
    \node[style={circle,thick,draw,inner sep=0.3pt, fill=white}] ()  at (5,-8)  {{\scriptsize $A$}};  
    \node[style={circle,thick,draw,inner sep=0.3pt, fill=white}] ()  at (5,-6)  {{\scriptsize $B$}};  
    
    \node[style={circle,draw,inner sep=1.3pt, fill=black}] ()  at (5,-10)  {};    
    \node[style={circle,draw,inner sep=1.3pt, fill=black}] ()  at (6,-9)  {};   
    \node[style={circle,draw,inner sep=1.3pt, fill=black}] ()  at (6,-7)  {};    
    \node[style={circle,draw,inner sep=1.3pt, fill=black}] ()  at (6,-11)  {};    
    \node[style={circle,draw,inner sep=1.3pt, fill=black}] ()  at (3,-2)  {};    
    \node[style={circle,draw,inner sep=1.3pt, fill=black}] ()  at (4,-3)  {};    
    \node[style={circle,draw,inner sep=1.3pt, fill=black}] ()  at (4,-5)  {};  
    \node[style={circle,draw,inner sep=1.3pt, fill=black}] ()  at (3,-6)  {};
    \node[style={circle,draw,inner sep=1.3pt, fill=black}] ()  at (2,-7)  {};      
    \node[style={circle,draw,inner sep=1.3pt, fill=black}] ()  at (5,-4)  {};          
    \node[style={circle,draw,inner sep=1.3pt, fill=black}] ()  at (6,-3)  {};          
    \node[style={circle,draw,inner sep=1.3pt, fill=black}] ()  at (6,-5)  {};      

    \node[style={circle,draw,inner sep=1.3pt, fill=black}] ()  at (-1,-6)  {};     
    \node[style={circle,draw,inner sep=1.3pt, fill=black}] ()  at (0,-7)  {};
    \node[style={circle,draw,inner sep=1.3pt, fill=black}] ()  at (1,-8)  {};
    \node[style={circle,draw,inner sep=1.3pt, fill=black}] ()  at (2,-9)  {};
    \node[style={circle,draw,inner sep=1.3pt, fill=black}] ()  at (3,-10)  {};    
    \node[style={circle,draw,inner sep=1.3pt, fill=black}] ()  at (4,-11)  {};
    \node[style={circle,draw,inner sep=1.3pt, fill=black}] ()  at (5,-12)  {};    
    \node[style={circle,draw,inner sep=1.3pt, fill=black}] ()  at (6,-13)  {};    
    
    \node[style={circle,draw,inner sep=1.3pt, fill=black}] ()  at (-2,-7)  {};     
    \node[style={circle,draw,inner sep=1.3pt, fill=black}] ()  at (-1,-8)  {};
    \node[style={circle,draw,inner sep=1.3pt, fill=black}] ()  at (0,-9)  {};    
    \node[style={circle,draw,inner sep=1.3pt, fill=black}] ()  at (1,-10)  {};
    \node[style={circle,draw,inner sep=1.3pt, fill=black}] ()  at (2,-11)  {};
    \node[style={circle,draw,inner sep=1.3pt, fill=black}] ()  at (3,-12)  {};    
    \node[style={circle,draw,inner sep=1.3pt, fill=black}] ()  at (4,-13)  {};
    \node[style={circle,draw,inner sep=1.3pt, fill=black}] ()  at (5,-14)  {};    
    \node[style={circle,draw,inner sep=1.3pt, fill=black}] ()  at (6,-15)  {};    

    \node[style={circle,draw,inner sep=1.3pt, fill=black}] ()  at (7,-14)  {};   
    \node[style={circle,draw,inner sep=1.3pt, fill=black}] ()  at (8,-15)  {};   
    \node[style={circle,draw,inner sep=1.3pt, fill=black}] ()  at (7,-16)  {};       
    \node[style={circle,draw,inner sep=1.3pt, fill=black}] ()  at (4,-15)  {};     
    \node[style={circle,draw,inner sep=1.3pt, fill=black}] ()  at (5,-16)  {};
    \node[style={circle,draw,inner sep=1.3pt, fill=black}] ()  at (6,-17)  {};
    \node[style={circle,draw,inner sep=1.3pt, fill=black}] ()  at (5,-18)  {};
    \node[style={circle,draw,inner sep=1.3pt, fill=black}] ()  at (4,-17)  {};
    
	\node[] at (5,1)  {$\vdots$};
	
    
    \node[anchor = east] at (3.5,-1)  {{\small $L$}};		
    \node[anchor = east] at (2.5,-2)  {{\small $L$}};		
    \node[anchor = east] at (1.5,-3)  {{\small $L$}};		

    \node[anchor = east] at (0.5,-4)  {{\color{red}{\small $L$}}};		
    \node[anchor = east] at (-0.5,-5)  {{\small $L$}};		
    \node[anchor = east] at (-1.5,-6)  {{\small $L$}};		
    
    \node[anchor = west] at (6.5,-13)  {{\small $R$}};		    
    \node[anchor = west] at (7.5,-14)  {{\small $R$}};    
\end{scope}

\begin{scope}[xshift=180, yshift=0, scale=0.30]


    \draw[line width = 1pt, color = red] (7,-8) -- (5,-10) -- (4,-9) -- (5,-8); 
    \draw[line width = 1pt, color = red] (7,-8) -- (6,-7);       
    \draw[line width = 1pt, color = red] (6,-9) -- (5,-8);

	\draw[line width = 1pt] (5,0) -- (4,-1) -- (5,-2) -- (6,-1) -- (5,0);
	\draw[line width = 1pt] (1,-4) -- (2,-3) -- (3,-4) -- (2,-5) -- (1,-4);
	\draw[line width = 1pt] (5,-8) -- (4,-7) -- (5,-6) -- (6,-7) -- (5,-8);	    
    
    \draw[line width = 1pt] (5,-10) -- (4,-11);
    \draw[line width = 1pt] (8,-9) -- (2,-15);    
    \draw[line width = 1pt] (9,-10) -- (3,-16);

    \draw[line width = 1pt] (9,-10) -- (7,-8);
    \draw[line width = 1pt] (8,-11) -- (6,-9);
    \draw[line width = 1pt] (7,-12) -- (5,-10);    
    \draw[line width = 1pt] (6,-13) -- (4,-11);
    \draw[line width = 1pt] (5,-14) -- (4,-13);    

    \draw[line width = 1pt] (5,-14) -- (6,-15) -- (4,-17) -- (2,-15);    
    \draw[line width = 1pt] (3,-14) -- (6,-17) -- (5,-18) -- (4,-17);  

    \draw[line width = 1pt] (5,-18) -- (4.5,-18.5);  
    \draw[line width = 1pt] (5,-18) -- (5.5,-18.5);

    \draw[line width = 1pt] (5,-2) -- (3,-4);
    \draw[line width = 1pt] (5,-2) -- (6,-3) -- (5,-4) -- (6,-5) -- (5,-6);    
    \draw[line width = 1pt] (2,-3) -- (4,-1);     
    \draw[line width = 1pt] (3,-4) -- (4,-5);
    \draw[line width = 1pt] (2,-5) -- (3,-6);     
    \draw[line width = 1pt] (5,-6) -- (4,-5);     
    \draw[line width = 1pt] (4,-7) -- (3,-6);         
    \draw[line width = 1pt] (3,-2) -- (4,-3) -- (5,-4) -- (4,-5) -- (3,-6) -- (4,-7); 
    \draw[line width = 1pt] (5,0) -- (4.5,0.5);    
    \draw[line width = 1pt] (6,-1) -- (6.5,-0.5);    
    \draw[line width = 1pt] (5,0) -- (5.5,0.5);  
    \draw[line width = 1pt] (4,-1) -- (3.5,-1.5);      
    
    \draw[line width = 1pt] (6,-17) -- (6.5,-17.5);  
    
    \draw[line width = 1pt, color = red] (6,-7) -- (5,-8);
    
    \draw[line width = 2pt, color = cyan] (4.5,0.5) -- (5,0) -- (2,-3) -- (5,-6) -- (8,-9) -- (6,-11) -- (3,-14) -- (4,-15) -- (3,-16) -- (5.5,-18.5);
    
    \node[style={circle,draw,inner sep=1.3pt, fill=black}] ()  at (5,0)  {};
    \node[style={circle,draw,inner sep=1.3pt, fill=black}] ()  at (4,-1)  {};    
    \node[style={circle,draw,inner sep=1.3pt, fill=black}] ()  at (6,-1)  {};
    \node[style={circle,draw,inner sep=1.3pt, fill=black}] ()  at (5,-2)  {};
    \node[style={circle,draw,inner sep=1.3pt, fill=black}] ()  at (2,-3)  {};
    \node[style={circle,draw,inner sep=1.3pt, fill=black}] ()  at (3,-4)  {};    
    \node[style={circle,draw,inner sep=1.3pt, fill=black}] ()  at (1,-4)  {};
    \node[style={circle,draw,inner sep=1.3pt, fill=black}] ()  at (2,-5)  {};
    \node[style={circle,draw,inner sep=1.3pt, fill=black}] ()  at (7,-8)  {};    
    \node[style={circle,draw,inner sep=1.3pt, fill=black}] ()  at (4,-7)  {};
    \node[style={circle,draw,inner sep=1.3pt, fill=black}] ()  at (6,-9)  {};
    \node[style={circle,draw,inner sep=1.3pt, fill=black}] ()  at (5,-10)  {};    
    \node[style={circle,draw,inner sep=1.3pt, fill=black}] ()  at (4,-9)  {};   
    \node[style={circle,draw,inner sep=1.3pt, fill=black}] ()  at (6,-7)  {};    
    \node[style={circle,draw,inner sep=1.3pt, fill=black}] ()  at (4,-11)  {};
    
    \node[style={circle,thick,draw,inner sep=0.3pt, fill=white}] ()  at (5,-8)  {{\scriptsize $A$}};
    \node[style={circle,thick,draw,inner sep=0.3pt, fill=white}] ()  at (5,-6)  {{\scriptsize $B$}}; 
    
    \node[style={circle,draw,inner sep=1.3pt, fill=black}] ()  at (3,-2)  {};    
    \node[style={circle,draw,inner sep=1.3pt, fill=black}] ()  at (4,-3)  {};    
    \node[style={circle,draw,inner sep=1.3pt, fill=black}] ()  at (4,-5)  {};  
    \node[style={circle,draw,inner sep=1.3pt, fill=black}] ()  at (3,-6)  {};  
    \node[style={circle,draw,inner sep=1.3pt, fill=black}] ()  at (5,-4)  {};          
    \node[style={circle,draw,inner sep=1.3pt, fill=black}] ()  at (6,-3)  {};          
    \node[style={circle,draw,inner sep=1.3pt, fill=black}] ()  at (6,-5)  {};      

    \node[style={circle,draw,inner sep=1.3pt, fill=black}] ()  at (8,-9)  {};
    \node[style={circle,draw,inner sep=1.3pt, fill=black}] ()  at (7,-10)  {};    
    \node[style={circle,draw,inner sep=1.3pt, fill=black}] ()  at (6,-11)  {};
    \node[style={circle,draw,inner sep=1.3pt, fill=black}] ()  at (5,-12)  {};    
    \node[style={circle,draw,inner sep=1.3pt, fill=black}] ()  at (4,-13)  {};    
    
    \node[style={circle,draw,inner sep=1.3pt, fill=black}] ()  at (9,-10)  {};
    \node[style={circle,draw,inner sep=1.3pt, fill=black}] ()  at (8,-11)  {};
    \node[style={circle,draw,inner sep=1.3pt, fill=black}] ()  at (7,-12)  {};    
    \node[style={circle,draw,inner sep=1.3pt, fill=black}] ()  at (6,-13)  {};
    \node[style={circle,draw,inner sep=1.3pt, fill=black}] ()  at (5,-14)  {};    
    \node[style={circle,draw,inner sep=1.3pt, fill=black}] ()  at (4,-15)  {};    

    \node[style={circle,draw,inner sep=1.3pt, fill=black}] ()  at (3,-14)  {};   
    \node[style={circle,draw,inner sep=1.3pt, fill=black}] ()  at (2,-15)  {};   
    \node[style={circle,draw,inner sep=1.3pt, fill=black}] ()  at (3,-16)  {};       
    \node[style={circle,draw,inner sep=1.3pt, fill=black}] ()  at (6,-15)  {};     
    \node[style={circle,draw,inner sep=1.3pt, fill=black}] ()  at (5,-16)  {};
    \node[style={circle,draw,inner sep=1.3pt, fill=black}] ()  at (4,-17)  {};
    \node[style={circle,draw,inner sep=1.3pt, fill=black}] ()  at (5,-18)  {};
    \node[style={circle,draw,inner sep=1.3pt, fill=black}] ()  at (6,-17)  {};
    

	\node[] at (5,1)  {$\vdots$};


    \node[anchor = east] at (3.5,-1)  {{\small $L$}};		
    \node[anchor = east] at (2.5,-2)  {{\small $L$}};		
    \node[anchor = east] at (1.5,-3)  {{\small $L$}};		

    \node[anchor = west] at (6.5,-7)  {{\color{red}{\small $R$}}};		
    \node[anchor = west] at (7.5,-8)  {{\small $R$}};		
    \node[anchor = west] at (8.5,-9)  {{\small $R$}};		
    
    \node[anchor = east] at (3.5,-13)  {{\small $L$}};		    
    \node[anchor = east] at (2.5,-14)  {{\small $L$}};

\end{scope}
\end{tikzpicture}
\end{center}
\caption{On the left is a snippet of a poset $J(P(\bw))$ where $\bw$ contains the sequence $\cdots RLLL{\color{red}L}LLRRL\cdots$. 
On the right is the corresponding snippet of $J(P(f_i(\bw)))$, where $i$ is the index of the red $L$ in $\bw$. 
The corresponding portion in $f_i(\bw)$ is $\cdots RLLL{\color{red}R}RRLLR\cdots$. 
The blue paths demonstrate the bijective correspondence between maximal chains in $J(P(\bw))$ through $A$ or $B$ with maximal chains in $J(P(f_i(\bw)))$.}
\label{fig:proofByPicture}
\end{figure}

\section{A combinatorial interpretation of circuits}\label{sec:circuits} 

In the remainder of this article, we study the properties of the order polytope of a poset $Q_{\bw}$ whose lattice of filters is a generalized snake poset.

Define $\hatP(\bw)$ to be the generalized snake poset $P(\bw)$ with $\widehat0$ and $\widehat1$ adjoined, and when $\bw$ is clear from context we write $\hatP$.  
Given $\bw=w_0w_1\cdots w_n$,  $\hatP=\hatP(\bw)$ is a distributive lattice with order $2n+6$ because $\hatP$ does not contain a copy of the smallest non-modular lattice with five elements and does not contain a sublattice isomorphic to a three-element antichain with a $\widehat{0}$ and $\widehat{1}$ added.
Let $Q_{\bw}=\mathrm{Irr}_\wedge(\hatP)$ denote the poset of meet-irreducibles of $\hatP$. 
Heuristically, $\Irr_\wedge(\hatP)$ is obtained from $\hatP$ by removing $\widehat1$, and every vertex which is at the bottom of a bounded face in the Hasse diagram. 
See Figure~\ref{fig:graph_poset_meet}.
By the fundamental theorem of finite distributive lattices, $\hatP \cong J(Q_{\bw})$, where $J(Q_{\bw})$ is the lattice of filters of $Q_{\bw}$, ordered by reverse inclusion.

We construct a graph $G=G(\bw)$ associated to $\hatP=\hatP(\bw)$ as follows. 
If $\bw=w_0w_1\cdots w_n$, the vertex set of $G$ is $V(G)=\{w_0,w_1,\ldots, w_n\}$. 
The edge set of $G$ is given by
$$E(G) = \{(w_i,w_{i+1}) \mid i=0,\ldots, n-1\} \cup \{(w_i,w_{i+2}) \mid \hbox{ if $w_iw_{i+1}w_{i+2} = LLR$ or $RRL$}\}.$$
In other words, $G$ consists of the path of length $n$ on the vertices $w_0, \ldots, w_n$, with a $3$-cycle for each turn $LLR$ or $RRL$ in $\bw$. See Figure~\ref{fig:graph_poset_meet}.
We denote the set of nonempty connected induced subgraphs of $G(\bw)$ by $\calG(\bw)$.

The Hasse diagram of $\hatP(\bw)$ can be embedded on the plane so that its edges are non-crossing where each bounded face of the embedded Hasse diagram has degree $4$ given by the length of the cycle bounding the face.
We call these bounded faces the {\em squares} of $\hatP(\bw)$.

There is a one-to-one correspondence between the squares of $\hatP(\bw)$ and the letters of $\bw$ by realizing $G=G(\bw)$ as follows.
Consider each square in the Hasse diagram $\mathrm{Hasse}(\hatP)$ as a vertex, then form an edge between squares when they intersect in the plane, as shown in Figure~\ref{fig:graph_poset_meet}.
To each vertex $w_i$ of $G$, we denote by $\Sq(w_i)$ the four elements of $\hatP$ contained in the $4$-cycle which bounds the face of $\mathrm{Hasse}(\hatP)$ corresponding to $w_i$.



\begin{figure}[ht!] 
\begin{center}
\begin{tikzpicture}[scale=.5]
\begin{scope}[xshift=-300, yshift=0] 
	\gvx[label=left:$\textcolor{blue}{w_0}$](c00) at (1,18) {};	
	\gvx[label=left:$\textcolor{blue}{w_1}$](c01) at (0,17) {};
	\gvx[label=left:$\textcolor{blue}{w_2}$](c02) at (-1,16) {};
	\gvx[label=left:$\textcolor{blue}{w_3}$](c03) at (-2,15) {};
	\gvx[label=right:$\textcolor{blue}{w_4}$](c04) at (-1,14) {};
	\gvx[label=right:$\textcolor{blue}{w_5}$](c05) at (0,13) {};
	\gvx[label=right:$\textcolor{blue}{w_6}$](c06) at (-1,12) {};
	\gvx[label=right:$\textcolor{blue}{w_7}$](c07) at (-2,11) {};
	\gvx[label=left:$\textcolor{blue}{w_8}$](c08) at (-3,10) {};
	\gvx[label=left:$\textcolor{blue}{w_9}$](c09) at (-4,9) {};
	\gvx[label=right:$\textcolor{blue}{w_{10}}$](c10) at (-3,8) {};
	\gvx[label=right:$\textcolor{blue}{w_{11}}$](c11) at (-2,7) {};
	\gvx[label=right:$\textcolor{blue}{w_{12}}$](c12) at (-1,6) {};
	\gvx[label=right:$\textcolor{blue}{w_{13}}$](c13) at (0,5) {};
	\gvx[label=right:$\textcolor{blue}{w_{14}}$](c14) at (1,4) {};
	\gvx[label=right:$\textcolor{blue}{w_{15}}$](c15) at (0,3) {};
	\gvx[label=right:$\textcolor{blue}{w_{16}}$](c16) at (-1,2) {};
	
	\draw[line width=1pt, color=blue] (c16)--(c15)--(c14)--(c13)--(c12)--(c11)--(c10)--(c09)--(c08)--(c07)--(c06)--(c05)--(c04)--(c03)--(c02)--(c01)--(c00);
	\draw[line width=1pt, color=blue] (c02)--(c04);
	\draw[line width=1pt, color=blue] (c04)--(c06);
	\draw[line width=1pt, color=blue] (c08)--(c10);
	\draw[line width=1pt, color=blue] (c13)--(c15);
\end{scope}
\begin{scope}[xshift=0, yshift=0] 
	\pvx[label=left:$\emptyset$](emp) at (2,20) {};
	\pvx[label=left:\tiny$\langle0\rangle$](v00) at (1,19) {};
	\pvx[label=left:\tiny$\langle1\rangle$](v01) at (0,18) {};
	\pvx[label=right:\tiny$\langle2\rangle$](v02) at (2,18) {};
	\pvx[label=left:\tiny$\langle3\rangle$](v03) at (-1,17) {};
	\pvx[label=left:\tiny$\langle4\rangle$](v04) at (-2,16) {};
	\pvx[label=left:\tiny$\langle5\rangle$](v05) at (-3,15) {};			
	\pvx[label=right:\tiny$\langle6\rangle$](v06) at (0,14) {};
	\pvx[label=right:\tiny$\langle7\rangle$](v07) at (1,13) {};	
	\pvx[label=left:\tiny$\langle8\rangle$](v08) at (-2,12) {};
	\pvx[label=left:\tiny$\langle9\rangle$](v09) at (-3,11) {};	
	\pvx[label=left:\tiny$\langle10\rangle$](v10) at (-4,10) {};
	\pvx[label=left:\tiny$\langle11\rangle$](v11) at (-5,9) {};
	\pvx[label=right:\tiny$\langle12\rangle$](v12) at (-2,8) {};
	\pvx[label=right:\tiny$\langle13\rangle$](v13) at (-1,7) {};
	\pvx[label=right:\tiny$\langle14\rangle$](v14) at (0,6) {};
	\pvx[label=right:\tiny$\langle15\rangle$](v15) at (1,5) {};
	\pvx[label=right:\tiny$\langle16\rangle$](v16) at (2,4) {};
	\pvx[label=left:\tiny$\langle17\rangle$](v17) at (-1,3) {};
	\pvx[label=left:\tiny$\langle18\rangle$](v18) at (-2,2) {};
	\pvx[label=left:\tiny$\langle19\rangle$](v19) at (0,0) {};
	\pvx[](w0102) at (1,17) {};
	\pvx[](w0103) at (0,16) {};
	\pvx[](w0104) at (-1,15) {};
	\pvx[](w0105) at (-2,14) {};
	\pvx[](w0506) at (-1,13) {};
	\pvx[](w0507) at (0,12) {};
	\pvx[](w0708) at (-1,11) {};
	\pvx[](w0709) at (-2,10) {};
	\pvx[](w0710) at (-3,9) {};
	\pvx[](w0711) at (-4,8) {};
	\pvx[](w1112) at (-3,7) {};
	\pvx[](w1113) at (-2,6) {};
	\pvx[](w1114) at (-1,5) {};
	\pvx[](w1115) at (0,4) {};
	\pvx[](w1116) at (1,3) {};	
	\pvx[](w1617) at (0,2) {};
	\pvx[](w1618) at (-1,1) {};
	
	\draw[line width=1pt] (v19)--(v18);
	\draw[line width=1pt] (v18)--(v15);
	\draw[line width=1pt] (v16)--(v10);
	\draw[line width=1pt] (v11)--(v06);
	\draw[line width=1pt] (v07)--(v04);
	\draw[line width=1pt] (v02)--(v00);
	\draw[line width=1pt] (v05)--(emp);
	\draw[line width=1pt] (w1618)--(v16);
	\draw[line width=1pt] (w1617)--(v17);	
	\draw[line width=1pt] (w1114)--(v14);
	\draw[line width=1pt] (w1113)--(v13);
	\draw[line width=1pt] (w1112)--(v12);	
	\draw[line width=1pt] (w1116)--(v11);
	\draw[line width=1pt] (w0711)--(v07);
	\draw[line width=1pt] (w0709)--(v09);	
	\draw[line width=1pt] (w0708)--(v08);	
	\draw[line width=1pt] (w0711)--(v07);	
	\draw[line width=1pt] (w0507)--(v05);
	\draw[line width=1pt] (w0105)--(v02);
	\draw[line width=1pt] (w0103)--(v03);
	\draw[line width=1pt] (w0102)--(v01);			
	
	\node[](c01) at (1,18) {$\varepsilon$};	
	\node[](c02) at (0,17) {\scriptsize$L$};
	\node[](c03) at (-1,16) {\scriptsize$L$};
	\node[](c04) at (-2,15) {\scriptsize$L$};
	\node[](c05) at (-1,14) {\scriptsize$R$};
	\node[](c06) at (0,13) {\scriptsize$R$};
	\node[](c07) at (-1,12) {\scriptsize$L$};
	\node[](c08) at (-2,11) {\scriptsize$L$};
	\node[](c09) at (-3,10) {\scriptsize$L$};
	\node[](c10) at (-4,9) {\scriptsize$L$};
	\node[](c11) at (-3,8) {\scriptsize$R$};
	\node[](c12) at (-2,7) {\scriptsize$R$};
	\node[](c13) at (-1,6) {\scriptsize$R$};
	\node[](c14) at (0,5) {\scriptsize$R$};
	\node[](c15) at (1,4) {\scriptsize$R$};
	\node[](c16) at (0,3) {\scriptsize$L$};
	\node[](c17) at (-1,2) {\scriptsize$L$};
\end{scope}
\begin{scope}[xshift=300, yshift=80] 
	\pvx[label=left:$0$](v00) at (2,14) {};
	\pvx[label=left:$1$](v01) at (1,13) {};
	\pvx[label=right:$2$](v02) at (3,13) {};
	\pvx[label=left:$3$](v03) at (0,12) {};
	\pvx[label=left:$4$](v04) at (-1,11) {};
	\pvx[label=left:$5$](v05) at (-2,10) {};
	\pvx[label=right:$6$](v06) at (0,10) {};					
	\pvx[label=right:$7$](v07) at (1,9) {};
	\pvx[label=left:$8$](v08) at (-1,9) {};	
	\pvx[label=left:$9$](v09) at (-2,8) {};	
	\pvx[label=left:$10$](v10) at (-3,7) {};
	\pvx[label=left:$11$](v11) at (-4,6) {};
	\pvx[label=right:$12$](v12) at (-2,6) {};
	\pvx[label=right:$13$](v13) at (-1,5) {};
	\pvx[label=right:$14$](v14) at (0,4) {};
	\pvx[label=right:$15$](v15) at (1,3) {};
	\pvx[label=left:$17$](v17) at (0,2) {};
	\pvx[label=right:$16$](v16) at (2,2) {};
	\pvx[label=left:$18$](v18) at (-1,1) {};
	\pvx[label=left:$19$](v19) at (0,0) {};

	\draw[line width=1pt] (v19)--(v16);
	\draw[line width=1pt] (v19)--(v18);
	\draw[line width=1pt] (v18)--(v15);
	\draw[line width=1pt] (v16)--(v10);
	\draw[line width=1pt] (v17)--(v11);
	\draw[line width=1pt] (v11)--(v02);
	\draw[line width=1pt] (v12)--(v07);
	\draw[line width=1pt] (v07)--(v04);
	\draw[line width=1pt] (v08)--(v05);
	\draw[line width=1pt] (v05)--(v00);
	\draw[line width=1pt] (v02)--(v00);
\end{scope}
\end{tikzpicture}
\end{center}
\caption{In the center is the lattice $\hatP(\bw)$ for $\bw=\varepsilon L^3R^2L^4R^5L^2$. Its poset of meet-irreducibles $Q_{\bw}=\mathrm{Irr}_\wedge(\hatP)$ is shown to the right, and the associated graph $G(\bw)$ is shown to the left.} \label{fig:graph_poset_meet}
\end{figure}


\begin{remark}
The volume of $\calO(Q_{\bw})$ equals the number of maximal chains in $\hatP(\bw)$ or, equivalently, in $P(\bw)$. 
By \cite[Section 4]{Propp}, maximal chains in $P(\bw)$ are in bijection with perfect matchings of the Hasse diagram of $P(\bw^*)$, where $\bw^*$ denotes the dual of $\bw$.
Informally, $P(\bw^*)$ is obtained from $P(\bw)$ by replacing three consecutive squares that form a ladder by three squares that form a bend and vice versa. 
Perfect matchings of $P(\bw^*)$ have been extensively studied because they play an important role in the theory of cluster algebras and their total number can be computed via explicit formulas involving continued fractions \cite[Theorem 3.4]{SchifflerCanakci} or certain admissible sequences \cite[Theorem 4.6]{BFGST}. 

Alternatively, we may view the Hasse diagram of $P(\bw)$ as a skew partition $\lambda/\mu$, and maximal chains in $P(\bw)$ correspond to partitions contained in $\lambda/\mu$.  
For example in Figure~\ref{fig:graph_poset_meet}, $\lambda=(10,7,7,3,3,3,3,3)$ and $\mu=(6,6,2,2,2,2,2)$.
Given a partition $\lambda=(\lambda_1,\ldots, \lambda_k)$, the number of partitions contained inside $\lambda$ is given by $D(\lambda) = \det(\binom{\lambda_j+1}{j-i+1})_{1\leq i,j\leq k}$, thus the number of maximal chains in $P(\bw)$ can be computed by $D(\lambda) -D(\mu)$.
\end{remark}

Next, we study the circuits of the order polytope $\calO(Q_{\bw})$.
Understanding this for arbitrary words $\bw$ is a challenge, therefore we instead restrict our attention in this section to the following set of words.

\begin{definition}\label{def:V}
Let $\mathcal{V}$ denote the subset of words which do not contain the substring $LRL$ or $RLR$.
\end{definition}

Theorem~\ref{thm.bijection} shows that for $\bw\in\mathcal{V}$, circuits in the vertices of $\mathcal{O}(Q_{\bw})$ have a combinatorial interpretation as the nonempty connected induced subgraphs of the graph $G(\bw)$.

\begin{lemma} Let $\bw \in \mathcal{V}$ be a generalized snake word of length $n$. 
The poset $Q_{\bw}$ has order $n+4$. 
\end{lemma}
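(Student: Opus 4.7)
The plan is to count the meet-irreducibles of $\hatP(\bw)$ directly from the characterization given immediately before the lemma: an element fails to lie in $Q_{\bw}$ exactly when it equals $\widehat{1}$ or it is the bottom of a bounded face of the Hasse diagram, i.e., has two or more upper covers.

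First, I record that $|\hatP(\bw)|=2n+6$. The recursive construction of $P(\bw)$ begins with the four elements of $P(\varepsilon)$ and appends two new elements $2i+2, 2i+3$ at each step $i\geq 1$, so $|P(\bw)|=2n+4$; adjoining $\widehat{0}$ and $\widehat{1}$ contributes two more.

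Second, I will show by induction on $n$ that the elements of $\hatP(\bw)$ with two or more upper covers are exactly $\{3,5,7,\dots,2n+3\}$, and that each of these has precisely two upper covers. The base case $\bw=\varepsilon$ is immediate: the element $3$ has upper covers $1,2$, and no other element has more than one upper cover. For the inductive step, adding a letter $w_n$ inserts the two new elements $2n+2$ and $2n+3$; the recursive cover relations imply that $2n+2$ receives a unique upper cover (either $2n-1$ or $2n$, depending on the last two letters of $\bw$) while $2n+3$ receives precisely the two upper covers $2n+1$ and $2n+2$. Crucially, no previously existing element acquires a new upper cover, so the set of elements with two upper covers grows by the single new element $2n+3$, yielding $\{3,5,\dots,2n+3\}$ as claimed.

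Combining these two counts, exactly $n+1$ elements of $\hatP(\bw)$ are bottoms of bounded faces, so
\[
|Q_{\bw}| \;=\; |\hatP(\bw)| - 1 - (n+1) \;=\; (2n+6) - (n+2) \;=\; n+4.
\]
The only subtle point is the inductive verification that no element acquires more than two upper covers; this is straightforward from the explicit cover relations in the recursive definition of $P(\bw)$, and is the main technical obstacle.
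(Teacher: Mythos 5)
Your proof is correct and follows essentially the same route as the paper: count $|\hatP(\bw)|=2n+6$, remove $\widehat{1}$, and remove the $n+1$ non-meet-irreducible elements (the minima of the squares, equivalently the elements with two upper covers). The only difference is that you supply an explicit inductive identification of these elements as $\{3,5,\dots,2n+3\}$, a detail the paper compresses into the observation that there are $n+1$ squares.
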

\begin{proof}
A non-$\widehat{1}$ element of $\hatP$ is meet-irreducible if and only if it is not the minimum element in a square of $\hatP(\bw)$.
There are $n+1$ squares, thus
$|\Irr_\wedge(\hatP)| = 2n+6-1-(n+1) = n+4. $
\end{proof}

\begin{lemma}\label{lem.mindepset01} 
Let $\bw \in \mathcal{V}$ be a generalized snake word of length $n$. 
A circuit of the vertex set of $\calO(Q_{\bw})$ cannot contain the zero vector $\bv_\emptyset$ or $\bv_{Q_{\bw}}=(1,1,\ldots,1)$.
\end{lemma}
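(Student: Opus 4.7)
The plan is to exploit two distinguished coordinates of $\mathbb{R}^{|Q_{\bw}|}$ indexed by extremal elements of $Q_{\bw}$. First I would observe that $Q_{\bw}$ has a unique maximum $M$ and a unique minimum $m$. The maximum $M$ is the top element of $P(\bw)$: its only upper cover in $\hatP$ is the adjoined $\widehat{1}$, so it is meet-irreducible and hence in $Q_{\bw}$, and once $\widehat{1}$ is removed it becomes maximal in $Q_{\bw}$. The minimum $m$ is the adjoined $\widehat{0}$: it has a unique upper cover $2n+3$ in $\hatP$, so it is meet-irreducible and lies in $Q_{\bw}$, and since it sits below every element of $\hatP$ it is automatically a minimum of $Q_{\bw}$. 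That these are the only extremal elements is a direct check from the Hasse diagram of $\hatP$: any other element of $Q_{\bw}$ has an upper cover in $\hatP$ that is either in $Q_{\bw}$ (so the element is not maximal) or is the bottom of a square, in which case the two sides of that square lie in $Q_{\bw}$ and are strictly above; a symmetric argument handles uniqueness of the minimum.

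The key combinatorial consequence is that for every filter $A$ of $Q_{\bw}$, the coordinate $(\bv_A)_M$ equals $1$ if and only if $A$ is nonempty (any nonempty upward-closed subset contains the unique maximum), while the coordinate $(\bv_A)_m$ equals $1$ if and only if $A = Q_{\bw}$ (any filter containing the unique minimum must contain everything above it). Equivalently, $\bv_\emptyset = \mathbf{0}$ is the unique vertex of $\mathcal{O}(Q_{\bw})$ having a $0$ in coordinate $M$, and $\bv_{Q_{\bw}} = \mathbf{1}$ is the unique vertex having a $1$ in coordinate $m$.

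The contradictions then follow from a short coordinate argument. Suppose $Z$ is a circuit containing $\bv_\emptyset$, and let $\sum_{v \in Z} \lambda_v v = 0$ with $\sum_{v \in Z} \lambda_v = 0$ be the unique (up to scaling) affine dependence; by minimality every $\lambda_v$ is nonzero. Reading off the $M$-th coordinate and using $(\bv_\emptyset)_M = 0$ gives $\sum_{v \in Z \setminus \{\bv_\emptyset\}} \lambda_v = 0$, which combined with $\sum_{v \in Z} \lambda_v = 0$ forces $\lambda_{\bv_\emptyset} = 0$, a contradiction. The case $\bv_{Q_{\bw}} \in Z$ is even cleaner: reading off the $m$-th coordinate of the affine dependence gives $\lambda_{\bv_{Q_{\bw}}} = 0$ directly, since only $\bv_{Q_{\bw}}$ has a nonzero entry in coordinate $m$.

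The only step requiring care is the structural claim about extremal elements of $Q_{\bw}$, but once this is in place the computation is essentially immediate; in particular the argument does not use the restriction $\bw \in \mathcal{V}$ and applies to all generalized snake words.
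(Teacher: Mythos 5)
Your proof is correct and takes essentially the same route as the paper: both arguments single out a distinguished coordinate (the one indexed by the minimum, respectively the maximum, of $Q_{\bw}$) in which the vector in question is the unique vertex with a given entry, and then read off that its coefficient in the affine dependence must vanish. If anything your handling of $\bv_\emptyset$ is more careful than the paper's one-line dismissal of the zero vector; the only blemish is the parenthetical claim that \emph{both} sides of a square lie in $Q_{\bw}$ (one side may be the bottom of the adjacent square and hence fail to be meet-irreducible), but this is harmless since the top element of $P(\bw)$ is the unique coatom of $\hatP$ and therefore already lies above every other element of $Q_{\bw}$.
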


\begin{proof}
Observe that $\bv_\emptyset$ is the zero vector in $\R^{|Q_{\bw}|}$, so it cannot be in a minimal dependent set. Also, $\bv_{Q_\bw}$ is the vector of all ones in $\R^{|Q_{\bw}|}$, and in particular is the only vertex whose $|Q_{\bw}|$-th coordinate is nonzero, so it also cannot be in a minimal dependent set.
\end{proof}

\begin{theorem}\label{thm.bijection}
Let $\bw \in \mathcal{V}$ be a generalized snake word of length $n$. 
There exists a bijection $\Gamma:\calG(\bw) \rightarrow \calC(Q_{\bw})$ between the set $\calG(\bw)$ of nonempty connected induced subgraphs of $G(\bw)$ and the set $\calC(Q_{\bw})$ of circuits of the vertex set of the order polytope $\calO(Q_{\bw})$.
\end{theorem}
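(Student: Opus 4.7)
My plan is to construct $\Gamma$ explicitly by summing the square relations of $\hatP$ over the squares indexed by $H$, then establish the bijection by proving injectivity directly and surjectivity through a classification of minimal affine dependences in $\calO(Q_{\bw})$.

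I would first define $\Gamma$. Since each square $\Sq(w_i)=\{m,a,b,M\}$ of $\hatP$ satisfies $a\vee b=M$ and $a\wedge b=m$, the characteristic vectors of the corresponding filters satisfy $\bv_a+\bv_b=\bv_m+\bv_M$. Given $H\in\calG(\bw)$ with vertex set $\{w_{i_1},\ldots,w_{i_k}\}$, I sum these $k$ identities over the squares in $R(H):=\bigcup_j \Sq(w_{i_j})$. Characteristic vectors corresponding to ``interior'' lattice elements of $R(H)$---those participating in several squares with balanced signs---cancel, leaving a nontrivial affine relation supported on a subset of the ``outer boundary'' of $R(H)$. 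I set $\Gamma(H)$ to be this support, with the oriented partition $(Z_+,Z_-)$ inherited from the surviving signs. Small examples confirm the construction is well-defined and depends delicately on how squares meet: for $\bw=\varepsilon LL$ the full subgraph produces a type-$(2,2)$ circuit, whereas for $\bw=\varepsilon LR$ it produces a type-$(3,3)$ circuit.

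To verify that $\Gamma(H)$ is a circuit, I would show nontriviality by noting that the global maximum and minimum of $R(H)$ always survive cancellation with opposite signs. Minimality follows from an inductive argument on $|H|$: each element of $Z_+\cup Z_-$ is a distinguished ``corner'' of the region's boundary whose contribution to the summed identity cannot be reproduced from the other elements, so no proper subset is affinely dependent. Injectivity of $\Gamma$ is then immediate: the support $Z_+\cup Z_-$ determines the outer boundary of $R(H)$, from which the union of squares and thus $H$ can be recovered.

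The main obstacle is surjectivity. Given an arbitrary circuit $C\subseteq V(\calO(Q_{\bw}))$, I must produce $H\in\calG(\bw)$ with $\Gamma(H)=C$. My approach exploits the fact that $\hatP$ is a distributive lattice: every affine relation among characteristic vectors of filters is a $\mathbb{Z}$-linear combination of the basic square relations $\bv_a+\bv_b=\bv_{a\wedge b}+\bv_{a\vee b}$. Minimality of $C$ forces the set $S$ of squares with nonzero coefficient in this decomposition to be connected as a subgraph of $G(\bw)$---otherwise the dependence would split into two smaller dependences, contradicting minimality. The hypothesis $\bw\in\calV$ is precisely what prevents local $LRL$ and $RLR$ patterns from producing exotic minimal dependences outside the image of $\Gamma$. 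Carrying out this classification---and verifying that $\calV$ is exactly the correct restriction---is the most delicate part of the proof.
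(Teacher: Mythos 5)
Your construction of $\Gamma$ and your plans for well-definedness and injectivity are in substance the same as the paper's, but note one imprecision at the outset: literally summing the $k$ square identities with coefficient $+1$ does not produce the claimed cancellation. When $\Sq(w_i)$ and $\Sq(w_{i+2})$ meet in a single vertex (at a turn $LLR$ or $RRL$), that vertex is the minimum of one square and the maximum of the other, so in the unsigned sum it acquires coefficient $\pm 2$ rather than $0$. One must attach signs to the square relations --- the same sign on $\sigma_{i_j}$ and $\sigma_{i_{j-1}}$ when $i_j-i_{j-1}=1$, opposite signs when $i_j-i_{j-1}=2$ --- and with those signs the surviving support is exactly the set of lattice elements lying in an odd number of the chosen squares, which is the paper's definition of $\Gamma(H)$.

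The genuine gap is in surjectivity, which is also where your route departs from the paper's induction on the length of $\bw$. Granting that the $n+1$ square relations form a basis of the space of affine dependences (true here by a dimension count plus independence), and granting that the decomposition $\sum_{i\in S}\lambda_i\sigma_i$ of a circuit's dependence has connected support $S$ (your splitting argument is sound, since squares lying in different components of the induced subgraph on $S$ are vertex-disjoint), you still have not shown that the circuit equals $\Gamma(H_S)$. Different all-nonzero coefficient vectors on the same connected $S$ produce genuinely incomparable vertex supports. Concretely, take three squares forming a triangle, with $\Sq(w_i)=\{A,A\cup\{a\},A\cup\{b\},A\cup\{a,b\}\}$, $\Sq(w_{i+1})=\{A\cup\{b\},A\cup\{a,b\},A\cup\{b,c\},A\cup\{a,b,c\}\}$, and $\Sq(w_{i+2})=\{A\cup\{a,b\},A\cup\{a,b,c\},A\cup\{a,b,d\},A\cup\{a,b,c,d\}\}$. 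The choice $\lambda_1-\lambda_2+\lambda_3=0$ with $\lambda_1\neq\lambda_2$ kills the triple point $\bv_{A\cup\{a,b\}}$ but leaves $\bv_{A\cup\{b\}}$ and $\bv_{A\cup\{a,b,c\}}$, yielding a seven-element dependent set that neither contains nor is contained in the six-element set $\Gamma(H_S)$. To exclude it as a circuit you must locate a smaller circuit inside it, namely $\{\bv_A,\bv_{A\cup\{a\}},\bv_{A\cup\{b,c\}},\bv_{A\cup\{a,b,c\}}\}=\Gamma(\{w_i,w_{i+1}\})$, and carrying this out uniformly over all coefficient patterns and all connected $S$ is exactly the case analysis you defer as ``the most delicate part''; it is also the only place the hypothesis $\bw\in\calV$ actually does any work. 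As written, your argument establishes only that every circuit's dependence is supported on a connected set of squares, not that every circuit lies in the image of $\Gamma$.
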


\begin{proof}
Let ${\bw}=w_0w_1\dots w_n$ and let $H\in \calG(\bw)$.
For a filter $A$ of $Q_{\bw}$, we consider $A$ to be a point in $J(Q_{\bw}) \cong \hatP$.
We say $A$ is \emph{compatible} with $H$ if $A$ is an element of an odd number of squares in $\{\Sq(w_i):w_i\in H\}$.
Define $\Gamma(H)$ to be the set of all $\bv_A$ such that $A$ is compatible with $H$.
See Figure~\ref{fig.embedH} for an illustration.

\begin{figure}[ht!]
\begin{center}
\begin{tikzpicture}[scale=.5]
\begin{scope}[xshift=0, yshift=0] 
	\pvx[](emp) at (2,20) {};
	\pvx[](v00) at (1,19) {};
	\pvx[label=left:{\tiny$\textcolor{red}{+}$}, color=red](v01) at (0,18) {};
	\pvx[](v02) at (2,18) {};
	\pvx[](v03) at (-1,17) {};
	\pvx[](v04) at (-2,16) {};
	\pvx[label=left:{\tiny$\textcolor{red}{-}$}, color=red](v05) at (-3,15) {};	
	\pvx[label=right:{\tiny$\textcolor{red}{-}$}, color=red](v06) at (0,14) {};
	\pvx[](v07) at (1,13) {};	
	\pvx[](v08) at (-2,12) {};
	\pvx[](v09) at (-3,11) {};	
	\pvx[label=left:{\tiny$\textcolor{red}{+}$}, color=red](v10) at (-4,10) {};
	\pvx[](v11) at (-5,9) {};
	\pvx[](v12) at (-2,8) {};
	\pvx[](v13) at (-1,7) {};
	\pvx[](v14) at (0,6) {};
	\pvx[](v15) at (1,5) {};
	\pvx[](v16) at (2,4) {};
	\pvx[](v17) at (-1,3) {};
	\pvx[](v18) at (-2,2) {};
	\pvx[](v19) at (0,0) {};
	\pvx[label=right:{\tiny$\textcolor{red}{-}$}, color=red](w0102) at (1,17) {};
	\pvx[](w0203) at (0,16) {};
	\pvx[label=right:{\tiny$\textcolor{red}{+}$}, color=red](w0204) at (-1,15) {};
	\pvx[](w0205) at (-2,14) {};
	\pvx[](w0506) at (-1,13) {};
	\pvx[label=right:{\tiny$\textcolor{red}{+}$}, color=red](w0507) at (0,12) {};
	\pvx[](w0708) at (-1,11) {};
	\pvx[](w0709) at (-2,10) {};
	\pvx[label=right:{\tiny$\textcolor{red}{-}$}, color=red](w0710) at (-3,9) {};
	\pvx[](w0711) at (-4,8) {};
	\pvx[](w1112) at (-3,7) {};
	\pvx[](w1113) at (-2,6) {};
	\pvx[](w1114) at (-1,5) {};
	\pvx[](w1115) at (0,4) {};
	\pvx[](w1116) at (1,3) {};	
	\pvx[](w1617) at (0,2) {};
	\pvx[](w1618) at (-1,1) {};
	
	\draw[line width=1pt] (v19)--(v18);
	\draw[line width=1pt] (v18)--(v15);
	\draw[line width=1pt] (v16)--(v10);
	\draw[line width=1pt] (v11)--(v06);
	\draw[line width=1pt] (v07)--(v04);
	\draw[line width=1pt] (v02)--(v00);
	\draw[line width=1pt] (v05)--(emp);
	\draw[line width=1pt] (w1618)--(v16);
	\draw[line width=1pt] (w1617)--(v17);	
	\draw[line width=1pt] (w1114)--(v14);
	\draw[line width=1pt] (w1113)--(v13);
	\draw[line width=1pt] (w1112)--(v12);	
	\draw[line width=1pt] (w1116)--(v11);
	\draw[line width=1pt] (w0711)--(v07);
	\draw[line width=1pt] (w0709)--(v09);	
	\draw[line width=1pt] (w0708)--(v08);	
	\draw[line width=1pt] (w0711)--(v07);	
	\draw[line width=1pt] (w0507)--(v05);
	\draw[line width=1pt] (w0205)--(v02);
	\draw[line width=1pt] (w0203)--(v03);
	\draw[line width=1pt] (w0102)--(v01);			
	
	\gvx[](c01) at (0,17) {};
	\gvx[](c02) at (-1,16) {};
	\gvx[](c03) at (-2,15) {};
	\gvx[](c04) at (-1,14) {};
	\gvx[](c06) at (-1,12) {};
	\gvx[](c07) at (-2,11) {};
	\gvx[](c08) at (-3,10) {};
	
	\node[] (d01) at (0,17.4) {\tiny $\textcolor{blue}{w_1}$};	
	\node[] (d02) at (-1.1,16.4) {\tiny $\textcolor{blue}{w_2}$};	
	\node[] (d03) at (-2.1,15.4) {\tiny $\textcolor{blue}{w_3}$};	
	
	\node[] (d04) at (-0.5,14) {\tiny $\textcolor{blue}{w_4}$};
	\node[] (d06) at (-0.5,12) {\tiny $\textcolor{blue}{w_6}$};

	\node[] (d07) at (-2.1,11.4) {\tiny $\textcolor{blue}{w_7}$};	
	\node[] (d08) at (-3.1,10.4) {\tiny $\textcolor{blue}{w_8}$};

	\draw[line width=1pt, color=blue] (c01)--(c02);
	\draw[line width=1pt, color=blue] (c02)--(c03);
	\draw[line width=1pt, color=blue] (c03)--(c04);
	\draw[line width=1pt, color=blue] (c04)--(c06);
	\draw[line width=1pt, color=blue] (c06)--(c07);
	\draw[line width=1pt, color=blue] (c07)--(c08);
	\draw[line width=1pt, color=blue] (c02)--(c04);
\end{scope}
\end{tikzpicture}
\end{center}
\caption{The connected induced subgraph $H$ of $G(\bw)$, depicted in blue, is embedded in $\mathrm{Hasse}(\hatP)$.  The elements in the circuit $\Gamma(H)$ are depicted in red.}
\label{fig.embedH}
\end{figure}
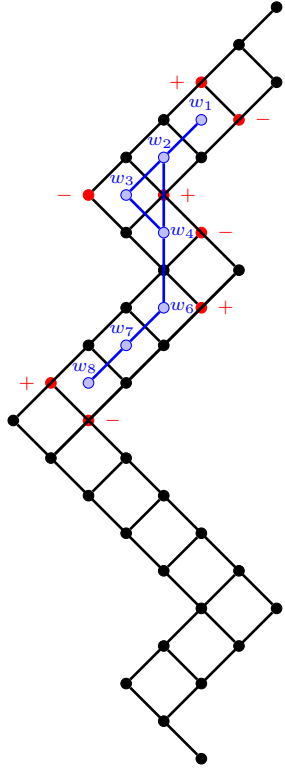


We shall show by induction on $|H|$ that $\Gamma(H)$ is a minimal dependent set of vertices of $\calO(Q_{\bw})$.
First, suppose $H=\{w_k\}$ is a single vertex of $G$ for some $k=0,\ldots, n$.  
Then for some filter $A$ of $Q_{\bw}$ and distinct incomparable elements $x,y\in Q_{\bw}$, we have
\[
\Gamma(H)= \{\bv_B:B\in \Sq(w_k)\} = \{\bv_A, \bv_{A\cup \{x\}}, \bv_{A\cup \{y\}}, \bv_{A\cup\{x,y\}} \} \, .
\]
As
\begin{align*}
\bv_{A\cup\{x\}}&= \bv_A +\be_x,\\
\bv_{A\cup\{y\}}&= \bv_A +\be_y,\\
\bv_{A\cup\{x,y\}}&= \bv_A +\be_x+\be_y,
\end{align*}
then
$\bv_A - \bv_{A\cup \{x\}} - \bv_{A\cup \{y\}}+ \bv_{A\cup\{x,y\}} =\mathbf{0},$
and $\Gamma(H)$ is a circuit.

Next, suppose $H=\{w_{i_1},\ldots, w_{i_k}\}$ is a connected induced subgraph of $G$, with the assumption that $i_1< \cdots < i_k$ and $k\geq 2$.
Observe that $H$ contains the path $w_{i_1},\ldots, w_{i_k}$.
We will inductively assign signs to the squares of $\hatP$ that contain vertices of $H$.
Start by defining $\sgn(\Sq(w_{i_1}))=1$. For $j=2,\ldots, k$,
\[
\sgn(\Sq(w_{i_{j}})) = 
\begin{cases}
\ \,\,\,\sgn(\Sq(w_{i_{j-1}})) \, , &\hbox{if $i_j-i_{j-1}=1$},\\
-\sgn(\Sq(w_{i_{j-1}})), &\hbox{if $i_j-i_{j-1}=2$} \, .
\end{cases} \]
We note that since $H$ is a connected subgraph, then $\Sq(w_{i_j}) \cap \Sq(w_{i_{j+1}})\neq \emptyset$.
If 
$$\Sq(w_{i_j}) = \left\{A_{i_j}, A_{i_j}\cup\{x_{i_j}\}, A_{i_j}\cup\{y_{i_j}\}, A_{i_j}\cup\{x_{i_j},y_{i_j}\}\right\},$$ 
then 
$$\sigma_{i_j}:=\bv_{A_{i_j}} - \bv_{A_{i_j}\cup\{x_{i_j}\}} - \bv_{A_{i_j}\cup\{y_{i_j}\}} + \bv_{A_{i_j}\cup\{x_{i_j},y_{i_j}\}}= \mathbf{0} $$
is an affine dependence relation on the vertices of $\Sq(w_{i_j})$.  Thus,

\begin{equation}\label{eq:11}
\sum_{j=1}^k \sgn(\Sq(w_{i_j}))\cdot \sigma_{i_j} = \mathbf{0}. \, 
\end{equation}

Because of the definition of the $\sgn$ function, the terms which occur in the left hand side of this expression with nonzero coefficient are indexed precisely by the elements of $\hatP$ which are compatible with $H$, i.e., $\Gamma(H)$.
Thus, $\Gamma(H)$ is a dependent set with an affine dependence relation (\ref{eq:11}).

Having shown $\Gamma(H)$ is dependent, it remains to show that $\Gamma(H)$ is also minimal, i.e., a circuit.
We will use induction on $|H|=k$.
If $k=1$, then it is straightforward to verify that $\Sq(w_{i_1})$ is a circuit.
Assume that $k>1$.
We consider three cases.
For the first case, suppose that $i_{k-1}+1=i_k$ and $(w_{i_{k-2}},w_{i_{k}})$ is not an edge in $H$.
Thus, it follows that for some filter $A$ and elements $a,b,c$, we have
\[
\Sq(w_{i_{k-1}})=\{A,A\cup\{b\},A\cup\{a\},A\cup \{a,b\}\}
\]
and
\[
\Sq(w_{i_{k}})=\{A\cup\{b\},A\cup\{a,b\},A\cup\{b,c\},A\cup \{a,b,c\}\}\, .
\]

\begin{center}
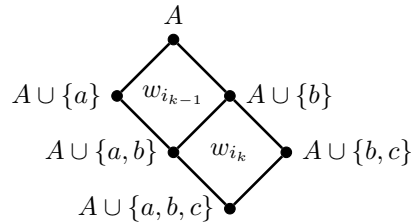
\begin{figure}[h!]
\begin{tikzpicture}[scale=0.75]\	
    \pvx[label=above:\small{$A$}](v) at (0,20) {};
	\pvx[label=left:\small{$A\cup \{a\}$}](va) at (-1,19) {};
	\pvx[label=right:\small{$A\cup \{b\}$}](vb) at (1,19) {};
	\pvx[label=left:\small{$A\cup \{a,b\}$}](vab) at (0,18) {};
	\pvx[label=right:\small{$A\cup \{b,c\}$}](vbc) at (2,18) {};
	\pvx[label=left:\small{$A\cup \{a,b,c\}$}](vabc) at (1,17) {};

	\draw[line width=1pt] (v)--(vb);
	\draw[line width=1pt] (v)--(va);		
	\draw[line width=1pt] (va)--(vab);
	\draw[line width=1pt] (vb)--(vab);
	\draw[line width=1pt] (vb)--(vbc);
	\draw[line width=1pt] (vab)--(vabc);
	\draw[line width=1pt] (vbc)--(vabc);
	
	\node[](wik-1) at (0,19) {\small{$w_{i_{k-1}}$}};
	\node[](wik) at (1,18) {\small{$w_{i_{k}}$}};
\end{tikzpicture}
\caption{The first case, with $i_{k-1}+1 = i_k$ and $(w_{i_{k-2}},w_{i_{k}})$ is not an edge in $H$.}
\end{figure}
\end{center} 

In $\Gamma(H)$, the only vectors supported on the $c$-coordinate are $\bv_{A\cup\{b,c\}}$ and $\bv_{A\cup\{a,b,c\}}$.
If we restrict the vectors in $\{\bv_C:C\in \Gamma(H)\}$ to the coordinates in $A\cup\{a,b\}$, then we obtain the vectors $\{\bv_C:C\in \Gamma(H\setminus w_{i_k})\}$.
By induction, these vectors form a circuit with a unique minimal dependence where the coefficients of $\bv_{A\cup\{b\}}$ and $\bv_{A\cup\{a,b\}}$ are equal and opposite in sign.
Thus, this is the only potential dependence (up to scaling) for $\{\bv_C:C\in \Gamma(H)\}$, where the coefficients of $\bv_{A\cup\{b\}}$ and $\bv_{A\cup\{a,b\}}$  become the coefficients of $\bv_{A\cup\{b,c\}}$ and $\bv_{A\cup\{a,b,c\}}$ respectively.
It is immediate that this choice of coefficients is a dependence with all non-zero coefficients, and thus $\Gamma(H)$ is a circuit.

For the second case, suppose that $i_{k-1}+2=i_k$, i.e., the last edge in $H$ is the vertical edge of a triangle in $G$.
Thus, it follows that for some filter $A$ and elements $a,b,c,d$, we have
\[
\Sq(w_{i_{k-1}})=\{A,A\cup\{b\},A\cup\{a\},A\cup \{a,b\}\}
\]
and
\[
\Sq(w_{i_{k}})=\{A\cup\{a,b\},A\cup\{a,b,c\},A\cup\{a,b,d\},A\cup \{a,b,c,d\}\}\, .
\]

\begin{center}
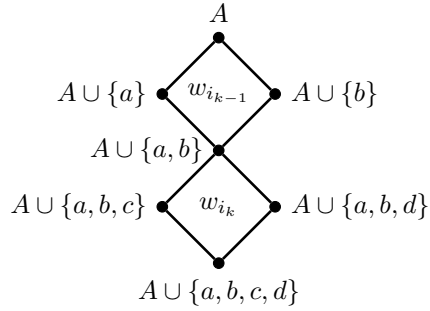
\begin{figure}[H]
\begin{tikzpicture}[scale=0.75]\	
    \pvx[label=above:\small{$A$}](v) at (0,20) {};
	\pvx[label=left:\small{$A\cup \{a\}$}](va) at (-1,19) {};
	\pvx[label=right:\small{$A\cup \{b\}$}](vb) at (1,19) {};
	\pvx[label=left:\small{$A\cup \{a,b\}$}](vab) at (0,18) {};
	\pvx[label=right:\small{$A\cup \{a,b,d\}$}](vabd) at (1,17) {};
	\pvx[label=left:\small{$A\cup \{a,b,c\}$}](vabc) at (-1,17) {};
	\pvx[label=below:\small{$A\cup \{a,b,c,d\}$}](vabcd) at (0,16) {};

	\draw[line width=1pt] (v)--(vb);
	\draw[line width=1pt] (v)--(va);		
	\draw[line width=1pt] (va)--(vab);
	\draw[line width=1pt] (vb)--(vab);
	\draw[line width=1pt] (vab)--(vabc);
	\draw[line width=1pt] (vab)--(vabd);
	\draw[line width=1pt] (vabc)--(vabcd);
	\draw[line width=1pt] (vabd)--(vabcd);
	
	\node[](wik-1) at (0,19) {\small{$w_{i_{k-1}}$}};
	\node[](wik) at (0,17) {\small{$w_{i_{k}}$}};
\end{tikzpicture}
\caption{Case two, with $i_{k-1}+2=i_k$.}
\end{figure}
\end{center} 
In $\Gamma(H)$, the only vectors supported on the $d$-coordinate are $\bv_{A\cup\{a,b,c,d\}}$ and $\bv_{A\cup\{a,b,d\}}$.
Thus, in any dependence for $\Gamma(H)$, these two vectors have coefficients that are equal in magnitude and opposite in sign.
The only vectors supported on the $c$-coordinate are $\bv_{A\cup\{a,b,c\}}$ and $\bv_{A\cup\{a,b,c,d\}}$.
Thus, these vectors have coefficients that are equal in magnitude and opposite in sign in any dependence for $\Gamma(H)$.
Summing these three vectors with these equal and opposite coefficients yields the vector $\bv_{A\cup\{a,b\}}$ (scaled by the same coefficient).
Thus, any dependence on $\Gamma(H)$ arises from a dependence on $\Gamma(H\setminus w_{i_k})$.
By induction, this set is a circuit with a unique minimal dependence.
This unique minimal dependence induces a unique minimal dependence on $\Gamma(H)$, for which every coefficient is non-zero.

For the third case, suppose that $i_{k-1}+1=i_k$ and $(w_{i_{k-2}},w_{i_{k}})$ is an edge in $H$, i.e., that $w_{i_{k-2}}$, $w_{i_{k-1}}$, and $w_{i_{k}}$ form a triangle in $H$.
Thus, it follows that for some filter $A$ and elements $a,b,c,d$, we have
\[
\Sq(w_{i_{k-2}})=\{A,A\cup\{b\},A\cup\{a\},A\cup \{a,b\}\}
\]
and
\[
\Sq(w_{i_{k-1}})=\{A\cup\{b\},A\cup\{a,b\},A\cup \{b,c\},A\cup\{a,b,c\}\}
\]
and
\[
\Sq(w_{i_{k}})=\{A\cup\{a,b\},A\cup\{a,b,c\},A\cup\{a,b,d\},A\cup \{a,b,c,d\}\}\, .
\]

\begin{center}
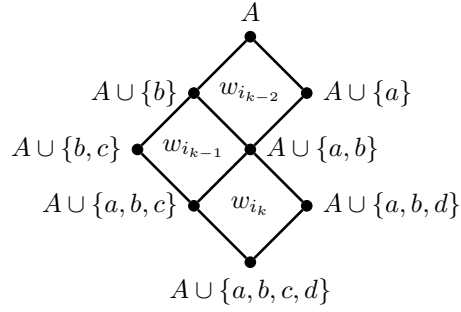
\begin{figure}[H]
\begin{tikzpicture}[scale=0.75]\	
    \pvx[label=above:\small{$A$}](v) at (0,20) {};
	\pvx[label=right:\small{$A\cup \{a\}$}](va) at (1,19) {};
	\pvx[label=left:\small{$A\cup \{b\}$}](vb) at (-1,19) {};
	\pvx[label=right:\small{$A\cup \{a,b\}$}](vab) at (0,18) {};	\pvx[label=left:\small{$A\cup \{b,c\}$}](vbc) at (-2,18) {};
	\pvx[label=right:\small{$A\cup \{a,b,d\}$}](vabd) at (1,17) {};
	\pvx[label=left:\small{$A\cup \{a,b,c\}$}](vabc) at (-1,17) {};
	\pvx[label=below:\small{$A\cup \{a,b,c,d\}$}](vabcd) at (0,16) {};

	\draw[line width=1pt] (v)--(vb);
	\draw[line width=1pt] (v)--(va);		
	\draw[line width=1pt] (va)--(vab);
	\draw[line width=1pt] (vb)--(vab);
	\draw[line width=1pt] (vab)--(vabc);
	\draw[line width=1pt] (vab)--(vabd);
	\draw[line width=1pt] (vabc)--(vabcd);
	\draw[line width=1pt] (vabd)--(vabcd);
	\draw[line width=1pt] (vb)--(vbc);
	\draw[line width=1pt] (vbc)--(vabc);
	
	\node[](wik-2) at (0,19) {\small{$w_{i_{k-2}}$}};	
	\node[](wik-1) at (-1,18) {\small{$w_{i_{k-1}}$}};
	\node[](wik) at (0,17) {\small{$w_{i_{k}}$}};
\end{tikzpicture}
\caption{Case three, with $i_{k-1}+1=i_k$ and $(w_{i_{k-2}},w_{i_{k}})$ is an edge in $H$.}
\end{figure}
\end{center} 
In $\Gamma(H)$, the only vectors supported on the $d$-coordinate are $\bv_{A\cup\{a,b,c,d\}}$ and $\bv_{A\cup\{a,b,d\}}$.
Thus, in any dependence for $\Gamma(H)$, these two vectors have coefficients that are equal in magnitude and opposite in sign.
Because $A\cup\{a,b,c\}$ is not compatible with $H$, the only non-zero vectors supported on the $c$-coordinate are $\bv_{A\cup\{b,c\}}$and $\bv_{A\cup\{a,b,c,d\}}$.
Thus, these vectors have coefficients that are equal in magnitude and opposite in sign in any dependence for $\Gamma(H)$.
Summing these three vectors with these equal and opposite coefficients yields the vector $\bv_{A\cup\{b\}}$ (scaled by the same coefficient).
Thus, any dependence on $\Gamma(H)$ arises from a dependence on $\Gamma(H\setminus \{w_{i_{k-1}},w_{i_{k}}\})$.
By induction, this set is a circuit with a unique minimal dependence.
This unique minimal dependence induces a unique minimal dependence on $\Gamma(H)$, for which every coefficient is non-zero.
\color{black}

Having established that $\Gamma$ is well-defined, we next show that $\Gamma$ is injective.
Suppose $\Gamma(H) = \Gamma(K)$ but $H\neq K$.
Since $H$ and $K$ are induced subgraphs, then this means $V(H)\neq V(K)$.
Without loss of generality, suppose $w_m\in H$ but $w_m\notin K$.  
Since $H$ and $K$ are each connected, then $w_m$ must occur either at one of the ends of the main path ($w_{i_1},\ldots, w_{i_k}$) of $H$, or is a corner of a triangle in $H$, where $w_{i}$ is a corner if $w_{i-1}$ and $w_{i+1}\in H$. 
In either case, this implies $\Gamma(H)\backslash \Gamma(K)\neq \emptyset$, a contradiction.

To see that $\Gamma$ surjective, we induct on the length of $\bw$.
If $\bw=w_0w_1\cdots w_n$, we define $\ell(\bw)=n$.
Consider $\ell(\bw)=0$ so that $\bw=\varepsilon$. See Figure~\ref{fig:base_case}.

\begin{center}
\begin{figure}[H]
\begin{tikzpicture}[scale=.5]
\begin{scope}[xshift=0, yshift=0] 
	\pvx[label=left:\tiny$\emptyset$](emp) at (2,20) {};
	\pvx[label=left:\tiny{$\langle0\rangle$}](v00) at (1,19) {};
	\pvx[label=left:\tiny{$\langle1\rangle$}](v01) at (0,18) {};
	\pvx[label=right:\tiny{$\langle2\rangle$}](v02) at (2,18) {};
	\pvx[label=right:\tiny{$\langle3\rangle$}](v03) at (0,16) {};
	\pvx[label=right:\tiny{$\langle1,2\rangle$}](w0102) at (1,17) {};

	\draw[line width=1pt] (v02)--(v00);
	\draw[line width=1pt] (w0102)--(v01);
	\draw[line width=1pt] (v02)--(v03);
	\draw[line width=1pt] (v01)--(emp);			
	
	\node[](c01) at (1,18) {$\varepsilon$};	
	\node[] at (-3.5,18) {$\hatP=\hatP(\varepsilon):$};	
\end{scope}
\begin{scope}[xshift=350, yshift=140] 
	\pvx[label=above:$0$](v00) at (2,14) {};
	\pvx[label=left:$1$](v01) at (1,13) {};
	\pvx[label=right:$2$](v02) at (3,13) {};
	\pvx[label=below:$3$](v03) at (2,12) {};

	\draw[line width=1pt] (v03)--(v02)--(v00)--(v01)--(v03);	
	\node[] at (-2.75,13) {$Q_{\varepsilon}=\Irr_\wedge(J):$};
\end{scope}
\end{tikzpicture}
\caption{The base case with $\bw = \varepsilon$.}\label{fig:base_case}
\end{figure}
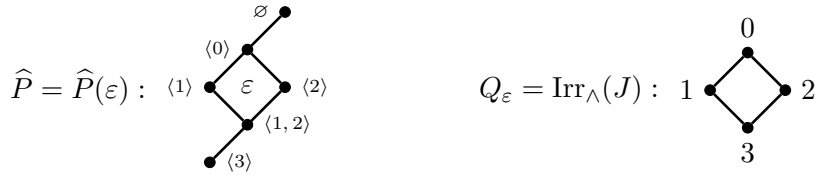
\end{center}

The only circuit arises from $\Sq(\varepsilon)$:
$$\bv_{\langle 0 \rangle} - \bv_{\langle 1 \rangle} - \bv_{\langle 2 \rangle} + \bv_{\langle 1,2 \rangle} = (1,0,0,0) - (1,1,0,0) - (1,0,1,0) + (1,1,1,0) = (0,0,0,0). $$

Suppose the map $\Gamma:\calG(\bw) \rightarrow \calC(Q_{\bw})$ is surjective for all $\bw$ where $\ell(\bw) \leq n-1$.  

Now, let $\bu = w_0\cdots w_{n-1} \in \calV$ and $\bw= \bu w_n \in \calV$.
Suppose $\Sq(w_n)=\{ A,B,C,D \}$ so that $A\prec B$, $A\prec C$, $B\prec D$, $C\prec D$, and $C$ and $B$ are incomparable.
Without loss of generality, there are two cases to consider; $w_{n-1} = w_n = R$, or $w_{n-1} = L$ and $w_{n}= R$. These cases are shown in Figure \ref{fig:surjInductionCases}.

\begin{center}
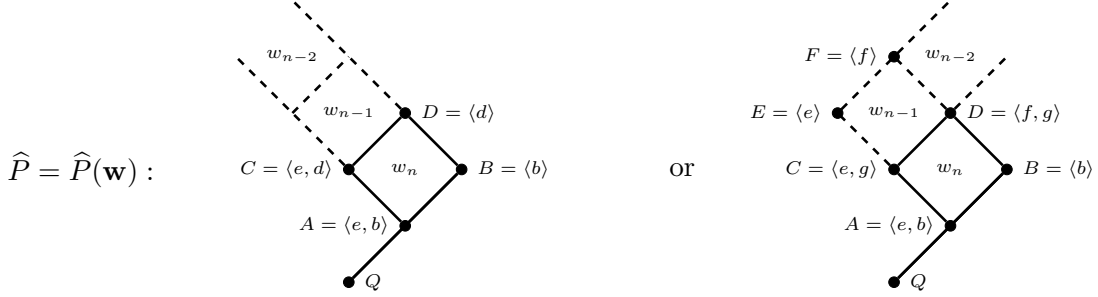
\begin{figure}[H]
\begin{tikzpicture}[scale=.75]
\begin{scope}[xshift=0, yshift=0] 
	\pvx[label=right:\tiny{$D=\langle d\rangle$}](v00) at (1,19) {};
	\pvx[label=left:\tiny{$C=\langle e,d\rangle$}](v01) at (0,18) {};
	\pvx[label=right:\tiny{$B=\langle b\rangle$}](v02) at (2,18) {};
	\pvx[label=left:\tiny{$A=\langle e,b\rangle$}](w0102) at (1,17) {};
	\pvx[label=right:\tiny$Q$](v03) at (0,16) {};

	\draw[line width=1pt] (v02)--(v00);
	\draw[line width=1pt] (w0102)--(v01);
	\draw[line width=1pt] (v02)--(v03);
	\draw[line width=1pt] (v02)--(v03);
	\draw[line width=1pt] (v01)--(v00);			
	\draw[dashed, line width=1pt] (v00)--(-1,21);
	\draw[dashed, line width=1pt] (v01)--(-2,20);
	\draw[dashed, line width=1pt] (-1,19)--(0,20);
	
	\node[](c01) at (1,18) {\tiny$w_n$};	
	\node[](c02) at (0,19) {\tiny$w_{n-1}$};	
	\node[](c03) at (-1,20) {\tiny$w_{n-2}$};		
	\node[] at (-4.75,18) {$\hatP =\hatP(\bw):$};	
\end{scope}
\begin{scope}[xshift=275, yshift=0] 
	\pvx[label=left:\tiny{$F=\langle f\rangle$}] at (0,20) {};
	\pvx[label=left:\tiny{$E=\langle e\rangle$}] at (-1,19) {};
	\pvx[label=right:\tiny{$D=\langle f,g\rangle$}](v00) at (1,19) {};
	\pvx[label=left:\tiny{$C=\langle e,g\rangle$}](v01) at (0,18) {};
	\pvx[label=right:\tiny{$B=\langle b\rangle$}](v02) at (2,18) {};
	\pvx[label=left:\tiny{$A=\langle e,b\rangle$}](w0102) at (1,17) {};
	\pvx[label=right:\tiny$Q$](v03) at (0,16) {};

	\draw[line width=1pt] (v02)--(v00);
	\draw[line width=1pt] (w0102)--(v01);
	\draw[line width=1pt] (v02)--(v03);
	\draw[line width=1pt] (v02)--(v03);
	\draw[line width=1pt] (v01)--(v00);			
	\draw[dashed, line width=1pt] (v00)--(0,20);
	\draw[dashed, line width=1pt] (v01)--(-1,19)--(1,21);
	\draw[dashed, line width=1pt] (v00)--(2,20);
	
	\node[](c01) at (1,18) {\tiny$w_n$};	
	\node[](c02) at (0,19) {\tiny$w_{n-1}$};
	\node[](c03) at (1,20) {\tiny$w_{n-2}$};	
	\node[] at (-3.75,18) {or};	
\end{scope}
\end{tikzpicture}
\caption{The two cases in the induction step.}
\label{fig:surjInductionCases}
\end{figure}
\end{center}

In both cases, $A$ and $C$ are not meet-irreducible, while $B$ is meet-irreducible.
In the case $w_n=w_{n-1}$, $D$ is meet-irreducible,  so we have $B=\langle b\rangle$, $D=\langle d \rangle$, $A=\langle e,b\rangle$ and $C=\langle e,d\rangle$ for some $b,d,e\in Q$.
In the case $w_n \neq w_{n-1}$, $D$ is not meet-irreducible.
If $\Sq(w_{n-1})= \{C,D,E,F \}$ with $E\prec F$, then we have $B=\langle b\rangle$, $E=\langle e\rangle$, $F=\langle f\rangle$, $A=\langle e,b\rangle$, $C= \langle e,d\rangle$ and $D=\langle f,d\rangle$ for some $b,d,e,f\in Q$. See the picture on the left in Figure \ref{fig:surjInductionCases}. 

Let $T =\Irr_\wedge(\hatP(\bu))$.
If $\gamma \in \calC(Q_{\bw})$ is a circuit that does not contain $\bv_A$ or $\bv_B$, then $\gamma$ is a circuit of the vertex set of $\calO(T)$.
By the induction hypothesis, there exists a connected induced subgraph $H\in\calG(\bu)\subset \calG(\bw)$ such that $\Gamma(H) = \gamma$.

We shall show that for any circuit $\gamma$ that contains $\bv_A$ or $\bv_B$, there exists a connected induced subgraph $H\subseteq G$ such that $\Gamma(H) = \gamma$.
First, observe that $\bv_A$ or $\bv_B$ are the only vertices in $\calO(Q_{\bw})$ whose $b$-th coordinate is nonzero, aside from $\bv_{Q_{\bw}}$ (which by Lemma~\ref{lem.mindepset01} we know cannot be contained in any circuit).
Thus, if one of these vectors is in $\gamma$, then they must both be in $\gamma$.
This also implies that any dependency relation involving $\bv_A$ and $\bv_B$ must be of the form $\alpha(\bv_A - \bv_B) + R =\mathbf{0}$ for some nonzero $\alpha\in \R$ and where $R$ is a linear combination of the elements of $\gamma \smallsetminus \{\bv_A, \bv_B\}$.
Second, since $\bv_A\in \gamma$ and $A=\langle e,b\rangle$, then $\gamma$ must contain a vertex of the form $\bv_{\langle e \rangle}$ or $\bv_{\langle e,x\rangle}$ for some $x\in Q_{\bw}$.

Consider the case $w_n=w_{n-1} = R$, so that $C=\langle e,d\rangle$ and $D=\langle d \rangle$. If $\bv_C\in \gamma$, then $\bv_D\in \gamma$ as well, since these are the only vertices in $\calO(T)$ whose $d$-th coordinate is nonzero.
Then by minimality $\gamma$ must be the circuit $\{A,B,C,D \}$ as
$$\bv_A -\bv_B -\bv_C +\bv_D = \mathbf{0}. $$
In this case $H=\{w_n\}$ and $\Gamma(H) = \gamma$.

Otherwise, $\gamma$ does not contain $\bv_C$ nor $\bv_D$,
so suppose $\gamma$ gives rise to a minimal dependence relation of the form
$$\alpha\bv_A - \alpha\bv_B  + R = \mathbf{0}$$
for some nonzero $\alpha\in \R$, and $R$ is a nonzero linear combination of vertices of $\calO(T)$ that does not involve $\bv_A$ or $\bv_B$.
As $\bv_A - \bv_B = \bv_C-\bv_D,$
then
$$ \alpha\bv_C - \alpha\bv_D  + R = \mathbf{0}$$
is another dependence relation which is minimal, because the initial dependence relation was minimal.
This new dependence relation consists of a set of vertices $\gamma'\subseteq\calO(T)$, so by the induction hypothesis, there is a connected induced subgraph $H'\in \calG(\bu)$ such that $\Gamma(H') = \gamma'$.  
From this, it follows that the connected induced subgraph $H= H'\cup \{w_n\} \in \calG(\bw)$ satisfies $\Gamma(H) = \gamma$.

Next, we consider the case  $w_{n-1} = L$ and $w_n = R$.
We have $C=\langle e,g\rangle$, $D=\langle f,g\rangle$, $E=\langle e\rangle $ and $F=\langle f\rangle$. See the picture on the right in Figure \ref{fig:surjInductionCases}. 
We consider four cases; in each we will find an $H$ with $\Gamma(H)=\gamma$, concluding the proof.

\begin{enumerate}
    \item[(i)] Case $\bv_C,\bv_D\in \gamma$: This implies $\gamma$ contains $\{A,B,C,D\}$, which is a circuit.
Thus, it must be that $\gamma=\{A,B,C,D\}$ and hence $\Gamma(\{w_n\})=\gamma$.
    \item[(ii)] Case $\bv_C\in\gamma$, $\bv_D\notin \gamma$: 
Since $\bv_A=\bv_F+\be_g+\be_e+\be_b$, $\bv_B=\bv_F+\be_g+\be_b$, and $\bv_C=\bv_F+\be_g+\be_e$, then $\gamma$ must contain $\bv_F$ since $\bv_F$ is the only other vertex in $\calO(T)$ whose $f$-th entry is nonzero.
In this case, $\gamma$ gives rise to a minimal dependence relation of the form
$$\alpha(\bv_A -\bv_B-\bv_C+\bv_F) +R =\mathbf{0} $$
for some nonzero $\alpha\in\R$, and $R$ is a nonzero linear combination of vertices of $\calO(T)$ that does not involve $\bv_A,\bv_B,\bv_C,\bv_D,\bv_E$, or $\bv_F$.
As $\bv_A -\bv_B-\bv_C = -\bv_D$, then
$$\alpha(\bv_F-\bv_D) + R = \mathbf{0} $$
is another minimal dependence relation consisting of vertices $\gamma'\subseteq \calO(T)$ so there exists $H'\in\calG(\bu)$ such that $\Gamma(H')=\gamma'$, and it follows that $\Gamma(H'\cup \{w_n\}) =\gamma$.
    \item[(iii)] Case $\bv_C\notin \gamma$, $\bv_F \in \gamma$: By a similar analysis as above, it must be that $\gamma$ is the circuit $\{A,B,E,F\}$, since \[\bv_A - \bv_B - \bv_E + \bv_F =\mathbf{0},\]
and $H$ is the subgraph induced on $\{w_{n-1},w_n\}$, where $\Gamma(H) = \gamma$.
    \item[(iv)] Case $\bv_C,\bv_F\notin \gamma$: Since $\bv_C\notin\gamma$, we must have $\bv_E\in \gamma$ as $\bv_A = \bv_B+\be_e$ and $\bv_E$ is the only other vertex in $\calO(T)$ whose $e$-th coordinate is nonzero.
Since $A,B,E < F$, then the $f$-th entry of each of $\bv_A,\bv_B,\bv_E$ are nonzero.
This implies that $\bv_D$ is in $\gamma$ since the only other vertex in $\calO(T)$ whose $f$-th entry is nonzero is $\bv_F$.

Thus, $\gamma$ gives rise to a minimal dependence relation of the form
$$\alpha(\bv_A - \bv_B + \bv_D -\bv_E) + R =\mathbf{0} $$
for some nonzero $\alpha\in \R$, and $R$ is a nonzero linear combination of vertices of $\calO(T)$ that does not involve $\bv_A,\bv_B,\bv_C,\bv_D,\bv_E$, or $\bv_F$.
As $\bv_A -\bv_B+\bv_D=\bv_C$, then
$$\alpha(\bv_C -\bv_E) + R =\mathbf{0} $$
is another minimal dependence relation consisting of vertices $\gamma'\subseteq \calO(T)$ so there exists $H'\in \calG(\bu)$ such that $\Gamma(H')=\gamma'$, and $\Gamma(H'\cup\{w_n\}) = \gamma$.

\end{enumerate}
\end{proof}

\begin{remark}
Theorem~\ref{thm.bijection} does not hold for a generalized snake word $\bw$ outside of $\mathcal{V}$.  Computational evidence suggests that the size of $\calG(\bw)$ is an upper bound for the number of circuits of $\calO(Q_{\bw})$.
\end{remark}

Next, we obtain a number of corollaries about the structure of the circuits in the vertex set of $\calO(Q_{\bw})$.

\begin{corollary}\label{cor:circuit}
Let $\bw\in \calV$.  A circuit $Z$ with partition $(Z_{+}, Z_{-})$ in the vertex set of $\calO(Q_{\bw})$ has an affine dependence relation of the form \[\sum\limits_{j \in Z_+}  \mathbf{v}_j = \sum\limits_{j \in Z_-} \mathbf{v}_j.\]  
In particular, $|{Z_-}|=|{Z_+}|$.
\end{corollary}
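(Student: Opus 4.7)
The plan is to leverage the explicit affine dependence relation constructed in the proof of Theorem~\ref{thm.bijection}. By that theorem, every circuit $Z$ in the vertex set of $\calO(Q_{\bw})$ arises as $Z = \Gamma(H)$ for a unique nonempty connected induced subgraph $H = \{w_{i_1},\ldots,w_{i_k}\}$ of $G(\bw)$, and the accompanying affine dependence is equation~(\ref{eq:11}), namely
\[
\sum_{j=1}^{k} \sgn(\Sq(w_{i_j})) \cdot \sigma_{i_j} = \mathbf{0},
\]
where each square relation $\sigma_{i_j} = \bv_A - \bv_{A \cup \{x\}} - \bv_{A \cup \{y\}} + \bv_{A \cup \{x,y\}}$ has coefficients $(+1,-1,-1,+1)$ on the (top, middle, middle, bottom) corners of $\Sq(w_{i_j})$. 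I would argue that in this combination every nonzero coefficient is exactly $\pm 1$. Since each $\sigma_{i_j}$ has coefficients summing to $0$, this will immediately force the total signed sum of coefficients to be $0$, so the dependence can be rewritten as $\sum_{j \in Z_+} \bv_j = \sum_{j \in Z_-} \bv_j$ and in particular $|Z_+| = |Z_-|$.

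\textbf{Key step: coefficients are $\pm 1$.} The first sub-step is the structural observation that, for $\bw \in \calV$, every element of $\hatP$ is a corner of at most three squares in the Hasse diagram, and lies in exactly three squares precisely when those squares correspond to a triangle ($LLR$ or $RRL$) of $G(\bw)$. By definition, an element $A \in Z = \Gamma(H)$ is compatible with $H$, i.e., lies in an odd number of squares of $\{\Sq(w_i) : w_i \in H\}$, which by the preceding observation must be $1$ or $3$. If it is $1$, the coefficient of $\bv_A$ is manifestly $\pm 1$. If it is $3$, then $H$ necessarily contains all three vertices of a triangle of $G(\bw)$; in this situation, Case~3 of the proof of Theorem~\ref{thm.bijection} already spells out the three squares explicitly, and the shared vertex $A \cup \{a,b\}$ is the bottom of $\Sq(w_{i_{k-2}})$, a middle of $\Sq(w_{i_{k-1}})$, and the top of $\Sq(w_{i_k})$. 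Because the three squares get the same value of $\sgn$ when the path $w_{i_{k-2}} \to w_{i_{k-1}} \to w_{i_k}$ is traversed (each consecutive pair is at distance~$1$), the coefficient on $\bv_{A \cup \{a,b\}}$ is $(+1) + (-1) + (+1) = +1$, up to the global sign of the triangle. The remaining compatible vertices in these three squares each belong to only one of them and thus contribute $\pm 1$.

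\textbf{Main obstacle and conclusion.} The principal difficulty is the 3-square case: it requires simultaneously tracking the recursive definition of $\sgn$ along a triangle and the role (top, middle, or bottom of its diamond) played by the shared vertex in each of the three squares, to confirm that the three $\pm 1$ contributions collapse to $\pm 1$ rather than $\pm 3$. Once that verification is in place, the corollary follows at once: the dependence relation in equation~(\ref{eq:11}) has all nonzero coefficients equal to $\pm 1$, the sum of all coefficients equals $\sum_{j} \sgn(\Sq(w_{i_j})) \cdot 0 = 0$, so the positive and negative indices balance, giving $\sum_{j \in Z_+} \bv_j = \sum_{j \in Z_-} \bv_j$ and $|Z_+| = |Z_-|$.
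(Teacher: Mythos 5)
Your proof is correct and follows essentially the same route as the paper's: both invoke the bijection $\Gamma$ from Theorem~\ref{thm.bijection}, read off the affine dependence from equation~(\ref{eq:11}), observe that every nonzero coefficient there is $\pm 1$, and conclude $|Z_+|=|Z_-|$ because the coefficients sum to zero. Your explicit check that a vertex lying in the three squares of a triangle still receives coefficient $\pm 1$ (the telescoping $+1-1+1$, using that the three squares carry the same value of $\sgn$) is a worthwhile filling-in of a detail the paper dispatches with the phrase ``by construction.''
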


\begin{proof}
By proof of Theorem~\ref{thm.bijection}, a circuit $Z=\Gamma(H)$ for some nonempty connected induced subgraph $H$ of $G(\bw)$.  Moreover, an affine dependence relation for $Z$ is given in (\ref{eq:11}), where by construction every vertex of $Z$ appears with coefficient $\pm1$. This shows that there is an affine dependence relation for $Z$ as in the statement of the corollary.  Furthermore, we conclude that $|{Z_-}|=|{Z_+}|$, because the dependence is affine. 
\end{proof}

\begin{corollary}
Let $H=\{w_{i_1},\ldots, w_{i_k}\}$ be a connected induced subgraph of $G$ induced by the subword $w_{i_1}\cdots w_{i_k}$ of $\bw = w_0\cdots w_{n}\in\calV$ such that $i_1<\cdots <i_k$.
Suppose $H'=H \cup\{w_{i_j}\}$ is a connected induced subgraph of $G$ such that $i_k < i_j$.
Then
\begin{enumerate}
\item[(a)] If $w_{i_j} = w_{i_k}$, then $|\Gamma(H')|=|\Gamma(H)|$.
\item[(b)] If $w_{i_j} \neq w_{i_k}$, then $|\Gamma(H')|=|\Gamma(H)|+2$.
\end{enumerate}
In the case where $H = \{\varepsilon\}$, $|\Gamma(H')|=|\Gamma(H)|$. 
Thus, the smallest circuits in the vertex set of $\calO(Q_{\bw})$ have four vertices.
The largest circuits have $4+2t$ vertices where $t$ is the number of turns (an occurrence of $LLR$ or $RRL$) in $\bw$.
\end{corollary}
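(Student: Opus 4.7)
The plan is to invoke the bijection $\Gamma$ from Theorem~\ref{thm.bijection} and track how the circuit size changes when one more vertex is adjoined to $H$. The key observation is that $\Gamma(H)$ consists of the vectors $\bv_A$ for filters $A$ lying in an odd number of the squares $\{\Sq(w):w\in H\}$; equivalently, writing $\gamma(H)=\bigoplus_{w\in H}\Sq(w)$ for the symmetric difference in $\hatP$, we have $\Gamma(H)=\{\bv_A:A\in \gamma(H)\}$. Since $\gamma(H')=\gamma(H)\oplus \Sq(w_{i_j})$, inclusion--exclusion immediately gives
\[
|\Gamma(H')|-|\Gamma(H)| \;=\; 4-2\,|\gamma(H)\cap \Sq(w_{i_j})|,
\]
so both (a) and (b) reduce to computing $|\gamma(H)\cap \Sq(w_{i_j})|$.

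I would then analyze this intersection by a case split paralleling the three structural cases in the proof of Theorem~\ref{thm.bijection}, according to whether $\Sq(w_{i_j})$ is glued to the existing squares of $H$ along a full edge ($i_j=i_k+1$) or at a single corner vertex ($i_j=i_k+2$, i.e.\ via a triangle edge of $G(\bw)$). In case (a), where $w_{i_j}=w_{i_k}$, the patterns $LLL$ and $RRR$ produce no triangle, so $\Sq(w_{i_j})$ meets $H$'s squares only along its shared edge with $\Sq(w_{i_k})$; the constraint $\bw\in\calV$ prevents either of these two shared elements from lying in $\Sq(w_{i_{k-1}})$, so both are in $\gamma(H)$ and the intersection has size $2$. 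In case (b), where $w_{i_j}\neq w_{i_k}$, the addition of $w_{i_j}$ is supported by a turn pattern $LLR$ or $RRL$, so the unique shared corner lies in three consecutive squares of $\hatP$; exactly one of these is outside $H$, and a parity count gives intersection size $1$ with a net change of $+2$. The special case $H=\{\varepsilon\}$ is analogous to (a) and can be checked directly by observing that the two elements shared by $\Sq(\varepsilon)$ and $\Sq(w_1)$ each lie in exactly one square of $H$.

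The two consequences then follow at once: $\Gamma(\{w_i\})=\Sq(w_i)$ has exactly four elements for every single vertex, giving the minimum circuit size; and building $H=G(\bw)$ from $\{\varepsilon\}$ by appending $w_1,w_2,\ldots,w_n$ one at a time yields no change at each $LL$ or $RR$ step (case (a) or the special case) and a $+2$ change at each $LLR$ or $RRL$ step (case (b)), so $|\Gamma(G(\bw))|=4+2t$. The main technical obstacle I anticipate is the parity bookkeeping in case (b): verifying that the triangle corner is indeed the \emph{unique} element of $\Sq(w_{i_j})$ contributing to $\gamma(H)\cap \Sq(w_{i_j})$ requires using the condition $\bw\in\calV$ to rule out additional overlaps of $\Sq(w_{i_j})$ with earlier squares of $H$ further back in the chain.
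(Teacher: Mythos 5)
Your reduction is the right one: writing $\gamma(H)=\bigoplus_{w\in H}\Sq(w)$ for the set of filters lying in an odd number of the squares of $H$, one has $\Gamma(H)=\{\bv_A:A\in\gamma(H)\}$ and hence $|\Gamma(H')|=|\Gamma(H)|+4-2\,|\gamma(H)\cap \Sq(w_{i_j})|$, so everything is a local intersection count. Your case (a) is complete: there $i_j=i_k+1$ necessarily, the two elements of $\Sq(w_{i_k})\cap\Sq(w_{i_j})$ are the bottom and one side vertex of $\Sq(w_{i_k})$, neither of which lies in $\Sq(w_{i_{k-1}})$ (two squares with indices differing by two meet only at the top corner of the lower one, and only when the corresponding letters differ, which $w_{i_j}=w_{i_k}$ rules out), and squares whose indices differ by three or more are disjoint; so the intersection has size two and the count is unchanged.

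The gap is in case (b), at the sentence ``exactly one of these is outside $H$.'' That assertion amounts to $w_{i_j-1},w_{i_j-2}\in H$ and is never justified; it also does not hold uniformly. If $i_j=i_k+2$ (the triangle edge), then $w_{i_j-1}\notin H$ by hypothesis, so \emph{two} of the three squares through the shared corner are outside $H$; the corner then lies in an odd number (one) of squares of $H$, and the intersection still has size one, but for the opposite parity reason --- so that sub-case needs its own sentence. If $i_j=i_k+1$, then $\calV$ forces $w_{i_k-1}=w_{i_k}$, which kills the triangle edge at $w_{i_k}$ and makes $w_{i_k-1}$ the unique smaller-index neighbour of $w_{i_k}$ in $G$; connectivity then gives $w_{i_k-1}\in H$, \emph{but only when} $|H|\geq 2$. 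When $H=\{w_{i_k}\}$ is a singleton with $i_k\geq 1$, the shared corner lies in exactly one square of $H$, the intersection has size two, and the formula returns $|\Gamma(H')|=|\Gamma(H)|$: the symmetric difference of two edge-adjacent squares always has four elements (this is exactly case one in the proof of Theorem~\ref{thm.bijection}), so part (b) as literally stated fails for, e.g., $\bw=\varepsilon LLR$, $H=\{w_2\}$, $H'=\{w_2,w_3\}$. You therefore need either to restrict to the chain of subgraphs grown outward from $\{\varepsilon\}$ (which is all the concluding $4+2t$ count uses, and is evidently the intended scope given the separate treatment of $H=\{\varepsilon\}$), or to add the hypothesis $w_{i_j-2}\in H$ to case (b); in either reading, the step your proof must supply --- and currently does not --- is the verification that $w_{i_k-1}$ belongs to $H$.
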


Using the bijection of Theorem~\ref{thm.bijection}, we can recursively compute the number of circuits in the vertex set of $\calO(Q_{\bw})$.

\begin{corollary}
Let $\bu=w_0\cdots w_{n-1}\in\calV$ and $\bw=\bu w_n\in\calV$.
Let $N_{k}$ be the number of connected induced subgraphs of $G(\bu)$ that contain $w_k$ but not $w_{k+1}$.
Then $|\calG(\varepsilon)| = 1$, $|\calG(\varepsilon w_1)| = 3$, and
\begin{enumerate}
\item[(a)] If $w_n = w_{n-1}$, then $|\calG(\bw)|=|\calG(\bu)|+N_{n-1}+1$.
\item[(b)] If $w_n \neq w_{n-1}$, then $|\calG(\bw)|=|\calG(\bu)|+N_{n-1}+N_{n-2}+1$.
\end{enumerate}
\end{corollary}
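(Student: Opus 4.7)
The plan is to verify the recursion by partitioning $\calG(\bw)$ according to whether the subgraph contains the newly added vertex $w_n$. First I would check the two base cases directly: $G(\varepsilon)$ is a single vertex, so $|\calG(\varepsilon)| = 1$; and $G(\varepsilon w_1)$ is a single edge with vertex set $\{w_0, w_1\}$, giving the three nonempty connected induced subgraphs $\{w_0\}$, $\{w_1\}$, $\{w_0, w_1\}$.

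For the recursive step with $n \geq 2$, the first observation is that $G(\bu)$ is the induced subgraph of $G(\bw)$ on $\{w_0, \ldots, w_{n-1}\}$, since both path edges and triangle edges among these indices depend only on $w_0 \cdots w_{n-1}$. Using this, I would partition $\calG(\bw)$ into three classes: (I) those $H$ with $w_n \notin V(H)$, which correspond bijectively to $\calG(\bu)$ and contribute $|\calG(\bu)|$; (II) the singleton $\{w_n\}$, contributing $1$; and (III) those $H$ with $w_n \in V(H)$ and $|V(H)| \geq 2$.

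The crux is enumerating class (III) by identifying the neighbors of $w_n$ in $G(\bw)$. In case (a), since $w_n = w_{n-1}$, the substring $w_{n-2} w_{n-1} w_n$ has the form $?LL$ or $?RR$, which matches neither $LLR$ nor $RRL$, so $w_n$'s only neighbor is $w_{n-1}$. In case (b), the constraint $\bw \in \calV$ (no $LRL$ or $RLR$ substring) combined with $w_n \neq w_{n-1}$ forces $w_{n-2} = w_{n-1}$, so $w_{n-2} w_{n-1} w_n \in \{LLR, RRL\}$ produces the triangle edge $(w_{n-2}, w_n)$; hence $w_n$ is adjacent precisely to $w_{n-1}$ and $w_{n-2}$. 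Consequently, every $H$ in class (III) must contain $w_{n-1}$ in case (a); in case (b) it must contain either $w_{n-1}$ (with $w_{n-2}$ in or out) or $w_{n-2}$ without $w_{n-1}$, and these two subcases are disjoint and exhaustive.

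The last step is to verify that $H \mapsto H \setminus \{w_n\}$ is a bijection from class (III) onto the appropriate connected induced subgraphs of $G(\bu)$: in case (a) those containing $w_{n-1}$, counted by $N_{n-1}$; in case (b) those containing $w_{n-1}$ (counted by $N_{n-1}$) together with those containing $w_{n-2}$ but not $w_{n-1}$ (counted by $N_{n-2}$). The main obstacle is showing that removing $w_n$ preserves connectedness. I would argue that any path in $H$ visiting $w_n$ can be rerouted using only edges of $G(\bu)$, since the neighbors of $w_n$ in $H$ lie in $\{w_{n-1}, w_{n-2}\}$ and these two vertices are themselves adjacent in $G(\bu)$ via the path edge $(w_{n-2}, w_{n-1})$. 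Conversely, adjoining $w_n$ to any such $K$ preserves connectedness because $K$ contains a neighbor of $w_n$. Summing the contributions of classes (I)--(III) then yields the stated recursions.
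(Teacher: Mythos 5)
Your proof is correct and follows essentially the same route as the paper's: partition $\calG(\bw)$ by whether $H$ contains $w_n$, observe that $\deg_{G(\bw)}(w_n)$ is $1$ in case (a) and $2$ in case (b), and biject the subgraphs containing $w_n$ (other than the singleton $\{w_n\}$) with the subgraphs of $G(\bu)$ counted by $N_{n-1}$, respectively $N_{n-1}+N_{n-2}$, via deletion of $w_n$. You in fact supply a detail the paper leaves implicit, namely that deleting $w_n$ preserves connectedness because its only possible neighbors $w_{n-1}$ and $w_{n-2}$ are themselves adjacent in $G(\bu)$.
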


\begin{proof}\
\begin{enumerate}
\item[(a)] If $w_n=w_{n-1}$, then $\deg_{G(\bw)} w_n =1$.  
Thus for any connected induced subgraph $H$ of $G(\bw)$ that contains $w_n$, the connected induced subgraph $H'=H\backslash \{w_n\}$ contains $w_{n-1}$.
From the proof of Theorem~\ref{thm.bijection}, every circuit supported on the squares corresponding to $H$ has a corresponding circuit supported on the squares corresponding to $H'$.
The claim follows as $N_{n-1}$ counts the subgraphs of $G(\bu)$ which contain $w_{n-1}$ and $1$ counts the subgraph $\{w_n\}$. 
\item[(b)] 
If $w_n=w_{n-1}$, then $\deg_{G(\bw)} w_n =2$. 
Thus for any connected induced subgraph $H$ of $G(\bw)$ that contains $w_n$, the connected induced subgraph $H'=H\backslash \{w_n\}$ contains at least one of $w_{n-2}$ and $w_{n-1}$.
From the proof of Theorem~\ref{thm.bijection}, every circuit supported on the squares corresponding to $H$ has a corresponding circuit supported on the squares corresponding to $H'$.
The claim follows as $N_{n-2}$ is the number of connected induced subgraphs of $G(\bu)$ which contain $w_{n-2}$ but not $w_{n-1}$, $N_{n-1}$ is the number of connected induced subgraphs of $G(\bu)$ which contain $w_{n-1}$, and $1$ counts the subgraph $\{w_n\}$. 
\end{enumerate}
\end{proof}

\begin{remark}
When $\bw =  \varepsilon RRLLRRLL\ldots$, the poset $Q_{\bw}=P(\varepsilon RLRLRL\ldots) = S_k$ is the snake poset.  
The number of circuits of the order polytope of the snake poset is equal to the number of nonempty connected induced subgraphs of the graph $TS_{2k+1}$, defined as follows.
For odd $n$, let $TS_n$ denote the graph on $n$ vertices formed by taking the the path graph on $n$ vertices $P_{n}$ and adding edges $(2i-1, 2i+1)$ for $i = 1, \dots, (n-1)/2$.
The graph $TS_n$ is called a \emph{triangular snake graph}; see~\cite{GallianSurvey} and the references therein for additional information about triangular snakes.
\end{remark}

The properties of circuits imply the following results regarding triangulations of $\cO(Q_{\bw})$. 

\begin{lemma}\label{lem:unimodular}
Let $\bw\in\calV$.  If two triangulations of the polytope $\cO(Q_{\bw})$ are connected by a flip, then they have the same number of simplices.
\end{lemma}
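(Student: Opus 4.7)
The plan is to invoke Theorem~\ref{thm:flip} to read off exactly how the two triangulations differ, and then apply Corollary~\ref{cor:circuit} to conclude the two sides of the flip contribute the same number of simplices.

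First I would recall the flip description: if $\mathcal{T}_1$ and $\mathcal{T}_2$ are triangulations of $\cO(Q_{\bw})$ connected by a flip, then by Theorem~\ref{thm:flip} there is a circuit $Z$ of (the vertex set of) $\cO(Q_{\bw})$ with Radon partition $(Z_+, Z_-)$, a common link $\mathrm{L}$ of the maximal simplices of $\mathcal{T}_Z^{\pm}$ inside both triangulations, and $\mathcal{T}_2$ is obtained from $\mathcal{T}_1$ by removing the subcomplex $\mathcal{T}_Z^+ \ast \mathrm{L}$ and replacing it with $\mathcal{T}_Z^- \ast \mathrm{L}$. By definition $\mathcal{T}_Z^+ = \{Z \setminus \{j\} : j \in Z_+\}$ and $\mathcal{T}_Z^- = \{Z \setminus \{j\} : j \in Z_-\}$, so the maximal simplices removed number exactly $|Z_+|$ and those inserted number exactly $|Z_-|$, each joined with the same link $\mathrm{L}$. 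Consequently, the difference in the number of maximal simplices of $\mathcal{T}_1$ and $\mathcal{T}_2$ equals $(|Z_-| - |Z_+|) \cdot (\text{number of maximal faces of } \mathrm{L})$.

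Next I would invoke the key structural input for words in $\calV$. Since $\bw \in \calV$, Corollary~\ref{cor:circuit} guarantees that every circuit $Z$ of the vertex set of $\cO(Q_{\bw})$ has a balanced Radon partition, that is, $|Z_+| = |Z_-|$. Substituting into the displayed difference above shows that $\mathcal{T}_1$ and $\mathcal{T}_2$ contain the same number of maximal simplices, which is the desired conclusion.

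No serious obstacle is expected: the content of the lemma is entirely packaged in the balance $|Z_+| = |Z_-|$, which has already been established in Corollary~\ref{cor:circuit} via the fact that each circuit $\Gamma(H)$ comes equipped with an affine dependence in which every coefficient is $\pm 1$. The only mild care needed is to note that the counts $|Z_{\pm}|$ refer to maximal simplices of $\mathcal{T}_Z^{\pm}$ (not to the number of simplices of all dimensions in the subcomplex $\mathcal{T}_Z^{\pm} \ast \mathrm{L}$), but this is exactly what is needed when comparing the number of top-dimensional simplices of $\mathcal{T}_1$ and $\mathcal{T}_2$.
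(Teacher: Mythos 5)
Your proposal is correct and follows essentially the same route as the paper: both arguments combine the flip characterization of Theorem~\ref{thm:flip} with the balance $|Z_+|=|Z_-|$ from Corollary~\ref{cor:circuit} to see that the removed subcomplex $\mathcal{T}_Z^+\ast\mathrm{L}$ and the inserted subcomplex $\mathcal{T}_Z^-\ast\mathrm{L}$ contribute the same number of maximal simplices. Your explicit bookkeeping of the difference as $(|Z_-|-|Z_+|)$ times the number of maximal faces of $\mathrm{L}$ is a slightly more careful rendering of the same counting the paper performs.
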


\begin{proof}
 Let $\mathcal{T}_1,\mathcal{T}_2$ be a pair of triangulations of $\cO(Q_{\bw})$ that differ by a flip at circuit $Z$.  Then $\mathcal{T}_1,\mathcal{T}_2$ are related as in Theorem~\ref{thm:flip}. By Corollary~\ref{cor:circuit}, we have  $|{Z_-}|=|{Z_+}|$, which implies that the two triangulations $\mathcal{T}_Z^-, \mathcal{T}_Z^+$ of $Z$ have the same number of simplices. Therefore, because by definition the link $\mathrm{L}$ is disjoint from both $\mathcal{T}_Z^-, \mathcal{T}_Z^+$, we see that $\mathcal{T}_1=\mathcal{T}_Z^+\ast \mathrm{L}$ and $\mathcal{T}_2=\mathcal{T}_Z^-\ast \mathrm{L}$ also have the same number of simplices.  
 This shows that the number of simplices in a triangulation does not change when performing a flip. 
\end{proof}


\begin{theorem}\label{thm:unimodular}
For $\bw\in\calV$, every vertex of the secondary polytope of $\cO(Q_{\bw})$ is a unimodular triangulation.
Thus, every triangulation of $\cO(Q_{\bw})$ is unimodular.
\end{theorem}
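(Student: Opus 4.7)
The plan is to combine the fundamental fact that the canonical triangulation $\mathcal{T}_{\mathrm{can}}$ of $\cO(Q_{\bw})$ is regular and unimodular with Lemma~\ref{lem:unimodular}. Since $\mathcal{T}_{\mathrm{can}}$ has exactly $V := \vol(\cO(Q_{\bw}))$ maximal simplices, each of normalized volume one, the core idea is a simple volume-counting argument: any triangulation with $V$ simplices must have every simplex unimodular, because the vertices of $\cO(Q_{\bw})$ are $0/1$ vectors, so each simplex has integer normalized volume at least one, and $V$ summands each $\geq 1$ can sum to $V$ only when each equals $1$.

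For the first assertion, I would invoke the standard fact recalled at the end of Subsection~\ref{sec:triangulations} (cf.~\cite{DeLoeraRambauSantos}) that the $1$-skeleton of the secondary polytope sits inside the flip graph as a spanning subgraph on the regular triangulations, so the induced subgraph on regular triangulations is connected. Consequently, any vertex $\mathcal{T}$ of the secondary polytope is connected to $\mathcal{T}_{\mathrm{can}}$ by a finite sequence of flips. Lemma~\ref{lem:unimodular} ensures that each flip preserves the number of maximal simplices, so $\mathcal{T}$ has exactly $V$ maximal simplices. The $0/1$ counting argument above then forces every simplex of $\mathcal{T}$ to have normalized volume one, yielding that $\mathcal{T}$ is unimodular.

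The second assertion — that \emph{every} triangulation (not only the regular ones) is unimodular — is stronger, and the same volume accounting reduces it to showing that any triangulation of $\cO(Q_{\bw})$ has exactly $V$ maximal simplices. One natural path is to exploit the balanced type $(k,k)$ structure of all circuits (Corollary~\ref{cor:circuit}) together with the combinatorial characterization of circuits via connected induced subgraphs of $G(\bw)$ from Theorem~\ref{thm.bijection}: because no flip can alter the simplex count, it would suffice to argue that the full flip graph of $\cO(Q_{\bw})$ is connected, or alternatively that every triangulation admits a chain of flips reaching $\mathcal{T}_{\mathrm{can}}$.

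The main obstacle lies precisely in this last step. Connectedness of the full flip graph is known to fail for general point configurations, so for the family $\cO(Q_{\bw})$ with $\bw \in \calV$ one must leverage the rigid combinatorial structure uncovered in Section~\ref{sec:circuits} — namely that every circuit is balanced and is produced by the map $\Gamma$ of Theorem~\ref{thm.bijection} — to rule out non-regular triangulations with strictly fewer than $V$ maximal simplices. Once that lower bound is in place, the $0/1$ integrality argument closes the proof uniformly for all triangulations.
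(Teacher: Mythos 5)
Your argument for the first assertion matches the paper's: the vertices of the secondary polytope are the regular triangulations, these are connected by flips, Lemma~\ref{lem:unimodular} shows a flip preserves the number of maximal simplices, and since the canonical triangulation is regular and unimodular the volume-counting argument (each simplex of a $0/1$-polytope has integer normalized volume at least one, and the simplex counts sum to $\vol(\cO(Q_{\bw}))$) forces every regular triangulation to be unimodular. That part is complete and is essentially the paper's proof.

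The second assertion is where your proposal has a genuine gap, and you have correctly diagnosed it but chosen the wrong escape route. You propose to show that every (possibly non-regular) triangulation has exactly $V$ maximal simplices, by connecting it to $\mathcal{T}_{\mathrm{can}}$ through flips or by some count derived from the circuit structure. Neither of these is carried out, and the flip-connectedness of the \emph{full} flip graph is exactly the kind of statement one should not expect to prove easily (as you note, it fails for general configurations, and nothing in Section~\ref{sec:circuits} establishes it here). The paper avoids this entirely by working at the level of individual simplices rather than whole triangulations: for a lattice polytope, the conditions ``all regular triangulations are unimodular,'' ``all full-dimensional simplices spanned by points of the configuration are unimodular,'' and ``all triangulations are unimodular'' are equivalent. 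The key direction is that any full-dimensional simplex $\sigma$ on the vertices of $\cO(Q_{\bw})$ occurs as a cell of some \emph{placing} (hence regular) triangulation, namely one that places the vertices of $\sigma$ first; by the first assertion that triangulation is unimodular, so $\sigma$ has normalized volume one. An arbitrary triangulation is a collection of such simplices, hence unimodular, with no connectedness claim about the full flip graph needed. Replacing your final paragraph with this simplex-level reduction closes the proof.
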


\begin{proof}
The vertices of a secondary polytope of $\cO(Q_{\bw})$ correspond to regular triangulations of $\cO(Q_{\bw})$, see \cite[Theorem 5.1.9]{DeLoeraRambauSantos}.  Moreover, all regular triangulations are connected by flips \cite[Theorem 5.3.1]{DeLoeraRambauSantos}. 
In particular, because the canonical triangulation of $\cO(Q_{\bw})$ is regular and unimodular, then Lemma~\ref{lem:unimodular} implies that all regular triangulations of  $\cO(Q_{\bw})$ are also unimodular. 
Finally, it is a straightforward exercise to show that for a lattice polytope $P$, the following conditions are equivalent: all full-dimensional simplices are unimodular; all triangulations are unimodular; all regular triangulations are unimodular; all placing triangulations are unimodular.
Hence, all triangulations are unimodular.
\end{proof}


\section{Flips and a twist action on triangulations} \label{sec:flips_twists}

In this section we will take a deeper look at the 1-skeleton of the secondary polytope. Starting from the canonical triangulation of $\calO(Q_\bw)$ we will see that for a length $k$ word there are exactly $k+1$ flips, where a single flip corresponds to a local move along an edge in the flip graph. As a consequence, we fully determine the flip graph of regular triangulations in the special case of the ladder. 
We will also introduce the notion of twists which act globally by inducing automorphisms on the flip graph.

\subsection{Theorems regarding twists, flips, and triangulations}

Using the notation from Section~\ref{sec:circuits}, let $\bw$ be a generalized snake word in $\mathcal{V}$ and consider the associated poset $Q_{\bw}$.
In this section, our goal is to prove four theorems about flips of regular triangulations for $\cO(Q_{\bw})$.
We state the four theorems below; all undefined terms and proofs will be given in later subsections.
First, we classify the flips that can be made from the canonical triangulation of $\cO(Q_{\bw})$.

\begin{theorem}\label{thm:flipsfromcanonical}
Let $\bw\in \mathcal{V}$ have length $k$.
The canonical triangulation of $\cO(Q_{\bw})$ admits exactly $k+1$ flips.
\end{theorem}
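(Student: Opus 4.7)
The plan uses the bijection from Theorem~\ref{thm.bijection}, which identifies circuits of $\cO(Q_\bw)$ with nonempty connected induced subgraphs of $G(\bw)$. Since $\bw$ has length $k$, the graph $G(\bw)$ has $k+1$ vertices $w_0,\ldots,w_k$, so there are exactly $k+1$ single-vertex subgraphs. I will prove that the canonical triangulation $\mathcal{T}$ admits a flip at the circuit $\Gamma(H)$ if and only if $|H|=1$, which will yield the count of $k+1$.

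First, for $H=\{w_i\}$, write $\Sq(w_i)=\{A,A\cup\{x\},A\cup\{y\},A\cup\{x,y\}\}$, so that $Z_+=\{A,A\cup\{x,y\}\}$ and $Z_-=\{A\cup\{x\},A\cup\{y\}\}$. The maximal simplices of $\mathcal{T}_Z^+$ each contain the incomparable pair $\{A\cup\{x\},A\cup\{y\}\}$, so they are not chains in $\hatP$ and thus not faces of any simplex of $\mathcal{T}$. In contrast, the two maximal simplices of $\mathcal{T}_Z^-$ are the covering chains $\{A,A\cup\{x\},A\cup\{x,y\}\}$ and $\{A,A\cup\{y\},A\cup\{x,y\}\}$; these are precisely the two local ways a maximal chain of $\hatP$ can traverse $\Sq(w_i)$, and every such maximal chain contains one of them as a face. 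Hence $\mathcal{T}$ contains $\mathcal{T}_Z^-$. Moreover, since all comparisons involved are cover relations, the link of each of these two triangles in $\mathcal{T}$ is precisely the subcomplex of $\mathcal{T}$ supported on the chains of $\hatP$ drawn from $[\widehat{0},A)\cup(A\cup\{x,y\},\widehat{1}]$, which does not depend on which triangle was chosen. Thus condition (ii) of Theorem~\ref{thm:flip} is satisfied, and a flip at $Z$ exists.

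Next, suppose $|H|\geq 2$. I claim condition (ii) of Theorem~\ref{thm:flip} fails. The key structural observation is that for each $w_j\in H$, the square $\Sq(w_j)$ contributes an element \emph{not} in $Z=\Gamma(H)$: by the compatibility construction in the proof of Theorem~\ref{thm.bijection}, this missing element lies strictly between two elements of $Z$ in $\hatP$. Consequently, every maximal simplex $\sigma$ of $\mathcal{T}_Z^\pm$, obtained by deleting a vertex $j\in Z_\pm$ from $Z$, is a chain in $\hatP$ with one or more gaps, i.e., consecutive vertices of $\sigma$ that are not cover-related. The precise positions of these gaps, and which intermediate filters of $\hatP$ can fill them, depend on which vertex was removed. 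Since the link of $\sigma$ in $\mathcal{T}$ is exactly the complex recording all admissible completions of $\sigma$ to a maximal chain of $\hatP$, different maximal simplices of $\mathcal{T}_Z^\pm$ produce distinct link complexes, and therefore no common link $\mathrm{L}$ exists.

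The main obstacle is to verify rigorously, for every connected induced subgraph $H$ with $|H|\geq 2$, that the intermediate filters required to fill the gaps of distinct maximal simplices are genuinely different elements of $\hatP$. I expect to handle this by induction on $|H|$, peeling off an extremal (degree-one) vertex of $H$ at each step and invoking the structural description of compatible elements from the proof of Theorem~\ref{thm.bijection} to track the additional intermediate filter introduced by the removed square. Particular care is needed when $H$ contains a triangle arising from an $LLR$ or $RRL$ turn in $\bw$, since in that case two squares of $H$ meet only at a single corner and the geometry of the resulting gap differs from the edge-sharing case; however, in both cases the newly contributed intermediate filter lies in a square distinct from the squares governing gaps in the other maximal simplices, so the distinctness of links is preserved.
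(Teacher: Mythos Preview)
Your positive direction (single-square circuits admit flips, with common link given by chains in $[\widehat0,A)\cup(A\cup\{x,y\},\widehat1]$) matches the paper exactly.

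The gap is in the negative direction. Your key assertion---that for $|H|\geq 2$ ``every maximal simplex $\sigma$ of $\mathcal{T}_Z^\pm$ \ldots\ is a chain in $\hatP$ with one or more gaps''---is false in general. It already fails for size-$4$ non-square circuits: with $Z_+=\{A,B\cup\{b,c\}\}$ and $Z_-=\{A\cup\{c\},B\cup\{b\}\}$ as in the paper's Figure~\ref{fig.links}, the pair $Z_-$ is an antichain, so every simplex of $\mathcal{T}_Z^+$ contains an incomparable pair and is not a face of $\mathcal{T}$ at all. More seriously, when $|Z|>4$ (i.e., $H$ contains a skip edge $w_{i_j}w_{i_{j}+2}$ or a turn $LLR$, $RRL$, etc.), the sign pattern forces incomparable pairs inside \emph{both} $Z_+$ and $Z_-$, so both $\mathcal{T}_Z^+$ and $\mathcal{T}_Z^-$ contain simplices supported on antichains. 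In that regime neither triangulation of the circuit is a subcomplex of $\mathcal{T}$, and the obstruction to a flip is condition~(i) of Theorem~\ref{thm:flip}, not condition~(ii); your link computation never gets started.

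The paper accordingly splits into two cases. For $|Z|=4$ with $|H|\geq 2$ (a path inside one ladder), $\mathcal{T}_Z^-$ \emph{is} contained in $\mathcal{T}$, and one checks directly that the intermediate filter $B\cup\{c\}$ lies in the link of one triangle of $\mathcal{T}_Z^-$ but not the other---this is precisely your ``gap'' intuition, and no induction on $|H|$ is needed. For $|Z|>4$, the paper instead exhibits the antichains and concludes immediately. Your proposed uniform link argument would need to incorporate this dichotomy; as written, it does not cover the large-circuit case.
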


As an application, we determine the flip graph of regular triangulations for the special case of a ladder.
When $\bw=\varepsilon L^{n-1}$, $\hatP\backslash\{\hat0,\hat1\}$ is the product of a $(n+1)$-chain and a $2$-chain.  
Thus the next result is a rephrasing of the well-known result that the secondary polytope of the Cartesian product of an $n$-simplex and $1$-simplex is an $n$-dimensional permutahedron~\cite[Section 16.7.1]{GOT}.

\begin{theorem}\label{thm.Cayleygraph}
Let $\mathbf{w}= \varepsilon L^{n-1}$, and $Q_\mathbf{w}= \mathrm{Irr}_\wedge (\hatP(\mathbf{w}))$.
The flip graph of triangulations of $\calO(Q_\mathbf{w})$ is the Cayley graph of the symmetric group $\mathfrak{S}_{n+1}$ with the simple transpositions as the generating set.
\end{theorem}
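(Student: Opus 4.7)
The plan is to reduce this theorem to the classical fact that the secondary polytope of $\Delta^n\times\Delta^1$ is the $n$-dimensional permutahedron, whose $1$-skeleton is the Cayley graph of $\mathfrak{S}_{n+1}$ with simple transpositions as generators. The idea is that for $\mathbf{w}=\varepsilon L^{n-1}$, the polytope $\calO(Q_\mathbf{w})$ is obtained from $\Delta^n\times\Delta^1$ by two successive pyramid constructions, and each pyramid leaves the flip graph unchanged.

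First I would make $Q_\mathbf{w}$ explicit. Coordinatising $\hatP(\mathbf{w})\setminus\{\hat 0,\hat 1\}$ as pairs $(i,j)$ with $0\le i\le n$ and $0\le j\le 1$ under the identification with the product of an $(n+1)$-chain and a $2$-chain, a direct inspection of upper covers shows that the meet-irreducibles of $\hatP(\mathbf{w})$ are exactly $\hat 0$, the element $(n,0)$, and $(i,1)$ for $0\le i\le n$, so $|Q_\mathbf{w}|=n+3$. Inheriting the order from $\hatP(\mathbf{w})$, $Q_\mathbf{w}$ has a unique minimum $\hat 0$, a unique maximum $(n,1)$, and between them the chain $(0,1)<(1,1)<\cdots<(n-1,1)$ together with an incomparable element $(n,0)$. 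Consequently $Q_\mathbf{w}\setminus\{\hat 0,(n,1)\}$ is the disjoint union of a chain on $n$ elements and a singleton, so its order polytope is $\Delta^n\times\Delta^1$.

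Next I would exhibit $\calO(Q_\mathbf{w})$ as an iterated pyramid. Whenever a poset $Q$ has a unique minimum $m$, the only filter of $Q$ containing $m$ is $Q$ itself, so the vertex $\bv_Q=\mathbf{1}$ is the apex of a pyramid whose base $\calO(Q\setminus\{m\})$ sits in the hyperplane $x_m=0$; the dual statement handles a unique maximum and yields the apex $\mathbf{0}=\bv_\emptyset$. Applying both pyramid constructions to $Q_\mathbf{w}$ realises $\calO(Q_\mathbf{w})$ as the iterated pyramid $\mathbf{0}\ast\mathbf{1}\ast(\Delta^n\times\Delta^1)$.

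Finally I would transfer triangulation data across the two pyramids. Because the apex of a pyramid must lie in every full-dimensional simplex of every triangulation, each triangulation of $v\ast X$ is uniquely of the form $v\ast\mathcal{T}$ for a triangulation $\mathcal{T}$ of $X$; the circuits of $v\ast X$ therefore coincide with those of $X$, and flips correspond bijectively. Iterating, the flip graph of $\calO(Q_\mathbf{w})$ is canonically isomorphic to the flip graph of $\Delta^n\times\Delta^1$. The classical description cited just above the theorem now completes the proof: all triangulations of $\Delta^n\times\Delta^1$ are regular and are indexed by $\mathfrak{S}_{n+1}$ via the staircase construction, and two such triangulations differ by a flip precisely when the corresponding permutations differ by a simple transposition, so the flip graph is the Cayley graph of $\mathfrak{S}_{n+1}$ with the simple transpositions as generating set. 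The main subtle point is verifying that the pyramid bijection preserves the \emph{entire} flip graph and not merely its regular component, which is automatic from the apex-is-in-every-simplex observation.
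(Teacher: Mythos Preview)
Your proposal is correct and takes a genuinely different route from the paper's own proof.

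You reduce to the classical fact (which the paper itself cites immediately before the theorem) that the secondary polytope of $\Delta^n\times\Delta^1$ is the permutahedron, by exhibiting $\calO(Q_{\bw})$ as an iterated pyramid over $\Delta^n\times\Delta^1$. The pyramid argument is sound: since $Q_{\bw}$ has a unique minimum and a unique maximum, the only filter containing the minimum is $Q_{\bw}$ itself and the only filter omitting the maximum is $\emptyset$, so $\mathbf{1}$ and $\mathbf{0}$ are genuine pyramid apices over $\calO(Q_{\bw}\setminus\{\hat 0,\hat 1\})$; and since an apex lies in every full-dimensional simplex and in no circuit, triangulations and flips transfer bijectively. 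Your structural identification of $Q_{\bw}\setminus\{\hat 0,\hat 1\}$ as a disjoint union of an $n$-chain and a point is also correct (the minor coordinate convention you use is the reverse of the paper's $1,\ldots,n{+}1,1'$ labeling, but the poset is the same).

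The paper instead works internally: it uses Theorem~\ref{thm.bijection} to list all $\binom{n+1}{2}$ circuits $Z_{i,j}=\{i,i',j,j'\}$, introduces label-permuting maps $\pi_i$ that act as simple transpositions, and proves (Lemmas~\ref{lem.2} and~\ref{lem.3}) by induction on the number of flips that each flip from a triangulation of ``ladder type'' at the only admissible circuits $Z_{i,i+1}$ again yields a ladder-type triangulation indexed by $\pi_i\cdot\pi$. Your approach is shorter and more conceptual, harvesting the result from the literature; the paper's approach is self-contained within its circuit framework, makes the bijection with $\mathfrak{S}_{n+1}$ completely explicit via the $\pi_i$, and dovetails with the twist maps $\tau_i$ developed later in Section~\ref{sec:flips_twists} (the $\pi_i$ permute places while the $\tau_i$ permute values).
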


Third, we introduce the following group.
Let $\hatP=\hatP(\bw)$ be defined as in the previous section. 
We can then think of $\hatP$ as being made up of $\widehat{0}$, $\widehat{1}$, and ladders $\mL^1, \dots, \mL^t$ for $t\geq 1$ defined as follows.
Given the vertices $w_0,\ldots,w_k$ of $G(\bw)$, let $w_{i_1}$ be the first index such that there is an edge from $w_{i_1}$ to $w_{i_1+2}$.
Then $\mL^1$ is the ladder in $\hatP$ induced by the elements of $\cup_{j=0}^{i_1+1}\Sq(w_j)$.
Let $w_{i_2}$ be the next vertex where there is an edge from $w_{i_2}$ to $w_{i_2+2}$.
Then $\mL^2$ is the ladder in $\hatP$ induced by the elements of $\cup_{j=i_1+1}^{i_2+1}\Sq(w_j)$.
Inductively define $\mL^i$ in a similar fashion.
Note that by definition these ladders are disjoint except that $\mL^i\cap \mL^{i+1}$ is a single square corresponding to a corner box in $\hatP$. That is, $\mL^i\cap \mL^{i+1}$ comes from the underlined letter $\dots R\underline{R} L \dots$ or $\dots L\underline{L} R \dots$ in the expression for $\bw$.
Moreover, we index the ladders so that $y$, the top element of $\mL^1$,  is covered by $\widehat{1}$ in $\hatP$. That is, $y \prec \widehat{1}.$  
Since $\bw$ avoids subwords $LRL$ and $RLR$, each $\mL^i$, for $1<i<t$, consists of at least three squares and $\mL^1, \mL^t$ consist of at least two squares, except for the case where $\bw=\varepsilon$, in which case we have one square and one ladder.
For example, in Figure~\ref{fig.embedH} the poset $\hatP$ consists of five ladders $\mL^1, \dots, \mL^5$ made up of $4, 3, 5, 6,$ and $3$ boxes respectively.  

Let $V_0$ denote the set of vertices of $\hatP$. 
Next, we define a collection of certain permutations on elements of $V_0$.  
Consider the ladder $\mL^i$ for $i\in [t]$ in the poset $\hatP$. 
Then $\mL^i$ has the following structure up to a reflection of $\hatP$ in a vertical axis.
Label the vertices of $\mL^i$ as $x_1, \dots, x_s$ for some even integer $s$ as in Figure~\ref{fig.twist}. In the case where $\bw = \varepsilon$, we resolve the ambiguity of the labeling by choosing the convention that the left and right elements in the antichain of the square have labels $x_2$ and $x_3$ respectively.

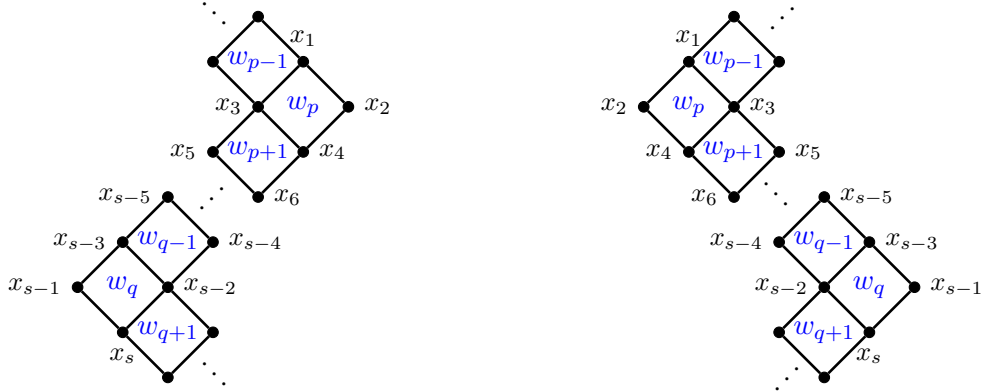
\begin{figure}
\begin{center}
\begin{tikzpicture}[scale=.6]
\begin{scope}[xshift=0, yshift=0] 
	
	\pvx[label=above:{\small$x_1$}](v01) at (1,19) {};
	\pvx[label=left:{\small$x_3$}](v03) at (0,18) {};
	\pvx[label=right:{\small$x_2$}](v02) at (2,18) {};
	\pvx[label=right:{\small$x_4$}](v04) at (1,17) {};
	\pvx[label=left:{\small$x_5$}](v05) at (-1,17) {};
	\pvx[label=right:{\small$x_6$}](v06) at (0,16) {};
	
	\node[] (d2)  at (-1.2,16)  {$\udots$};
	\node[] (d2)  at (-1,11.9)  {$\Ddots$};
	\node[] (d2)  at (-1,19.9)  {$\Ddots$};
	
	\node[] (w1)  at (0,19)  {\color{blue} $w_{p-1}$};
	\node[] (w2)  at (1,18)  {\color{blue} $w_{p}$};
	\node[] (w3)  at (0,17)  {\color{blue} $w_{p+1}$};
	\node[] (w4)  at (-2,15)  {\color{blue} $w_{q-1}$};
	\node[] (w5)  at (-3,14)  {\color{blue} $w_{q}$};
	\node[] (w6)  at (-2,13)  {\color{blue} $w_{q+1}$};
	
	\pvx[label=left:{\small$x_{s-5}$}](v07) at (-2,16) {};
	\pvx[label=left:{\small$x_{s-3}$}](v08) at (-3,15) {};
	\pvx[label=left:{\small$x_{s-1}$}](v09) at (-4,14) {};
	\pvx[label=right:{\small$x_{s-4}$}](v10) at (-1,15) {};
 	\pvx[label=right:{\small$x_{s-2}$}](v11) at (-2,14) {};
	\pvx[label=below:{\small$x_{s}$}](v12) at (-3,13) {};
		
	\pvx[](v13) at (-2,12) {};	
	\pvx[](v14) at (-1,13) {};	
	\pvx[](v15) at (0,20) {};	
	\pvx[](v16) at (-1,19) {};	
	\draw[line width=1pt] (v01)--(v02);
	\draw[line width=1pt] (v03)--(v04);		
	\draw[line width=1pt] (v05)--(v06);
	\draw[line width=1pt] (v01)--(v03);
	\draw[line width=1pt] (v03)--(v05);
	\draw[line width=1pt] (v02)--(v04);
	\draw[line width=1pt] (v04)--(v06);
	
	\draw[line width=1pt] (v07)--(v08);
	\draw[line width=1pt] (v08)--(v09);
	\draw[line width=1pt] (v10)--(v11);
	\draw[line width=1pt] (v11)--(v12);
	\draw[line width=1pt] (v07)--(v10);
	\draw[line width=1pt] (v08)--(v11);
	\draw[line width=1pt] (v09)--(v12);
	
	\draw[line width=1pt] (v13)--(v14);
	\draw[line width=1pt] (v13)--(v12);
	\draw[line width=1pt] (v14)--(v11);
	\draw[line width=1pt] (v15)--(v16);
	\draw[line width=1pt] (v15)--(v01);
	\draw[line width=1pt] (v16)--(v03);

\end{scope}
\begin{scope}[xshift=300, yshift=0, yscale=1,xscale=-1] 
	
	\pvx[label=above:{\small$x_1$}](v01) at (1,19) {};
	\pvx[label=right:{\small$x_3$}](v03) at (0,18) {};
	\pvx[label=left:{\small$x_2$}](v02) at (2,18) {};
	\pvx[label=left:{\small$x_4$}](v04) at (1,17) {};
	\pvx[label=right:{\small$x_5$}](v05) at (-1,17) {};
	\pvx[label=left:{\small$x_6$}](v06) at (0,16) {};
	
	\node[] (d2)  at (-0.9,15.9)  {$\Ddots$};
	\node[] (d2)  at (-1,12)  {$\udots$};
	\node[] (d2)  at (-0.9,20.0)  {$\udots$};
	
	\node[] (w1)  at (0,19)  {\color{blue} $w_{p-1}$};
	\node[] (w2)  at (1,18)  {\color{blue} $w_{p}$};
	\node[] (w3)  at (0,17)  {\color{blue} $w_{p+1}$};
	\node[] (w4)  at (-2,15)  {\color{blue} $w_{q-1}$};
	\node[] (w5)  at (-3,14)  {\color{blue} $w_{q}$};
	\node[] (w6)  at (-2,13)  {\color{blue} $w_{q+1}$};
	
	\pvx[label=right:{\small$x_{s-5}$}](v07) at (-2,16) {};
	\pvx[label=right:{\small$x_{s-3}$}](v08) at (-3,15) {};
	\pvx[label=right:{\small$x_{s-1}$}](v09) at (-4,14) {};
	\pvx[label=left:{\small$x_{s-4}$}](v10) at (-1,15) {};
 	\pvx[label=left:{\small$x_{s-2}$}](v11) at (-2,14) {};
	\pvx[label=below:{\small$x_{s}$}](v12) at (-3,13) {};
		
	\pvx[](v13) at (-2,12) {};	
	\pvx[](v14) at (-1,13) {};	
	\pvx[](v15) at (0,20) {};	
	\pvx[](v16) at (-1,19) {};	
	\draw[line width=1pt] (v01)--(v02);
	\draw[line width=1pt] (v03)--(v04);		
	\draw[line width=1pt] (v05)--(v06);
	\draw[line width=1pt] (v01)--(v03);
	\draw[line width=1pt] (v03)--(v05);
	\draw[line width=1pt] (v02)--(v04);
	\draw[line width=1pt] (v04)--(v06);
	
	\draw[line width=1pt] (v07)--(v08);
	\draw[line width=1pt] (v08)--(v09);
	\draw[line width=1pt] (v10)--(v11);
	\draw[line width=1pt] (v11)--(v12);
	\draw[line width=1pt] (v07)--(v10);
	\draw[line width=1pt] (v08)--(v11);
	\draw[line width=1pt] (v09)--(v12);
	
	\draw[line width=1pt] (v13)--(v14);
	\draw[line width=1pt] (v13)--(v12);
	\draw[line width=1pt] (v14)--(v11);
	\draw[line width=1pt] (v15)--(v16);
	\draw[line width=1pt] (v15)--(v01);
	\draw[line width=1pt] (v16)--(v03);

	\end{scope}	
\end{tikzpicture}
\end{center}
\caption{Ladder $\mL^i$ in $\hatP$ containing boxes with labels $w_p, \dots, w_q$, where $w_p < w_{p+1} < \cdots < w_{q}$. The left (right) represents the case where $w_q = L$ ($w_q=R$).}
\label{fig.twist}
\end{figure}

\begin{definition}
\label{def.twist}
Given a ladder $\mL^i$, define $\tau_i \in \mathfrak{S}_{|V_0|}$ to be the permutation of $V_0$ such that for $v\in V_0$, 
\[
\tau_i(v) = \begin{cases} 
x_{j-1}, 
    & \text{if } v=x_{j} \text{ and } j\in [s] \text{ is even,}\\ 
x_{j+1}, 
    & \text{if } v=x_{j} \text{ and } j\in [s] \text{ is odd,} \\ 
v, & \text{otherwise.} 
\end{cases}
\]
\end{definition}

Hence, $\tau_i$ acts on $V_0$ by reflecting the vertices of $\mL^i$ across a diagonal and fixing the remaining vertices. 
The next lemma says that the set of $\tau_i$ for $i\in[t]$ generate a commutative subgroup of $\mathfrak{S}_{|V_0|}$. 

\begin{lemma}\label{lem:twist1}
For all $\tau_i, \tau_r \in \mathfrak{S}_{|V_0|}$, the following properties hold.
\begin{itemize}
\item[(a)] $\tau_i^2=1$
\item[(b)] $\tau_i\tau_r=\tau_r\tau_i$
\end{itemize}
\end{lemma}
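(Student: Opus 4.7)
My plan is to handle the two parts separately, with part (a) being immediate from the definition and part (b) reducing to a short case analysis of how the supports of $\tau_i$ and $\tau_r$ interact inside $\hatP$.

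For part (a), I will simply observe that by Definition~\ref{def.twist} the permutation $\tau_i$ is the product of the disjoint transpositions $(x_1\,x_2)(x_3\,x_4)\cdots(x_{s-1}\,x_s)$ on the vertices of $\mL^i$ and acts as the identity on $V_0\setminus\mL^i$. Since each transposition is an involution and they act on pairwise disjoint sets, $\tau_i^2=1$ follows immediately.

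For part (b), I will split into cases according to $|i-r|$. The case $i=r$ is trivial from part (a). If $|i-r|\ge 2$, then by the indexing convention on the ladders (every internal $\mL^j$ has at least three squares and consecutive ladders share exactly one square), we have $\mL^i\cap\mL^r=\emptyset$; since $\tau_i$ is the identity off $\mL^i$ and similarly for $\tau_r$, the two permutations act on disjoint subsets of $V_0$ and therefore commute. The substantive case is $r=i+1$, where $\mL^i$ and $\mL^{i+1}$ share a single square $B\subseteq V_0$ consisting of four vertices, and outside of $B$ the supports of $\tau_i$ and $\tau_{i+1}$ are again disjoint. Here the key step is to verify that on the shared square $B$ both restrictions $\tau_i|_B$ and $\tau_{i+1}|_B$ are products of two disjoint transpositions on the same four-element set: namely, $B$ is the bottom square of $\mL^i$ in $\mL^i$'s indexing, so its four vertices are exactly $\{x_{s-3},x_{s-2},x_{s-1},x_s\}$ and $\tau_i|_B=(x_{s-3}\,x_{s-2})(x_{s-1}\,x_s)$, while symmetrically $B$ is the top square of $\mL^{i+1}$ in $\mL^{i+1}$'s indexing, so $\tau_{i+1}|_B$ is also a double transposition of the same four points. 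The three double-transposition involutions on a four-element set, together with the identity, form the Klein four-group, which is abelian; hence $\tau_i|_B$ and $\tau_{i+1}|_B$ commute. Combining this with the disjointness of the supports outside $B$ gives $\tau_i\tau_{i+1}=\tau_{i+1}\tau_i$ on all of $V_0$.

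The main obstacle — really a bookkeeping task rather than a deep difficulty — is checking carefully, across the two possible corner orientations (the $R\underline{R}L$ and $L\underline{L}R$ patterns, together with their mirror images via $f_1$), that the first four labels of $\mL^{i+1}$ coincide as a set with the last four labels of $\mL^i$ on the shared square $B$, so that each $\tau$ genuinely restricts to a double transposition of $B$ rather than sending a vertex of $B$ outside of $B$. Once this matching is pinned down in each orientation, the Klein four-group observation makes the commutativity automatic, and no deeper difficulty remains.
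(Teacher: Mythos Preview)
Your proof is correct and follows essentially the same approach as the paper: the same case split on $|i-r|$, the same reduction to the four vertices of the shared corner square $B=\mL^i\cap\mL^{i+1}$ when $r=i+1$, and the same observation that the restrictions of $\tau_i$ and $\tau_{i+1}$ to $B$ commute. The only cosmetic difference is that the paper verifies commutativity on $B$ by an explicit diagram chase (computing both compositions on the four labeled vertices), whereas you invoke the Klein four-group structure of double transpositions on a four-element set; this is a slightly cleaner packaging of the same verification.
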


\begin{proof}
Part~(a) follows directly from the definition of $\tau_i$.
Part~(b) is straightforward when $|r-i|\geq 2$ because the ladders $\mL^r, \mL^i$ have no vertices in common. 
The case $r=i$ follows from part (a), so it suffices to consider the case $r=i+1$.  
Moreover, it is enough to check the action of the $\tau$'s on the vertices of the square in $\mL^i\cap \mL^{i+1}$. 
Label the vertices of this square as $x_a, x_b, x_c, x_d$ and consider the computations $\tau_i\tau_{i+1}$ and $\tau_{i+1}\tau_i$, as shown in Figure~\ref{fig.tauscommute}.  
This shows that $\tau_i\tau_{i+1}=\tau_{i+1}\tau_i$ as desired. 
\end{proof}

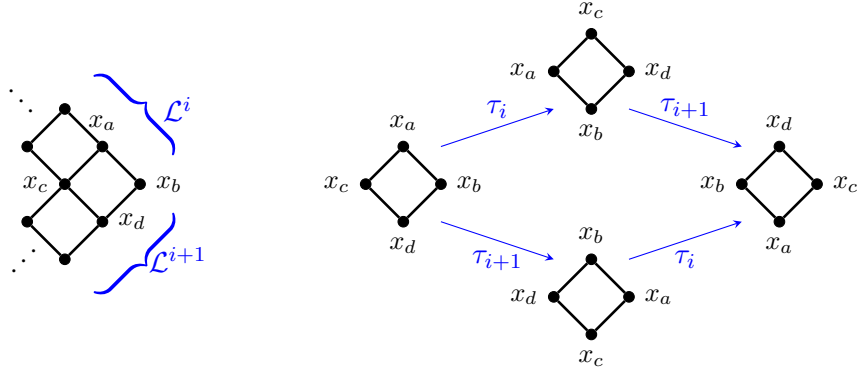
\begin{figure}[ht!]
\centering
\begin{tikzpicture}[scale=.5]
\begin{scope}[xshift=0, yshift=0] 
	
	\pvx[label=above:{\small$x_a$}](v01) at (-1,19) {};
	\pvx[label=left:{\small$x_c$}](v03) at (-2,18) {};
	\pvx[label=right:{\small$x_b$}](v02) at (0,18) {};
	\pvx[label=right:{\small$x_d$}](v04) at (-1,17) {};
	\pvx[](v05) at (-3,17) {};
	\pvx[](v06) at (-2,16) {};
	
	\node[](d2)  at (-3.3,16)  {$\udots$};
	\node[] (d2)  at (-3.2,20)  {$\Ddots$};
	
	\pvx[](v15) at (-2,20) {};	
	\pvx[](v16) at (-3,19) {};	
	\draw[line width=1pt] (v01)--(v02);
	\draw[line width=1pt] (v03)--(v04);		
	\draw[line width=1pt] (v05)--(v06);
	\draw[line width=1pt] (v01)--(v03);
	\draw[line width=1pt] (v03)--(v05);
	\draw[line width=1pt] (v02)--(v04);
	\draw[line width=1pt] (v04)--(v06);
	
	\draw[line width=1pt] (v15)--(v16);
	\draw[line width=1pt] (v15)--(v01);
	\draw[line width=1pt] (v16)--(v03);
	
\node[rotate=-135, scale=1.3] (d2)  at (0,20)  {\color{blue} $\Bigg \{ $};	
\node[] (d2)  at (1,20)  {\color{blue} $\mL^i$};	
\node[rotate=135, scale=1.3] (d2)  at (0,16)  {\color{blue} $\Bigg \{ $};
\node[] (d2)  at (1,16)  {\color{blue} $\mL^{i+1}$};		

\pvx[label=above:{\small$x_a$}](v11) at (7,19) {};
	\pvx[label=left:{\small$x_c$}](v13) at (6,18) {};
	\pvx[label=right:{\small$x_b$}](v12) at (8,18) {};
	\pvx[label=below:{\small$x_d$}](v14) at (7,17) {};

\draw[blue, -stealth] (8,19) -- (11,20)  node[midway, above] {\color{blue} $\tau_i$};
\draw[blue,  -stealth] (8,17) -- (11,16)  node[midway, below] {\color{blue} $\tau_{i+1}$};
\draw[blue, -stealth] (13,20) -- (16,19)  node[midway, above] {\color{blue} $\tau_{i+1}$};
\draw[blue, -stealth] (13,16) -- (16,17)  node[midway, below] {\color{blue} $\tau_{i}$};

\draw[line width=1pt] (v11)--(v12);
	\draw[line width=1pt] (v13)--(v14);		
\draw[line width=1pt] (v12)--(v14);
	\draw[line width=1pt] (v13)--(v11);	
	
	\pvx[label=above:{\small$x_c$}](v21) at (12,22) {};
	\pvx[label=left:{\small$x_a$}](v23) at (11,21) {};
	\pvx[label=right:{\small$x_d$}](v22) at (13,21) {};
	\pvx[label=below:{\small$x_b$}](v24) at (12,20) {};

\draw[line width=1pt] (v21)--(v22);
	\draw[line width=1pt] (v23)--(v24);		
\draw[line width=1pt] (v22)--(v24);
	\draw[line width=1pt] (v23)--(v21);

\pvx[label=above:{\small$x_b$}](v31) at (12,16) {};
	\pvx[label=left:{\small$x_d$}](v33) at (11,15) {};
	\pvx[label=right:{\small$x_a$}](v32) at (13,15) {};
	\pvx[label=below:{\small$x_c$}](v34) at (12,14) {};

\draw[line width=1pt] (v31)--(v32);
	\draw[line width=1pt] (v33)--(v34);		
\draw[line width=1pt] (v32)--(v34);
	\draw[line width=1pt] (v33)--(v31);		
	
\pvx[label=above:{\small$x_d$}](v41) at (17,19) {};
	\pvx[label=left:{\small$x_b$}](v43) at (16,18) {};
	\pvx[label=right:{\small$x_c$}](v42) at (18,18) {};
	\pvx[label=below:{\small$x_a$}](v44) at (17,17) {};

\draw[line width=1pt] (v41)--(v42);
	\draw[line width=1pt] (v43)--(v44);		
\draw[line width=1pt] (v42)--(v44);
	\draw[line width=1pt] (v43)--(v41);	
	
\end{scope}
\end{tikzpicture}
\caption{The action of $\tau_i$ and $\tau_{i+1}$ commute.}
\label{fig.tauscommute}
\end{figure}

\begin{definition}\label{def.twistgroup}
Let $\frakT(\bw)$ denote the subgroup of $\mathfrak{S}_{|V_0|}$ generated by the set of the $\tau_i$'s.
We call $\frakT(\bw)$ {\it the twist group of} $Q_\bw$.
Elements of $\frakT(\bw)$ are called \emph{twists} and the elements $\tau_i$ are called \emph{elementary twists}.
\end{definition}

Note that by Lemma~\ref{lem:twist1}, $\frakT(\bw) = \langle\tau_i \mid i\in[t] \rangle$ is isomorphic to $\mathbb{Z}_2^t$.
The fact that elementary twists commute will be an important factor in several proofs in this section.
As the next theorem demonstrates, the twist group acts on the component of the flip graph of triangulations of $\cO(Q_{\bw})$ containing the canonical triangulation, and flips are preserved by twists.
A priori, a simplex $\sigma$ in the triangulation $T$ after twisting becomes a collection of vertices $\tau(\sigma)$ of $\cO(Q_{\bw})$ that may or may not also form a simplex.
Hence, a twist  $\tau(\cT)$ of a triangulation $\cT$ is a collection of subsets of vertices obtained by applying the twist $\tau$ to every simplex in $\cT$, so  $\tau(\cT)$ is not necessarily a triangulation. However, in the case when twisting results in a triangulation, the following theorem says that twists and flips behave well with each other. 

Recall that if $Z$ is a circuit in $\cO(Q_{\bw})$ and $\cT$ is a triangulation of $\cO(Q_{\bw})$ that admits a flip using $Z$, then $\cT=\cT_Z^+$ and $\cT_Z^-$ are the triangulations related by flips at $Z$.

\begin{theorem}\label{thm:square}
Let $\bw\in \mathcal{V}$, $Q_\mathbf{w} = \mathrm{Irr}_\wedge (\hatP(\mathbf{w}))$, and let $\cT$ and $\tau(\cT)$ be two triangulations of $\cO(Q_\mathbf{w})$ where $\tau$ is a twist. 
If $\cT=\cT_Z^+$ can be flipped at circuit $Z$ and $\tau(\cT_Z^+)=\tau(\cT_Z^+)_{\tau(Z)}^+$, then $\tau(\cT_Z^+)_{\tau(Z)}^-=\tau(\cT_Z^-)$. In other words, the following diagram commutes. 
\[
\xymatrix@C=50pt{\cT_Z^+\ar[r]^-{\text{flip in } Z} \ar[d]_-{\text twist}& \cT_Z^- \ar[d]^-{\text twist} \\
\tau(T_Z^+)=\tau(\cT_Z^+)_{\tau(Z)}^+ \ar[r]^-{\text{flip in } \tau(Z)} & \tau(\cT_Z^+)_{\tau(Z)}^-=\tau(\cT_Z^-)}
\] 
\end{theorem}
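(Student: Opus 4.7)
The plan is to unwind Theorem~\ref{thm:flip} and exploit that any twist $\tau$ acts as a bijection on the vertex set of $\calO(Q_\bw)$, since $\tau$ permutes the elements of $V_0$, equivalently the filters of $Q_\bw$, equivalently the vertices of $\calO(Q_\bw)$. Consequently, $\tau$ commutes with the set-theoretic operations of union, set difference, and the join of simplicial complexes, so the argument reduces to a careful bookkeeping exercise.

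First, by the characterization of the two triangulations of a corank-one configuration \cite[Lemma 2.4.2]{DeLoeraRambauSantos}, at the level of the circuit $Z$ itself the two triangulations are $\{Z\setminus\{j\} : j\in Z_+\}$ and $\{Z\setminus\{j\} : j\in Z_-\}$. Since $\tau$ is a bijection on the vertices of $\calO(Q_\bw)$, applying $\tau$ gives
\[
\tau(\{Z\setminus\{j\} : j\in Z_+\}) = \{\tau(Z)\setminus\{j'\} : j'\in \tau(Z_+)\},
\]
and analogously for $Z_-$. The hypothesis $\tau(\cT_Z^+)=\tau(\cT_Z^+)_{\tau(Z)}^+$, read at the level of the sub-triangulation supported on $\tau(Z)$, forces $\tau(Z_+)=\tau(Z)_+$. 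Since $\tau$ is a bijection and $Z_-=Z\setminus Z_+$, I obtain $\tau(Z_-)=\tau(Z)\setminus\tau(Z_+)=\tau(Z)_-$, and so the image under $\tau$ of the negative triangulation on $Z$ is exactly the negative triangulation on $\tau(Z)$.

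Next, I would lift this from the circuit itself to the ambient triangulations of $\calO(Q_\bw)$. By Theorem~\ref{thm:flip}(ii)--(iii), the full triangulations $\cT_Z^+$ and $\cT_Z^-$ of $\calO(Q_\bw)$ share a common link $\mathrm{L}$ along $Z$ and a common collection $\mathcal{R}$ of maximal simplices not involved in the flip, and they differ only in the sub-complexes $\cT_Z^+|_Z \ast \mathrm{L}$ and $\cT_Z^-|_Z \ast \mathrm{L}$. Applying $\tau$ distributes over join and union because $\tau$ is a bijection on vertices, yielding $\tau(\cT_Z^\pm) = \tau(\cT_Z^\pm|_Z) \ast \tau(\mathrm{L}) \cup \tau(\mathcal{R})$. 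The hypothesis of the theorem identifies $\tau(\mathrm{L})$ and $\tau(\mathcal{R})$ with the link and ``rest'' of the flip at $\tau(Z)$ inside $\tau(\cT_Z^+)$, so, combining with the preceding paragraph, performing that flip on $\tau(\cT_Z^+)$ replaces $\tau(\cT_Z^+|_Z) \ast \tau(\mathrm{L})$ by $\tau(\cT_Z^-|_Z) \ast \tau(\mathrm{L})$, producing exactly $\tau(\cT_Z^-)$, as desired.

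The main obstacle is not conceptual but notational: the symbol $\cT_Z^\pm$ is being used both for the pair of triangulations of the circuit $Z$ and for the pair of full triangulations of $\calO(Q_\bw)$, and one must carefully distinguish which is meant at each step. Once this bookkeeping is kept straight, every structural operation appearing in \cite[Lemma 2.4.2]{DeLoeraRambauSantos} and Theorem~\ref{thm:flip} commutes with $\tau$ automatically because a twist is a vertex-level bijection; the substantive content reduces to the single observation $\tau(Z_-)=\tau(Z)_-$.
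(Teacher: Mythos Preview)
Your proposal is correct and follows essentially the same approach as the paper's proof: both arguments exploit that a twist $\tau$ is a bijection on the vertex set $V_0$ and therefore commutes with all the set-theoretic constructions (links, joins, set differences) appearing in the description of a flip from Theorem~\ref{thm:flip}. The paper's proof organizes the bookkeeping by splitting the maximal simplices of $\cT$ into those that do and do not contain a face supported on $Z$, whereas you phrase it via the decomposition $\cT_Z^\pm|_Z \ast \mathrm{L} \cup \mathcal{R}$, but this is only a cosmetic difference. The one place the paper adds slightly more is that it invokes Lemma~\ref{lem:twist2} to verify directly that $\tau(Z)$ is a circuit and hence that $\tau(\cT)$ admits a flip there; you instead (legitimately) read this off from the hypothesis $\tau(\cT_Z^+)=\tau(\cT_Z^+)_{\tau(Z)}^+$, which already presupposes it.
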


\begin{corollary}
Let $\bw\in \mathcal{V}$, $Q_\mathbf{w} = \mathrm{Irr}_\wedge (\hatP(\mathbf{w}))$, and let $\cT$ and $\tau(\cT)$ be two triangulations of $\cO(Q_\mathbf{w})$ where $\tau$ is a twist. 
Then $\cT$ and $\tau(\cT)$ admit the same number of flips.
\label{cor:kFlips}
\end{corollary}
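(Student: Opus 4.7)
My plan is to use Theorem~\ref{thm:square} to construct an explicit bijection between the set $F(\cT)$ of flips applicable to $\cT$ and the set $F(\tau(\cT))$ of flips applicable to $\tau(\cT)$. By Lemma~\ref{lem:twist1}, every twist $\tau \in \frakT(\bw)$ is an involution, so if $\tau(\cT)$ is a triangulation, then so is $\tau(\tau(\cT)) = \cT$, and the two triangulations play perfectly symmetric roles.

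A flip at $\cT$ is determined by a circuit $Z$ of $\cO(Q_\bw)$ together with a link $L$ such that $\cT$ contains $\cT_Z^+ \ast L$. I propose the assignment $\Phi \colon F(\cT) \to F(\tau(\cT))$ that sends this data to the flip of $\tau(\cT)$ at the circuit $\tau(Z)$ with link $\tau(L)$. Two things need verification: (i) $\tau(Z)$ is again a circuit of $\cO(Q_\bw)$, and (ii) $\tau(\cT)$ actually admits a flip at $\tau(Z)$. Point (i) follows from the fact that twists preserve circuits (this is the statement referenced in the introduction just before Theorem~\ref{thm:square}, and can be checked directly by tracking the action of each elementary twist $\tau_i$ through the bijection $\Gamma$ of Theorem~\ref{thm.bijection}, since $\tau_i$ permutes the vertices in the ladder $\mL^i$ in a way that sends connected induced subgraphs of $G(\bw)$ to connected induced subgraphs). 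For (ii), once $\tau(Z)$ is known to be a circuit, the subcomplex $\tau(\cT_Z^+ \ast L) = \tau(\cT_Z^+) \ast \tau(L)$ of the triangulation $\tau(\cT)$ must restrict over $\tau(Z)$ to one of the two triangulations $\cT_{\tau(Z)}^{\pm}$; after possibly relabeling signs this is $\cT_{\tau(Z)}^+$, which supplies exactly the hypothesis needed to invoke Theorem~\ref{thm:square}. The theorem then tells us that the flip at $\tau(Z)$ replaces $\tau(\cT)$ with $\tau(\cT_Z^-)$, confirming that $\Phi$ is well-defined.

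Running the same construction starting from the triangulation $\tau(\cT)$ and applying $\tau^{-1} = \tau$ produces a map $F(\tau(\cT)) \to F(\tau^2(\cT)) = F(\cT)$ which on the level of circuits sends $\tau(Z)$ back to $\tau^2(Z) = Z$. This is the two-sided inverse of $\Phi$, so $\Phi$ is a bijection, and hence $|F(\cT)| = |F(\tau(\cT))|$.

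The main obstacle I anticipate is step (i): Theorem~\ref{thm:square} as stated assumes that both $\cT$ and $\tau(\cT)$ admit flips along the matched circuits $Z$ and $\tau(Z)$, rather than deducing the existence of the second flip from the first. The resolution is to invoke circuit preservation separately, which the structure of the elementary twists $\tau_i$ on the individual ladders $\mL^i$ makes accessible via the combinatorial model of circuits supplied by Theorem~\ref{thm.bijection}.
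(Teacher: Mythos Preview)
Your proposal is correct and follows essentially the same approach as the paper: both arguments use that every twist is an involution, that twists send circuits to circuits (the paper defers this to the later Lemma~\ref{lem:twist2}, which you instead sketch via Theorem~\ref{thm.bijection}), and hence that applying $\tau$ and $\tau^{-1}=\tau$ gives mutually inverse maps between the flips of $\cT$ and those of $\tau(\cT)$. The paper's proof is more terse, but the underlying bijection and its justification are identical to yours.
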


\begin{proof}
As will be seen later, a twist $\tau$ is an involution on $J(Q_\bw)$ that yields an involution on circuits of $\cO(Q_\bw)$. 
Thus, we can apply $\tau$ to $\tau(\cT)$ and recover $\cT$.
This shows that there is a bijective correspondence between flips from $\cT$ and flips from $\tau(\cT)$.
\end{proof}

Lastly, we show that twists of regular triangulations lead to regular triangulations.

\begin{theorem} \label{thm:regularity}
Let $\mathbf{w}\in \mathcal{V}$, $Q_\mathbf{w} = \mathrm{Irr}_\wedge (\hatP(\mathbf{w}))$, and let $\cT_\mathbf{w}$ be the canonical triangulation of $\mathcal{O}(Q_\mathbf{w})$. 
Then $\cT_\mathbf{w}$ is a regular triangulation with height function $\omega(x_i) = 2^{\rho(x_i)}$ defined in Definition~\ref{def:canonicalheight}.
Furthermore, for any twist $\tau$, $\tau(\cT_\mathbf{w})$ is a regular triangulation with the corresponding twisted height function.
\end{theorem}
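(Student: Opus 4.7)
The plan is to prove the two assertions in sequence: first I would verify that the canonical triangulation $\cT_\mathbf{w}$ is regular with the specified height function $\omega$, and second I would derive the regularity of $\tau(\cT_\mathbf{w})$ by introducing a suitably twisted height function.

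For the first step, I would extend $\omega$ to the vertices of $\cO(Q_\bw)$ as prescribed in Definition~\ref{def:canonicalheight}, and then invoke Theorem~\ref{thm.bijection} to examine each circuit $Z = \Gamma(H)$ coming from a connected induced subgraph $H\subseteq G(\bw)$. The affine dependence relation~(\ref{eq:11}) expresses $Z$ as a signed sum of the square dependences $\sigma_{i_j}$ with coefficients in $\{\pm 1\}$, inherited from the sign function on $\{\Sq(w_{i_j})\}$. I would then compute the discriminant $D_Z := \sum_{B\in Z_-} \omega(\bv_B) - \sum_{A\in Z_+}\omega(\bv_A)$ and show that its sign matches the side of $Z$ used by $\cT_\bw$. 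The superincreasing weights $2^{\rho(x_i)}$ should make the sign of $D_Z$ predictable from the combinatorial structure of $H$, since $D_Z$ is dominated by the contribution of the single element of maximal rank that distinguishes $Z_+$ from $Z_-$.

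For the second step, define the \emph{twisted height function} $\omega^\tau$ on the vertices of $\cO(Q_\bw)$ by $\omega^\tau(\bv_A) := \omega(\bv_{\tau^{-1}(A)})$. Since $\tau$ preserves circuits (which is precisely the property underlying Theorem~\ref{thm:square}), for any circuit $Z = (Z_+, Z_-)$ the set $\tau^{-1}(Z) = (\tau^{-1}(Z_+),\tau^{-1}(Z_-))$ is again a circuit, and the discriminant of $Z$ under $\omega^\tau$ equals the discriminant of $\tau^{-1}(Z)$ under $\omega$. By the first step this latter discriminant selects exactly the side of $\tau^{-1}(Z)$ used in $\cT_\bw$; applying $\tau$ to that side recovers the side of $Z$ used by $\tau(\cT_\bw)$. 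Since a regular triangulation is determined by its choices on every circuit, the regular triangulation induced by $\omega^\tau$ must coincide with $\tau(\cT_\bw)$, which in particular confirms that $\tau(\cT_\bw)$ is a genuine triangulation.

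The main obstacle is the first step: verifying that every circuit is resolved by $\omega$ in favor of $\cT_\bw$. This requires combining the explicit combinatorial description of circuits in Theorem~\ref{thm.bijection} with a careful analysis of how the braid arrangement triangulates $\cO(Q_\bw)$, broken down according to whether each contributing square sits inside a ladder $\mL^i$ or straddles a corner. A secondary technical point is to justify the circuit-preserving property of twists used in the second step, but this should follow from the observation that $\tau$ acts locally on the squares $\Sq(w_i)$ making up each $H$ and hence transports the signed decomposition~(\ref{eq:11}) intact.
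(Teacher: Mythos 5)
There is a genuine gap in both halves of your plan. For the first half, ``for every circuit $Z$ the sign of $D_Z$ matches the side of $Z$ used by $\cT_\bw$'' is not a valid criterion for $\cT_\bw = \cT(\mathbf{A},\omega)$: for most circuits of $\cO(Q_\bw)$ the canonical triangulation does not restrict to either $\cT_Z^+$ or $\cT_Z^-$ (the proof of Theorem~\ref{thm:flipsfromcanonical} shows that for any circuit with more than four elements both candidate triangulations contain antichains, hence neither appears in $\cT_\bw$), so ``the side used by $\cT_\bw$'' is undefined there. The correct local criterion is the folding condition of Theorem~\ref{thm.2.3.20}, checked at every \emph{wall} $\sigma_1\cap\sigma_2$; the relevant circuit is then the unique one inside $V(\sigma_1)\cup V(\sigma_2)$, which for the canonical triangulation is always a single square of $\hatP(\bw)$ (two adjacent maximal chains differ in one element). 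Restricted to these wall-circuits your discriminant is, up to a positive factor and an orientation sign, exactly the folding form $\Psi$, so the repaired criterion is the paper's. But even then the sign verification is the entire technical content of the theorem: the paper evaluates $\Psi_{\tau(\sigma_i),\bv}(\omega_\tau)$ as a determinant, reduces it by row and column operations either to an explicit $2\times 2$ block or to the folding form of a shorter word, and inducts on the length of $\bw$ through thirty-six extension/turn cases. Your statement that the superincreasing weights $2^{\rho(x_i)}$ ``should make the sign of $D_Z$ predictable'' is precisely the claim that must be proved, and it is not automatic --- the required inequality depends on where the wall's square sits inside the ladders $\mL^i$ and on which ladders $\tau$ twists, which is exactly what the case analysis tracks.

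For the second half, your argument presupposes that $\tau(\cT_\bw)$ is a triangulation (otherwise it makes no ``choices'' on circuits) and then concludes that it is one; this is circular. The paper proves separately, before the regularity argument, that $\tau(\sigma)$ is a nondegenerate simplex for every $\sigma\in\cT_\bw$ and that images of distinct simplices have disjoint interiors, via an explicit row-replacement computation giving $\det M_{\tau(\sigma)}=-\det M_\sigma=\pm 1$. Moreover ``a regular triangulation is determined by its choices on every circuit'' is not a principle you can apply to an arbitrary collection of simplices. Your transport rule $\omega^\tau(\bv_A)=\omega(\bv_{\tau^{-1}(A)})$ is the right definition of the twisted height (it agrees with Definition~\ref{def:canonicalheight}), and circuit-preservation under twists is indeed available from Lemma~\ref{lem:twist2}, but neither fact closes these two gaps.
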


For each of these theorems, we have dedicated one subsection that follows to their proof.

\subsection{Proof of Theorem~\ref{thm:flipsfromcanonical}}

For the canonical triangulation of $\cO(Q_{\bw})$, simplices correspond to maximal chains in $J(Q_\mathbf{w})$.
In order for a circuit $Z$ to be supported on a flip in the canonical triangulation, then either $\mathcal{T}_+$ or $\mathcal{T}_-$ must be a subcomplex of the canonical triangulation.
Consider the circuits corresponding to a single square $\Sq(w_i)=\{A,A\cup\{a\},A\cup\{b\},A\cup\{a,b\}\}$ in $J(Q_\mathbf{w})$. 
If we assign the left $A \cup \{a\}$ and right $A \cup \{b\}$ vertices of the square to be have negative signs in the circuit, and we assign positive signs to the upper vertex $A$ and the lower vertex $A\cup\{a,b\}$, then the canonical triangulation contains $\mathcal{T}_-$, where $\mathcal{T}_-$ consists of the two triangles each formed by the positive vertices and one of the negative vertices in $\Sq(w_i)$.
Further, for each of these triangles, the link in the canonical triangulation is the set of all chains contained in 
$\{x\in J(Q_\mathbf{w}) \mid x < A\cup\{a,b\} \} \cup \{x\in J(Q_\mathbf{w}) \mid x > A \}$,
and thus $\mathcal{T}_-$ can be flipped at $\Sq(w_i)$.
However, $\mathcal{T}_+$ is not contained in the canonical triangulation, because there is not a maximal chain in $J(Q_\mathbf{w})$ that runs through the two negative vertices, which form an antichain in $J(Q_\mathbf{w})$.
Thus, for each of the $k+1$ squares in $J(Q_\mathbf{w})$, we can flip the corresponding circuit.

If we have a circuit in $J(Q_\mathbf{w})$ that does not come from a square, then we consider two cases.
If the circuit is of size four, then the corresponding connected induced subgraph has vertices $w_{i_1},w_{i_1+1},\ldots,w_{i_1+r}$.
In this case, because there are elements of $J(Q_\mathbf{w})$ strictly between the elements of the circuit and they lie in different ranks, the links are different for the two faces in $\mathcal{T}_-$.
For example, suppose that $\Sq(w_{i_1})$ consists of the filters $A$, $A\cup\{a\}$, $A\cup\{c\}$, and $A\cup\{a,c\}$.
Suppose also that $\Sq(w_{i_1+r})$ consists of the filters $B$, $B\cup\{b\}$, $B\cup\{c\}$, and $B\cup \{b,c\}$.
See Figure~\ref{fig.links} for an example.
Then the filters $A,A\cup \{c\},B\cup\{b,c\}$ form a triangle in $\mathcal{T}_-$, as does $A,B\cup \{b\},B\cup\{b,c\}$.
Note that $B\cup\{c\}$ is in the link of the first triangle but not the second.
A similar argument holds in general, and thus this circuit does not support a flip.

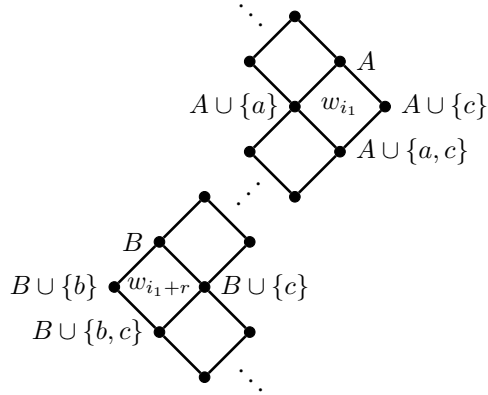
\begin{figure}[ht!]
\begin{center}
\begin{tikzpicture}[scale=.6]
\begin{scope}[xshift=0, yshift=0] 
    \pvx[label=right:{\small$A$}](v01) at (16,19) {};
	\pvx[label=left:{\small$A\cup \{a\}$}](v03) at (15,18) {};
	\pvx[label=right:{\small$A\cup \{c\}$}](v02) at (17,18) {};
	\pvx[label=right:{\small$A\cup \{a,c\}$}](v04) at (16,17) {};
	\pvx[](v05) at (14,17) {};
	\pvx[](v06) at (15,16) {};
	
	\node[] (d2)  at (13.8,16.1)  {$\udots$};
	\node[] (d2)  at (14,11.75)  {$\Ddots$};
	\node[] (d2)  at (14,19.85)  {$\Ddots$};
	
	\pvx[](v07) at (13,16) {};
	\pvx[label=left:{\small$B$}](v08) at (12,15) {};
	\pvx[label=left:{\small$B\cup \{b\}$}](v09) at (11,14) {};
	\pvx[](v10) at (14,15) {};
 	\pvx[label=right:{\small$B\cup \{c\}$}](v11) at (13,14) {};
	\pvx[label=left:{\small$B\cup \{b,c\}$}](v12) at (12,13) {};
		
	\pvx[](v13) at (13,12) {};	
	\pvx[](v14) at (14,13) {};	
	\pvx[](v15) at (15,20) {};	
	\pvx[](v16) at (14,19) {};	
	\draw[line width=1pt] (v01)--(v02);
	\draw[line width=1pt] (v03)--(v04);		
	\draw[line width=1pt] (v05)--(v06);
	\draw[line width=1pt] (v01)--(v03);
	\draw[line width=1pt] (v03)--(v05);
	\draw[line width=1pt] (v02)--(v04);
	\draw[line width=1pt] (v04)--(v06);
	
	\draw[line width=1pt] (v07)--(v08);
	\draw[line width=1pt] (v08)--(v09);
	\draw[line width=1pt] (v10)--(v11);
	\draw[line width=1pt] (v11)--(v12);
	\draw[line width=1pt] (v07)--(v10);
	\draw[line width=1pt] (v08)--(v11);
	\draw[line width=1pt] (v09)--(v12);
	
	\draw[line width=1pt] (v13)--(v14);
	\draw[line width=1pt] (v13)--(v12);
	\draw[line width=1pt] (v14)--(v11);
	\draw[line width=1pt] (v15)--(v16);
	\draw[line width=1pt] (v15)--(v01);
	\draw[line width=1pt] (v16)--(v03);

    \node[] (d2)  at (12,14)  {\small $w_{i_1+r}$};
	\node[] (d2)  at (16,18)  {\small $w_{i_1}$};
	
	\end{scope}
\end{tikzpicture}
\end{center}
\caption{A circuit failing the link condition.}
\label{fig.links}
\end{figure}

For a circuit of size greater than four, the word defining the corresponding induced subgraph either contains a subword of the form $w_{i_j}w_{i_j+2}$ or contains a turn, i.e., contains one of $RRL$, $LLR$, $RLL$, or $LRR$.
Consider the first case, where a subword of the form $w_{i_j}w_{i_j+2}$ is present.
See Figure~\ref{fig.embedH} for an example with $w_{4}w_{6}$.
In this case, it is straightforward to verify that there exist at least two incomparable pairs in the circuit, where the elements of each pair share the same sign in the circuit and each pair has a different sign.
Thus, both $\mathcal{T}_+$ and $\mathcal{T}_-$ have simplices containing antichains, and hence the canonical triangulation does not contain any triangulations arising from circuits of this type.

Next consider the case where the induced subgraph contains a turn, i.e., contains one of $RRL$, $LLR$, $RLL$, or $LRR$.
Again see Figure~\ref{fig.embedH} for an example with $w_{2}w_{3}w_{4}$.
In this case, the existence of a turn implies that there are two antichains $\{A,B\}$ and $\{B,C\}$ where each filter has the same sign in the circuit.
Again, this implies that both $\mathcal{T}_+$ and $\mathcal{T}_-$ have simplices containing antichains, and hence the canonical triangulation does not contain any triangulations arising from circuits of this type.

Thus, the canonical triangulation admits only flips in circuits formed by the squares, and each of the resulting triangulations is distinct, from which the result follows.

\subsection{Proof of Theorem~\ref{thm.Cayleygraph}}

Let $\bw=\varepsilon L^{n-1}$.
Then $\hatP(\bw)\backslash\{\hat0,\hat1\}$ is the ladder with $n$ squares, and the graph $G(\bw)$ associated to $\hatP(\bw)$ is the path graph with $n$ vertices.  
By Theorem~\ref{thm.bijection}, the circuits of the vertex set of the order polytope $\mathcal{O}(Q_\mathbf{w})$ are in bijection with the nonempty connected induced subgraphs of $G(\bw)$, so in this context, every circuit has exactly $4$ elements, corresponding to some nonempty connected subgraph of the path graph with $n$ vertices.  
Since the circuits never contain $\hat0$ or $\hat1$, then without loss of generality we only need to concern ourselves with the remaining $2(n+1)$ vertices of $\hatP(\bw)\backslash\{\hat0,\hat1\}$, which we label  as $[n+1]\cup[(n+1)']$. 
See Figure~\ref{fig.sigmaladder}.
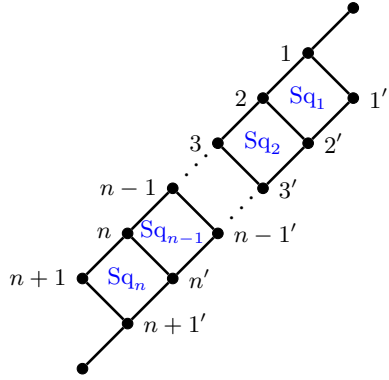
\begin{figure}[ht!]
\begin{center}
\begin{tikzpicture}[scale=.6]
\begin{scope}[xshift=0, yshift=0] 
 	\pvx[](v00) at (2,20) {};
 	\pvx[label=left:{\footnotesize$1$}](v01) at (1,19) {};
 	\pvx[label=left:{\footnotesize$2$}](v03) at (0,18) {};
	\pvx[label=left:{\footnotesize$3$}](v05) at (-1,17) {};
 	\pvx[label=left:{\footnotesize$n-1$}](v07) at (-2,16) {};
 	\pvx[label=left:{\footnotesize$n$}](v08) at (-3,15) {};
 	\pvx[label=left:{\footnotesize$n+1$}](v09) at (-4,14) {};
 	\pvx[label=right:{\footnotesize$1'$}](v02) at (2,18) {};
 	\pvx[label=right:{\footnotesize$2'$}](v04) at (1,17) {};
 	\pvx[label=right:{\footnotesize$3'$}](v06) at (0,16) {};
 	\pvx[label=right:{\footnotesize$n-1'$}](v10) at (-1,15) {};
  	\pvx[label=right:{\footnotesize$n'$}](v11) at (-2,14) {};
 	\pvx[label=right:{\footnotesize$n+1'$}](v12) at (-3,13) {};
 	\pvx[](v13) at (-4,12) {};
 	\node[]  at (-1.65,16.55)  {$\udots$};
 	\node[]  at (-.65,15.6)  {$\udots$};
	
 	\node[] at (1,18)  {\color{blue} \footnotesize$\Sq_1$};
 	\node[] at (0,17)  {\color{blue} \footnotesize$\Sq_2$};
 	\node[] at (-2,15)  {\color{blue} \footnotesize$\Sq_{n-1}$};
 	\node[] at (-3,14)  {\color{blue} \footnotesize$\Sq_n$};
	
 	\draw[line width=1pt] (v00)--(v01)--(v02);
 	\draw[line width=1pt] (v03)--(v04);		
 	\draw[line width=1pt] (v05)--(v06);
 	\draw[line width=1pt] (v01)--(v03);
 	\draw[line width=1pt] (v03)--(v05);
 	\draw[line width=1pt] (v02)--(v04);
 	\draw[line width=1pt] (v04)--(v06);
	
 	\draw[line width=1pt] (v07)--(v08);
 	\draw[line width=1pt] (v08)--(v09);
 	\draw[line width=1pt] (v10)--(v11);
 	\draw[line width=1pt] (v11)--(v12);
 	\draw[line width=1pt] (v07)--(v10);
 	\draw[line width=1pt] (v08)--(v11);
 	\draw[line width=1pt] (v09)--(v12)--(v13);
\end{scope}
\end{tikzpicture}
\end{center}
\caption{The $n$-ladder $J(Q_\mathbf{w})=\hatP(\mathbf{w})$ of Theorem~\ref{thm.Cayleygraph}.
 }
\label{fig.sigmaladder}
\end{figure}

There are $\binom{n+1}{2}$ circuits; explicitly, the circuits are $Z_{i,j} = \{ i,i', j, j'\}$ for $1\leq i<j\leq n+1$.  
In particular, it follows from Theorem~\ref{thm:flipsfromcanonical} that the circuits which support a flip in the canonical triangulation $\cT_\mathbf{w}$ are $Z_{i,i+1}$ for $i=1,\ldots, n$.



Next, we define maps on the labels of $\hatP(\bw)\backslash\{\hat0,\hat1\}$. 
Given $1\leq i \leq n$, if the labels on the four vertices of the $i$-th square $\Sq_i$ are $a,a',b,b'$, then $\pi_i$ swaps $a$ with $b$, and $a'$ with $b'$.
In other words, $\pi_i$ permutes places, not values. 
See Figure~\ref{fig.pi}. 
Compare this to the twist maps which permute values, not places.
It is clear from this definition that $\pi_i$ acts as a simple transposition on the labels of $\hatP(\bw)\backslash\{\hat0,\hat1\}$ so that $\pi_i^2=1$, and moreover, $\pi_1,\ldots,\pi_n$ generate the symmetric group $\mathfrak{S}_{n+1}$.

\begin{figure}[ht!]
\begin{center}
\begin{tikzpicture}[scale=.6]
\begin{scope}[xshift=0, yshift=0] 
	\pvx[](u01) at (1,19) {};
	\pvx[label=left:{\footnotesize$a$}](u02) at (0,18) {};
	\pvx[label=left:{\footnotesize$b$}](u03) at (-1,17) {};
	\pvx[](u04) at (-2,16) {};

	\pvx[](l01) at (2,18) {};
	\pvx[label=right:{\footnotesize$a'$}](l02) at (1,17) {};
	\pvx[label=right:{\footnotesize$b'$}](l03) at (0,16) {};
	\pvx[](l04) at (-1,15) {};
	
	\node[]  at (-2.75,15.5)  {$\udots$}; \node[]  at (-1.75,14.5)  {$\udots$};
	\node[]  at (1.35,19.6)  {$\udots$}; \node[]  at (2.35,18.6)  {$\udots$};
	
	\node[] at (1,18)  {\color{blue} \footnotesize$\Sq_{i-1}$};
	\node[] at (0,17)  {\color{blue} \footnotesize$\Sq_i$};
	\node[] at (-1,16)  {\color{blue} \footnotesize$\Sq_{i+1}$};
	
	\draw[line width=1pt] (u01)--(u04);
	\draw[line width=1pt] (l01)--(l04);
	\draw[line width=1pt] (u01)--(l01);
	\draw[line width=1pt] (u02)--(l02);
	\draw[line width=1pt] (u03)--(l03);
	\draw[line width=1pt] (u04)--(l04);	
\end{scope}

\begin{scope}[xshift=100, yshift=0] 
	\node[] at (0,17) {$\longrightarrow$};
	\node[] at (0,17.5) {$\pi_i$};
\end{scope}

\begin{scope}[xshift=200, yshift=0] 
	\pvx[](u01) at (1,19) {};
	\pvx[label=left:{\footnotesize$b$}](u02) at (0,18) {};
	\pvx[label=left:{\footnotesize$a$}](u03) at (-1,17) {};
	\pvx[](u04) at (-2,16) {};

	\pvx[](l01) at (2,18) {};
	\pvx[label=right:{\footnotesize$b'$}](l02) at (1,17) {};
	\pvx[label=right:{\footnotesize$a'$}](l03) at (0,16) {};
	\pvx[](l04) at (-1,15) {};
	
	\node[]  at (-2.75,15.5)  {$\udots$}; \node[]  at (-1.75,14.5)  {$\udots$};
	\node[]  at (1.35,19.6)  {$\udots$}; \node[]  at (2.35,18.6)  {$\udots$};
	
	\node[] at (1,18)  {\color{blue} \footnotesize$\Sq_{i-1}$};
	\node[] at (0,17)  {\color{blue} \footnotesize$\Sq_i$};
	\node[] at (-1,16)  {\color{blue} \footnotesize$\Sq_{i+1}$};
	
	\draw[line width=1pt] (u01)--(u04);
	\draw[line width=1pt] (l01)--(l04);
	\draw[line width=1pt] (u01)--(l01);
	\draw[line width=1pt] (u02)--(l02);
	\draw[line width=1pt] (u03)--(l03);
	\draw[line width=1pt] (u04)--(l04);	
\end{scope}
\end{tikzpicture}
\end{center}
\caption{The action of $\pi_i$ on the labels of $\hatP(\varepsilon L^{n-1})$.}
\label{fig.pi}
\end{figure}
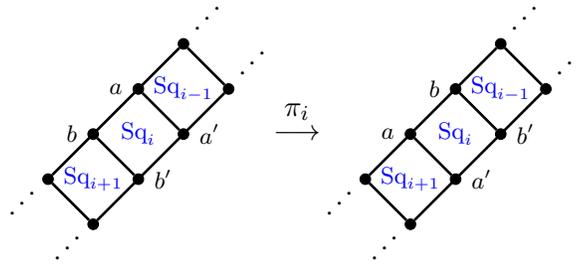

\begin{lemma}\label{lem.2}
Let $\cT_\bw$ denote the canonical triangulation of $\calO(Q_\bw)$. 
Let $\mathcal{U}_i$ denote the triangulation of $\calO(Q_\bw)$ that differs from $\cT_\bw$ by the flip supported at the circuit $Z_{i,i+1}$ for $1 \leq i < n$. 
Then the simplices of $\mathcal{U}_i$ are the maximal chains of the poset $\pi_i \cdot \hatP(\bw)$.
Thus, $\mathcal{U}_i$ is a canonical triangulation of an order polytope, and hence is regular.
\end{lemma}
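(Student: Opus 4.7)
The plan is to describe both $\cT_\bw$ and $\mathcal{U}_i$ explicitly as simplicial complexes on the vertex set $J(Q_\bw) = \hatP(\bw)$, and then directly match the simplices of $\mathcal{U}_i$ with the maximal chains of the relabeled lattice $\pi_i \cdot \hatP(\bw)$. Since $\hatP(\bw)$ is the $n$-square ladder, its maximal chains are parameterized by a single ``turning point'' $k \in \{1, \ldots, n+1\}$ recording where the chain switches from the right column of the ladder to the left column, namely
\[
C_k = \{\hat 0, (n+1)', n', \ldots, k', k, k-1, \ldots, 1, \hat 1\}.
\]
Hence $\cT_\bw$ consists of the $n+1$ simplices $C_1, \ldots, C_{n+1}$.

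The first step is to pin down the flip. The circuit $Z_{i,i+1}$ equals the square $\Sq_i = \{i, i+1, i', (i+1)'\}$; as in the proof of Theorem~\ref{thm:flipsfromcanonical}, its oriented partition has $Z_+ = \{i, (i+1)'\}$ (top and bottom of $\Sq_i$) and $Z_- = \{i+1, i'\}$ (the antichain). Thus $\mathcal{T}_Z^-$ consists of the two chain-triangles $\{i, i', (i+1)'\} \subset C_i$ and $\{i, i+1, (i+1)'\} \subset C_{i+1}$, with common link
\[
\mathrm{L} = \{\hat 0, (n+1)', \ldots, (i+2)'\} \cup \{i-1, \ldots, 1, \hat 1\}.
\]
Performing the flip at $Z_{i,i+1}$ replaces $C_i$ and $C_{i+1}$ by the two new simplices
\[
S_1 := \mathrm{L} \cup \{i+1, i', (i+1)'\}, \qquad S_2 := \mathrm{L} \cup \{i, i+1, i'\},
\]
while leaving the remaining $C_k$ with $k \notin\{i, i+1\}$ unchanged.

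The key step is to verify that $\{C_k : k \neq i, i+1\} \cup \{S_1, S_2\}$ coincides with $\{\pi_i(C_k) : k = 1, \ldots, n+1\}$, which is the set of maximal chains of $\pi_i \cdot \hatP(\bw)$. This follows from a short case analysis on which of the four labels $i, i+1, i', (i+1)'$ lie in $C_k$. For $k < i$, neither $i$ nor $i+1$ lies in $C_k$ while both $i'$ and $(i+1)'$ do, so the involution $\pi_i$ fixes $C_k$ setwise; the symmetric check handles $k > i+1$. For the exceptional index $k = i$, the set $C_i$ contains $i, i', (i+1)'$ but not $i+1$, so $\pi_i(C_i) = (C_i \setminus \{i\}) \cup \{i+1\} = S_1$; similarly $C_{i+1}$ contains $i, i+1, (i+1)'$ but not $i'$, giving $\pi_i(C_{i+1}) = (C_{i+1} \setminus \{(i+1)'\}) \cup \{i'\} = S_2$. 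The bulk of the argument is this bookkeeping; the main obstacle is organizing the case analysis so that the two exceptional indices fall out as exactly the two new simplices $S_1, S_2$.

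For the final assertion, note that $\pi_i$ is a lattice isomorphism from $\hatP(\bw)$ to $\pi_i \cdot \hatP(\bw)$ and restricts to a poset isomorphism $\pi_i : Q_\bw \xrightarrow{\sim} Q'$ between their meet-irreducibles, where $Q' = \Irr_\wedge(\pi_i \cdot \hatP(\bw))$. The induced coordinate permutation gives an affine isomorphism $\mathcal{O}(Q_\bw) \xrightarrow{\sim} \mathcal{O}(Q')$ that carries $\mathcal{U}_i$ to the canonical triangulation of $\mathcal{O}(Q')$. Since canonical triangulations of order polytopes are regular, $\mathcal{U}_i$ is regular.
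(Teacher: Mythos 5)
Your main argument --- identifying the oriented circuit $Z_{i,i+1}=(\{i,(i+1)'\},\{i+1,i'\})$, observing that only the two chains $C_i$ and $C_{i+1}$ contain the triangles of $\mathcal{T}_Z^-$, computing the common link, and matching the two replacement simplices with $\pi_i(C_i)$ and $\pi_i(C_{i+1})$ while checking that $\pi_i$ fixes every other $C_k$ setwise --- is correct and is essentially the paper's proof, just with the bookkeeping written out more explicitly.

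The final paragraph, however, contains a genuine error. The coordinate permutation $\Phi$ induced by a poset isomorphism $f:Q_\bw\to Q'$ satisfies $\Phi(\bv_A)=\bv_{f(A)}$ for every filter $A$, so it sends maximal chains of $J(Q_\bw)$ to maximal chains of $J(Q')$; that is, $\Phi$ carries the \emph{canonical} triangulation of $\calO(Q_\bw)$ to the canonical triangulation of $\calO(Q')$. Since $\mathcal{U}_i\neq\cT_\bw$, the same map cannot also carry $\mathcal{U}_i$ to the canonical triangulation of $\calO(Q')$. More bluntly: any coordinate permutation preserves the componentwise order on $0/1$ vectors, whereas the simplices of $\mathcal{U}_i$ contain componentwise-incomparable pairs (e.g.\ after the flip at $\Sq_i$ the new simplices contain both $\bv_{i'}$ and $\bv_{i+1}$, which correspond to incomparable filters), so no coordinate permutation can turn $\mathcal{U}_i$ into a triangulation by chains. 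The conclusion you want is still true, but the affine map realizing $\mathcal{U}_i$ as a canonical triangulation of an order polytope is not a coordinate permutation --- already for $n=1$ (the diamond, Example~\ref{ex:triangulation_2}) one needs a genuinely different linear map, e.g.\ one sending $\be_2\mapsto-\be_2$. The honest justification here is the classical fact, invoked by the paper just before Theorem~\ref{thm.Cayleygraph}, that all $(n+1)!$ staircase triangulations of (the polytope affinely equivalent to) $\Delta^n\times\Delta^1$ are regular, with secondary polytope the permutahedron; alternatively one must exhibit the non-permutation affine map or a height function directly. As it stands, your last step would "prove" that $\mathcal{U}_i$ equals $\cT_\bw$, so it needs to be replaced.
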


\begin{proof}
Let
$$Z_{i,i+1} = ((Z_{i,i+1})_+, (Z_{i,i+1})_-) = ( \{i, (i+1)'\}, \{i+1, i'\})$$
be the oriented circuit so that the canonical triangulation $\cT_\bw$ contains $\cT_Z^-$ but not $\cT_Z^+$.  
The $n+1$ maximal chains in $\hatP(\bw)$ are of the form
$$C_i: 1, 2, \ldots, i, i', \ldots, n', (n+1)' $$
for $i=1,\ldots,n+1$. 
To make a flip from $\cT_\bw$ to $\mathcal{U}_i$, each maximal chain in $\hatP(\bw)$ containing $\{i,i+1, (i+1)'\}$ has that triplet replaced by $\{i,i+1, i'\}$, and every maximal chain in $\hatP(\bw)$ containing $\{i,i', (i+1)'\}$ has that triplet replaced by $\{i+1, i', (i+1)'\}$.  
So the flip affects only the two chains $C_i$ and $C_{i+1}$, where effectively, $(i+1)'$ is replaced by $i'$, and $i$ is replaced by $i'$.
Thus the simplices of $\mathcal{U}_i$ are precisely the maximal chains in the poset $\pi_i\cdot\hatP(\bw)$.
\end{proof}

We will denote the triangulation $\mathcal{U}_i$ by $\pi_i\cT_\bw$.

\begin{lemma}\label{lem.3}
Let $\cT$ be a regular triangulation of $\calO(Q_\bw)$ whose simplices are maximal chains in a labeled $n$-ladder poset $\hatP_\mathcal{T}$.
There are exactly $n$ regular triangulations that differ from $\mathcal{T}$ by a circuit flip, and each of these triangulations have simplices which are the maximal chains in $\pi_i \hatP_\mathcal{T}$ for $i=1,\ldots, n$.
\end{lemma}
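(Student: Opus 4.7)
The plan is to generalize the arguments from Lemma~\ref{lem.2} and from the proof of Theorem~\ref{thm:flipsfromcanonical}: both arguments are stated for the canonical triangulation $\cT_\bw$, but their reasoning depends only on the simplices of the triangulation being maximal chains in an $n$-ladder and not on the specific labeling $\hatP(\bw)$ of that ladder. Since this is exactly the hypothesis imposed on $\mathcal{T}$, the arguments should transfer with only cosmetic changes.

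First, I would show that $\mathcal{T}$ admits exactly $n$ flips, one per square of $\hatP_\mathcal{T}$. By Theorem~\ref{thm.bijection}, every circuit of $\calO(Q_\bw)$ arises from a nonempty connected induced subgraph of $G(\bw)$, which for the ladder is a path on $n$ vertices; every circuit thus has size four and corresponds to a subpath. For a circuit supported on a single square of $\hatP_\mathcal{T}$, the argument of Lemma~\ref{lem.2} applies: the negative part $\mathcal{T}_-$ is contained in $\mathcal{T}$ (as the two triangles formed by the top, bottom, and one antichain vertex are maximal chains), and the two triangles share the common link consisting of the chains above the top of the square and below its bottom, so a flip is admitted. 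For circuits of size four arising from a subpath of length at least two, the argument from the proof of Theorem~\ref{thm:flipsfromcanonical} applies verbatim: the circuit elements lie in distinct ranks of the ladder, forcing the two triangles in $\mathcal{T}_-$ to have different links, so the flip condition of Theorem~\ref{thm:flip} fails.

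Second, I would mimic the proof of Lemma~\ref{lem.2} to identify the resulting triangulation after each flip. The flip at the square $\Sq_i$ of $\hatP_\mathcal{T}$ affects only the two maximal chains of $\hatP_\mathcal{T}$ that pass through the two incomparable vertices of $\Sq_i$; each such chain is replaced by the unique maximal chain that agrees with it outside $\Sq_i$ and instead passes through the other antichain vertex. This is precisely the effect of the relabeling $\pi_i$ applied to $\hatP_\mathcal{T}$, so the resulting triangulation has simplex set equal to the maximal chains of $\pi_i\hatP_\mathcal{T}$.

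The main obstacle is showing that each $\pi_i \mathcal{T}$ is a regular triangulation of $\calO(Q_\bw)$. Since $\pi_i\hatP_\mathcal{T}$ is again a labeled $n$-ladder, my plan is to induct on the number of transpositions needed to express the relabeling of $\hatP_\mathcal{T}$, viewed as a permutation in $\mathfrak{S}_{n+1}$, relative to the identity labeling on $\hatP(\bw)$. The base case is precisely Theorem~\ref{thm:regularity}, which gives the height function $\omega(\bv_A) = 2^{\rho(A)}$ for $\cT_\bw$. For the inductive step, I plan to obtain a height function for $\pi_i\mathcal{T}$ by locally modifying the height function for $\mathcal{T}$ at just the two vertices of $\Sq_i$ whose labels are swapped by $\pi_i$; the technical point, which I expect to be the most delicate part of the argument, is verifying that this local modification does not accidentally change the lower envelope away from the simplices directly affected by the flip. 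This reduces to analyzing the lifted lower faces in a neighborhood of $\Sq_i$ and should follow from the same sort of rank-based reasoning that underlies Theorem~\ref{thm:regularity}.
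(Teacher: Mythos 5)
Your treatment of the first two claims matches the paper's: the paper likewise reduces to the circuits $Z_{i,j}=\{i,i',j,j'\}$, observes that only the $n$ circuits whose four elements sit in a single square of $\hatP_\mathcal{T}$ satisfy the link condition of Theorem~\ref{thm:flip}, and identifies the flipped triangulation with the maximal chains of $\pi_i\hatP_\mathcal{T}$ exactly as in Lemma~\ref{lem.2}. (One small caution: when you invoke the link argument from Theorem~\ref{thm:flipsfromcanonical} ``verbatim,'' the relevant ranks are those of the relabeled poset $\hatP_\mathcal{T}$, not of $\hatP(\bw)$; this is harmless for a ladder since every relabeling $\pi\hatP_\mathcal{T}$ is again a ladder, but it should be said.)

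Where you diverge is on regularity, and this is the weak point of the proposal. You plan to certify regularity of $\pi_i\mathcal{T}$ by surgically modifying the height function of $\mathcal{T}$ near $\Sq_i$ and then checking that the lower envelope is undisturbed elsewhere; you correctly flag this verification as delicate, but you do not carry it out, and experience with this polytope suggests it would not be short (the paper's height-function verification for twists in Theorem~\ref{thm:regularity} runs to thirty-six cases). The paper avoids height functions entirely: since $\pi_i\hatP_\mathcal{T}$ is again a labeled $n$-ladder, i.e.\ the lattice of filters of a poset isomorphic to $Q_\bw$ under a relabeling of the ground set, the triangulation $\pi_i\mathcal{T}$ is itself the canonical triangulation of an order polytope and is therefore regular by Stanley's construction (this is exactly the closing sentence of Lemma~\ref{lem.2}, and the induction in the paper's proof just propagates it). I would recommend replacing your height-function plan with this observation; as written, the regularity step is an unproved reduction to a genuinely nontrivial computation.
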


\begin{proof} Proceed by induction on the number of flips away from the canonical triangulation $\cT_\bw$. 
The base case follows from Lemma~\ref{lem.2}.
Suppose $\mathcal{T} = \pi_{i_\ell} \cdots \pi_{i_1} \mathcal{T}_c$ for some (reduced) sequence of transpositions $\pi_{i_\ell},\ldots, \pi_{i_1}\in \mathfrak{S}_{n+1}$.
Let $\pi = \pi_{i_\ell} \cdots \pi_{i_1} $.
The circuits of $\mathcal{T}$ are then 
$$Z_{\pi^{-1}(j), \pi^{-1}(k)} = \{\pi^{-1}(j), \pi^{-1}(j'), \pi^{-1}(k), \pi^{-1}(k') \}$$ for $1\leq j < k\leq n+1$.  
In particular, for $k=1,\ldots,n$, the four elements of the circuit $Z_{\pi^{-1}(k), \pi^{-1}(k+1)}$ are labels on the square $\Sq_{\pi^{-1}(k)}$ of the poset $\hatP_\mathcal{T}$, and these are the only circuits which support a flip as they are the only ones whose links are the same for the two faces in $\mathcal{T}_Z^-$.  So a flip in this circuit yields a triangulation $\pi \mathcal{T}$ whose simplices are maximal chains in the poset $\pi\cdot \hatP_\mathcal{T}$.
\end{proof}

Since the transpositions $\pi_1,\ldots,\pi_n$ generate $\mathfrak{S}_{n+1}$, the proof of Theorem~\ref{thm.Cayleygraph} now follows.

\subsection{Proof of Theorem~\ref{thm:square}}

We use the notation from Definition~\ref{def.twistgroup}.
We can naturally extend the action of $\frakT(\bw)$ on $V_0$ to the action of $\frakT(\bw)$ on subsets of $V_0$.
The following lemma states that the twist group also acts on the circuits of the vertices of $\cO(Q_\bw)$. 

\begin{lemma}\label{lem:twist2}
Let $Z = (Z_+, Z_-)$ be a circuit on $V_0$ and $\tau\in \frakT(\bw)$. Then $\tau(Z):= (\tau(Z_+), \tau(Z_-))$ is also a circuit on $V_0$.  
\end{lemma}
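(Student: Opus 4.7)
The plan is to reduce to elementary twists $\tau_i$ by Lemma~\ref{lem:twist1}(b), and to use the bijection of Theorem~\ref{thm.bijection} identifying circuits of $\cO(Q_\bw)$ with nonempty connected induced subgraphs of $G(\bw)$. It then suffices to prove that for every $\tau_i$ and every $H\in\calG(\bw)$ there is an $H'\in\calG(\bw)$ with $\tau_i(\Gamma(H))=\Gamma(H')$; the partition $(\tau_i(Z_+),\tau_i(Z_-))$ is then a valid oriented circuit, possibly with reversed orientation, and the lemma follows.

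The first step is to describe the action of $\tau_i$ on filters of $Q_\bw$ explicitly. Using Definition~\ref{def.twist} and the correspondence $\hatP=J(Q_\bw)$, a direct check shows that $\tau_i$ acts on each filter $A\in\mL^i$ by toggling the membership of a single distinguished element $r_i\in Q_\bw$ (namely $r_i=x_2\setminus x_1$, with $x_1,x_2$ the top two vertices of $\mL^i$ as labeled in Figure~\ref{fig.twist}), while fixing every filter outside $\mL^i$. Consequently $\bv_{\tau_i(A)}=\bv_A\pm\be_{r_i}$ for $A\in\mL^i$, and $\bv_{\tau_i(A)}=\bv_A$ for $A\notin\mL^i$.

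With this explicit description, the affine dependence $\sum_{A\in Z_+}\bv_A=\sum_{A\in Z_-}\bv_A$ from Corollary~\ref{cor:circuit} transports to $\sum_{A\in Z_+}\bv_{\tau_i(A)}=\sum_{A\in Z_-}\bv_{\tau_i(A)}$; for coordinates $q\neq r_i$ this is immediate, while for $q=r_i$ it follows from the observation that the $\pm 1$ signs of the filters of $Z$ lying in $\mL^i$ sum to zero, which is a consequence of the connectedness of $H$ together with the sign pattern recorded in the proof of Theorem~\ref{thm.bijection}. Finally, $H'$ is exhibited by case analysis on how $H$ interacts with the squares of $\mL^i$: when $H$ is supported on squares lying entirely in $\mL^i$ or entirely outside it, $H'=H$ and $\tau_i(Z)=Z$ setwise; otherwise $H$ straddles a vertex $w_c$ of $G(\bw)$ that indexes a shared square between $\mL^i$ and an adjacent ladder, and one verifies $H'=H\mathbin{\triangle}\{w_c\}$, which is connected in $G(\bw)$ because $w_c$ is adjacent to vertices of $H\cap\mL^i$. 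The main obstacle is the last case analysis at ladder boundaries, but this reduces to checking the explicit swap pattern on the shared square already recorded in Figure~\ref{fig.tauscommute}.
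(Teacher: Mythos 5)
Your high-level plan coincides with the paper's: reduce to elementary twists via Lemma~\ref{lem:twist1}(b), describe $\tau_i$ on the filters of $\mL^i$ as toggling a single element $r_i$ (this is exactly Remark~\ref{rem:vertices}), and exhibit $\tau_i(\Gamma(H))$ as $\Gamma(H')$ for a modified connected induced subgraph $H'$, so that Theorem~\ref{thm.bijection} supplies minimality. The gap is in your closing case analysis, which is keyed to whether the \emph{squares} of $H$ lie inside or outside $\mL^i$; the relevant question is whether the \emph{vertices} of $Z=\Gamma(H)$ lie in $\mL^i$, and these are not equivalent. Take $i\geq 2$ and $H=\{w_{p-1}\}$, the square of $\mL^{i-1}$ adjacent to the corner square $\Sq(w_p)=\mL^{i-1}\cap\mL^{i}$. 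Every square of $H$ lies outside $\mL^i$, so your first case asserts $\tau_i(Z)=Z$; but $Z=\Sq(w_{p-1})$ contains $x_1$ and $x_3$, which are vertices of $\mL^i$, and $\tau_i$ sends them to $x_2,x_4$, giving $\tau_i(Z)=\Gamma(\{w_{p-1},w_p\})\neq Z$. Your ``otherwise'' clause does not catch this either, since a one-vertex $H$ straddles nothing. The correct rule, which is the substance of the paper's argument, is: toggle $w_p$ in $H$ exactly when $w_{p-1}\in H$, and toggle $w_q$ exactly when $w_{q+1}\in H$; in particular both toggles may be needed simultaneously (the paper's hardest subcase), which a single symmetric difference $H\mathbin{\triangle}\{w_c\}$ does not express.

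A smaller point: when you transport the affine relation, the change in the $r_i$-coordinate of $\sum_{A\in Z_+}\bv_{\tau_i(A)}-\sum_{A\in Z_-}\bv_{\tau_i(A)}$ equals $\sum_{A\in Z\cap\mL^i}\sgn(A)\,\epsilon(A)$, where $\sgn(A)=\pm1$ according to whether $A\in Z_\pm$ and $\epsilon(A)=+1$ if $A$ gains $r_i$ under the toggle and $-1$ if it loses it. The condition you invoke, namely the vanishing of the unweighted sum $\sum_{A\in Z\cap\mL^i}\sgn(A)$, is not the quantity that must vanish. This step would in any case become redundant once $\tau_i(Z)=\Gamma(H')$ is correctly established, but as written its justification does not close.
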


\begin{proof}
Because $\frakT(\bw)$ is abelian, it suffices to show that $\tau_k(Z)$ is a circuit for every $k$.  
We use the labeling of the squares and vertices of $\mL^k$ given in Figure~\ref{fig.twist}. 
Also, by Theorem~\ref{thm.bijection} we have $Z = \Gamma(H_{\bf w'})$, where $H_{\bf w'}$ is an induced connected subgraph of $G(\bf w)$ corresponding to a subword ${\bf w'}$ of ${\bf w}$.  

Observe that if $Z$ does not contain any vertex of $\mL^k$ then $\tau_k(Z)=Z$ and the lemma holds. 
If every vertex of $Z$ is also a vertex of $\mL^k$, then $Z$ consists of four vertices and we have $Z=(\{x_i, x_j\}, \{x_{i+1}, x_{j-1}\})$ up to interchanging $Z_+, Z_-$ for some $i$ odd and $j$ even.
Then,
\[\tau_k(Z) = (\{x_{i+1}, x_{j-1}\}, \{x_{i}, x_{j}\})=(Z_-, Z_+)\] 
which is the same circuit as $Z$.   
It remains to consider the case where $Z$ contains vertices both in $\mL^k$ and outside of $\mL^k$.  
In this case, it must be that $Z=\Gamma(H_{\bf w'})$ where $\bf w'$ contains one or both of $w_{p-1}$ and $w_{q+1}$. 

If ${\bf w'}$ contains $w_{p-1}$ and no letter in $\mL^k$, then $\tau_k$ applied to $Z$ replaces $x_1, x_3$ with $x_2, x_4$ respectively. 
Hence, $\tau_k(Z)= \Gamma(H_{{\bf w'}w_p})$ is again a circuit.  
Similar computation holds if ${\bf w'}$ contains $w_{q-1}$ and no letter in $\mL^k$.   
Since $\tau_k^2=1$ by Lemma~\ref{lem:twist1}(a), this also resolves the case when $\bf w'$ ends in $w_{p-1}w_p$ or starts with $w_{q}w_{q+1}$.   

The cases when ${\bf w'}$ contains $w_{p-1}, w_{p+1}$ but not $w_{q+1}$ or ${\bf w'}$ contains $w_{q-1}, w_{q+1}$ but not $w_{p-1}$ follow similarly to the case when  $\bf w'$ contains both $w_{p-1}$ and $w_{q+1}$; therefore, we only provide a detailed proof for the latter case.
Suppose that $\bf w'$ contains both $w_{p-1}$ and $w_{q+1}$.  
Then ${\bf w'}$ contains $w_{p+1}, \dots, w_{q-1}$ and it may or may not contain each of $w_p, w_q$. 
We treat the situation when ${\bf w'}$ contains $w_q$ but not $w_p$, and the other possibilities follow similarly.
Up to interchanging $Z_+$ and $Z_-$, we have the following situation where $x_1, x_{s-2}\in Z_-$ and $x_{s-1}, x_4\in Z_+$ and no other vertex in $\mL^k$ appears in the circuit $Z$.
This is depicted in the left-hand side of Figure~\ref{fig.flipsofcircuits}.

In this case, 
\[ 
\tau_k(Z_-)=\left(Z_-\setminus\{x_1, x_{s-2}\}\right)\cup \{x_2, x_{s-3}\}
\]
and
\[ 
\tau_k(Z_+)=\left(Z_+\setminus\{x_4, x_{s+1}\}\right) \cup \{x_3, x_s\} \, .
\]
In particular, $\tau_k(Z) = \Gamma(H_{\bf{w}''})$ where ${\bf{w}''}$ is obtained from $\bf{w}'$ by adding $w_p$ and removing $w_q$.   
Therefore, $\tau_k(Z)$ is a circuit. 
The remaining cases are proved in a similar fashion.
This completes the proof of the lemma.
\end{proof}

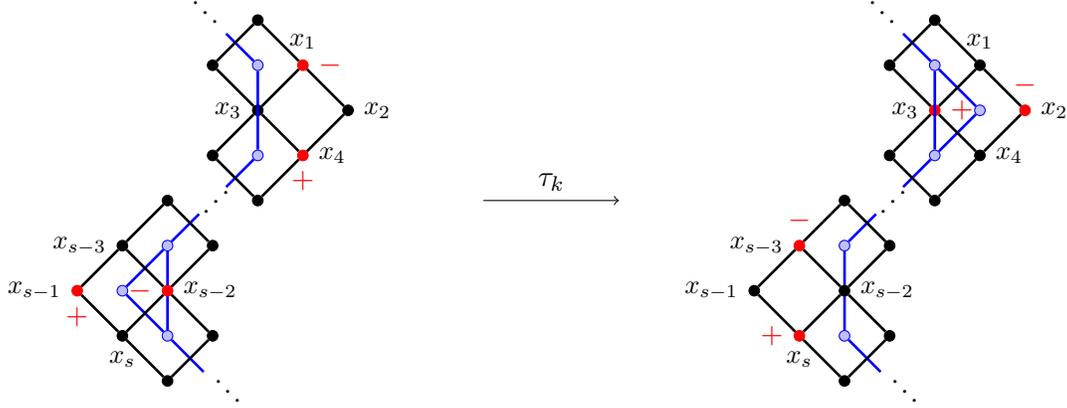
\begin{figure}
\centering
\begin{tikzpicture}[scale=.6]
\begin{scope}[xshift=0, yshift=0] 
	
	\pvx[label=above:{\small$x_1$}, label=right:{ \color{red} $-$}, color=red](v01) at (1,19) {};
	\pvx[label=left:{\small$x_3$}](v03) at (0,18) {};
	\pvx[label=right:{\small$x_2$}](v02) at (2,18) {};
	\pvx[label=right:{\small$x_4$}, label=below:{ \color{red} $+$}, color=red](v04) at (1,17) {};
	\pvx[](v05) at (-1,17) {};
	\pvx[](v06) at (0,16) {};

	\node[] (d2)  at (-1.1,16)  {$\udots$};
	\node[] (d2)  at (-0.7,11.6)  {$\Ddots$};
	\node[] (d2)  at (-1.2,20)  {$\Ddots$};

	\gvx[](c01) at (0,19) {};
	\gvx[](c02) at (0,17) {};
	\gvx[](c03) at (-2,15) {};
	\gvx[](c04) at (-3,14) {};
	\gvx[](c05) at (-2,13) {};
	\draw[blue, line width=1pt] (c01)--(c02);
	\draw[blue, line width=1pt] (-1.2,12.2)--(c05);
	\draw[blue, line width=1pt] (-.7,19.7)--(c01);
	\draw[blue, line width=1pt] (-.7,16.3)--(c02);
	\draw[blue, line width=1pt] (-1.3,15.7)--(c03);
	\draw[blue, line width=1pt] (c03)--(c04);
	\draw[blue, line width=1pt] (c04)--(c05);
	\draw[blue, line width=1pt] (c03)--(c05);
	
	\pvx[](v07) at (-2,16) {};
	\pvx[label=left:{\small$x_{s-3}$}](v08) at (-3,15) {};
	\pvx[label=left:{\small$x_{s-1}$}, label=below:{ \color{red} $+$}, color=red](v09) at (-4,14) {};
	\pvx[](v10) at (-1,15) {};
 	\pvx[label=right:{\small$x_{s-2}$}, label=left:{ \color{red} $-$}, color=red](v11) at (-2,14) {};
	\pvx[label=below:{\small$x_{s}$}](v12) at (-3,13) {};
		
	\pvx[](v13) at (-2,12) {};	
	\pvx[](v14) at (-1,13) {};	
	\pvx[](v15) at (0,20) {};	
	\pvx[](v16) at (-1,19) {};	
	\draw[line width=1pt] (v01)--(v02);
	\draw[line width=1pt] (v03)--(v04);		
	\draw[line width=1pt] (v05)--(v06);
	\draw[line width=1pt] (v01)--(v03);
	\draw[line width=1pt] (v03)--(v05);
	\draw[line width=1pt] (v02)--(v04);
	\draw[line width=1pt] (v04)--(v06);
	
	\draw[line width=1pt] (v07)--(v08);
	\draw[line width=1pt] (v08)--(v09);
	\draw[line width=1pt] (v10)--(v11);
	\draw[line width=1pt] (v11)--(v12);
	\draw[line width=1pt] (v07)--(v10);
	\draw[line width=1pt] (v08)--(v11);
	\draw[line width=1pt] (v09)--(v12);
	
	\draw[line width=1pt] (v13)--(v14);
	\draw[line width=1pt] (v13)--(v12);
	\draw[line width=1pt] (v14)--(v11);
	\draw[line width=1pt] (v15)--(v16);
	\draw[line width=1pt] (v15)--(v01);
	\draw[line width=1pt] (v16)--(v03);

\pvx[label=above:{\small$x_1$}](v01) at (16,19) {};
	\pvx[label=left:{\small$x_3$}, label=right:{ \color{red} $+$}, color=red](v03) at (15,18) {};
	\pvx[label=right:{\small$x_2$}, label=above:{ \color{red} $-$}, color=red](v02) at (17,18) {};
	\pvx[label=right:{\small$x_4$}](v04) at (16,17) {};
	\pvx[](v05) at (14,17) {};
	\pvx[](v06) at (15,16) {};
	
	\node[] (d2)  at (13.9,16)  {$\udots$};
	\node[] (d2)  at (14.3,11.6)  {$\Ddots$};
	\node[] (d2)  at (13.8,20)  {$\Ddots$};

	\gvx[](c01) at (15,19) {};
	\gvx[](c02) at (15,17) {};
	\gvx[](c03) at (13,15) {};
	\gvx[](c00) at (16,18) {};
	\gvx[](c05) at (13,13) {};
	\draw[blue, line width=1pt] (c01)--(c02);
	\draw[blue, line width=1pt] (13.8,12.2)--(c05);
	\draw[blue, line width=1pt] (14.3,19.7)--(c01);
	\draw[blue, line width=1pt] (14.3,16.3)--(c02);
	\draw[blue, line width=1pt] (13.7,15.7)--(c03);
	\draw[blue, line width=1pt] (c00)--(c01);
	\draw[blue, line width=1pt] (c00)--(c02);
	\draw[blue, line width=1pt] (c03)--(c05);
	\pvx[](v07) at (13,16) {};
	\pvx[label=left:{\small$x_{s-3}$}, label=above:{\color{red}$-$}, color=red](v08) at (12,15) {};
	\pvx[label=left:{\small$x_{s-1}$}](v09) at (11,14) {};
	\pvx[](v10) at (14,15) {};
 	\pvx[label=right:{\small$x_{s-2}$}](v11) at (13,14) {};
	\pvx[label=below:{\small$x_{s}$}, label=left:{\color{red}$+$}, color=red](v12) at (12,13) {};
		
	\pvx[](v13) at (13,12) {};	
	\pvx[](v14) at (14,13) {};	
	\pvx[](v15) at (15,20) {};	
	\pvx[](v16) at (14,19) {};	
	\draw[line width=1pt] (v01)--(v02);
	\draw[line width=1pt] (v03)--(v04);		
	\draw[line width=1pt] (v05)--(v06);
	\draw[line width=1pt] (v01)--(v03);
	\draw[line width=1pt] (v03)--(v05);
	\draw[line width=1pt] (v02)--(v04);
	\draw[line width=1pt] (v04)--(v06);
	
	\draw[line width=1pt] (v07)--(v08);
	\draw[line width=1pt] (v08)--(v09);
	\draw[line width=1pt] (v10)--(v11);
	\draw[line width=1pt] (v11)--(v12);
	\draw[line width=1pt] (v07)--(v10);
	\draw[line width=1pt] (v08)--(v11);
	\draw[line width=1pt] (v09)--(v12);
	
	\draw[line width=1pt] (v13)--(v14);
	\draw[line width=1pt] (v13)--(v12);
	\draw[line width=1pt] (v14)--(v11);
	\draw[line width=1pt] (v15)--(v16);
	\draw[line width=1pt] (v15)--(v01);
	\draw[line width=1pt] (v16)--(v03);

\draw[->] (5, 16)--(8,16) node[midway, above] {$\tau_k$};
	
	\end{scope}
\end{tikzpicture}
\caption{Two cases for proof of Lemma~\ref{lem:twist2}.}
\label{fig.flipsofcircuits}
\end{figure}

\begin{proof}[Proof of Theorem~\ref{thm:square}]

Let $\cT=\cT_Z^+$ and $\cT_Z^-$ be triangulations related by a flip at the circuit $Z$.
Let $\cT|_Z$ denote the restriction of $\cT$ to the circuit $Z$.
Since we can flip $\cT$ at $Z$, the links of simplices of $\cT|_Z$ in $\cT$ match. 
By Lemma~\ref{lem:twist2}, the twist $\tau(Z)$ is also a circuit, so the links of $\tau(\cT)|_{\tau(Z)}$ in $\tau(\cT)$ are obtained from the links of $\cT|_Z$ in $\cT$ by applying the permutation $\tau$. 
Thus, they also match. 
Therefore, we obtain two triangulations related by a flip through $\tau(Z)$, denoted $\tau(\cT)=\tau(\cT)_{\tau(Z)}^+$ and $\tau(\cT)_{\tau(Z)}^-$.
It suffices to show that $\tau(\cT)_{\tau(Z)}^-$ and $\tau(\cT_Z^-)$ are the same as sets.  

Every full-dimensional simplex $\sigma$ in $\cT=\cT_Z^+$ that does not contain a simplex supported on $Z$ remains a simplex in $\cT_Z^-$.
Hence $\tau(\cT_Z^-)$ contains $\tau(\sigma)$ as a subset.
Also, $\tau(\sigma)$ is a simplex of $\tau(\cT)=\tau(\cT)_{\tau(Z)}^+$ that does not a simplex supported on $\tau(Z)$, so it remains a simplex in $\tau(\cT)_{\tau(Z)}^-$ after the flip.
Every full-dimensional simplex $\sigma$ in $\cT$ that contains a simplex supported on $Z$ becomes $(\sigma\setminus \{ \sigma_+\})\cup \{\sigma_- \}$ after the flip, for some appropriate pair of subsets $\sigma_+$, $\sigma_-$ obtained from $Z$ by removing a single element in $Z_+$, $Z_-$ respectively.  
Similarly, $\tau(\sigma)$ contains a simplex supported on $\tau(Z)$, and after the flip supported at the circuit $\tau(Z)$ it becomes $(\tau(\sigma)\setminus \{ \tau(\sigma+)\})\cup \{\tau(\sigma_-) \}$ which equals $\tau((\sigma\setminus \{ \sigma_+\})\cup \{\sigma_- \})$.  
\end{proof}


\subsection{Proof of Theorem~\ref{thm:regularity}}

Given a lattice $\hatP =\hatP(\bw)$, let $Q_{\bw}=\mathrm{Irr}_\wedge(\hatP)$ be the poset of meet-irreducibles.
Label the elements of $Q_{\bw}$ as in Figure~\ref{fig:graph_poset_meet}.
Let $\sigma$ be a full-dimensional simplex in the canonical triangulation of the order polytope $\calO(Q_{\bw})$, i.e., $\sigma = \{ \bv_{\emptyset}, \bv_{A_0}, \dots, \bv_{A_m}\}$ is a collection of vertices in $\calO(Q_{\bw})$ that form a maximal chain $\mathbf{0}=\bv_{\emptyset}< \bv_{A_0}< \dots< \bv_{A_m}= \mathds{1}_m$  in $\hatP$, where $m=|Q_{\bw}|$.
Here $\bv_{A_{i}}$ is an indicator vector of $A_{i} \subset Q_{\bw}$.
Since the canonical triangulation is unimodular, we know that every simplex $\sigma$ has normalized volume $1$.
Let $M_\sigma$ denote the matrix with columns $\bv_{A_0}, \dots, \bv_{A_m}$. 
We have the following fact that will be used later in this subsection.

\begin{remark}\label{rem:simplex}
A collection of points $\sigma=\{ \bv_{\emptyset}, \bv_{B_0}, \dots, \bv_{B_m} \}$ in $\calO(Q_{\bw})$ is a simplex of the canonical triangulation of $\calO(Q_{\bw})$ if and only if $\emptyset\subset B_0\subset B_1\subset \cdots \subset B_m$ forms a maximal chain in $\hatP$, which holds if and only if $\det M_{\sigma}\not=0$.
\end{remark}

\begin{figure}
\begin{center}
\begin{tikzpicture}[scale=.6]
\begin{scope}[xshift=0, yshift=0] 

\pvx[label=right:{\small$x_1 = \langle a \rangle$}](v01) at (16,19) {};
	\pvx[label=left:{\small$x_3$}](v03) at (15,18) {};
	\pvx[label=right:{\small$x_2 = \langle a+1 \rangle$}](v02) at (17,18) {};
	\pvx[label=right:{\small$x_4$}](v04) at (16,17) {};
	\pvx[label=left:{\small$x_5 = \langle a+2 \rangle$}](v05) at (14,17) {};
	\pvx[label=right:{\small$x_6$}](v06) at (15,16) {};
	
	\node[] (d2)  at (13.8,16.1)  {$\udots$};
	\node[] (d2)  at (14,11.75)  {$\Ddots$};
	\node[] (d2)  at (14,19.85)  {$\Ddots$};
	
	
	
	\pvx[label=left:{\small$x_{s-5}$}](v07) at (13,16) {};
	\pvx[label=left:{\small$x_{s-3} = \langle a + \frac{s}{2} -2 \rangle$}](v08) at (12,15) {};
	\pvx[label=left:{\small$x_{s-1} = \langle a + \frac{s}{2} -1 \rangle$}](v09) at (11,14) {};
	\pvx[label=right:{\small$x_{s-4}$}](v10) at (14,15) {};
 	\pvx[label=right:{\small$x_{s-2}$}](v11) at (13,14) {};
	\pvx[label=below:{\small$x_{s}$}](v12) at (12,13) {};
		
	\pvx[](v13) at (13,12) {};	
	\pvx[label=right:{\small$ \langle a+\frac{s}{2} \rangle$}](v14) at (14,13) {};	
	\pvx[label=right:{\small$\langle a-1 \rangle$}](v15) at (15,20) {};	
	\pvx[](v16) at (14,19) {};	
	\draw[line width=1pt] (v01)--(v02);
	\draw[line width=1pt] (v03)--(v04);		
	\draw[line width=1pt] (v05)--(v06);
	\draw[line width=1pt] (v01)--(v03);
	\draw[line width=1pt] (v03)--(v05);
	\draw[line width=1pt] (v02)--(v04);
	\draw[line width=1pt] (v04)--(v06);
	
	\draw[line width=1pt] (v07)--(v08);
	\draw[line width=1pt] (v08)--(v09);
	\draw[line width=1pt] (v10)--(v11);
	\draw[line width=1pt] (v11)--(v12);
	\draw[line width=1pt] (v07)--(v10);
	\draw[line width=1pt] (v08)--(v11);
	\draw[line width=1pt] (v09)--(v12);
	
	\draw[line width=1pt] (v13)--(v14);
	\draw[line width=1pt] (v13)--(v12);
	\draw[line width=1pt] (v14)--(v11);
	\draw[line width=1pt] (v15)--(v16);
	\draw[line width=1pt] (v15)--(v01);
	\draw[line width=1pt] (v16)--(v03);

	\end{scope}
\end{tikzpicture}
\end{center}
\caption{Ladder $\mL^k$ in $\hatP$.}
\label{fig.twist2}
\end{figure}

Now we analyze the action of a twist $\tau_k$ on the $0/1$ entries of vertices in $\calO(Q_{\bw})$.
Consider the ladder $\mL^k$ in $\hatP$ with vertices labeled $x_1, \dots, x_s$ as in Figure~\ref{fig.twist2} and let $[a] = \{0,1,2, \dots, a\}$.
Note the inclusion of $0$ in the set $[a]$.
Then, after identifying $B$ with $\bv_B$, we have the following description of the vertices, for some $a'\leq a-1$.
\[
\begin{array}{lll}
x_1 = \bv_{[a]\setminus \{a'\} } = (1^{a'}, 0, 1^{a-a'}, 0, \dots, 0) && 
x_2 = \bv_{[a+1]\setminus \{a'\} } = (1^{a'}, 0, 1^{a-a'+1}, 0, \dots, 0)\\
x_3 = \bv_{[a+1]\setminus\{a+1\} } = (1^{a+1}, 0, \dots, 0) &&
x_4=\bv_{[a+1]} = (1^{a+2}, 0, \dots, 0)\\
x_5=\bv_{[a+2]\setminus\{a+1\}} = (1^{a+1}, 0,1, 0, \dots, 0)&&
x_6=\bv_{[a+2]} = (1^{a+3}, 0, \dots, 0)\\
\vdots && \vdots\\
x_{s-1} = \bv_{[a+\frac{s}{2}-1]\setminus\{a+1\}} = (1^{a+1}, 0,1^{\frac{s}{2}-2} ,0 \dots, 0)&& x_s=\bv_{[a+\frac{s}{2}-1]} = (1^{a+\frac{s}{2}}, 0, \dots, 0)\\
\end{array}
\]

For example in Figure~\ref{fig:graph_poset_meet}, if $k=5$ then $\mL^5$ is the last ladder in $\hatP$ and in this case we have $a=15$ and $a'=11$.
Hence all vertices of $\calO(Q_{\bw})$ in $\mL^k$ are characterized as follows.  

\begin{remark}\label{rem:vertices}
A vertex $\bv_A \in \mL^k$ if and only if $[a]\setminus\{a'\}\subset A$ and $a+\frac{s}{2}, a+\frac{s}{2}+1, \dots, m \not\in A$.  Moreover, for every $\bv_A \in \mL^k$, the vector $\tau_k(\bv_A)$ is obtained from $\bv_A$ by replacing a $0$ or $1$ in position $a+2$ by $1$ or $0$, respectively.  
\end{remark}

The next result says that a twist of a canonical triangulation is again a triangulation.  

\begin{theorem}
Let $\cT_{\bw}$ be the canonical triangulation of $\calO(Q_{\bw})$. 
Then $\tau(\cT_{\bw})$ is also a triangulation of $\calO(Q_{\bw})$ for every $\tau \in \frakT(\bw)$.  
\end{theorem}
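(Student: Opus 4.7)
My plan is to reduce to the case of a single elementary twist and then realize $\tau_k(\cT_{\bw})$ as a sequence of flips applied to $\cT_{\bw}$. Since flips preserve the triangulation property by Theorem~\ref{thm:flip}, this will suffice. By Lemma~\ref{lem:twist1}, $\frakT(\bw)$ is abelian and generated by the commuting involutions $\tau_k$, so writing $\tau$ as a product of elementary twists I would induct on the number of factors; the inductive step reduces to proving $\tau_k(\cT_{\bw})$ is a triangulation for each elementary twist $\tau_k$.

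Before setting up the flip sequence, I would first verify that each image $\tau_k(\sigma)$ of a full-dimensional simplex $\sigma \in \cT_{\bw}$ is itself a full-dimensional simplex of $\cO(Q_{\bw})$. Using Remark~\ref{rem:vertices}, $\tau_k$ toggles a single coordinate $c$ on vertices lying inside $\mL^k$, with sign depending on which ``column'' of the ladder the vertex occupies, and fixes all other vertices. For a maximal chain, the vertices inside $\mL^k$ form a consecutive sub-chain $\bv_{A_p} \subset \cdots \subset \bv_{A_q}$ passing from the top of $\mL^k$ down to its bottom, and within this sub-chain all ``odd-column'' ladder vertices precede all ``even-column'' ladder vertices. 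Hence $M_{\tau_k(\sigma)} = M_\sigma + e_c \epsilon^{\top}$ is a rank-one update, where $\epsilon$ is a suitable sign vector. Applying the matrix determinant lemma, and using that $M_\sigma^{-1} e_c$ is supported precisely on the unique odd-to-even transition in the chain, I would show $\det M_{\tau_k(\sigma)} = -\det M_\sigma \neq 0$, so by Remark~\ref{rem:simplex}, $\tau_k(\sigma)$ is indeed a simplex of $\cO(Q_{\bw})$.

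Finally, I would realize $\tau_k$ as a sequence of flips. Restricted to the ladder $\mL^k$, the situation mirrors Theorem~\ref{thm.Cayleygraph}: maximal chains through $\mL^k$ are indexed by a symmetric group, and flips at circuits supported on squares within $\mL^k$ implement simple transpositions. The elementary twist $\tau_k$ corresponds to the longest element of this symmetric group, which admits a reduced expression in simple transpositions, and each corresponding flip is a valid flip in $\cO(Q_{\bw})$ because the link condition of Theorem~\ref{thm:flip} is satisfied by an argument analogous to the proof of Theorem~\ref{thm:flipsfromcanonical}. The main obstacle will be verifying that these ladder-local flips interact correctly with the simplicial structure coming from the rest of $\hatP$---specifically, that the links of circuits supported entirely inside $\mL^k$ match across the flip sequence despite the presence of additional vertices from adjacent ladders $\mL^{k \pm 1}$. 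The rank-one update analysis above provides the algebraic underpinning for this compatibility, ensuring that intermediate simplices remain non-degenerate at each step, but a careful case analysis of the corner squares connecting $\mL^k$ to its neighboring ladders is needed to close the argument.
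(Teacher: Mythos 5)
Your first step---checking that each $\tau_k(\sigma)$ is a nondegenerate simplex by viewing $M_{\tau_k(\sigma)}$ as a rank-one perturbation of $M_\sigma$ with $\det M_{\tau_k(\sigma)}=-\det M_\sigma$---is sound and is essentially the paper's own computation (the paper phrases it as a row replacement $\text{Row}(a+2)\mapsto \text{Row}(a+1)-\text{Row}(a+2)+\text{Row}(a+1+\tfrac{s}{2})$). But this only gives the simplices; the actual content of the theorem is the intersection and union properties, and there your route diverges from the paper's and has a genuine gap. The paper finishes directly: if $\tau(p)\in\tau(\sigma_1)^\circ\cap\tau(\sigma_2)^\circ$, transporting the barycentric coefficients back to $\sigma_1,\sigma_2$ produces points $p_A,p_B$ that agree in every coordinate except possibly $a+2$, and agree there too by the same row relation (\ref{eq:row}); this contradicts $\sigma_1^\circ\cap\sigma_2^\circ=\emptyset$, and the union property then follows from unimodularity plus a count of simplices. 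No flips are needed.

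Your substitute---realizing $\tau_k(\cT_{\bw})$ as the endpoint of a sequence of flips---is not closed, and the missing pieces are exactly the hard ones. First, the statement that flips at squares of $\mL^k$ "implement simple transpositions" with $\tau_k$ the longest element is only justified in the paper when the \emph{entire} poset is a ladder (Lemma~\ref{lem.3} and Theorem~\ref{thm.Cayleygraph}); for general $\bw$ the flips available from a \emph{non-canonical} intermediate triangulation are not characterized anywhere (Theorem~\ref{thm:flipsfromcanonical} treats only the canonical one), and the corner squares joining $\mL^k$ to $\mL^{k\pm1}$ carry circuits indexed by triangles of $G(\bw)$ whose link behavior you have not analyzed---you yourself flag this as open. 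Second, even granting that every intermediate flip is legal, you must prove that the terminal triangulation of your flip word equals $\tau_k(\cT_{\bw})$ as a set of simplices; this is asserted by analogy, not proved. Third, your reduction to elementary twists is circular as stated: the inductive step requires running the flip argument for $\tau_{k_1}$ starting from the already-twisted, non-canonical triangulation $\tau_{k_2}\cdots\tau_{k_r}(\cT_{\bw})$, where your canonical-triangulation analysis does not apply, and you cannot appeal to Theorem~\ref{thm:square} to commute twists past flips because its proof presupposes that the twisted objects are already triangulations. The paper sidesteps all of this by running the determinant and interior-intersection arguments for an arbitrary $\tau\in\frakT(\bw)$ in one pass, observing that the auxiliary rows $a+1$ and $a+1+\tfrac{s}{2}$ are unchanged by the other elementary twists.
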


\begin{proof}
We prove the theorem in two steps.  
First, we show that a twist of a simplex in $\cT_{\bw}$ is also a simplex. 
Then, we show that no two simplices in $\tau(\cT_{\bw})$ intersect in the interior of $\calO(Q_{\bw})$. 
 Since $\cT_{\bw}$ is a unimodular triangulation and $\tau(\cT_{\bw})$ has the same number of simplices as $\cT_{\bw}$, this implies the desired result that $\tau(\cT_{\bw})$ is also a (unimodular) triangulation of $\calO(Q_{\bw})$.

Let $\sigma = \{ \bv_{\emptyset}, \bv_{A_0}, \dots, \bv_{A_m}\}$ be a simplex in $\cT_{\bw}$, and we claim that $\tau(\sigma)$ is also a simplex.  
Note that $\tau(\bv_{\emptyset})=\bv_{\emptyset}$ and by Remark~\ref{rem:simplex} it suffices to show that $\det M_{\tau(\sigma)}\not=0$. 
First, we address the claim when $\tau=\tau_k$ is an elementary twist.  
Label the vertices $x_i$ of $\mL^k$ by $\bv_A$ for their corresponding subset $A$ of $P$ as discussed earlier, see Figure~\ref{fig.twist2}.
The determinant of $M_\sigma$ is $\pm 1$, and by Remark~\ref{rem:vertices} the two matrices $M_\sigma, M_{\tau_k(\sigma)}$ differ only in row $a+2$.  
Recall that by construction, the entry $M_\sigma$ in position $(i,j)$ equals 1 if and only if $i-1\in A_{j-1}$.
Then, since $\emptyset \subset A_0 \subset \dots \subset A_m$ is a maximal chain, a row $i$ in $M_\sigma$ is of the form $[ 0^{q}, 1^{m+1-q}]$ for some $q\geq 0$ such that $i-1\not\in A_0, \dots, A_{q-1}$ and $i-1 \in A_{q}, \dots, A_m$.   
Let $\text{Row}_M (i)$ denote the $i$-th row of the matrix $M$, so then 
\[
\text{Row}_{M_\sigma}(a+2)=[ 0^{d}, 1^{m+1-d}]
\] 
for some $d$. We will show that $M_{\tau_k(\sigma)}$ is obtained from $M_\sigma$ by a row replacement operation consisting of three rows, as follows. 
Let $\bv_{A_b}, \bv_{A_{b+1}}, \dots, \bv_{A_c}$ be vertices of $\mL^k\cap \sigma$.  
If $i\geq b$ then $a\in A_i$, while if $i<b$ then $a\not\in A_i$.
Hence, 
\[
\text{Row}_{M_\sigma}(a+1)= [0^{b}, 1^{m+1-b}] \, .
\]  
Similarly, if $i\geq c+1$ then $a+s/2\in A_i$ and if $i<c+1$ then $a+\frac{s}{2}\not\in A_i$.  Hence, 
\[
\text{Row}_{M_\sigma}(a+1+\frac{s}{2})= [0^{c+1}, 1^{m-c}] \, .
\]  
Note that $b<d<c$.  By Remark~\ref{rem:vertices}, the $(a+2)$-th row of $M_{\tau_k(\sigma)}$ is obtained from the $(a+2)$-th row of $M_\sigma$ by replacing $0$'s in columns indexed by $\bv_{A_b}, \dots,  \bv_{A_{d-1}}$ by $1$'s, and replacing $1$'s in columns $\bv_{A_d}, \dots,  \bv_{A_{c}}$ by $0$'s.  
Then 
\[
\text{Row}_{M_{\tau_k(\sigma)}}(a+2)=[0^{b}, 1^{d-b}, 0^{c+1-d}, 1^{m-c}]
\]
 while all other rows of $M_{\tau_k(\sigma)}$ remain the same.
 Then the following equation holds. 
\begin{equation}\label{eq:row}
\text{Row}_{M_{\tau_k(\sigma)}}(a+1)-\text{Row}_{M_{\tau_k(\sigma)}}(a+2)+\text{Row}_{M_{\tau_k(\sigma)}}(a+1+\frac{s}{2})= [ 0^{m+1-d},\sigma1^{d}] = \text{Row}_{M_\sigma}(a+2).
\end{equation} 
We have shown that $M_{\tau_k(\sigma)}$ is obtained from $M_\sigma$ by a row replacement operation, so $\det M_{\tau_k(\sigma)}=-\det(M_\sigma) = \pm 1$.  
This shows that $\tau_k(\sigma)$ is also a simplex.  
In the case where $\tau$ is a composition of $\tau_k$'s, the same argument works because rows $a+1$ and $a+1+\frac{s}{2}$ in $M_\sigma$ do not change under twists.  
This completes the proof of the claim that $\tau(\sigma)$ is a simplex.

Now we claim that the images under $\tau$ of two simplices $\sigma_1, \sigma_2$ in the canonical triangulation do not intersect in the interior, that is $\tau(\sigma_1)^{\circ}\cap \tau(\sigma_2)^{\circ}=\emptyset$. 
Suppose to the contrary that there exists some $\tau(p)\in \tau(\sigma_1)^{\circ}\cap \tau(\sigma_2)^{\circ}$.
Let $\sigma_1 =  \{ \bv_{\emptyset}, \bv_{A_0}, \dots, \bv_{A_m}\}$ and $\sigma_2= \{ \bv_{\emptyset}, \bv_{B_0}, \dots, \bv_{B_m}\}$.
Then,
\[
\tau(p)=a_{-1} \mathbf{0} + a_0\tau(\bv_{A_0}) + \dots + a_m\tau(\bv_{A_m}) = b_{-1} \mathbf{0} + b_0\tau(\bv_{B_0}) + \dots + b_m\tau(\bv_{B_m}) 
\]
for some $a_i, b_i>0$ and $\sum_{i=-1}^{m} a_i = \sum_{i=-1}^{m} b_i=1$.   
Now, define
\[
p_A:= a_{-1} \mathbf{0} + a_0 \bv_{A_0} + \dots + a_m \bv_{A_m}
\]
and
\[
p_B:=  b_{-1} \mathbf{0} + b_0 \bv_{B_0} + \dots + b_m \bv_{B_m} \, . 
\]
We will show that $p_A=p_B$ which implies $p_A\in \sigma_1^\circ \cap \sigma_2^\circ = \emptyset$, a contradiction.  
Again we will show this in the case $\tau=\tau_k$, and the general case follows in the same way as in the proof of the first claim.
Since $\tau_k$ only affects $(a+2)$-th index of points in $\sigma_1, \sigma_2$, then $p_A, p_B$ are equal to $\tau_k(p)$ in all positions except for $a+2$. 
Therefore, it suffices to show that $(p_A)_{a+2}=(p_B)_{a+2}$. 
Observe that 
\begin{equation}\label{eq:12}
 \text{Row}_{M_{\tau_k(\sigma_1)}}(a+2) \cdot [a_0, \dots, a_m]=\tau_k(p)_{a+2}= \text{Row}_{M_{\tau_k(\sigma_2)}}(a+2) \cdot [b_0, \dots ,b_m] \, .
\end{equation}

Note that $ \mathbf{0}$ is not a column of $M_\sigma$ by construction, so $a_{-1}, b_{-1}$ are omitted in the equation above. 
Moreover, by equation \eqref{eq:row} we have that $\text{Row}_{M_{\sigma_1}}(a+2)$ (respectively $\text{Row}_{M_{\sigma_2}}(a+2)$) is a linear combination of rows $a+1, a+2,$ and $a+1+s/2$ in $M_{\tau_k(\sigma_1)}$ (respectively $M_{\tau_k(\sigma_2)}$).
Hence, after taking the dot product of both sides of equation \eqref{eq:row} with $[a_0, \dots, a_m]$ and with $[b_0, \dots ,b_m]$, we obtain
\[
(p_A)_{a+2}  = \text{Row}_{M_{\sigma_1}} (a+2) \cdot [a_0, \dots, a_m] = \text{Row}_{M_{\sigma_2}} (a+2) \cdot [b_0, \dots, b_m] = (p_B)_{a+2} \, .
\]
Indeed, this follows because $\tau(p)_i=(p_A)_i=\text{Row}_{M_{\tau_k (\sigma_1)}} (i) \cdot [a_0, \dots, a_m] = \text{Row}_{M_{\tau_k (\sigma_2)}} (i) \cdot [b_0, \dots, b_m]= (p_B)_i$ for all $i\not=a+2$, and for $i=a+2$ we have equation \eqref{eq:12}.

This shows the desired claim, and we obtain a contradiction to $\sigma_1^\circ \cap \sigma_2^\circ = \emptyset$.  
\end{proof}

Next we proceed to show that twisting the canonical triangulation yields regular triangulations. 
Given a point configuration $\mathbf{A} = (\mathbf{p}_1,...,\mathbf{p}_m)$ in $\R^d$ and a height function $\omega: \mathbf{A} \to \R$, let $\mathbf{A}^\omega$ be the lifted point configuration given by $\mathbf{A}^\omega = \begin{pmatrix}
\mathbf{p}_1 & \cdots & \mathbf{p}_m \\
\omega(\bp_1) & \cdots & \omega (\bp_m) \\
\end{pmatrix} $.
Let $\cT(\mathbf{A},\omega)$ denote the regular triangulation of $\mathbf{A}$ induced by the height function $\omega$. 

For a simplex $\sigma$ in a triangulation $\cT$ of $\mathbf{A}$, we use $\mathbf{A}|_{\sigma}$ to denote the matrix whose columns correspond to the vertices of $\sigma$, and similarly for $\mathbf{A}^\omega|_{\sigma}$. 
The main tool we will use is the following theorem.
Recall that a \emph{wall} in a triangulation is a simplex of codimension one that is a face of two maximal simplices.

\begin{theorem}[\cite{DeLoeraRambauSantos}, Theorem 2.3.20] 
\label{thm.2.3.20}
Let $\cT$ be a triangulation of a point configuration $\mathbf{A}\subset \R^d$, and let $\omega : \mathbf{A} \to \mathbb{R}$ be a height function. Then one has $\cT = \cT(\mathbf{A},\omega)$ if and only if the local folding condition holds, i.e., if for every wall $\sigma_0 \in \cT$, with incident full-dimensional simplices $\sigma
_1$ and $\sigma_2$, the point $\bv \in V(\sigma_1) \setminus V(\sigma_2)$ lies above the hyperplane containing $\mathbf{A}^\omega|_{\sigma_2}$
and vice versa. 
\end{theorem}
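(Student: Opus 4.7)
My plan is to apply the local folding criterion Theorem~\ref{thm.2.3.20} twice: first to show that $\cT_\bw$ is the regular triangulation of $\cO(Q_\bw)$ induced by the height function $\omega$ of Definition~\ref{def:canonicalheight}, and then to conclude regularity of $\tau(\cT_\bw)$ by pulling $\omega$ back along $\tau$.

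For the first half, I begin by classifying the walls of $\cT_\bw$. Because the full-dimensional simplices of $\cT_\bw$ correspond bijectively to maximal chains $\widehat0=B_0\prec B_1\prec\cdots\prec B_m=\widehat1$ in $\hatP$, every wall $\sigma_0$ is obtained by deleting a single interior element $B_j$. Since $\hatP$ is distributive, there are exactly two elements of rank $\rho(B_{j-1})+1$ between $B_{j-1}$ and $B_{j+1}$, namely $B_j$ and some $B_j'$; these label the two full-dimensional simplices $\sigma_1,\sigma_2$ sharing $\sigma_0$. Writing $B_j=B_{j-1}\cup\{a\}$ and $B_j'=B_{j-1}\cup\{b\}$ yields the affine identity $\bv_{B_j}+\bv_{B_j'}=\bv_{B_{j-1}}+\bv_{B_{j+1}}$, so the affine functional $L$ describing the lifted hyperplane through $\sigma_2^\omega$ satisfies $L(\bv_{B_j})=\omega(\bv_{B_{j-1}})+\omega(\bv_{B_{j+1}})-\omega(\bv_{B_j'})$. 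The local folding condition at $\sigma_0$ therefore reduces to a single numerical inequality on the four heights at $B_{j-1},B_j,B_j',B_{j+1}$, and this inequality follows from the strict convexity built into Definition~\ref{def:canonicalheight}.

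For the second half, I define the twisted height function by $\omega_\tau(\bv_{\tau(A)}):=\omega(\bv_A)$ for every $A\in\hatP$. Since $\tau(\cT_\bw)$ has already been shown to be a triangulation (by the theorem preceding Theorem~\ref{thm:regularity} in this subsection), it suffices to verify the local folding criterion with respect to $\omega_\tau$. The walls of $\tau(\cT_\bw)$ are precisely the images $\tau(\sigma_0)$ of walls of $\cT_\bw$, with adjacent simplices $\tau(\sigma_1),\tau(\sigma_2)$. By construction the four heights $\omega_\tau(\bv_{\tau(B_{j-1})})$, $\omega_\tau(\bv_{\tau(B_j)})$, $\omega_\tau(\bv_{\tau(B_j')})$, $\omega_\tau(\bv_{\tau(B_{j+1})})$ agree with the corresponding untwisted heights, so provided the twisted affine identity $\bv_{\tau(B_j)}+\bv_{\tau(B_j')}=\bv_{\tau(B_{j-1})}+\bv_{\tau(B_{j+1})}$ survives, the local folding inequality at $\tau(\sigma_0)$ is literally the inequality already verified in the first half.

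The hard part is therefore the preservation of these affine identities under twists. I plan to reduce to elementary twists by Lemma~\ref{lem:twist1}(b), and then observe that each $\tau_k$ acts as a poset automorphism on $\mL^k$ (the diagonal reflection of the ladder prescribed by Definition~\ref{def.twist}) and fixes every element outside $\mL^k$. Since every covering square of $\hatP$ is contained in some single ladder $\mL^i$, the elementary twist $\tau_k$ either fixes the square pointwise (when $i\ne k$) or acts on it as a poset automorphism (when $i=k$); in either case $\tau_k$ preserves the partition of the square's four vertices into the comparable pair $\{B_{j-1},B_{j+1}\}$ and the incomparable pair $\{B_j,B_j'\}$, and this alone suffices to preserve the affine identity.
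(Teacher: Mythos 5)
The central problem is that your proposal does not prove the statement at hand. Theorem~\ref{thm.2.3.20} is a general characterization of regular triangulations of an \emph{arbitrary} point configuration $\mathbf{A}\subset\R^d$: the equivalence $\cT=\cT(\mathbf{A},\omega)$ if and only if the local folding condition holds at every wall. In the paper this is a quoted result from De Loera--Rambau--Santos, used as a black box. Your argument \emph{invokes} this very theorem in its first sentence and then uses it to establish a different result, namely Theorem~\ref{thm:regularity} (regularity of the canonical triangulation of $\cO(Q_\bw)$ and of its twists). A genuine proof of the stated theorem would have to treat an arbitrary triangulation and an arbitrary height function and establish both implications; in particular it would need the nontrivial globalization step showing that local convexity of the lifted surface across every wall forces the lifted triangulation to coincide with the lower envelope of $\mathbf{A}^\omega$. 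Nothing in your write-up addresses this, so as a proof of Theorem~\ref{thm.2.3.20} the proposal is circular and empty.

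Read instead as a proof sketch of Theorem~\ref{thm:regularity}, your outline is closer in spirit to a clean argument than the paper's (which proceeds by induction on the word length with an explicit $36$-case analysis of folding-form determinants), but it still has a real gap at the step you flag as routine: after reducing the folding condition at a wall to $\omega(\bv_{B_j})+\omega(\bv_{B_j'})>\omega(\bv_{B_{j-1}})+\omega(\bv_{B_{j+1}})$, you assert this follows from ``strict convexity built into Definition~\ref{def:canonicalheight}.'' The heights $2^{\rho(\cdot)}$ are assigned by position in the canonical (or twisted) linear order on the vertices, and one must actually verify, for every covering square of $\hatP(\bw)$ --- including the squares at turns, where the four elements are not consecutive in that order, and their images under twists --- that the incomparable pair receives the larger sum of heights. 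That verification is precisely the content of the paper's case analysis and appendix, and it cannot be waved away as convexity of $t\mapsto 2^t$.
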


We use the notation $(M:\mathbf{v})$ to denote the matrix $M$ with the column vector $\mathbf{v}$ appended as the last column.
Further, $\mathrm{sign}\det(M)$ denotes the sign of the determinant of $M$.

\begin{definition}[\cite{DeLoeraRambauSantos}, Definition 5.2.4] 
Let $B=\{\mathbf{p}_{r_1},...,\mathbf{p}_{r_{d+1}}\}$ be a basis for $\mathbf{A}$, i.e. an affinely independent set of full dimension. 
Then for any $\mathbf{p}_j \in \mathbf{A}$, the linear form 
\begin{align*}
    \Psi_{B,\mathbf{p}_j}(\omega) &:= \mathrm{sign} \det(\mathbf{p}_{r_1},...,\mathbf{p}_{r_{d+1}})\cdot \det\begin{pmatrix} \mathbf{p}_{r_1} & \cdots  & \mathbf{p}_{r_{d+1}} & \mathbf{p}_j \\ \omega(\bp_{r_1}) & \cdots & \omega(\bp_{r_{d+1}}) & \omega(\bp_{j}) 
    \end{pmatrix} \\
    &:= \mathrm{sign} \det(\mathbf{A}|_B)\cdot \det(\mathbf{A}^\omega|_B:(\mathbf{p}_j,\omega(\bp_j)))
\end{align*}
is called the {\em folding form} of $\mathbf{p}_j$ with respect to $B$. 
\end{definition}

The folding form can be used to check the local folding condition of Theorem~\ref{thm.2.3.20} since the point $\mathbf{p}_j~\in~\sigma_1\setminus~\sigma_2$ lies above the hyperplane containing $\mathbf{A}^\omega|_{\sigma_2}$ if $\Psi_{\sigma_2,\mathbf{p}_j}(\omega) > 0$.
Let $\mathbf{w} = w_0 w_1\cdots w_k$ be a fixed word.  Let $Q_\mathbf{w}$ be the poset for which $J(Q_\mathbf{w}) = \hatP(\mathbf{w})$.
We will slightly alter the labeling convention used in the previous section for the elements in $Q_\mathbf{w}$. 
Relabel the vertices $0,1,2$ to be $1,2,3$ respectively, relabel vertex $i$ as $i+2$ for $3 \leq i \leq n-1$, and relabel $n$ with $4$. See Figure~\ref{fig.newLabeling} for an example. 
This new labeling may seem somewhat peculiar at first, but it will make an inductive proof for regularity easier for the following reason: when $k> 0$, removing the highest label in $Q_\mathbf{w}$ with the new labeling yields the poset $Q_{\mathbf{w}\setminus\{w_k\}}$, with $J(Q_{\mathbf{w}\setminus\{w_k\}}) = \hatP(w_0\cdots w_{k-1})$.
In other words, removing the highest label in $Q_{\mathbf{w}}$ corresponds with removing the last box in $\hatP(\mathbf{w})$.
We then get the sequence of posets $Q_\mathbf{w}\supset Q_{\mathbf{w}\setminus \{w_k\}} \supset \cdots \supset Q_{w_0}$, where each poset is obtained as a subposet from the previous one by removing the vertex with the highest label. 
The corresponding posets of upper order ideals are then $\hatP(w_0\cdots w_{k-1}w_k)\supseteq \hatP(w_0\cdots w_{k-1})\supseteq \cdots \supseteq \hatP(w_0)$. 
We label vertices of $\mathcal{O}(Q_\mathbf{w})$ by the generators of their corresponding upper order ideals. 
For example, a vertex $\mathbf{v}_{\langle a,b\rangle}$ is the vertex corresponding to the upper order ideal generated by $a$ and $b$ in $Q_\bw$. We label the vertex corresponding to the empty upper order ideal with $\mathbf{v}_\emptyset$.

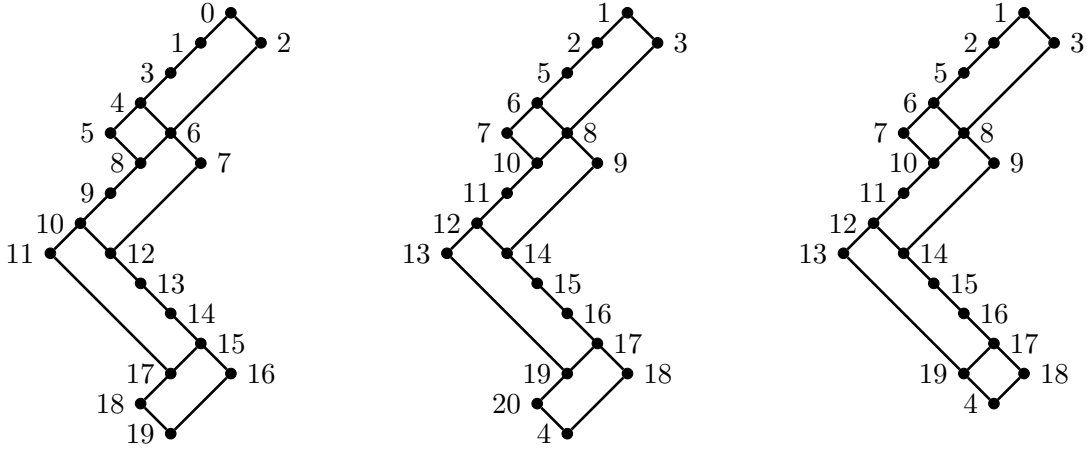
\begin{figure}[h!]
\begin{center}
\begin{tikzpicture}
\begin{scope}[xshift=-150, yshift=0, scale=0.4] 
	\pvx[label=left:$0$](v00) at (2,14) {};
	\pvx[label=left:$1$](v01) at (1,13) {};
	\pvx[label=right:$2$](v02) at (3,13) {};
	\pvx[label=left:$3$](v03) at (0,12) {};
	\pvx[label=left:$4$](v04) at (-1,11) {};
	\pvx[label=left:$5$](v05) at (-2,10) {};
	\pvx[label=right:$6$](v06) at (0,10) {};					
	\pvx[label=right:$7$](v07) at (1,9) {};
	\pvx[label=left:$8$](v08) at (-1,9) {};	
	\pvx[label=left:$9$](v09) at (-2,8) {};	
	\pvx[label=left:$10$](v10) at (-3,7) {};
	\pvx[label=left:$11$](v11) at (-4,6) {};
	\pvx[label=right:$12$](v12) at (-2,6) {};
	\pvx[label=right:$13$](v13) at (-1,5) {};
	\pvx[label=right:$14$](v14) at (0,4) {};
	\pvx[label=right:$15$](v15) at (1,3) {};
	\pvx[label=left:$17$](v17) at (0,2) {};
	\pvx[label=right:$16$](v16) at (2,2) {};
	\pvx[label=left:$18$](v18) at (-1,1) {};
	\pvx[label=left:$19$](v19) at (0,0) {};

	\draw[line width=1pt] (v19)--(v16);
	\draw[line width=1pt] (v19)--(v18);
	\draw[line width=1pt] (v18)--(v15);
	\draw[line width=1pt] (v16)--(v10);
	\draw[line width=1pt] (v17)--(v11);
	\draw[line width=1pt] (v11)--(v02);
	\draw[line width=1pt] (v12)--(v07);
	\draw[line width=1pt] (v07)--(v04);
	\draw[line width=1pt] (v08)--(v05);
	\draw[line width=1pt] (v05)--(v00);
	\draw[line width=1pt] (v02)--(v00);
\end{scope}
\begin{scope}[xshift=0, yshift=0, scale=0.4] 
	\pvx[label=left:$1$](v00) at (2,14) {};
	\pvx[label=left:$2$](v01) at (1,13) {};
	\pvx[label=right:$3$](v02) at (3,13) {};
	\pvx[label=left:$5$](v03) at (0,12) {};
	\pvx[label=left:$6$](v04) at (-1,11) {};
	\pvx[label=left:$7$](v05) at (-2,10) {};
	\pvx[label=right:$8$](v06) at (0,10) {};					
	\pvx[label=right:$9$](v07) at (1,9) {};
	\pvx[label=left:$10$](v08) at (-1,9) {};	
	\pvx[label=left:$11$](v09) at (-2,8) {};	
	\pvx[label=left:$12$](v10) at (-3,7) {};
	\pvx[label=left:$13$](v11) at (-4,6) {};
	\pvx[label=right:$14$](v12) at (-2,6) {};
	\pvx[label=right:$15$](v13) at (-1,5) {};
	\pvx[label=right:$16$](v14) at (0,4) {};
	\pvx[label=right:$17$](v15) at (1,3) {};
	\pvx[label=left:$19$](v17) at (0,2) {};
	\pvx[label=right:$18$](v16) at (2,2) {};
	\pvx[label=left:$20$](v18) at (-1,1) {};
	\pvx[label=left:$4$](v19) at (0,0) {};

	\draw[line width=1pt] (v19)--(v16);
	\draw[line width=1pt] (v19)--(v18);
	\draw[line width=1pt] (v18)--(v15);
	\draw[line width=1pt] (v16)--(v10);
	\draw[line width=1pt] (v17)--(v11);
	\draw[line width=1pt] (v11)--(v02);
	\draw[line width=1pt] (v12)--(v07);
	\draw[line width=1pt] (v07)--(v04);
	\draw[line width=1pt] (v08)--(v05);
	\draw[line width=1pt] (v05)--(v00);
	\draw[line width=1pt] (v02)--(v00);
\end{scope}
\begin{scope}[xshift=150, yshift=0, scale=0.4] 
	\pvx[label=left:$1$](v00) at (2,14) {};
	\pvx[label=left:$2$](v01) at (1,13) {};
	\pvx[label=right:$3$](v02) at (3,13) {};
	\pvx[label=left:$5$](v03) at (0,12) {};
	\pvx[label=left:$6$](v04) at (-1,11) {};
	\pvx[label=left:$7$](v05) at (-2,10) {};
	\pvx[label=right:$8$](v06) at (0,10) {};					
	\pvx[label=right:$9$](v07) at (1,9) {};
	\pvx[label=left:$10$](v08) at (-1,9) {};	
	\pvx[label=left:$11$](v09) at (-2,8) {};	
	\pvx[label=left:$12$](v10) at (-3,7) {};
	\pvx[label=left:$13$](v11) at (-4,6) {};
	\pvx[label=right:$14$](v12) at (-2,6) {};
	\pvx[label=right:$15$](v13) at (-1,5) {};
	\pvx[label=right:$16$](v14) at (0,4) {};
	\pvx[label=right:$17$](v15) at (1,3) {};
	\pvx[label=left:$19$](v17) at (0,2) {};
	\pvx[label=right:$18$](v16) at (2,2) {};
	\pvx[label=left:$4$](v18) at (1,1) {};

	\draw[line width=1pt] (v16)--(v18)--(v17);
	\draw[line width=1pt] (v17)--(v15);
	\draw[line width=1pt] (v16)--(v10);
	\draw[line width=1pt] (v17)--(v11);
	\draw[line width=1pt] (v11)--(v02);
	\draw[line width=1pt] (v12)--(v07);
	\draw[line width=1pt] (v07)--(v04);
	\draw[line width=1pt] (v08)--(v05);
	\draw[line width=1pt] (v05)--(v00);
	\draw[line width=1pt] (v02)--(v00);
\end{scope}
\end{tikzpicture}
\caption{The old labeling of $Q_\mathbf{w}$ (left). The altered labeling for $Q_\mathbf{w}$ (center). Removing the largest label $20$ in $Q_\mathbf{w}$ with the altered labeling forms $Q_{\mathbf{w}\setminus \{w_k\}}$ (right). }
\label{fig.newLabeling}
\end{center}
\end{figure}

As before, we think of $J(Q_\mathbf{w})$ as being made up of $\widehat{0}$, $\widehat{1}$ and ladders $\mL^1,...,\mL^t$. 
Label $\widehat{0}$ and $\widehat{1}$ with $x_0$ and $x_{2k+5}$, respectively. 
In the case that $\bw = \varepsilon$, we resolve the ambiguity as before in the discussion preceding Definition~\ref{def.twist}. 
That is, we treat $\varepsilon$ as an $R$ word. 
Refer to Figure~\ref{fig.baseCase} for an illustration of this labeling. 
In the case that $\bw \neq \varepsilon$, we label the remaining elements of $\hatP(\bw)$ with labels $x_i$, where $1\leq i \leq 2k+4$ in sequential order as follows. 
First label the element covered by $x_0$ with $x_1$, then label the other element of that rung in $\mL^1$ with $x_2$. 

Continue labeling the pairs on the rungs of $\mL^1$ until all elements of $\mL^1$ have labels $x_1\ldots,x_s$ for some even integer $s$ such that the even indexed labels are on one side of $\mL^1$ and the odd indexed labels are on the other. 
Note that this labeling of $\mL^1$ is consistent with that presented in Figure~\ref{fig.twist}.
Note that since $\mL^1$ and $\mL^2$ intersect in a square, the first two rungs of $\mL^2$ have already been assigned labels. 
Then, we proceed by labeling the unlabeled rungs of $\mL^2$ in a similar fashion to the labeling of $\mL_1$. 
That is, we label the first unlabeled rung of $\mL^2$ with $x_{s+1}$ covering $x_{s+2}$ 
and so on until all remaining elements of $\mL^2$ have been labeled. 
Continue labeling in this manner for the remaining ladders until every element of $\hatP(\bw)$ is assigned a label.
The left poset of Figure~\ref{fig:pushingOrder} demonstrates the labeling of $\hatP(\bw)$ when $\bw = \varepsilon RRLLR$.

We introduce the following terminology and notation for the following proof of Theorem~\ref{thm:regularity}. 
The \emph{canonical order} on $V(\mathcal{O}(Q_\mathbf{w}))$ is defined by
\[
x_0,x_2,x_1,x_4,x_3,x_6,x_5,...,x_s,x_{s-1},...,x_{2k+4},x_{2k+3},x_{2k+5} \, .
\]
We call this order the canonical order since it is used to construct a height function yielding the canonical triangulation.
Recall that applying a twist $\tau \in \frakT(\bw)$ to $\hatP(\bw)$ permutes the vertices $V(\mathcal{O}(Q_\mathbf{w}))$. 
Therefore, applying $\tau$ to the canonical order yields a new ordering, which we call the \textit{twisted order} with respect to $\tau$. We denote by $\tau(x_i)$ the label which replaces $x_i$ as a result of the twist.

\begin{definition}\label{def:canonicalheight}
If $x_i$ is the $k$-th element in the canonical order, let $\rho(x_i) = k-1$.
The {\em canonical height function} is the function $\omega:\mathbf{A} \to \mathbb{R}$ given by $\omega(x_i) = 2^{\rho(x_i)}$. 
Furthermore, we define the \textit{twisted height function} $\omega_\tau:\mathbf{A}\to \R$ to be given by $\omega_\tau(x_i) = \omega(\tau(x_i))$. Note that taking $\tau = \mathrm{id}$ gives the canonical height function.
\end{definition}

For example, consider the vertex $x_9$ of $V(\calO(Q_\bw))$ in Figure~\ref{fig:pushingOrder}. 
It appears as the $11$-th element in the canonical order. 
Thus $\omega(x_9) = 2^{\rho(x_9)} = 2^{10}$. 
Applying the twist $\tau_2$ on $V(\calO(Q_\bw))$ gives $\tau_2(x_{10}) = x_{9}$. 
Therefore $\omega_{\tau_2}(x_{10}) = \omega(\tau_2(x_{10})) = \omega(x_{9}) = 2^{10}$.   

\begin{center}
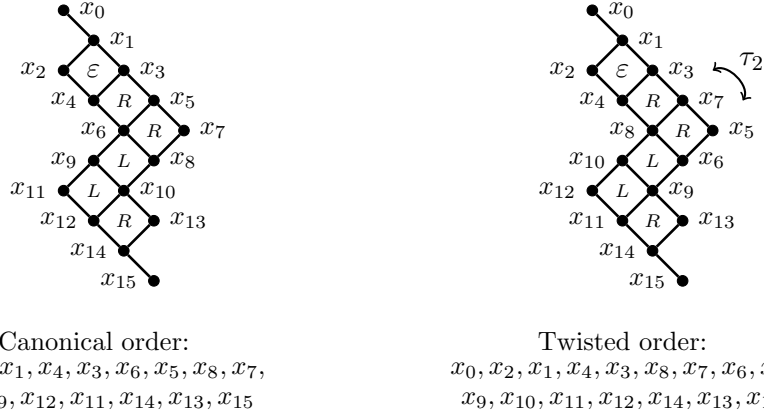
\begin{figure}
\begin{tikzpicture}
\begin{scope}[xshift=0, yshift=0, scale=0.4] 
	\pvx[label=right:{\small $x_0$}](-1) at (-1,11) {};
	\pvx[label=right:{\small $x_1$}](0) at (0,10) {};	
	\pvx[label=left:{\small $x_2$}](1) at (-1,9) {};
	\pvx[label=right:{\small $x_3$}](2) at (1,9) {};
	\pvx[label=left:{\small $x_4$}](3) at (0,8) {};
	\pvx[label=right:{\small $x_5$}](4) at (2,8) {};
	\pvx[label=left:{\small $x_6$}](5) at (1,7) {};
	\pvx[label=right:{\small $x_7$}](6) at (3,7) {};
	\pvx[label=right:{\small $x_8$}](7) at (2,6) {};
	\pvx[label=left:{\small $x_9$}](8) at (0,6) {};
	\pvx[label=right:{\small $x_{10}$}](9) at (1,5) {};	
	\pvx[label=left:{\small $x_{11}$}](10) at (-1,5) {};
	\pvx[label=left:{\small $x_{12}$}](11) at (0,4) {};	
	\pvx[label=left:{\small $x_{14}$}](13) at (1,3) {};
	\pvx[label=right:{\small $x_{13}$}](12) at (2,4) {};
	\pvx[label=left:{\small $x_{15}$}](14) at (2,2) {};	

	\draw[line width=1pt] (-1)--(0);			
	\draw[line width=1pt] (0)--(1);
	\draw[line width=1pt] (0)--(2);			
	\draw[line width=1pt] (1)--(3);	
	\draw[line width=1pt] (2)--(3);	
	\draw[line width=1pt] (2)--(4);
	\draw[line width=1pt] (3)--(5);
	\draw[line width=1pt] (4)--(5);		
	\draw[line width=1pt] (4)--(6);
	\draw[line width=1pt] (5)--(7);
	\draw[line width=1pt] (6)--(7);	
	\draw[line width=1pt] (5)--(8);	
	\draw[line width=1pt] (9)--(7);	
	\draw[line width=1pt] (8)--(9);	
	\draw[line width=1pt] (8)--(10);	
	\draw[line width=1pt] (10)--(11);	
	\draw[line width=1pt] (9)--(11);		
	\draw[line width=1pt] (11)--(13);
	\draw[line width=1pt] (9)--(12);
	\draw[line width=1pt] (12)--(13);		
	\draw[line width=1pt] (14)--(13);
	
	\node[](c01) at (0,9) {$\varepsilon$};
	\node[](c01) at (1,8) {{\tiny $R$}};
	\node[](c01) at (2,7) {{\tiny $R$}};
	\node[](c01) at (1,6) {{\tiny $L$}};	
	\node[](c01) at (0,5) {{\tiny $L$}};
	\node[](c01) at (1,4) {{\tiny $R$}};
	
	\node[](c01) at (0,0) {{\small Canonical order:}};
	\node[](c01) at (0,-1) {{\small $x_0,x_2,x_1,x_4,x_3,x_6,x_5,x_8,x_7,$}};
	\node[](c01) at (0,-2) {{\small $x_{10},x_9, x_{12},x_{11}, x_{14},x_{13},x_{15}$}};
	
\end{scope}
\begin{scope}[xshift=200, yshift=0, scale=0.4] 
	\pvx[label=right:{\small $x_0$}](-1) at (-1,11) {};
	\pvx[label=right:{\small $x_1$}](0) at (0,10) {};	
	\pvx[label=left:{\small $x_2$}](1) at (-1,9) {};
	\pvx[label=right:{\small $x_3$}](2) at (1,9) {};
	\pvx[label=left:{\small $x_4$}](3) at (0,8) {};
	\pvx[label=right:{\small $x_7$}](4) at (2,8) {};
	\pvx[label=left:{\small $x_8$}](5) at (1,7) {};
	\pvx[label=right:{\small $x_5$}](6) at (3,7) {};
	\pvx[label=right:{\small $x_6$}](7) at (2,6) {};
	\pvx[label=left:{\small $x_{10}$}](8) at (0,6) {};
	\pvx[label=right:{\small $x_{9}$}](9) at (1,5) {};	
	\pvx[label=left:{\small $x_{12}$}](10) at (-1,5) {};
	\pvx[label=left:{\small $x_{11}$}](11) at (0,4) {};	
	\pvx[label=left:{\small $x_{14}$}](13) at (1,3) {};
	\pvx[label=right:{\small $x_{13}$}](12) at (2,4) {};
	\pvx[label=left:{\small $x_{15}$}](14) at (2,2) {};	

	\draw[line width=1pt] (-1)--(0);			
	\draw[line width=1pt] (0)--(1);
	\draw[line width=1pt] (0)--(2);			
	\draw[line width=1pt] (1)--(3);	
	\draw[line width=1pt] (2)--(3);	
	\draw[line width=1pt] (2)--(4);
	\draw[line width=1pt] (3)--(5);
	\draw[line width=1pt] (4)--(5);		
	\draw[line width=1pt] (4)--(6);
	\draw[line width=1pt] (5)--(7);
	\draw[line width=1pt] (6)--(7);	
	\draw[line width=1pt] (5)--(8);	
	\draw[line width=1pt] (9)--(7);	
	\draw[line width=1pt] (8)--(9);	
	\draw[line width=1pt] (8)--(10);	
	\draw[line width=1pt] (10)--(11);	
	\draw[line width=1pt] (9)--(11);		
	\draw[line width=1pt] (11)--(13);
	\draw[line width=1pt] (9)--(12);
	\draw[line width=1pt] (12)--(13);		
	\draw[line width=1pt] (14)--(13);
	
	\node[](c01) at (0,9) {$\varepsilon$};
	\node[](c01) at (1,8) {{\tiny $R$}};
	\node[](c01) at (2,7) {{\tiny $R$}};
	\node[](c01) at (1,6) {{\tiny $L$}};	
	\node[](c01) at (0,5) {{\tiny $L$}};
	\node[](c01) at (1,4) {{\tiny $R$}};
	
	\node[](c01) at (4.3,9.3) {{$\tau_2$}};	
	\draw [<->,thick] (3,9) to[out=30,in=60] (4,8);

	\node[](c01) at (0,0) {{\small Twisted order:}};
	\node[](c01) at (0,-1) {{\small $x_0,x_2,x_1,x_4,x_3,x_8,x_7,x_6,x_5,$}};
	\node[](c01) at (0,-2) {{\small $x_9, x_{10}, x_{11},x_{12}, x_{14},x_{13},x_{15}$}};

\end{scope}
\end{tikzpicture}
\caption{An illustration of the canonical order and a twisted order.}
\label{fig:pushingOrder}
\end{figure}
\end{center}

We are now ready to prove Theorem~\ref{thm:regularity}. 

\begin{proof}[Proof of Theorem~\ref{thm:regularity}]
For a fixed word $\mathbf{w}$ of length $k$, let $\cT_\mathbf{w}$ denote the canonical triangulation of $\mathcal{O}(Q_{\mathbf{w}})$.
Let $n$ be the number of elements in $Q_\mathbf{w}$, that is, $n=k+4$. 
Let $\mathbf{A}_n = 
\begin{pmatrix} 
\mathbf{v}_{x_0} & \mathbf{v}_{x_1} & \cdots & \mathbf{v}_{x_{2n-3}} \\ 
1 & 1 & \cdots & 1  
\end{pmatrix}$ 
be the homogenized point configuration associated with the vertices of the order polytope $\mathcal{O}(Q_\mathbf{w})$, where $\bv_{x_i}$ is the vertex associated with $x_i\in \hatP(\bw)$. 
 We will show that $\tau(\cT_\mathbf{w}) = \cT(\mathbf{A}_n,\omega_\tau)$ for all $\tau \in \frakT(\bw)$ by inducting on $n$, or equivalently $k$. We first check the base case when $n=4$ and $k=0$, i.e., when $\mathbf{w} = \varepsilon$. 

\begin{figure}
\centering
\begin{tikzpicture}[scale=.5]
\begin{scope}[xshift=0, yshift=0] 
	\pvx[label=left:\tiny{$x_0 = \emptyset$}](emp) at (1,20.4) {};
	\pvx[label=left:\tiny{$x_1=\langle1\rangle$}](0) at (1,19) {};
	\pvx[label=left:\tiny{$x_2=\langle2\rangle$}](1) at (0,18) {};
	\pvx[label=right:\tiny{$x_3 =\langle3\rangle$}](2) at (2,18) {};
	\pvx[label=right:\tiny{$x_5=\langle4\rangle$}](3) at (1,15.6) {};
	\pvx[label=right:\tiny{$x_4=\langle 2,3\rangle$}](12) at (1,17) {};

	\draw[line width=1pt] (emp)--(0);
	\draw[line width=1pt] (1)--(0);
	\draw[line width=1pt] (2)--(0);
	\draw[line width=1pt] (12)--(2);			
	\draw[line width=1pt] (12)--(1);
	\draw[line width=1pt] (12)--(3);	
	
	\node[](c01) at (1,18) {$\varepsilon$};	
	\node[] at (-5,18) {$\hatP=\hatP(\varepsilon):$};	
\end{scope}
\begin{scope}[xshift=-350, yshift=140] 
	\pvx[label=above:$1$](v00) at (2,14) {};
	\pvx[label=left:$2$](v01) at (1,13) {};
	\pvx[label=right:$3$](v02) at (3,13) {};
	\pvx[label=below:$4$](v03) at (2,12) {};

	\draw[line width=1pt] (v03)--(v02)--(v00)--(v01)--(v03);	
	\node[] at (-3.75,13) {$Q_\varepsilon =\Irr_\wedge(\hatP(\varepsilon)):$};
\end{scope}
\end{tikzpicture}
\caption{Base case in proof of Theorem~\ref{thm:regularity} with $\bw = \varepsilon$.}
\label{fig.baseCase}
\end{figure}
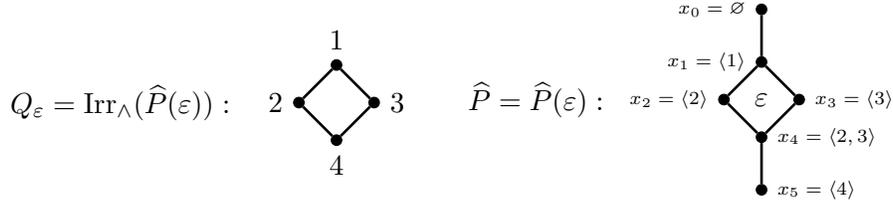

Although we previously chose the convention to treat $\varepsilon$ as an $R$, here it is necessary to also consider the case when $\varepsilon$ is treated as an $L$ as we will induct on the length of the word and a priori, we do not know whether $w_1 = L$ or $w_1 = R$. Both cases, however, are similar, and we therefore check the case with $\varepsilon$ treated as an $R$ word (see Figure~\ref{fig.baseCase}), leaving the other base case for the reader. 

Treating $\varepsilon$ as an $R$, we must check that $\tau(\cT_{\varepsilon}) = \cT(\mathbf{A}_4,\omega_\tau)$ for $\tau \in \frakT(\varepsilon) = \{\mathrm{id},\tau_1\}$ where $\tau_1$ exchanges the pairs $(\langle 1 \rangle, \langle 2\rangle )$ and $(\langle 3\rangle, \langle 2,3\rangle)$ in $\hatP$.
In this case, the canonical order is $x_0$, $x_2$, $x_1$, $x_4$, $x_3$, $x_5$.
We begin with $\tau = \mathrm{id}$ and check that the canonical triangulation is obtained as a regular triangulation with the canonical order and height function $\omega_{\mathrm{id}}:\mathbf{A}_4\to \R$ given by $\omega_{\mathrm{id}}(x_i)=2^{\rho(x_i)}$.
We have
\[
\mathbf{A}_4^{\omega_{\mathrm{id}}} = \begin{blockarray}{ccccccc}
&\mathbf{v}_\emptyset & \mathbf{v}_{\langle 1\rangle} & \mathbf{v}_{\langle 2\rangle} & \mathbf{v}_{\langle 3\rangle} & \mathbf{v}_{\langle 2,3\rangle} & \mathbf{v}_{\langle 4\rangle} \\
\begin{block}{c(cccccc)}
1 & 0 & 1 & 1 & 1 & 1 & 1 \bigstrut[t] \\
2 & 0 & 0 & 1 & 0 & 1 & 1 \\
3 & 0 & 0 & 0 & 1 & 1 & 1 \\
4 & 0 & 0 & 0 & 0 & 0 & 1 \\
5 & 1 & 1 & 1 & 1 & 1 & 1 \\
6 & 1 & 4 & 2 & 16 & 8 & 32 \\
\end{block}
\end{blockarray} \; .
 \]

From $\hatP$, we know there are two simplices in the canonical triangulation $\cT_\varepsilon$ of $\mathcal{O}(Q_\varepsilon)$, namely $\sigma_1 = (\mathbf{v}_\emptyset, \mathbf{v}_{\langle 1\rangle},\mathbf{v}_{\langle 2\rangle},\mathbf{v}_{\langle 2,3\rangle},\mathbf{v}_{\langle 4\rangle})$ and $\sigma_2 = (\mathbf{v}_\emptyset, \mathbf{v}_{\langle 1\rangle},\mathbf{v}_{\langle 3\rangle},\mathbf{v}_{\langle 2,3\rangle},\mathbf{v}_{\langle 4\rangle})$. 
The only wall is $\sigma_0=(\mathbf{v}_\emptyset$, $\mathbf{v}_{\langle 1\rangle}$,$\mathbf{v}_{\langle 2,3\rangle}$,$\mathbf{v}_{\langle 4\rangle})$, and so we compute

\begin{align*}
\Psi_{\sigma_1,\mathbf{v}_{\langle 3\rangle}}(\omega_{\mathrm{id}}) &= \mathrm{sign }\det(\mathbf{A}_4|_{\sigma_1})\cdot \det(\mathbf{A}_4^{\omega_{\mathrm{id}}}|_{\sigma_1}:(\mathbf{v}_{\langle 3\rangle},1,16)) \\
&=
\mathrm{sign }\det\begin{pmatrix}
\mathbf{v}_\emptyset & \mathbf{v}_{\langle 1\rangle} & \mathbf{v}_{\langle 2\rangle} & \mathbf{v}_{\langle 2,3\rangle} & \mathbf{v}_{\langle 4\rangle} \\
1 & 1 & 1 & 1 & 1  \\
\end{pmatrix} 
\cdot \det\begin{pmatrix}
\mathbf{v}_\emptyset & \mathbf{v}_{\langle 1\rangle} & \mathbf{v}_{\langle 2\rangle} & \mathbf{v}_{\langle 2,3\rangle} & \mathbf{v}_{\langle 4\rangle} & \mathbf{v}_{\langle 3\rangle} \\
1 & 1 & 1 & 1 & 1 & 1 \\
1 & 4 & 2 & 8 & 32 & 16 \\
\end{pmatrix} \\
&= 6 > 0
\end{align*}

and similarly

\begin{align*}
\Psi_{\sigma_2,\mathbf{v}_{\langle 2\rangle}}(\omega_{\mathrm{id}}) 
&= \mathrm{sign }\det(\mathbf{A}_4|_{\sigma_2}) \cdot \det(\mathbf{A}_4^{\omega_{\mathrm{id}}}|_{\sigma_2}:(\mathbf{v}_{\langle 2\rangle},1,2)) \\
&= \mathrm{sign} \det\begin{pmatrix}
\mathbf{v}_\emptyset & \mathbf{v}_{\langle 1\rangle} & \mathbf{v}_{\langle 3\rangle} & \mathbf{v}_{\langle 2,3\rangle} & \mathbf{v}_{\langle 4\rangle} \\
1 & 1 & 1 & 1 & 1  \\
\end{pmatrix}
\cdot \det\begin{pmatrix}
\mathbf{v}_\emptyset & \mathbf{v}_{\langle 1\rangle} & \mathbf{v}_{\langle 3\rangle} & \mathbf{v}_{\langle 2,3\rangle} & \mathbf{v}_{\langle 4\rangle} & \mathbf{v}_{\langle 2\rangle} \\
1 & 1 & 1 & 1 & 1 & 1 \\
1 & 4 & 16 & 8 & 32 & 2 \\
\end{pmatrix} \\
&= (-1)(-6) > 0.
\end{align*}
Thus, by the local folding condition in Theorem \ref{thm.2.3.20}, $\mathbf{v}_{\langle 2 \rangle} = \sigma_1\setminus \sigma_2$
lies above the hyperplane containing $\mathbf{A}_4|_{\sigma_2}$, and $\mathbf{v}_{\langle 3 \rangle} = \sigma_2\setminus \sigma_1$
lies above the hyperplane containing $\mathbf{A}_4|_{\sigma_1}$. By Theorem~\ref{thm.2.3.20} we then have $\cT_\varepsilon= \cT(\mathbf{A}_4,\omega_{\mathrm{id}})$. 
Checking that $\tau_1(\cT_\varepsilon) = \cT(\mathbf{A}_4, \omega_{\tau_1})$ is done similarly. Appending the row of heights according to the twisted order determined by $\tau_1$, we have
\[
\mathbf{A}_4^{\omega_{\tau_1}} = \begin{blockarray}{(ccccccc)}
\mathbf{v}_\emptyset & \mathbf{v}_{\langle 1\rangle} & \mathbf{v}_{\langle 2\rangle} & \mathbf{v}_{\langle 3\rangle} & \mathbf{v}_{\langle 2,3\rangle} & \mathbf{v}_{\langle 4\rangle} \\
  1 & 1 & 1 & 1 & 1 & 1 \\
  1 & 2 & 4 & 8 & 16 & 32 \\
\end{blockarray}\, .
 \]

The two simplices in the triangulation $\tau_1(\cT_\varepsilon)$ of $\mathcal{O}(Q_\varepsilon)$ are $\tau_1(\sigma_1) = (\mathbf{v}_\emptyset, \mathbf{v}_{\langle 1\rangle},\mathbf{v}_{\langle 2\rangle},\mathbf{v}_{\langle 3\rangle},\mathbf{v}_{\langle 4\rangle})$ and $\tau_1(\sigma_2) = (\mathbf{v}_\emptyset, \mathbf{v}_{\langle 2\rangle},\mathbf{v}_{\langle 3\rangle},\mathbf{v}_{\langle 2,3\rangle},\mathbf{v}_{\langle 4\rangle})$. 
The wall between them is $\tau_1(\sigma_0)=(\mathbf{v}_\emptyset, \mathbf{v}_{\langle 2\rangle},\mathbf{v}_{\langle 3\rangle},\mathbf{v}_{\langle 4\rangle})$. 
We compute $\Psi_{\tau_1(\sigma_1),\mathbf{v}_{\langle 2,3\rangle}}(\omega_{\tau_1}) = 6 > 0$ and $\Psi_{\tau_1(\sigma_2),\mathbf{v}_{\langle 1\rangle}}(\omega_{\tau_1}) 
= (-1)(-6) > 0$.
Similarly, in the case that $\varepsilon$ is treated as an $L$, the reader may check that $\tau(\cT_\varepsilon) = \cT(\mathbf{A}_4,\omega_\tau)$ for $\tau \in \frakT(\varepsilon) = \{\mathrm{id}, \tau_1\}$. Hence the base case holds.

We now consider the case $\mathbf{w} = w_0 w_1\cdots w_kw_{k+1}$. 
We assume the inductive hypothesis, namely that the local folding condition holds for every wall in $\tau(\cT_{w_0 w_1\cdots w_{\ell}})$ of $\mathcal{O}(Q_{w_0 w_1\cdots w_{\ell}})$ for all $0\leq \ell \leq k$ and for all $\tau \in \frakT(w_0 w_1\cdots w_{\ell})$. 
That is, for any wall $\tau(\sigma_{0})=\tau(\sigma_{1}) \cap \tau(\sigma_{2})$ in $\cT_{w_0 w_1\cdots w_{\ell}}$, we assume that $\Psi_{\tau(\sigma_{1}),\mathbf{v}_i}(\omega_\tau) > 0$ for $\mathbf{v}_i = \tau(\sigma_{2}\setminus \sigma_{1})$ and  $\Psi_{\tau(\sigma_{2}),\mathbf{v}_j}(\omega_\tau) > 0$ for $\mathbf{v}_j = \tau(\sigma_{1}\setminus \sigma_{2})$. 
The general strategy for the remainder of the proof is as follows.
Simultaneously performing the same row and column operations on $(\mathbf{A}_{n+1}^{\omega_\tau}|_{\tau(\sigma_1)}:(\bv_i,1,\omega_\tau(\bv_i)))$ and $\mathbf{A}_{n+1}|_{\tau(\sigma_1)}$ does not change the sign of the product of their determinants (and thus the sign of $\Psi_{\tau(\sigma_1),\mathbf{v}_i}(\omega_\tau)$), so long as each operation is possible on both matrices. 
Furthermore, we add rows to or subtract rows from the last row of $(\mathbf{A}_{n+1}^{\omega_\tau}|_{\tau(\sigma_1)}:(\bv_i,1,\omega_\tau(\bv_i)))$ without changing the sign of its determinant so that $(\mathbf{A}_{n+1}^{\omega_\tau}|_{\tau(\sigma_1)}:(\bv_i,1,\omega_\tau(\bv_i)))$ contains $(\mathbf{A}_{\ell+4}^{\omega_{\tau'}}|_{\tau'(\sigma_1')}:(\bv_i',1,\omega_{\tau'}(\bv_i')))$ as a block submatrix, which is a matrix of a previous case corresponding to a word $w_0w_1\cdots w_\ell$, with $\ell < k+1$. 
Here, $\tau' \in \frakT(w_0\cdots w_\ell)$ is the restriction of $\tau$ to the ladders of $\hatP(w_0\cdots w_\ell)$, $\tau'(\sigma'_1)$ is the simplex in $\calO(Q_{w_0\cdots w_\ell})$ corresponding to the maximal chain in $\hatP(w_0\cdots w_\ell)$ which is a subset of the maximal chain in $\hatP(\bw)$ corresponding with $\tau(\sigma_1)$, and $\bv'_j$ is equal to $\bv_j$ restricted to entries $1,2,\ldots,\ell + 4$.
By the inductive hypothesis, we then have $\Psi_{\tau'(\sigma_1'),\mathbf{v}_i'}(\omega_{\tau'}) = \mathrm{sign} \det(\mathbf{A}_{\ell+4}|_{\tau'(\sigma_1')})\cdot \det(\mathbf{A}_{\ell+4}^{\omega_{\tau'}}|_{\tau'(\sigma_1')}:(\bv_i',1,\omega_{\tau'}(\bv'_i))) >0$, from which we argue that $\Psi_{\tau(\sigma_1),\mathbf{v}_i}(\omega_\tau)>0$.
Moreover, via a similar argument, we can argue that $\Psi_{\tau(\sigma_2), \bv_j}(\omega_\tau)>0$.

To apply this strategy, we consider two possibilities, extensions and turns, each of which admit multiple subcases. 
To define extensions and turns, suppose $\hatP(w_0\cdots w_k)$ consists of $t$ ladders $\mL^1,...,\mL^t$ with $\widehat{0}$ and $\widehat{1}$.
The case when $w_{k+1}=w_k$ is referred to as an \textit{extension} because it corresponds with extending the ladder ending with $w_k$, namely $\mL^t$.
Furthermore, we refer to the case when $w_{k+1}\neq w_k$ as a \textit{turn} because it corresponds to a turn in $\hatP(\bw)$. 
Note that a turn effectively adds an additional ladder $\mL^{t+1}$ to $\hatP(w_0\cdots w_k)$ to form $\hatP(\bw)$. 
In the case of an extension, we need to consider both when there is and is not a twist applied to $\mL^t$. Furthermore, in each of these cases we must consider the subcases when the simplices differ by elements in the square $\Sq(\bw_{k+1})$, and when they differ by elements in a square $\Sq(\bw_{i})$ with $i\leq k$. 

In the case of a turn, it is somewhat more complicated than for an extension, as we have two possible twists which result in four cases to check. We can either twist $\mL^t$ or the new ladder $\mL^{t+1}$ produced by the turn, or we may twist both or neither. See Figure~\ref{fig:TurnCases}. In each of these cases, there are more subcases to check than for an extension, as we must consider when the simplices differ by elements in one of the squares $\Sq(\bw_{k+1})$, $\Sq(\bw_{k})$, $\Sq(\bw_{k-1})$, and when they differ in a square $\Sq(\bw_{i})$ with $i\leq k-1$. 
Extensions and turns combined result in a total of 36 cases to check, which are listed in Appendix~\ref{sec:appendix}.

\begin{center}
\begin{figure}
\begin{tikzpicture}
\begin{scope}[xshift=-50, yshift=0, scale=0.4] 
	\node[](c01) at (-5,13) {{\small Extension:}};
\end{scope}

\begin{scope}[xshift=0, yshift=0, scale=0.4] 
	\pvx[label=left:{\tiny $a-1$}](a-1) at (0,10) {};	
	\pvx[label=left:{\tiny $a$}](a) at (-1,9) {};
	\pvx[label=right:{\tiny $a+1$}](a1) at (1,9) {};
	\pvx[label=right:{\tiny $n-1$}](k-1) at (3,7) {};
	\pvx[label=right:{\tiny $n$}](k) at (4,6) {};
	\pvx[label=right:{\tiny $n+1$}](k1) at (5,5) {};
	\pvx[label=right:{\tiny $4$}](3) at (4,4) {};

 	\draw[line width=1pt] (a1)--(a-1)--(a)--(3) -- (k1) -- (k) -- (k-1);
 	\draw[line width=1pt] (0.3,10.3)--(a);
  	\draw[line width=1pt] (1.3,9.3)--(a1);	
  	\draw[line width=1pt] (1.3,8.7)--(a1);	
  	\draw[line width=1pt] (2.7,7.3)--(k-1);		

	\node[](dd) at (2,7.75) {$\Ddots$};
	\node[](uu1) at (1.75,10) {$\udots$};	
	\node[](uu1) at (.75,11) {$\udots$};		

	\node[](qq) at (-5,8) {$Q_\mathbf{w}$};

\end{scope}

\begin{scope}[xshift=200, yshift=0, scale=0.4] 
	\pvx[label=left:{\tiny $\langle a-1\rangle$}](a-1) at (0,10) {};	
	\pvx[label=left:{\tiny $\langle a \rangle$}](a) at (-1,9) {};
	\pvx[label=right:{\tiny $\langle a-1,b \rangle$}](a1) at (1,9) {};
	\pvx[label=right:{\tiny $\langle n-1\rangle$}](k-1) at (3,7) {};
	\pvx[label=right:{\tiny $\langle n \rangle$}](k) at (4,6) {};
	\pvx[label=right:{\tiny $\langle n+1\rangle$}](k1) at (5,5) {};
	\pvx[label=left:{\tiny $\langle a,n+1 \rangle$}](3) at (4,4) {};
	\pvx[label=left:{\tiny $\langle a,b \rangle$}](ab) at (0,8) {};
	\pvx[label=left:{\tiny $\langle a,n-1 \rangle$}](ak-1) at (2,6) {};
	\pvx[label=left:{\tiny $\langle a,n \rangle$}](ak) at (3,5) {};	
	\pvx[label=left:{\tiny $\langle 4 \rangle$}](1hat) at (5,3) {};

 	\draw[line width=1pt] (a1)--(a-1)--(a) -- (0.3,7.7); 
 	\draw[line width=1pt] (1.7,6.3)--(3) -- (k1) -- (k) -- (k-1);
 	\node[](dd) at (0.9,6.75) {$\Ddots$};	
 	
 	\draw[line width=1pt] (0.3,10.3)--(a);
  	\draw[line width=1pt] (1.3,9.3)--(a1);	
  	\draw[line width=1pt] (1.3,8.7)--(a1);	
  	\draw[line width=1pt] (2.7,7.3)--(k-1);		
  	\draw[line width=1pt] (ab)--(a1);		 	
  	\draw[line width=1pt] (ak-1)--(k-1);		 	  	
  	\draw[line width=1pt] (ak)--(k);
  	\draw[line width=1pt] (3)--(1hat);

	\node[](dd) at (2,7.75) {$\Ddots$};
	\node[](uu1) at (1.75,10) {$\udots$};	
	\node[](uu1) at (0.75,11) {$\udots$};		

	\node[](qq) at (-5,8) {$\hatP(\bw)$};
\end{scope}		

\begin{scope}[xshift=-50, yshift=-130, scale=0.4] 
	\node[](c01) at (-5,13) {{\small Turn:}};
\end{scope}

\begin{scope}[xshift=0, yshift=-130, scale=0.4] 
	\pvx[label=left:{\tiny $a-1$}](a-1) at (0,10) {};	
	\pvx[label=left:{\tiny $a$}](a) at (-1,9) {};
	\pvx[label=right:{\tiny $a+1$}](a1) at (1,9) {};
	\pvx[label=right:{\tiny $n-2$}](k-1) at (3,7) {};
	\pvx[label=right:{\tiny $n-1$}](k) at (4,6) {};
	\pvx[label=right:{\tiny $n$}](k1) at (5,5) {};
	\pvx[label=right:{\tiny $4$}](3) at (4,4) {};
	\pvx[label=left:{\tiny $n+1$}](kk) at (3,5) {};	

 	\draw[line width=1pt] (a1)--(a-1)--(a)--(3) -- (k1) -- (k) -- (k-1);
  	\draw[line width=1pt] (k)--(kk);	
 	\draw[line width=1pt] (0.3,10.3)--(a);
  	\draw[line width=1pt] (1.3,9.3)--(a1);	
  	\draw[line width=1pt] (1.3,8.7)--(a1);	
  	\draw[line width=1pt] (2.7,7.3)--(k-1);		

	\node[](dd) at (2,7.75) {$\Ddots$};
	\node[](uu1) at (1.75,10) {$\udots$};	
	\node[](uu1) at (.75,11) {$\udots$};		

	\node[](qq) at (-5,8) {$Q_\mathbf{w}$};		
	
\end{scope}

\begin{scope}[xshift=200, yshift=-130, scale=0.4] 
	\pvx[label=left:{\tiny $\langle a-1\rangle$}](a-1) at (0,10) {};	
	\pvx[label=left:{\tiny $\langle a \rangle$}](a) at (-1,9) {};
	\pvx[label=right:{\tiny $\langle a-1,b \rangle$}](a1) at (1,9) {};
	\pvx[label=right:{\tiny $\langle n-2\rangle$}](k-1) at (3,7) {};
	\pvx[label=right:{\tiny $\langle n-1 \rangle$}](k) at (4,6) {};
	\pvx[label=right:{\tiny $\langle n\rangle$}](k1) at (5,5) {};
	\pvx[label=right:{\tiny $\langle a,n \rangle$}](3) at (4,4) {};
	\pvx[label=left:{\tiny $\langle a,b \rangle$}](ab) at (0,8) {};
	\pvx[label=left:{\tiny $\langle a,n-2 \rangle$}](ak-1) at (2,6) {};
	\pvx[label=left:{\tiny $\langle a,n-1 \rangle$}](ak) at (3,5) {};	
	\pvx[label=right:{\tiny $\langle n,n+1 \rangle$}](2hat) at (3,3) {};
	\pvx[label=right:{\tiny $\langle 4 \rangle$}](1hat) at (2,2) {};	
	\pvx[label=left:{\tiny $\langle n+1 \rangle$}](dfs) at (2,4) {};

 	\draw[line width=1pt] (a1)--(a-1)--(a) -- (0.3,7.7); 
 	\draw[line width=1pt] (1.7,6.3)--(3) -- (k1) -- (k) -- (k-1);
 
 	\draw[line width=1pt] (0.3,10.3)--(a);
  	\draw[line width=1pt] (1.3,9.3)--(a1);	
  	\draw[line width=1pt] (1.3,8.7)--(a1);	
  	\draw[line width=1pt] (2.7,7.3)--(k-1);		
  	\draw[line width=1pt] (ab)--(a1);		 	
  	\draw[line width=1pt] (ak-1)--(k-1);		 	  	
  	\draw[line width=1pt] (ak)--(k);
  	\draw[line width=1pt] (3)--(2hat);
  	\draw[line width=1pt] (1hat)--(2hat);
  	\draw[line width=1pt] (ak)--(dfs)--(2hat);

 	\node[](dd) at (0.9,6.75) {$\Ddots$};	
	\node[](dd) at (2,7.75) {$\Ddots$};
	\node[](uu1) at (1.75,10) {$\udots$};	
	\node[](uu1) at (0.75,11) {$\udots$};				

	\node[](qq) at (-5,8) {$\hatP(\bw)$};
\end{scope}		
\end{tikzpicture}
\label{fig:ExtentionAndTurn}
\caption{The two main cases to check, i.e. an extension and turn.}
\end{figure}
\end{center}

\begin{center}
\begin{figure}
\begin{tikzpicture}
\begin{scope}[xshift=15, yshift=0, scale=0.4] 
	\pvx[label=left:{}](a-1) at (0,10) {};	
	\pvx[label=left:{}](a) at (-1,9) {};
	\pvx[label=right:{}](a1) at (1,9) {};
	\pvx[label=right:{\tiny $\langle n-2\rangle$}](k-1) at (3,7) {};
	\pvx[label=right:{\tiny $\langle n-1 \rangle$}](k) at (4,6) {};
	\pvx[label=right:{\tiny $\langle n\rangle$}](k1) at (5,5) {};
	\pvx[label=right:{\tiny $\langle a,n \rangle$}](3) at (4,4) {};
	\pvx[label=left:{}](ab) at (0,8) {};
	\pvx[label=left:{\tiny $\langle a,n-2 \rangle$}](ak-1) at (2,6) {};
	\pvx[label=left:{\tiny $\langle a,n-1 \rangle$}](ak) at (3,5) {};	
	\pvx[label=right:{\tiny $\langle n,n+1 \rangle$}](2hat) at (3,3) {};
	\pvx[label=right:{\tiny $\langle 4 \rangle$}](1hat) at (2,2) {};	
	\pvx[label=left:{\tiny $\langle n+1 \rangle$}](dfs) at (2,4) {};

 	\draw[line width=1pt] (a1)--(a-1)--(a) -- (0.3,7.7); 
 	\draw[line width=1pt] (1.7,6.3)--(3) -- (k1) -- (k) -- (k-1);

 	\draw[line width=1pt] (0.3,10.3)--(a);
  	\draw[line width=1pt] (1.3,9.3)--(a1);	
  	\draw[line width=1pt] (1.3,8.7)--(a1);	
  	\draw[line width=1pt] (2.7,7.3)--(k-1);		
  	\draw[line width=1pt] (ab)--(a1);		 	
  	\draw[line width=1pt] (ak-1)--(k-1);		 	  	
  	\draw[line width=1pt] (ak)--(k);
  	\draw[line width=1pt] (3)--(2hat);
  	\draw[line width=1pt] (1hat)--(2hat);
  	\draw[line width=1pt] (ak)--(dfs)--(2hat);

 	\node[](dd) at (0.9,6.75) {$\Ddots$};	
	\node[](dd) at (2,7.75) {$\Ddots$};
	\node[](uu1) at (1.75,10) {$\udots$};	
	\node[](uu1) at (0.75,11) {$\udots$};		

	\node[](qq) at (2,0) {No twist};
\end{scope}		

\begin{scope}[xshift=125, yshift=0, scale=0.4] 
	\pvx[label=left:{}](a-1) at (0,10) {};	
	\pvx[label=left:{}](a) at (-1,9) {};
	\pvx[label=right:{}](a1) at (1,9) {};
	\pvx[label=right:{\tiny $\langle a,n-2\rangle$}](k-1) at (3,7) {};
	\pvx[label=right:{\tiny $\langle a,n-1 \rangle$}](k) at (4,6) {};
	\pvx[label=right:{\tiny $\langle a,n\rangle$}](k1) at (5,5) {};
	\pvx[label=right:{\tiny $\langle n \rangle$}](3) at (4,4) {};
	\pvx[label=left:{}](ab) at (0,8) {};
	\pvx[label=left:{\tiny $\langle n-2 \rangle$}](ak-1) at (2,6) {};
	\pvx[label=left:{\tiny $\langle n-1 \rangle$}](ak) at (3,5) {};	
	\pvx[label=right:{\tiny $\langle n,n+1 \rangle$}](2hat) at (3,3) {};
	\pvx[label=right:{\tiny $\langle 4 \rangle$}](1hat) at (2,2) {};	
	\pvx[label=left:{\tiny $\langle n+1 \rangle$}](dfs) at (2,4) {};

 	\draw[line width=1pt] (a1)--(a-1)--(a) -- (0.3,7.7); 
 	\draw[line width=1pt] (1.7,6.3)--(3) -- (k1) -- (k) -- (k-1);
 	
 	\draw[line width=1pt] (0.3,10.3)--(a);
  	\draw[line width=1pt] (1.3,9.3)--(a1);	
  	\draw[line width=1pt] (1.3,8.7)--(a1);	
  	\draw[line width=1pt] (2.7,7.3)--(k-1);		
  	\draw[line width=1pt] (ab)--(a1);		 	
  	\draw[line width=1pt] (ak-1)--(k-1);		 	  	
  	\draw[line width=1pt] (ak)--(k);
  	\draw[line width=1pt] (3)--(2hat);
  	\draw[line width=1pt] (1hat)--(2hat);
  	\draw[line width=1pt] (ak)--(dfs)--(2hat);

 	\node[](dd) at (0.9,6.75) {$\Ddots$};	
	\node[](dd) at (2,7.75) {$\Ddots$};
	\node[](uu1) at (1.75,10) {$\udots$};	
	\node[](uu1) at (0.75,11) {$\udots$};		

	\node[](qq) at (2,0) {Twist on $\mL^t$};
\end{scope}		
\begin{scope}[xshift=245, yshift=0, scale=0.4] 
	\pvx[label=left:{}](a-1) at (0,10) {};	
	\pvx[label=left:{}](a) at (-1,9) {};
	\pvx[label=right:{}](a1) at (1,9) {};
	\pvx[label=right:{\tiny $\langle n-2\rangle$}](k-1) at (3,7) {};
	\pvx[label=right:{\tiny $\langle n \rangle$}](k) at (4,6) {};
	\pvx[label=right:{\tiny $\langle n-1\rangle$}](k1) at (5,5) {};
	\pvx[label=right:{\tiny $\langle a,n-1 \rangle$}](3) at (4,4) {};
	\pvx[label=left:{}](ab) at (0,8) {};
	\pvx[label=left:{\tiny $\langle a,n-2 \rangle$}](ak-1) at (2,6) {};
	\pvx[label=left:{\tiny $\langle a,n \rangle$}](ak) at (3,5) {};	
	\pvx[label=right:{\tiny $\langle n+1 \rangle$}](2hat) at (3,3) {};
	\pvx[label=right:{\tiny $\langle 4 \rangle$}](1hat) at (2,2) {};	
	\pvx[label=left:{\tiny $\langle n,n+1 \rangle$}](dfs) at (2,4) {};

 	\draw[line width=1pt] (a1)--(a-1)--(a) -- (0.3,7.7); 
 	\draw[line width=1pt] (1.7,6.3)--(3) -- (k1) -- (k) -- (k-1);

 	\draw[line width=1pt] (0.3,10.3)--(a);
  	\draw[line width=1pt] (1.3,9.3)--(a1);	
  	\draw[line width=1pt] (1.3,8.7)--(a1);	
  	\draw[line width=1pt] (2.7,7.3)--(k-1);		
  	\draw[line width=1pt] (ab)--(a1);		 	
  	\draw[line width=1pt] (ak-1)--(k-1);		 	  	
  	\draw[line width=1pt] (ak)--(k);
  	\draw[line width=1pt] (3)--(2hat);
  	\draw[line width=1pt] (1hat)--(2hat);
  	\draw[line width=1pt] (ak)--(dfs)--(2hat);

 	\node[](dd) at (0.9,6.75) {$\Ddots$};	
	\node[](dd) at (2,7.75) {$\Ddots$};
	\node[](uu1) at (1.75,10) {$\udots$};	
	\node[](uu1) at (0.75,11) {$\udots$};		

	\node[](qq) at (2,0) {Twist on $\mL^{t+1}$};
\end{scope}		
\begin{scope}[xshift=360, yshift=0, scale=0.4] 
	\pvx[label=left:{}](a-1) at (0,10) {};	
	\pvx[label=left:{}](a) at (-1,9) {};
	\pvx[label=right:{}](a1) at (1,9) {};
	\pvx[label=right:{\tiny $\langle a,n-2\rangle$}](k-1) at (3,7) {};
	\pvx[label=right:{\tiny $\langle a,n \rangle$}](k) at (4,6) {};
	\pvx[label=right:{\tiny $\langle a,n-1\rangle$}](k1) at (5,5) {};
	\pvx[label=right:{\tiny $\langle n-1 \rangle$}](3) at (4,4) {};
	\pvx[label=left:{}](ab) at (0,8) {};
	\pvx[label=left:{\tiny $\langle n-2 \rangle$}](ak-1) at (2,6) {};
	\pvx[label=left:{\tiny $\langle n \rangle$}](ak) at (3,5) {};	
	\pvx[label=right:{\tiny $\langle n+1 \rangle$}](2hat) at (3,3) {};
	\pvx[label=right:{\tiny $\langle 4 \rangle$}](1hat) at (2,2) {};	
	\pvx[label=left:{\tiny $\langle n,n+1 \rangle$}](dfs) at (2,4) {};

 	\draw[line width=1pt] (a1)--(a-1)--(a) -- (0.3,7.7); 
 	\draw[line width=1pt] (1.7,6.3)--(3) -- (k1) -- (k) -- (k-1);

 	\draw[line width=1pt] (0.3,10.3)--(a);
  	\draw[line width=1pt] (1.3,9.3)--(a1);	
  	\draw[line width=1pt] (1.3,8.7)--(a1);	
  	\draw[line width=1pt] (2.7,7.3)--(k-1);		
  	\draw[line width=1pt] (ab)--(a1);		 	
  	\draw[line width=1pt] (ak-1)--(k-1);		 	  	
  	\draw[line width=1pt] (ak)--(k);
  	\draw[line width=1pt] (3)--(2hat);
  	\draw[line width=1pt] (1hat)--(2hat);
  	\draw[line width=1pt] (ak)--(dfs)--(2hat);
  	
 	\node[](dd) at (0.9,6.75) {$\Ddots$};	
	\node[](dd) at (2,7.75) {$\Ddots$};
	\node[](uu1) at (1.75,10) {$\udots$};	
	\node[](uu1) at (0.75,11) {$\udots$};		

	\node[](qq) at (2,0) {Twist on $\mL^t$ and $\mL^{t+1}$};
\end{scope}

\end{tikzpicture}
\caption{The four possible twists at a turn.}
\label{fig:TurnCases}
\end{figure}
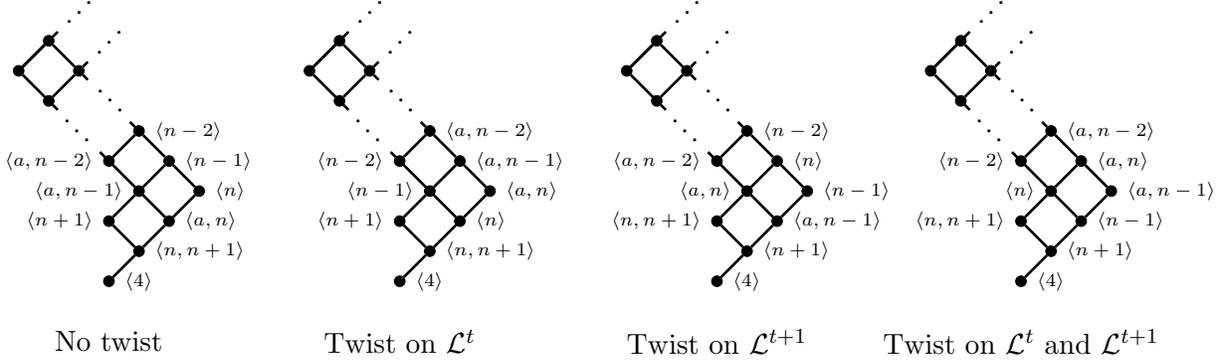
\end{center}


We will prove two of the thirty-six cases here, to demonstrate the techniques used in the remaining cases, which are checked similarly. The first case we consider here is an extension case with a twist $\tau$, where $\tau$ twists the last ladder $\mL^t$ (and possibly other ladders $\mL^i$ with $i< t$). The particular subcase chosen is the one where the simplices $\tau(\sigma_1)$ and $\tau(\sigma_2)$ differ in the final square $\Sq(\bw_{k+1})$. In this case we assume without loss of generality that $\mathbf{v}_{\langle n\rangle} \in \tau(\sigma_1)$ and $\mathbf{v}_{\langle a,n+1\rangle} \in \tau(\sigma_2)$, see Figure~\ref{fig.ExtensionSubcase}.
This is the extension case 2.c in Appendix~\ref{sec:appendix}.

\begin{figure}
\begin{center}
\begin{tikzpicture}[scale=.4]
\begin{scope}[xshift=200, yshift=0]

	\pvx[label=left:{\tiny $\langle a\rangle$}](a-1) at (0,10) {};	
	\pvx[label=left:{\tiny $\langle a-1 \rangle$}](a) at (-1,9) {};
	\pvx[label=right:{\tiny $\langle a,b \rangle$}](a1) at (1,9) {};
	\pvx[label=right:{\tiny $\langle a,n-1\rangle$}](k-1) at (3,7) {};
	\pvx[label=right:{\tiny $\langle a,n \rangle$}](k) at (4,6) {};
	\pvx[label=right:{\tiny $\langle a,n+1\rangle$}](k1) at (5,5) {};
	\pvx[label=left:{\tiny $\langle n+1 \rangle$}](3) at (4,4) {};
	\pvx[label=left:{\tiny $\langle b \rangle$}](ab) at (0,8) {};
	\pvx[label=left:{\tiny $\langle n-1 \rangle$}](ak-1) at (2,6) {};
	\pvx[label=left:{\tiny $\langle n \rangle$}](ak) at (3,5) {};	
	\pvx[label=left:{\tiny $\langle 4 \rangle$}](1hat) at (5,3) {};

 	\draw[line width=1pt] (a1)--(a-1)--(a) -- (0.3,7.7); 
 	\draw[line width=1pt] (1.7,6.3)--(3) -- (k1) -- (k) -- (k-1);
 	\node[](dd) at (0.9,6.75) {$\Ddots$};	
 	
 	\draw[line width=1pt] (0.3,10.3)--(a);
  	\draw[line width=1pt] (1.3,9.3)--(a1);	
  	\draw[line width=1pt] (1.3,8.7)--(a1);	
  	\draw[line width=1pt] (2.7,7.3)--(k-1);		
  	\draw[line width=1pt] (ab)--(a1);		 	
  	\draw[line width=1pt] (ak-1)--(k-1);		 	  	
  	\draw[line width=1pt] (ak)--(k);
  	\draw[line width=1pt] (3)--(1hat);

	\node[](dd) at (2,7.75) {$\Ddots$};
	\node[](uu1) at (1.75,10) {$\udots$};	
	\node[](uu1) at (0.75,11) {$\udots$};

	\draw[line width=2pt, color=red] (4.9,3)--(2.9,5)--(3.9,6)--(2.9,7);
	\draw[line width=2pt, color=blue] (5.1,3)--(4.1,4)--(5.1,5)--(3.1,7);
\end{scope}		

\end{tikzpicture}
\end{center}
\caption{A subcase of an extension, with $\mL^t$ twisted by $\tau$. Here $\tau(\sigma_1)$ is in red, and $\tau(\sigma_2)$ is in blue.}
\label{fig.ExtensionSubcase}
\end{figure}
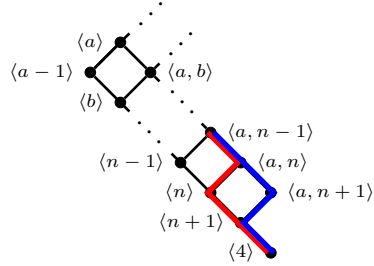

We need to show that $\Psi_{\tau(\sigma_1),\mathbf{v}_{\langle a,n+1\rangle }}(\omega_{\tau}) > 0$ and $\Psi_{\tau(\sigma_2),\mathbf{v}_{\langle n \rangle}}(\omega_{\tau}) > 0$, where $\tau\in \frakT(w_0w_1\cdots w_{k+1})$ twists $\mL^t$. We have $(\mathbf{A}_{n+1}^{\omega_\tau}|_{\tau(\sigma_1)}:(\mathbf{v}_{\langle a, n+1\rangle},1,2^{2n-4}))$ is equal to
\[
\begin{blockarray}{ccccccccc}
&\mathbf{v}_\emptyset & \cdots  & \mathbf{v}_{\langle a,n-1 \rangle} & \mathbf{v}_{\langle n \rangle} & \mathbf{v}_{\langle a,n\rangle} & \mathbf{v}_{\langle n+1\rangle} &\mathbf{v}_{\langle 4\rangle} & \mathbf{v}_{\langle a,n+1\rangle} \\
\begin{block}{c(cccccccc)}
1 & 0 &  \cdots & 1 & 1 & 1 & 1 & 1 & 1 \bigstrut[t] \\
2 & 0 & \cdots & 1 & * & 1 & * & 1 & 1 \\
3 & 0 & \cdots & 1 & * & 1 & * & 1 & 1 \\
4 & 0   & \cdots & 0 & 0 & 0 & 0 & 1 & 0 \\
\vdots & \vdots & & \vdots  & \vdots & \vdots & \vdots & \vdots & \vdots \\
a & 0 & \cdots & 1 & 0 & 1 & 0 & 1 & 1 \\
\vdots  & \vdots &   & \vdots & \vdots & \vdots & \vdots & \vdots & \vdots  \\
n & 0  & \cdots & 0 & 1 & 1 & 1 & 1 & 1 \\
n+1 & 0  & \cdots & 0 & 0 & 0 & 1 & 1 & 1 \\
n+2 & 1   & \cdots & 1 & 1 & 1 & 1 & 1 & 1\\
n+3 & 1  & \cdots & 2^{2n-8} & 2^{2n-7} & 2^{2n-6} & 2^{2n-5} & 2^{2n-3} & 2^{2n-4} \\
\end{block}
\end{blockarray} \; ,
 \]
where the $*$ entries are either zeros or ones. In the case that $\hatP(\bw)$ is not a single ladder, all the $*$ entries are ones. In the case that $\hatP(\bw)$ is a single ladder, then either $a=2$ or $a=3$, so either the $*$ entries in row $2$ are both $1$ while the $*$ entries of row $3$ are both $0$, or vice versa. We perform the following operations. 
First subtract column $\bv_{\langle n+1\rangle}$ from $\bv_{\langle a, n+1\rangle}$, then subtract column $\bv_{\langle n\rangle}$ from column $\bv_{\langle n+1\rangle}$. 
Then subtract row $4$ from row $n+1$. Then swap rows $n+1$ and $n+2$, and swap columns $\bv_{\langle n+1\rangle}$ and $\bv_{\langle 4 \rangle}$.

This yields the matrix
\[
\begin{blockarray}{ccccccccc}
&\mathbf{v}_\emptyset & \cdots  & \mathbf{v}_{\langle a,n-1 \rangle} & \mathbf{v}_{\langle n \rangle} & \mathbf{v}_{\langle a,n\rangle} & \mathbf{v}_{\langle 4\rangle} & \mathbf{v}_{\langle n+1\rangle}  & \mathbf{v}_{\langle a,n+1\rangle} \\
\begin{block}{c(cccccccc)}
1 & 0 &  \cdots & 1 & 1 & 1 & 1 & 0 & 0 \bigstrut[t] \\
2 & 0 & \cdots & 1 & * & 1 & 1 & 0 & 0 \\
3 & 0 & \cdots & 1 & * & 1 & 1 & 0 & 0 \\
4 & 0   & \cdots & 0 & 0 & 0 & 1 & 0 & 0 \\
\vdots & \vdots & & \vdots  & \vdots & \vdots & \vdots & \vdots & \vdots \\
a & 0 & \cdots & 1 & 0 & 1 & 1 & 0 & 0 \\
\vdots  & \vdots &   & \vdots & \vdots & \vdots & \vdots & \vdots & \vdots  \\
n & 0  & \cdots & 0 & 1 & 1 & 1 & 0 & 0 \\
n+2 & 1   & \cdots & 1 & 1 & 1 & 1 & 0 & 0\\
n+1 & 0  & \cdots & 0 & 0 & 0 & 0 & 1 & 1 \\
n+3 & 1  & \cdots & 2^{2n-8} & 2^{2n-7} & 2^{2n-6} & 2^{2n-3} & 2^{2n-5}-2^{2n-7}  & 2^{2n-4}-2^{2n-6} \\
\end{block}
\end{blockarray}  
\] 

The first $n+1$ rows of columns $\mathbf{v}_\emptyset,\ldots,\mathbf{v}_{\langle 4\rangle}$ now correspond to a simplex $\sigma'$ within $\tau(\sigma_1)\cap \tau(\sigma_2)$, and hence are linearly independent. 
Therefore, we can use row operations to transform it into $I_{n+1}$. Note that performing these operations does not alter the rest of the matrix, as the first $n+1$ rows of the last two columns are $0$. After obtaining $I_{n+1}$ in the upper left corner, the first $n+1$ entries in row $n+3$ can be zeroed out using the ones in $I_{n+1}$. 
As a result, the above matrix simplifies to $\begin{pmatrix}
I_{n+1} & \mathbf{0}_{(n1)\times 2} \\
\mathbf{0}_{2\times (n+1)} & B
\end{pmatrix}$, where 

\[
B = 
\begin{pmatrix}
1 & 1 \\
2^{2n-5} - 2^{2n-7} & 2^{2n-4} - 2^{2n-6} 
\end{pmatrix}.
\]
Performing all the previous row and column operations on $\mathbf{A}_{n} |_{\sigma'}$ (when possible) and $(\mathbf{A}_{n+1}^{\omega_\tau}|_{\tau(\sigma_1)}:(\mathbf{v}_{\langle a, n+1\rangle},1,2^{2n-4}))$ does not change the sign of $\Psi_{\tau(\sigma_1),\mathbf{v}_{\langle a,n+1\rangle}}( \omega_{\tau})$.
Note that the only operations done above which change the sign of the determinant were row and column swaps.
One column swap and one row swap were used initially on $(\mathbf{A}_{n+1}^{\omega_\tau}|_{\tau(\sigma_1)}:(\mathbf{v}_{\langle a, n+1\rangle},1,2^{2n-4}))$, thus their contributions to the sign of the determinant cancel.
Any row swaps arising in the row reduction to the identity matrix for the upper-left block occur for both of these matrices, and thus their contributions to the sign cancel.
Therefore, since $\det(B) = 2^{2n-4} - 2^{2n-6} - 2^{2n-5} + 2^{2n-7} > 0$, this shows that $\Psi_{\tau(\sigma_1),\mathbf{v}_{\langle a,n+1\rangle}}(\omega_{\tau}) > 0$.
Similarly, we can show that $\Psi_{\tau(\sigma_2),\mathbf{v}_{\langle n\rangle}}(\omega_{\tau}) > 0$.

The case above did not require the induction hypothesis, so the next case demonstrates a situation where it is needed. 
We consider the subcase of a turn where both $\mL^t$ and $\mL^{t+1}$ are twisted, with $\mathbf{v}_{\langle n+1\rangle}$, $\mathbf{v}_{\langle n-1\rangle}$, $\mathbf{v}_{\langle n\rangle}$, and
$\mathbf{v}_{\langle n-2\rangle}$
all contained in $\tau(\sigma_1)\cap \tau(\sigma_2)$.
See Figure~{\ref{fig:TurnSubcase}}. This is case 4.b.i in the list of cases provided in Appendix~\ref{sec:appendix}.

\begin{figure}
\centering
\begin{tikzpicture}
\begin{scope}[xshift=0, yshift=0, scale=0.4] 
	\pvx[label=left:{}](a-1) at (0,10) {};	
	\pvx[label=left:{}](a) at (-1,9) {};
	\pvx[label=right:{}](a1) at (1,9) {};
	\pvx[label=right:{\tiny $\langle a,n-2\rangle$}](k-1) at (3,7) {};
	\pvx[label=right:{\tiny $\langle a,n \rangle$}](k) at (4,6) {};
	\pvx[label=right:{\tiny $\langle a,n-1\rangle$}](k1) at (5,5) {};
	\pvx[label=right:{\tiny $\langle n-1 \rangle$}](3) at (4,4) {};
	\pvx[label=left:{}](ab) at (0,8) {};
	\pvx[label=left:{\tiny $\langle n-2 \rangle$}](ak-1) at (2,6) {};
	\pvx[label=left:{\tiny $\langle n \rangle$}](ak) at (3,5) {};	
	\pvx[label=right:{\tiny $\langle n+1 \rangle$}](2hat) at (3,3) {};
	\pvx[label=right:{\tiny $\langle 4 \rangle$}](1hat) at (2,2) {};	
	\pvx[label=left:{\tiny $\langle n,n+1 \rangle$}](dfs) at (2,4) {};

 	\draw[line width=1pt] (a1)--(a-1)--(a) -- (0.3,7.7); 
 	\draw[line width=1pt] (1.7,6.3)--(3) -- (k1) -- (k) -- (k-1);

 	\draw[line width=1pt] (0.3,10.3)--(a);
  	\draw[line width=1pt] (1.3,9.3)--(a1);	
  	\draw[line width=1pt] (1.3,8.7)--(a1);	
  	\draw[line width=1pt] (2.7,7.3)--(k-1);		
  	\draw[line width=1pt] (ab)--(a1);		 	
  	\draw[line width=1pt] (ak-1)--(k-1);		 	  	
  	\draw[line width=1pt] (ak)--(k);
  	\draw[line width=1pt] (3)--(2hat);
  	\draw[line width=1pt] (1hat)--(2hat);
  	\draw[line width=1pt] (ak)--(dfs)--(2hat);
  	
 	\node[](dd) at (0.9,6.75) {$\Ddots$};	
	\node[](dd) at (2,7.75) {$\Ddots$};
	\node[](uu1) at (1.75,10) {$\udots$};	
	\node[](uu1) at (0.75,11) {$\udots$};		
	
	\draw[line width=2pt, color=blue] (2,2)--(4,4)--(2,6);
	\draw[line width=2pt, color=red] (1.9,2.1)--(3.8,4)--(1.8,6);
	
\end{scope}		

\end{tikzpicture}
\caption{A subcase of a turn with both $\mL^t$ and $\mL^{t+1}$ twisted by $\tau$. Here $\tau(\sigma_1)$ is in red and $\tau(\sigma_2)$ is in blue.}
\label{fig:TurnSubcase}
\end{figure}

Let $\mathbf{v}_i = \tau(\sigma_1)\setminus \tau(\sigma_2)$, $\mathbf{v}_j = \tau(\sigma_2)\setminus \tau(\sigma_1)$, and $\tau\in \frakT(w_0w_1\cdots w_{k+1})$ twists $\mL^t$ and $\mL^{t+1}$.
We need to show that $\Psi_{\tau(\sigma_1),\mathbf{v}_j}(\omega_\tau) > 0$ (and $\Psi_{\tau(\sigma_2),\mathbf{v}_i}(\omega_\tau ) > 0$). 
We have that $(\mathbf{A}_{n+1}^{\omega_\tau}|_{\tau(\sigma_1)}:(\mathbf{v}_j,1,2^{\rho(\bv_j)}))$ is equal to
\[
\begin{blockarray}{cccccccccc}
&\mathbf{v}_\emptyset & \cdots & \mathbf{v}_i & \cdots & \mathbf{v}_{\langle n-1 \rangle} & \mathbf{v}_{\langle n\rangle} & \mathbf{v}_{\langle n+1\rangle} &\mathbf{v}_{\langle 4\rangle} & \mathbf{v}_j \\
\begin{block}{c(ccccccccc)}
1 & 0 & \cdots & 1 & \cdots & 1 & 1 & 1 & 1 & 1 \bigstrut[t] \\
2 & 0 & \cdots & * & \cdots & * & * & 1 & 1 & * \\
3 & 0 & \cdots & * & \cdots & * & * & 1 & 1 & * \\
4 & 0 & \cdots & 0 & \cdots & 0 & 0 & 0 & 1 & 0 \\
\vdots & \vdots &  & \vdots &  & \vdots & \vdots & \vdots & \vdots & \vdots \\
a & 0 & \cdots & 0  & \cdots & 0 & 0 & 0 & 1 & 0 \\
\vdots  & \vdots &  & \vdots &  & \vdots & \vdots & \vdots & \vdots & \vdots  \\
n-1 & 0 & \cdots & 0 & \cdots & 1 & 1 & 1 & 1 & 0 \\
n & 0 & \cdots & 0 & \cdots & 0 & 1 & 1 & 1 & 0 \\
n+1 & 0 & \cdots & 0 & \cdots & 0 & 0 & 1 & 1 & 0 \\
n+2 & 1 & \cdots & 1 & \cdots & 1 & 1 & 1 & 1 & 1\\
n+3 & 1 & \cdots & 2^{\rho(\bv_i)} & \cdots & 2^{2n-7} & 2^{2n-9} & 2^{2n-5} & 2^{2n-3} & 2^{\rho(\bv_j)} \\
\end{block}
\end{blockarray} \; .
 \]

We perform the following operations. 
Let $R_i$ denote the $i$-th row of the above matrix.
First replace $R_{n+3}$ with $R_{n+3}-(2^{2n-7}-2^{2n-9})(R_{n-1}-R_n)$, and then replace $R_{n+3}$ with $R_{n+3} - (2^{2n-9}-2^{2n-7})(R_n-R_{n+1})$. 
The effect of these two row operations is to exchange the location of $2^{2n-7}$ and $2^{2n-9}$ in the last row of the matrix above.
Then replace $R_{n+1}$ with $R_{n+1}-R_4$ and $R_{n+3}$ with  $R_{n+3}-(2^{2n-3}-2^{2n-5})R_4$.
Next, use the new $R_{n+1}$ to zero out all other entries in column $\mathbf{v}_{\langle n+1\rangle}$.
One effect of these row operations is to replace the $2^{2n-3}$ by $2^{2n-5}$ and zero out the $2^{2n-5}$ entry in column $\bv_{\langle n+1 \rangle}$.
Swap rows $n+1$ and $n+2$, and swap columns $\mathbf{v}_{\langle 4\rangle}$ and $\mathbf{v}_{\langle n+1\rangle}$. This yields
\[
\begin{blockarray}{cccccccccc}
&\mathbf{v}_\emptyset & \cdots & \mathbf{v}_i & \cdots & \mathbf{v}_{\langle n-1 \rangle} & \mathbf{v}_{\langle n\rangle} &\mathbf{v}_{\langle 4\rangle}  & \mathbf{v}_{\langle n+1\rangle} & \mathbf{v}_j  \\
\begin{block}{c(ccccccccc)}
1 & 0 & \cdots & 1 & \cdots & 1 & 1 & 1 & 0 & 1 \bigstrut[t] \\
2 & 0 & \cdots & * & \cdots & * & * & 1 & 0 & * \\
3 & 0 & \cdots & * & \cdots & * & * & 1 & 0 & * \\
4 & 0 & \cdots & 0 & \cdots & 0 & 0 & 1 & 0 & 0 \\
\vdots & \vdots &  & \vdots &  & \vdots & \vdots & \vdots & \vdots & \vdots \\
a & 0 & \cdots & 0  & \cdots & 0 & 0 & 1 & 0 & 0 \\
\vdots  & \vdots &  & \vdots &  & \vdots & \vdots & \vdots & \vdots & \vdots  \\
n-1 & 0 & \cdots & 0 & \cdots & 1 & 1 & 1 & 0 & 0 \\
n & 0 & \cdots & 0 & \cdots & 0 & 1 & 1 & 0 & 0 \\
n+2 & 1 & \cdots & 1 & \cdots & 1 & 1 & 1 & 0 & 1\\
n+1 & 0 & \cdots & 0 & \cdots & 0 & 0 & 0 & 1 & 0 \\
n+3 & 1 & \cdots & 2^{\rho(\bv_i)} & \cdots & 2^{2n-9} & 2^{2n-7} & 2^{2n-5} & 0 & 2^{\rho(\bv_j)} \\
\end{block}
\end{blockarray}\; .
 \]

Performing the same row operations on $\mathbf{A}_{n+1}|_{\tau(\sigma_1)}$ as we did on $(\mathbf{A}_{n+1}^{\omega_\tau}|_{\tau(\sigma_1)}:(\mathbf{v}_j,1,2^{\rho(\bv_j)}))$ preserves the sign of $\Psi_{\tau(\sigma_1),\mathbf{v}_j}(\omega_\tau )$.
The function $\mathrm{sign} (\Psi_{\tau(\sigma_1),\mathbf{v}_j}(\omega_\tau ))$ is a sign of a product of two terms: first, the $\mathrm{sign} \det$ applied to the matrix obtained by deleting the last row and last column in the matrix above, and second the determinant of the full matrix above.
To compute the sign of the determinant of the full matrix, we swap the last two rows and last two columns only in the matrix above, which will clarify the induction step. 
This also preserves the sign of $\Psi_{\tau(\sigma_1),\mathbf{v}_j}(\omega_\tau )$ as the contributed signs to the determinant cancel. We obtain precisely 
$$
\begin{pmatrix}
(\mathbf{A}_n^{\omega_{\tau'}}|_{\tau'(\sigma_1')}:(\mathbf{v}_j',1,2^{\rho(\bv_j')})) & \mathbf{0} \\
\mathbf{0}^T & 1 \\
\end{pmatrix}.
$$ 
Here, $\tau_{t+1}\tau' = \tau$ where $\tau'\in \frakT(w_0\cdots w_k)$, $\tau'(\sigma'_1)$ is the simplex in $\calO(Q_{w_0\cdots w_k})$ arising from the maximal chain in $\hatP(w_0\cdots w_k)$, which is a subset of the maximal chain in $\hatP(\bw)$ corresponding with $\tau(\sigma_1)$, and $\bv'_j$ is equal to $\bv_j$ restricted to entries $1,2,\ldots,n$. 

This shows that 
\begin{align*}
\mathrm{sign} (\Psi_{\tau(\sigma_1),\mathbf{v}_j}(\omega_\tau )) &= \mathrm{sign}\left( \mathrm{sign}\det
\begin{pmatrix}
\mathbf{A}_n|_{\tau'(\sigma_1')} & \mathbf{0} \\
\mathbf{0}^T & 1 \\
\end{pmatrix} \cdot 
\det 
\begin{pmatrix}
(\mathbf{A}_n^{\omega_{\tau'}}|_{\tau'(\sigma_1')}:(\mathbf{v}_j',1,2^{\rho(\bv_j')})) & \mathbf{0} \\
\mathbf{0}^T & 1  \\
\end{pmatrix}
\right) \\
&=\mathrm{sign}
\left( 
\Psi_{\tau'(\sigma_1'),\mathbf{v}_j'}(\omega_{\tau'} )
\right). 
\end{align*}
By the inductive hypothesis, $\Psi_{\tau'(\sigma_1'),\mathbf{v}_j'}(\omega_{\tau'} ) > 0$, and so $\Psi_{\tau(\sigma_1),\mathbf{v}_j}(\omega_\tau )>0$ as desired. Similarly, one checks that $\Psi_{\tau(\sigma_2),\mathbf{v}_i}(\omega_\tau ) > 0$. 
\end{proof}

\begin{corollary}
For $\bw \in \calV$, the component of the flip graph of  $\calO(Q_\bw)$ containing all regular triangulations admits a $\mathbb{Z}_2^t$ action given by twists.
\end{corollary}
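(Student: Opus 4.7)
The plan is to combine the previously established results, in particular Theorems~\ref{thm:regularity} and~\ref{thm:square}, with the fact (\cite[Theorem 5.3.1]{DeLoeraRambauSantos}) that the subgraph of regular triangulations is a connected subgraph of the flip graph. I will first observe that by Lemma~\ref{lem:twist1}, the twist group $\frakT(\bw)$ is isomorphic to $\mathbb{Z}_2^t$, so it suffices to show that $\frakT(\bw)$ acts on the claimed component and preserves the flip adjacencies.

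I would then show the main claim by induction on the flip distance $s$ from the canonical triangulation $\cT_\bw$. For the base case $s=0$, Theorem~\ref{thm:regularity} provides exactly what is needed: $\tau(\cT_\bw)$ is a regular triangulation of $\cO(Q_\bw)$ for every $\tau\in\frakT(\bw)$, and hence lies in the same regular-triangulation component of the flip graph as $\cT_\bw$.

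For the inductive step, suppose $\cT'$ is a regular triangulation obtained from $\cT_\bw$ by a sequence of $s$ flips, and write $\cT' = \cT_Z^-$ where $\cT = \cT_Z^+$ is the previous triangulation in the sequence, so by the inductive hypothesis $\tau(\cT)$ is a regular triangulation in the component. I would then invoke Theorem~\ref{thm:square}: since $\cT$ flips to $\cT'$ at the circuit $Z$, and $\tau(\cT)$ is a triangulation (by induction), the argument used in the proof of Corollary~\ref{cor:kFlips} (combined with the fact that $\tau$ is an involution acting bijectively on circuits by Lemma~\ref{lem:twist2}) shows that $\tau(\cT)$ admits a flip at the circuit $\tau(Z)$, and the resulting triangulation equals $\tau(\cT')$. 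In particular, $\tau(\cT')$ is a triangulation that is flip-adjacent to $\tau(\cT)$; since flips between triangulations preserve regularity in the sense that the resulting flip graph component remains connected, $\tau(\cT')$ is a regular triangulation in the same component.

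Finally, to assemble this into a group action: the map $\cT\mapsto \tau(\cT)$ is defined on all regular triangulations in the component, preserves the flip relation, and satisfies $\tau_i^2=\mathrm{id}$ and $\tau_i\tau_j=\tau_j\tau_i$ on vertices by Lemma~\ref{lem:twist1}, hence also on triangulations viewed as sets of subsets of vertices. This gives the desired $\mathbb{Z}_2^t$ action. I expect the only subtle point to be verifying that Theorem~\ref{thm:square} can be applied at each step of the induction, i.e., that the hypothesis \emph{$\tau(\cT)$ is a triangulation} in that theorem is maintained; but this is precisely what the inductive hypothesis gives us together with the observation that a flip on a triangulation yields a triangulation, so no genuine obstacle arises.
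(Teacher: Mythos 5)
Your argument is correct and follows essentially the same route as the paper: Theorem~\ref{thm:regularity} handles the canonical triangulation, and Theorem~\ref{thm:square} together with flip-connectivity of the component propagates the twist action, your explicit induction on flip distance being just a more detailed version of this. One small caution: in the inductive step you only need (and only actually obtain) that $\tau(\cT')$ is a triangulation lying in the component --- it need not itself be regular, and regularity of the intermediate triangulations in the flip sequence is not required anywhere, so you should drop the word ``regular'' from the inductive hypothesis and conclusion.
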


\begin{proof}
By Theorem~\ref{thm:regularity},
a twist of a canonical triangulation is a regular triangulation of  $\calO(Q_\bw)$, so both triangulations lie in the same connected component of the flip graph. 
Any two triangulations in this component are connected by a sequence of flips, hence Theorem~\ref{thm:square} allows us to extend the action of twists on all triangulations in this component.  
Moreover, this action respects the edge structure of the flip graph. 
This implies that this component admits a $\mathbb{Z}_2^t$ action given by twists as claimed.
\end{proof}

\section{Future Directions}\label{sec:future}

We conclude with several conjectures. 
Throughout this article, we studied generalized snake posets $\hatP(\bw)$, with our main goal being to study the secondary polytope of $\calO(Q_{\bw})$. 

In Theorem~\ref{thm.Cayleygraph}, when $\bw=\varepsilon L^{n-1}$ and $\hatP(\bw)$ is the $n$-ladder, we saw that the $1$-skeleton of the secondary polytope of $\calO(Q_{\bw})$ is the Cayley graph of $\mathfrak{S}_{n+1}$, which is an $n$-regular graph.
Furthermore, Corollary~\ref{cor:kFlips} showed that each triangulation which is generated by applying twists to the canonical triangulation admits the same number of flips as the canonical triangulation.
In light of these results, along with computational evidence, we conjecture that the secondary polytope of $\mathcal{O}(Q_\mathbf{w})$ is simple, and the degree of each vertex is equal to the dimension of the secondary polytope. 

\begin{conjecture}
For $\bw \in \calV$, the flip graph of regular triangulations for $\calO(Q_{\bw})$ is $k$-regular, where $k$ is the dimension of the secondary polytope of $\calO(Q_{\bw})$.
\end{conjecture}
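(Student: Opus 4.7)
The plan is to prove that every vertex of the secondary polytope of $\calO(Q_\bw)$ has degree equal to its dimension, which is equivalent to that polytope being simple. Let $n$ denote the length of $\bw$. First I would verify the dimension: $\calO(Q_\bw)$ is full-dimensional in $\R^{|Q_\bw|}$ with $|Q_\bw| = n+4$, while its vertex set is in bijection with $J(Q_\bw)\cong\hatP(\bw)$ of cardinality $2n+6$. Hence the secondary polytope has dimension $(2n+6)-(n+4)-1 = n+1$, matching the flip count from the canonical triangulation $\cT_{\bw}$ in Theorem~\ref{thm:flipsfromcanonical}. Thus $k = n+1$, and the conjecture reduces to showing that every regular triangulation of $\calO(Q_\bw)$ admits exactly $n+1$ flips.

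I would proceed by induction on the flip-distance $d(\cT, \cT_{\bw})$ in the flip graph of regular triangulations, which is finite by \cite[Theorem 5.3.1]{DeLoeraRambauSantos}. The base case is Theorem~\ref{thm:flipsfromcanonical}. For the inductive step, assume $\cT$ admits exactly $n+1$ flips and let $\cT'$ be obtained from $\cT$ by flipping at some circuit $Z$. The goal is to construct a bijection between flip-supporting circuits at $\cT$ and at $\cT'$: the flip at $Z$ should deactivate $Z$ itself while activating one ``partner'' circuit for $\cT'$, with the remaining $n$ flip-supporting circuits persisting unchanged. Since Theorem~\ref{thm.bijection} classifies circuits of $\calO(Q_\bw)$ as nonempty connected induced subgraphs of $G(\bw)$, this gives a concrete combinatorial parameter space in which to carry out the bookkeeping.

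The ladder case supplies a useful template. In Lemma~\ref{lem.3}, each regular triangulation of $\calO(Q_{\varepsilon L^{n-1}})$ is realized as the maximal-chain complex of a permuted lattice $\pi\cdot\hatP(\bw)$ for $\pi\in\mathfrak{S}_{n+1}$, and its $n$ flips correspond to the $n$ simple transpositions, independently of $\pi$. I would aim for an analogous general encoding: every regular triangulation of $\calO(Q_\bw)$ should be representable as the maximal-chain complex of a labeled distributive lattice with the same square skeleton as $\hatP(\bw)$, at which point the $n+1$ flips come directly from its $n+1$ squares. Twists already produce many such relabelings by Theorem~\ref{thm:regularity}, and Corollary~\ref{cor:kFlips} confirms the desired regularity on twist orbits of $\cT_{\bw}$.

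The main obstacle will be carrying the induction across flips that leave the twist orbit of $\cT_{\bw}$. Circuits in $\calO(Q_\bw)$ can have size larger than four when $\bw$ contains the subwords $LLR$ or $RRL$, and whether such a circuit supports a flip is determined by the link condition in Theorem~\ref{thm:flip}, which depends on global chain data of the current triangulation. The key technical step will therefore be to extend the antichain-and-link analysis from the proof of Theorem~\ref{thm:flipsfromcanonical} --- which ruled out every non-square circuit from supporting a flip at the canonical triangulation --- to an arbitrary regular triangulation via its hypothetical lattice encoding, yielding the local flip-toggle bijection that powers the induction.
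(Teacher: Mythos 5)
First, a point of orientation: the statement you set out to prove appears in the paper only as a conjecture in Section~\ref{sec:future}; the authors offer exactly the evidence you cite (Theorem~\ref{thm:flipsfromcanonical}, Theorem~\ref{thm.Cayleygraph}, Corollary~\ref{cor:kFlips}) together with computations, so there is no proof of it in the paper to compare against. Your dimension count is correct: the vertex configuration has $2n+6$ points affinely spanning $\R^{n+4}$, so the secondary polytope has dimension $n+1$, and since every vertex of a $D$-dimensional polytope has degree at least $D$ and the $1$-skeleton is a spanning subgraph of the flip graph on regular triangulations, the genuine content is the upper bound: no regular triangulation admits more than $n+1$ flips. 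Your proposal correctly isolates this reduction, but it does not prove it — the ``flip-toggle bijection'' that powers your induction is stated as a goal rather than established, and neither the paper nor your outline supplies the argument.

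More seriously, the mechanism you propose for the inductive step appears to be unworkable as stated. You want every regular triangulation to be the maximal-chain complex of a labeled lattice with the same square skeleton as $\hatP(\bw)$, generalizing Lemma~\ref{lem.3}. Any two such triangulations have isomorphic dual graphs, because the dual graph depends only on the unlabeled poset, not on the labeling. But the paper's second conjecture in Section~\ref{sec:future} — which the authors report is supported by their computations — asserts that whenever $\hatP(\bw)$ contains a turn, there exists a regular triangulation whose dual graph is \emph{not} isomorphic to that of the canonical triangulation. Nor can you escape by re-encoding with a differently shaped snake lattice of the same length: by Lemma~\ref{lem:unimodular} every triangulation of $\calO(Q_\bw)$ has the same number of simplices, namely the number of maximal chains of $\hatP(\bw)$, whereas by Theorem~\ref{thm:minmaxvolumes} distinct generalized snake posets of the same rank generally have different numbers of maximal chains. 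So the ladder template genuinely does not extend past the ladder, and the key technical step of your plan needs an entirely different idea — which is presumably why the authors left the statement as a conjecture.
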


In the case when $\hatP(\bw)$ is the $n$-ladder and its secondary polytope is a permutohedron, Lemma~\ref{lem.3} implies that the dual graph of every triangulation of $\mathcal{O}(Q_{\mathbf{w}})$ is the same as the dual graph of the canonical triangulation. 
When $\hatP(\bw)$ contains a turn, our computations support the following conjecture.
\begin{conjecture}
If $J(Q_{\bw}) = \hatP(\bw)$ contains a turn, then $\calO(Q_{\bw})$ has a regular triangulation whose dual graph is not isomorphic to the dual graph of the canonical triangulation.
\end{conjecture}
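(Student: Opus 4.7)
The key structural difference introduced by a turn in $\hatP(\bw)$ is that when the turn is sufficiently thick --- specifically, when $\bw$ contains one of the three-letter patterns $LLR$, $RRL$, $LRR$, or $RLL$ --- the bijection of Theorem~\ref{thm.bijection} produces circuits in $\calO(Q_\bw)$ whose supports differ in shape from any circuit appearing in the ladder case. In the ladder case, $G(\bw)$ is a path, every circuit has size exactly $4$, and Lemma~\ref{lem.3} shows that every regular triangulation is a canonical triangulation of some labeled relabeling of $\hatP(\bw)$, forcing isomorphic dual graphs across all triangulations. My plan is to exploit the new circuits available in the turn case to construct a regular triangulation whose dual graph cannot arise from any labeled canonical triangulation.

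First I would construct a target triangulation $\cT$ by a sequence of flips from the canonical triangulation $\cT_c$. Starting with a flip at a square $\Sq(w_i)$ adjacent to the turn (available by Theorem~\ref{thm:flipsfromcanonical}), I would iterate further flips as new circuits become available, aiming for a regular triangulation $\cT$ that admits or contains a flip at a circuit of size $\geq 6$ (when available), or, failing that, at a circuit positioned incompatibly with any simple relabeling of $\hatP(\bw)$. Existence of such a $\cT$ is supported by the fact that the number of regular triangulations grows faster than the number of admissible relabelings as soon as turns are present, combined with the connectivity of the regular flip graph \cite[Theorem~5.3.1]{DeLoeraRambauSantos}. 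For the minimal turn example $\bw = \varepsilon LLR$, I would verify this explicitly, perhaps via software such as \texttt{TOPCOM}, by enumerating all regular triangulations of $\calO(Q_\bw)$ and directly comparing their dual graphs.

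Next I would identify the dual-graph invariant that distinguishes $\cT$ from $\cT_c$. The dual graph of $\cT_c$ is isomorphic to the graph on linear extensions of $Q_\bw$ with edges between extensions differing by a single adjacent transposition, so any $K_3$ in it corresponds to a triple of linear extensions pairwise related by adjacent transpositions, governed by the braid relations in $\mathfrak{S}_{|Q_\bw|}$. In $\cT$, when a flip at a size-$6$ or larger circuit (type $(3,3)$ by Corollary~\ref{cor:circuit}) has been performed, the three maximal simplices of $\cT|_Z$ joined to their common link $\mathrm{L}$ produce a $K_3$ in the dual graph whose three shared walls all contain a common codimension-$2$ face of prescribed size $|\mathrm{L}|$. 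I would use this ``thick $K_3$'' structure --- or, as a backup, the degree sequence of the dual graph --- as the separating invariant.

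The principal obstacle is the final invariant-comparison step. The dual graph of $\cT_c$ can be combinatorially rich, especially when $\bw$ has long ladder segments between turns, so ruling out the chosen invariant requires a careful global analysis of the linear-extension structure of $Q_\bw$. I would begin from the minimal case $\bw = \varepsilon LLR$ to identify the correct invariant experimentally, then generalize via a local-to-global argument: the invariant should depend only on the local structure at a single turn, which is shared by all $\bw$ containing at least one turn. A secondary challenge is verifying regularity of each intermediate triangulation in the flip sequence, which I would address by adapting the height-function folding arguments developed in the proof of Theorem~\ref{thm:regularity}.
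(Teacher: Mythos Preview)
The statement you are attempting to prove is listed in the paper as a \emph{conjecture} (in Section~\ref{sec:future}, ``Future Directions''), not as a theorem. The paper offers no proof; it only remarks that the conjecture is supported by computational evidence. There is therefore no argument in the paper for you to be compared against.

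As for your proposal itself: what you have written is a research plan rather than a proof, and you acknowledge this explicitly. Several steps are left as aspirations: you would ``verify this explicitly, perhaps via software'' for $\bw=\varepsilon LLR$; you would ``identify the correct invariant experimentally''; you would ``adapt'' the height-function arguments of Theorem~\ref{thm:regularity} to certify regularity along your flip sequence. None of these are carried out. In particular, the crucial step --- showing that the dual graph of your constructed $\cT$ is \emph{not} isomorphic to that of $\cT_c$ --- is not executed for even a single example, and your proposed ``thick $K_3$'' invariant is not shown to be absent from the canonical dual graph. Note also that your appeal to size-$6$ circuits requires $\bw$ to contain a pattern $LLR$ or $RRL$ (so that $G(\bw)$ has a triangle); the conjecture as stated may also cover words like $\varepsilon LR$ where all circuits remain of size $4$, and your plan says nothing about that case.

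In short: the paper has nothing here for comparison, and your proposal, while a reasonable outline of how one might attack the conjecture, is not a proof and contains substantive gaps you yourself flag.
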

Naturally, the next case to study in-depth is when $\hatP(\bw)$ is nearly a ladder.
We have verified the following conjecture for $n=3,4,5,6,7$.
\begin{conjecture}
If $J(Q_{\bw})=\hatP(\bw)$, where $\bw=\varepsilon L R^{n-2}$ for $n\geq3$, then the number of triangulations of $\calO(Q_{\bw})$ whose dual graph is isomorphic to the dual graph of the canonical triangulation is $4n(n-2)!$ .
\end{conjecture}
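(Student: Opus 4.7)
The plan is to decompose the count $4n(n-2)!$ into structural factors arising from the distinct features of $\hatP(\bw)$ for $\bw = \varepsilon L R^{n-2}$: a two-ladder geometry and a local interaction at the $LR$ bend. I would first describe $\hatP(\bw)$ explicitly. Although $G(\bw)$ contains no triangle, the geometric turn at $L \to R$ partitions the $n$ squares of $\hatP$ into two ladders sharing a corner: a short ladder $\mL^1$ of two squares from $\varepsilon L$ and a longer ladder $\mL^2$ of $n-1$ squares comprising the corner together with the $R^{n-2}$ tail. The canonical triangulation $\cT_\bw$ has simplices indexed by maximal chains in $\hatP(\bw)$, and its dual graph $D(\cT_\bw)$ has an edge between two chains whenever they differ in exactly one element; this combinatorial data would be recorded explicitly and analyzed as the target isomorphism type.

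The factor $4$ arises from the twist group $\frakT(\bw) \cong \mathbb{Z}_2 \times \mathbb{Z}_2$, which acts freely on $\cT_\bw$ by Theorem~\ref{thm:regularity}. Because a twist relabels the vertices of $\calO(Q_\bw)$ while preserving the combinatorial incidence structure among simplices, and because twists commute with flips by Theorem~\ref{thm:square}, the four resulting triangulations have dual graphs all abstractly isomorphic to $D(\cT_\bw)$.

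Within each twist orbit, one seeks $n(n-2)!$ triangulations with dual graph $D(\cT_\bw)$. Viewing $\mL^2$ as a standalone pure ladder of $n-1$ squares, Theorem~\ref{thm.Cayleygraph} identifies its regular triangulations with the $n!$ elements of $\mathfrak{S}_n$, and Lemma~\ref{lem.3} implies these all share a common dual graph obtained by relabeling a fixed one. The bend $\mL^1$ imposes a compatibility constraint at the shared corner: of the $n!$ permutations of $\mL^2$, only a sub-family of size $n(n-2)! = n!/(n-1)$ is expected to produce triangulations of $\calO(Q_\bw)$ whose dual graph, when combined with the bend portion, is isomorphic to $D(\cT_\bw)$. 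The core combinatorial task is to identify this sub-family explicitly, perhaps as the set of permutations whose local labeling at the corner matches one of $n$ canonical configurations inherited from the canonical triangulation.

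The main obstacle is completeness: ruling out triangulations with dual graph isomorphic to $D(\cT_\bw)$ that do not arise from twists composed with flips in $\mL^2$. This would proceed by using Theorem~\ref{thm.bijection} to enumerate circuits of $\calO(Q_\bw)$ and then analyzing how flips at circuits supported partly on $\mL^1$ and partly on $\mL^2$ alter the dual graph; only flips that can be factored through the twist action and the Cayley action on $\mL^2$ should preserve the isomorphism type. The computational verification for $n = 3,4,5,6,7$ would anchor an induction on $n$, with the inductive step tracking how the orbit changes when a single $R$ square is appended to the end of $\bw$.
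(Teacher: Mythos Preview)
The statement you are trying to prove is stated in the paper as a \emph{conjecture}, not a theorem: the authors explicitly write that they have only verified it computationally for $n=3,4,5,6,7$ and give no proof. So there is no argument in the paper to compare your proposal against.

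As for the proposal itself, it is a plan rather than a proof, and the central step is missing. Your identification of two ladders $\mL^1,\mL^2$ and hence $\frakT(\bw)\cong\mathbb{Z}_2^2$ is consistent with the paper's conventions (treating the $\varepsilon L\to R$ transition as a turn), and it is true that a twist, being a vertex permutation, sends a triangulation to one with an isomorphic dual graph. But the factor~$4$ requires more: you must show that the twist action is \emph{free} on the entire set of triangulations with dual graph isomorphic to $D(\cT_\bw)$, not merely that $\tau(\cT_\bw)\neq\cT_\bw$ for the canonical one. Nothing you cite establishes this.

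The more serious gap is the factor $n(n-2)!$. Theorem~\ref{thm.Cayleygraph} and Lemma~\ref{lem.3} apply to $\calO(Q_{\varepsilon L^{n-1}})$, an order polytope in its own right; they say nothing about flips localized to a subladder $\mL^2$ inside the larger polytope $\calO(Q_\bw)$, where the circuits of Theorem~\ref{thm.bijection} can straddle the bend. Your assertion that exactly $n!/(n-1)$ of the $n!$ ladder permutations survive a ``compatibility constraint at the shared corner'' is not an argument---it is the conjectured number divided by~$4$, restated. You have given no mechanism that picks out $n$ canonical corner configurations, nor any reason the remaining $(n-1)\cdot n(n-2)!$ permutations should yield a different dual graph. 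Finally, the completeness step (ruling out triangulations not reachable by twists-plus-ladder-flips) is acknowledged but not attempted; ``analyzing how flips at circuits supported partly on $\mL^1$ and partly on $\mL^2$ alter the dual graph'' is exactly the hard combinatorics the conjecture is asking for.
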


We know that, by Theorem~\ref{thm:unimodular}, all triangulations of $\calO(Q_\bw)$ are unimodular. 
Moreover, all of our computations support the following conjecture.
\begin{conjecture}
If $\bw \in \calV$, all triangulations of $\calO(Q_\bw)$ are regular. 
\end{conjecture}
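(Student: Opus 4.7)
The plan is to proceed by induction on the length $n$ of the word $\bw = w_0w_1\cdots w_n$, using the structural results already established as the primary tools. The base case $n=0$ (so $\bw = \varepsilon$) is direct: $\calO(Q_\varepsilon)$ has exactly two triangulations, both of which are regular, as seen in Example~\ref{ex:triangulation}. For the inductive step, given a triangulation $\cT$ of $\calO(Q_\bw)$, I would restrict $\cT$ to the subpolytope associated to $Q_{w_0\cdots w_{n-1}}$, which sits inside $\calO(Q_\bw)$ as a face obtained by intersecting with the coordinate hyperplanes corresponding to the vertices of $Q_\bw$ introduced by the last letter $w_n$. The inductive hypothesis provides a height function $\omega'$ on $V(\cO(Q_{w_0\cdots w_{n-1}}))$ realizing the restricted triangulation as regular, and the goal is to extend $\omega'$ to a height function $\omega$ on all of $V(\cO(Q_\bw))$ that realizes $\cT$.

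The key to controlling this extension is the circuit description in Theorem~\ref{thm.bijection}: every circuit of $\cO(Q_\bw)$ corresponds to a connected induced subgraph of $G(\bw)$, and by Corollary~\ref{cor:circuit} each has type $(k,k)$. Each flippable circuit in $\cT$ involving a newly introduced vertex translates into a strict linear inequality on the heights to be assigned to those new vertices, and the task is to show the resulting system of inequalities is feasible. I would order the new vertices according to their positions in maximal chains extending the canonical chain of $\hatP(w_0\cdots w_{n-1})$, and then pick heights sequentially, using at each step a value sufficiently large compared to the previously fixed heights — analogous to the $2^{\rho(x_i)}$ construction of Definition~\ref{def:canonicalheight} but now calibrated to the constraints imposed by $\cT$. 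The determinantal and block-matrix reductions in the proof of Theorem~\ref{thm:regularity}, which verify the local folding condition for twists of the canonical triangulation, provide templates that should generalize, provided one carefully tracks which new vertex plays the role of the highest label at each stage.

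The main obstacle will be that a general triangulation of $\cO(Q_\bw)$ lacks the clean recursive combinatorial description enjoyed by the canonical triangulation and its twists, where simplices come from maximal chains in $\hatP$; in general a simplex can contain vertices whose associated filters form more intricate configurations, so the precise local structure near the last squares is harder to control. As a fallback strategy motivated by computational evidence, one can try to combine the twist action with flips: by Theorems~\ref{thm:square} and~\ref{thm:regularity}, the orbit of the canonical triangulation under twists together with flips connecting regular triangulations lies entirely within the regular ones, so it would suffice to prove that this orbit exhausts all triangulations of $\cO(Q_\bw)$. Establishing such orbit-completeness would require a ``canonical form'' argument reducing any triangulation, via a bounded sequence of flips dictated by the circuit structure of Theorem~\ref{thm.bijection}, to a twist of the canonical triangulation — and verifying that each intermediate flip preserves regularity is where the inductive height-extension argument above would plug back in.
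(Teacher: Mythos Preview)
The statement you are attempting to prove is listed in the paper as a \emph{conjecture} in the ``Future Directions'' section; the paper provides no proof, only computational evidence. So there is no proof in the paper to compare your proposal against.

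As for the proposal itself, it is a strategy sketch rather than a proof, and you correctly identify the main difficulty yourself. Two concrete gaps deserve mention. First, the inductive height-extension step is not substantiated: even granting that $\calO(Q_{w_0\cdots w_{n-1}})$ sits inside $\calO(Q_\bw)$ as a face and that the restricted triangulation is regular by induction, it is a well-known phenomenon that a regular triangulation of a face need not extend to a regular triangulation of the whole polytope compatible with a prescribed triangulation on the complement. The ``pick heights sequentially, large enough'' idea works for pulling/placing triangulations but not for an arbitrary $\cT$, precisely because (as you note) simplices of $\cT$ need not correspond to maximal chains in any relabeled $\hatP$, so the determinantal templates from Theorem~\ref{thm:regularity} do not apply.

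Second, the fallback strategy is circular. Showing that every triangulation can be flipped to a twist of the canonical one, with each intermediate triangulation regular, already presupposes what you are trying to prove: if the starting $\cT$ is not known to be regular, you cannot invoke flip-connectivity of regular triangulations, and if you could independently verify regularity of each intermediate via height extension, you would not need the reduction at all. Proving flip-connectivity of \emph{all} triangulations of $\calO(Q_\bw)$ is itself an open-style question (the flip graph of a point configuration can be disconnected in general), and establishing it here would likely require new structural input beyond Theorem~\ref{thm.bijection}.
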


 When $\bw \in \calV$, a twist of a canonical triangulation of $\calO(Q_{\bw})$ again yields a regular triangulation, by Theorem~\ref{thm:regularity}. 
 Therefore, if the above conjecture holds, we obtain an action of the twist group on the set of all (regular) triangulations.  
 Hence, the number of triangulations would be divisible by the order of the twist group. In the special case when $Q_{\bw}=S_n$ the twist group has order $2^{n+1}$. 
 We make the following conjecture about the precise number of regular triangulations of $\calO(S_n)$ where there appears to be a relationship between regular triangulations of $\calO(S_n)$ and odd Catalan numbers. 
 We have verified this conjecture for $n=1,2,3$.
\begin{conjecture}
The number of regular triangulations of $\calO(S_n)$  is $2^{n+1} \cdot\mathrm{Cat}(2n+1).$ 
\end{conjecture}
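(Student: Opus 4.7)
The plan is to use the $\mathbb{Z}_2^{n+1}$ action of the twist group $\frakT(\bw)$, in combination with the preceding conjecture that every triangulation of $\calO(S_n)$ is regular. I would proceed in three stages.

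First, I would realize $S_n$ as $Q_\bw$ for a word $\bw \in \calV$ of the form $\varepsilon R^2 L^2 R^2 L^2 \cdots$, and verify by counting turns in $\bw$ that $\hatP(\bw)$ decomposes into exactly $n+1$ ladders. Lemma~\ref{lem:twist1} then gives $|\frakT(\bw)| = 2^{n+1}$, accounting for the first factor of the conjectured formula.

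Second, I would show that $\frakT(\bw)$ acts freely on the set of regular triangulations of $\calO(S_n)$. By Theorem~\ref{thm:regularity} the twists of the canonical triangulation $\mathcal{T}_c$ are all distinct, giving an orbit of size $2^{n+1}$ through $\mathcal{T}_c$. Using Theorem~\ref{thm:square} and the connectivity of the regular-triangulation flip graph, the orbit structure propagates throughout the component: I would argue that any nontrivial $\tau$ satisfying $\tau(\mathcal{T}) = \mathcal{T}$ for some regular $\mathcal{T}$ yields, via conjugation by the sequence of flips from $\mathcal{T}_c$ to $\mathcal{T}$, a twist relation contradicting the known freeness at $\mathcal{T}_c$. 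Each orbit then has size exactly $2^{n+1}$.

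Third and most substantively, I would establish that the number of orbits equals $\mathrm{Cat}(2n+1)$. My primary approach is to construct an explicit bijection between canonical orbit representatives and triangulations of a convex $(2n+3)$-gon. A canonical representative can be singled out by a lexicographic minimality condition on the height data from Theorem~\ref{thm:regularity}. These representatives would be encoded as non-crossing diagonal dissections by reading off how each simplex's maximal chain traverses the $n+1$ ladders, with corner squares supplying the branch points. An alternative route is to derive a recursion for $T_n := |\{\text{regular triangulations of } \calO(S_n)\}|$ from the recursive structure $S_n = S_{n-1} \cup \{\text{two new squares}\}$, and verify it matches the ratio $T_n/T_{n-1} = 2 \cdot \mathrm{Cat}(2n+1)/\mathrm{Cat}(2n-1)$.

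The main obstacle is step three. Because the circuits of $\calO(S_n)$ are, by Theorem~\ref{thm.bijection}, in bijection with the exponentially many nonempty connected induced subgraphs of the triangular snake graph $TS_{2n+1}$, the global flip structure is intricate and the correct Catalan encoding is not immediately transparent. I would begin with detailed computer enumeration of the verified cases $n = 1$ (yielding $T_1 = 20$ and five orbits) and $n = 2$ (yielding $T_2 = 336$ and forty-two orbits), listing orbit representatives explicitly to pin down the bijection and to validate the freeness argument of step two.
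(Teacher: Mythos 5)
You should first note that this statement appears in the paper only as a conjecture: the authors offer no proof, just the observation that (conditionally on all triangulations being regular) the twist group $\mathbb{Z}_2^{n+1}$ acts on the triangulations, so the count ought to be divisible by $2^{n+1}$, together with computational verification for $n=1,2,3$. Your proposal is therefore aimed at an open problem, and as written it does not close it.

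Two concrete gaps. First, the freeness argument in your second step does not go through. Theorem~\ref{thm:square} says twists commute with flips, but that does not transport stabilizers along a flip path: if $\tau$ fixes $\mathcal{T}$ and $\mathcal{T}'$ is obtained from $\mathcal{T}$ by a flip at the circuit $Z$, then $\tau(\mathcal{T}')$ is the flip of $\mathcal{T}$ at $\tau(Z)$, which equals $\mathcal{T}'$ only when $\tau(Z)=Z$; conversely, freeness at $\cT_\bw$ gives no direct control on the stabilizer of a triangulation several flips away. Since $\frakT(\bw)\cong\mathbb{Z}_2^{n+1}$ is abelian, ``conjugation by the flip sequence'' produces no new relation, so the contradiction you invoke never materializes. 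Even the starting point --- that the $2^{n+1}$ twists of the canonical triangulation are pairwise distinct --- is asserted rather than proved: Theorem~\ref{thm:regularity} gives regularity of each $\tau(\cT_\bw)$, not injectivity of $\tau\mapsto\tau(\cT_\bw)$. Second, and decisively, your third step --- showing that the number of orbits is $\mathrm{Cat}(2n+1)$ --- is the entire content of the conjecture, and you offer only two candidate strategies (a bijection with triangulations of a $(2n+3)$-gon, or a recursion on $n$) without carrying either out; the ``verified cases'' $T_1=20$ with five orbits and $T_2=336$ with forty-two orbits are restatements of the conjectured formula, not independent computations. Until the bijection or the recursion is actually established (and the orbit sizes pinned down), this remains a plan rather than a proof.
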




\bibliographystyle{plain}
\bibliography{Bibliography}

\begin{thebibliography}{10}

\bibitem{BFGST}
Karin Baur, Eleonore Faber, Sira Gratz, Khrystyna Serhiyenko, and Gordana
  Todorov.
\newblock Mutation of friezes.
\newblock {\em Bull. Sci. Math.}, 142:1--48, 2018.

\bibitem{BrandenSolusLectureHall}
Petter Brändén and Liam Solus.
\newblock Some algebraic properties of lecture hall polytopes, 2019.

\bibitem{SchifflerCanakci}
\.{I}lke \c{C}anak\c{c}\i and Ralf Schiffler.
\newblock Cluster algebras and continued fractions.
\newblock {\em Compos. Math.}, 154(3):565--593, 2018.

\bibitem{ChappellFriedlSanyal}
Thomas Chappell, Tobias Friedl, and Raman Sanyal.
\newblock Two double poset polytopes.
\newblock {\em SIAM J. Discrete Math.}, 31(4):2378--2413, 2017.

\bibitem{DeLoeraRambauSantos}
Jes\'{u}s~A. De~Loera, J\"{o}rg Rambau, and Francisco Santos.
\newblock {\em Triangulations}, volume~25 of {\em Algorithms and Computation in
  Mathematics}.
\newblock Springer-Verlag, Berlin, 2010.
\newblock Structures for algorithms and applications.

\bibitem{DoignonRexhep}
Jean-Paul Doignon and Selim Rexhep.
\newblock Primary facets of order polytopes.
\newblock {\em J. Math. Psych.}, 75:231--245, 2016.

\bibitem{FerayReiner}
Valentin F\'{e}ray and Victor Reiner.
\newblock {$P$}-partitions revisited.
\newblock {\em J. Commut. Algebra}, 4(1):101--152, 2012.

\bibitem{GallianSurvey}
Joseph~A. Gallian.
\newblock A dynamic survey of graph labeling.
\newblock {\em Electron. J. Combin.}, 5:Dynamic Survey 6, 43, 1998.

\bibitem{GOT}
Jacob~E. Goodman, Joseph O'Rourke, and Csaba~D. T\'{o}th, editors.
\newblock {\em Handbook of discrete and computational geometry}.
\newblock Discrete Mathematics and its Applications (Boca Raton). CRC Press,
  Boca Raton, FL, 2018.
\newblock Third edition.

\bibitem{HaaseKohlTsuchiya}
Christian Haase, Florian Kohl, and Akiyoshi Tsuchiya.
\newblock Levelness of order polytopes.
\newblock {\em SIAM J. Discrete Math.}, 34(2):1261--1280, 2020.

\bibitem{HibiLiLiMuTsuchiya}
Takayuki Hibi, Nan Li, Teresa~Xueshan Li, Li~Li Mu, and Akiyoshi Tsuchiya.
\newblock Order-chain polytopes.
\newblock {\em Ars Math. Contemp.}, 16(2):299--317, 2019.

\bibitem{HibiMatsuda}
Takayuki Hibi and Kazunori Matsuda.
\newblock Quadratic {G}r\"{o}bner bases of twinned order polytopes.
\newblock {\em European J. Combin.}, 54:187--192, 2016.

\bibitem{HibiMatsudaKazunoriOhsugiHidefumiShibata}
Takayuki Hibi, Kazunori Matsuda, Hidefumi Ohsugi, and Kazuki Shibata.
\newblock Centrally symmetric configurations of order polytopes.
\newblock {\em J. Algebra}, 443:469--478, 2015.

\bibitem{LiuTsuchiya}
Fu~Liu and Akiyoshi Tsuchiya.
\newblock Stanley's non-{E}hrhart-positive order polytopes.
\newblock {\em Adv. in Appl. Math.}, 108:1--10, 2019.

\bibitem{LiuMeszarosStDizier}
Ricky~I. Liu, Karola M\'{e}sz\'{a}ros, and Avery St.~Dizier.
\newblock Gelfand-{T}setlin polytopes: a story of flow and order polytopes.
\newblock {\em SIAM J. Discrete Math.}, 33(4):2394--2415, 2019.

\bibitem{MeszarosMoralesStriker}
Karola M\'{e}sz\'{a}ros, Alejandro~H. Morales, and Jessica Striker.
\newblock On flow polytopes, order polytopes, and certain faces of the
  alternating sign matrix polytope.
\newblock {\em Discrete Comput. Geom.}, 62(1):128--163, 2019.

\bibitem{Propp}
James Propp.
\newblock The combinatorics of frieze patterns and {M}arkoff numbers.
\newblock {\em Integers}, 20:Paper No. A12, 38, 2020.

\bibitem{ReinerWelker}
Victor Reiner and Volkmar Welker.
\newblock On the {C}harney-{D}avis and {N}eggers-{S}tanley conjectures.
\newblock {\em J. Combin. Theory Ser. A}, 109(2):247--280, 2005.

\bibitem{SantosStumpWelker}
Francisco Santos, Christian Stump, and Volkmar Welker.
\newblock Noncrossing sets and a {G}rassmann associahedron.
\newblock {\em Forum Math. Sigma}, 5:Paper No. e5, 49, 2017.

\bibitem{StanleyTwoPosetPolytopes}
Richard~P. Stanley.
\newblock Two poset polytopes.
\newblock {\em Discrete Comput. Geom.}, 1(1):9--23, 1986.

\end{thebibliography}

\addresseshere

\newpage
\begin{multicols}{2}
[
\section{Appendix} 
The cases to check for the proof of Theorem~\ref{thm:regularity}. 
For each of the cases below, one must check that both $\Psi_{\tau(\sigma_1),\mathbf{v}_i}(\omega_\tau) > 0$ and $\Psi_{\tau(\sigma_2),\mathbf{v}_j}(\omega_\tau) > 0$. 
]
\label{sec:appendix}
\noindent \textbf{The Extension cases:}
\begin{itemize}
\item[1.] No twist on $\mL^t$ by $\tau$.  
\begin{itemize}
\item[(a)] $\mathbf{v}_{\langle n+1\rangle},\mathbf{v}_{\langle a,n+1\rangle}\in \tau(\sigma_1 \cap \sigma_2)$.
\item[(b)] $\mathbf{v}_{\langle a,n\rangle},\mathbf{v}_{\langle a,n+1\rangle}\in \tau(\sigma_1 \cap \sigma_2)$ .
\item[(c)] $\mathbf{v}_{\langle a,n\rangle} \in \tau(\sigma_1),\mathbf{v}_{\langle n+1\rangle} \in \tau(\sigma_2)$.
\end{itemize}

\item[2.] $\mL^t$ is twisted by $\tau$.
\begin{itemize}
\item[(a)] $\mathbf{v}_{\langle n+1\rangle},\mathbf{v}_{\langle a,n+1\rangle}\in \tau(\sigma_1 \cap \sigma_2)$.
\item[(b)] $\mathbf{v}_{\langle n\rangle},\mathbf{v}_{\langle n+1\rangle}\in \tau(\sigma_1 \cap \sigma_2)$ .
\item[(c)] $\mathbf{v}_{\langle n\rangle} \in \tau(\sigma_1), \mathbf{v}_{\langle a,n+1\rangle}\in \tau(\sigma_2)$.
\end{itemize}
\end{itemize}
\vspace{0.5cm}
\noindent\textbf{ The Turn cases:}

\begin{itemize}
\item[1.] No twist on $\mL^t$ or $\mL^{t+1}$ by $\tau$.
\begin{itemize}
\item[(a)] $\mathbf{v}_{\langle n,n+1\rangle}$,\hspace{-0.1cm} $\mathbf{v}_{\langle n+1\rangle}$, \hspace{-0.17cm} $\mathbf{v}_{\langle a,n-1\rangle}\in \tau(\sigma_1 \cap \sigma_2)$.
\item[(b)] $\mathbf{v}_{\langle n,n+1\rangle}$,\hspace{-0.1cm} $\mathbf{v}_{\langle a,n\rangle},\mathbf{v}_{\langle a,n-1\rangle}\in \tau(\sigma_1 \cap \sigma_2)$.
\item[(c)] $\mathbf{v}_{\langle n,n+1\rangle}$, $\mathbf{v}_{\langle a,n\rangle},\mathbf{v}_{\langle n\rangle}\in \tau(\sigma_1 \cap \sigma_2)$.
\item[(d)] $\mathbf{v}_{\langle n+1\rangle} \in \tau(\sigma_1),\mathbf{v}_{\langle a,n\rangle} \in \tau(\sigma_2)$.
\item[(e)] $\mathbf{v}_{\langle a,n-1\rangle} \in \tau(\sigma_1),\mathbf{v}_{\langle n\rangle} \in \tau(\sigma_2)$.
\end{itemize}

\item[2.] $\mL^t$ is twisted by $\tau$, but $\mL^{t+1}$ is not. 
\begin{itemize}
\item[(a)] $\mathbf{v}_{\langle n,n+1\rangle}$, $\mathbf{v}_{\langle n+1\rangle},\mathbf{v}_{\langle n-1\rangle}\in \tau(\sigma_1 \cap \sigma_2)$.
\item[(b)] $\mathbf{v}_{\langle n,n+1\rangle}$, $\mathbf{v}_{\langle n\rangle},\mathbf{v}_{\langle n-1\rangle}\in \tau(\sigma_1 \cap \sigma_2)$.
\item[(c)] $\mathbf{v}_{\langle n,n+1\rangle}$, $\mathbf{v}_{\langle n\rangle},\mathbf{v}_{\langle a,n\rangle}\in \tau(\sigma_1 \cap \sigma_2)$.
\item[(d)] $\mathbf{v}_{\langle n+1\rangle} \in \tau(\sigma_1),\mathbf{v}_{\langle n\rangle} \in \tau(\sigma_2)$.
\item[(e)] $\mathbf{v}_{\langle n-1\rangle} \in \tau(\sigma_1),\mathbf{v}_{\langle a,n\rangle} \in \tau(\sigma_2)$.
\vspace{2cm}
\end{itemize}

\item[3.] $\mL^{t+1}$ is twisted by $\tau$, but $\mL^t$ is not. 
\begin{itemize}
\item[(a)] $\mathbf{v}_{\langle n+1\rangle}$, $\mathbf{v}_{\langle n,n+1\rangle},\mathbf{v}_{\langle a,n\rangle}\in \tau(\sigma_1 \cap \sigma_2)$.
\begin{itemize}
\item[i.] $\mathbf{v}_{\langle a, n-2\rangle}\in \tau(\sigma_1 \cap \sigma_2)$.
\item[ii.] $\mathbf{v}_{\langle n\rangle}\in \tau(\sigma_1 \cap \sigma_2)$.
\item[iii.] $\mathbf{v}_{\langle a, n-2\rangle} \in \tau(\sigma_1),\mathbf{v}_{\langle n\rangle} \in \tau(\sigma_2)$.
\end{itemize}
\item[(b)] $\mathbf{v}_{\langle n+1\rangle}$, $\mathbf{v}_{\langle a,n-1\rangle},\mathbf{v}_{\langle a,n\rangle}\in \tau(\sigma_1  \cap \sigma_2)$.
\begin{itemize}
\item[i.] $\mathbf{v}_{\langle a, n-2\rangle}\in \tau(\sigma_1  \cap \sigma_2)$.
\item[ii.] $\mathbf{v}_{\langle n\rangle}\in \tau(\sigma_1  \cap  \sigma_2)$.
\item[iii.] $\mathbf{v}_{\langle a, n-2\rangle} \in \tau(\sigma_1),\mathbf{v}_{\langle n\rangle} \in \tau(\sigma_2)$. 
\end{itemize}
\item[(c)] $\mathbf{v}_{\langle n+1\rangle}$, $\mathbf{v}_{\langle a,n-1\rangle},\mathbf{v}_{\langle n-1\rangle}\in \tau(\sigma_1  \cap \sigma_2)$.
\item[(d)] $\mathbf{v}_{\langle n,n+1\rangle} \in \tau(\sigma_1),\mathbf{v}_{\langle a,n-1\rangle} \in \tau(\sigma_2)$.
\item[(e)] $\mathbf{v}_{\langle a,n\rangle} \in \tau(\sigma_1),\mathbf{v}_{\langle n-1\rangle} \in \tau(\sigma_2)$.
\begin{itemize}
    \item[i.] $\mathbf{v}_{\langle a,n-2\rangle} \in \tau(\sigma_1 \cap  \sigma_2)$.
    \item[ii.] $\mathbf{v}_{\langle n\rangle} \in \tau(\sigma_1 \cap \sigma_2)$. 
\end{itemize}
\end{itemize}

\item[4.] $\mL^t,\mL^{t+1}$ are both twisted by $\tau$. 
\begin{itemize}
\item[(a)] $\mathbf{v}_{\langle n+1\rangle}$, $\mathbf{v}_{\langle n,n+1\rangle},\mathbf{v}_{\langle n\rangle}\in \tau(\sigma_1  \cap  \sigma_2)$.
\begin{itemize}
\item[i.] $\mathbf{v}_{\langle n-2\rangle}\in \tau(\sigma_1  \cap  \sigma_2)$.
\item[ii.] $\mathbf{v}_{\langle a,n\rangle}\in \tau(\sigma_1  \cap  \sigma_2)$.
\item[iii.] $\mathbf{v}_{\langle  n-2\rangle} \in \tau(\sigma_1),\mathbf{v}_{\langle a,n\rangle} \in \tau(\sigma_2)$. 
\end{itemize}
\item[(b)] $\mathbf{v}_{\langle n+1\rangle}$, $\mathbf{v}_{\langle n-1\rangle},\mathbf{v}_{\langle n\rangle}\in \tau(\sigma_1  \cap  \sigma_2)$.
\begin{itemize}
\item[i.] $\mathbf{v}_{\langle  n-2\rangle}\in \tau(\sigma_1  \cap  \sigma_2)$.
\item[ii.] $\mathbf{v}_{\langle a, n\rangle}\in \tau(\sigma_1  \cap  \sigma_2)$.
\item[iii.] $\mathbf{v}_{\langle  n-2\rangle} \in \tau(\sigma_1),\mathbf{v}_{\langle a,n\rangle} \in \tau(\sigma_2)$.
\end{itemize}
\item[(c)] $\mathbf{v}_{\langle n+1\rangle}$, $\mathbf{v}_{\langle n-1\rangle},\mathbf{v}_{\langle a,n-1\rangle}\in \tau(\sigma_1  \cap \sigma_2)$.
\item[(d)] $\mathbf{v}_{\langle n\rangle} \in \tau(\sigma_1),\mathbf{v}_{\langle a,n-1\rangle} \in \tau(\sigma_2)$.
\item[(e)] $\mathbf{v}_{\langle n,n+1\rangle} \in \tau(\sigma_1),\mathbf{v}_{\langle n-1\rangle} \in \tau(\sigma_2)$.
\begin{itemize}
    \item[i.] $\mathbf{v}_{\langle n-2\rangle} \in \tau(\sigma_1  \cap \sigma_2)$.
    \item[ii.] $\mathbf{v}_{\langle a,n\rangle} \in \tau(\sigma_1  \cap  \sigma_2)$.
\end{itemize}
\end{itemize}

\end{itemize}
\end{multicols}

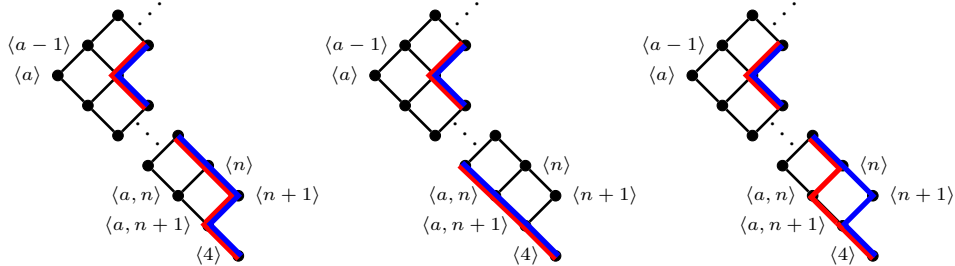
\begin{figure}[ht!]
\begin{center}
\begin{tikzpicture}[scale=.4]
\begin{scope}[xshift=0, yshift=0, yscale=1,xscale=-1] 
	
	\pvx[label=left:{\tiny $\langle a-1\rangle$}](v01) at (1,19) {};
	\pvx[](v03) at (0,18) {};
	\pvx[label=left:{\tiny $\langle a \rangle$}](v02) at (2,18) {};
	\pvx[](v04) at (1,17) {};
	\pvx[](v05) at (-1,17) {};
	\pvx[](v06) at (0,16) {};
	
	\node[] (d2)  at (-0.95,15.8)  {$\Ddots$};
	\node[] (d2)  at (-0.8,20.1)  {$\udots$};
	
	\pvx[](v07) at (-2,16) {};
	\pvx[label=right:{\tiny $\langle n\rangle $}](v08) at (-3,15) {};
	\pvx[label=right:{\tiny $\langle n+1\rangle$}](v09) at (-4,14) {};
	\pvx[](v10) at (-1,15) {};
 	\pvx[label=left:{\tiny $\langle a,n\rangle $}](v11) at (-2,14) {};
    
    \pvx[label=left:{\tiny $\langle a,n+1\rangle$}](v14) at (-3,13) {};	
	\pvx[](v15) at (0,20) {};	
	\pvx[](v16) at (-1,19) {};	
	
	\pvx[label=left:{\tiny $\langle 4\rangle$}](zerohat) at (-4,12) {};
	\draw[line width=1pt] (v14)--(zerohat);
	\draw[line width=2pt, color=red] (-4,11.9)--(-2.8,13.1);
	\draw[line width=2pt, color=blue] (-4.1,12)--(-3.1,13);	
	
	\draw[line width=1pt] (v01)--(v02);
	\draw[line width=1pt] (v03)--(v04);		
	\draw[line width=1pt] (v05)--(v06);
	\draw[line width=1pt] (v01)--(v03);
	\draw[line width=1pt] (v03)--(v05);
	\draw[line width=1pt] (v02)--(v04);
	\draw[line width=1pt] (v04)--(v06);
	
	\draw[line width=1pt] (v07)--(v08);
	\draw[line width=1pt] (v08)--(v09)--(v14);
	\draw[line width=1pt] (v10)--(v11);
	\draw[line width=1pt] (v07)--(v10);
	\draw[line width=1pt] (v08)--(v11);
	
	\draw[line width=1pt] (v14)--(v11);
	\draw[line width=1pt] (v15)--(v16);
	\draw[line width=1pt] (v15)--(v01);
	\draw[line width=1pt] (v16)--(v03);

	\draw[line width=2pt, color=red] (-2.9,13.1)--(-3.8,14)--(-1.9,15.9);
	\draw[line width=2pt, color=blue] (-3,13)--(-4,14)--(-2,16);

	\draw[line width=2pt, color=red] (-0.9,16.9)--(0.2,18)--(-0.9,19.1);
	\draw[line width=2pt, color=blue] (-1,17)--(0,18)--(-1,19);

\end{scope}	

\begin{scope}[xshift=300, yshift=0, yscale=1,xscale=-1] 
	
	\pvx[label=left:{\tiny $\langle a-1\rangle$}](v01) at (1,19) {};
	\pvx[](v03) at (0,18) {};
	\pvx[label=left:{\tiny $\langle a \rangle$}](v02) at (2,18) {};
	\pvx[](v04) at (1,17) {};
	\pvx[](v05) at (-1,17) {};
	\pvx[](v06) at (0,16) {};
	
	\node[] (d2)  at (-0.95,15.8)  {$\Ddots$};
	\node[] (d2)  at (-0.8,20.1)  {$\udots$};
	
	\pvx[](v07) at (-2,16) {};
	\pvx[label=right:{\tiny $\langle n\rangle $}](v08) at (-3,15) {};
	\pvx[label=right:{\tiny $\langle n+1\rangle$}](v09) at (-4,14) {};
	\pvx[](v10) at (-1,15) {};
 	\pvx[label=left:{\tiny $\langle a,n\rangle $}](v11) at (-2,14) {};
    
    \pvx[label=left:{\tiny $\langle a,n+1\rangle$}](v14) at (-3,13) {};	
	\pvx[](v15) at (0,20) {};	
	\pvx[](v16) at (-1,19) {};

	\draw[line width=1pt] (v01)--(v02);
	\draw[line width=1pt] (v03)--(v04);		
	\draw[line width=1pt] (v05)--(v06);
	\draw[line width=1pt] (v01)--(v03);
	\draw[line width=1pt] (v03)--(v05);
	\draw[line width=1pt] (v02)--(v04);
	\draw[line width=1pt] (v04)--(v06);
	
	\draw[line width=1pt] (v07)--(v08);
	\draw[line width=1pt] (v08)--(v09)--(v14);
	\draw[line width=1pt] (v10)--(v11);
	\draw[line width=1pt] (v07)--(v10);
	\draw[line width=1pt] (v08)--(v11);
	
	\draw[line width=1pt] (v14)--(v11);
	\draw[line width=1pt] (v15)--(v16);
	\draw[line width=1pt] (v15)--(v01);
	\draw[line width=1pt] (v16)--(v03);

	\pvx[label=left:{\tiny $\langle 4\rangle$}](zerohat) at (-4,12) {};
	\draw[line width=1pt] (v14)--(zerohat);
	\draw[line width=2pt, color=red] (-4,11.9)--(-2.8,13.1) -- (-0.8,15);
	\draw[line width=2pt, color=blue] (-4.1,12)--(-3.1,13)-- (-0.9,15.1);

	\draw[line width=2pt, color=red] (-0.9,16.9)--(0.2,18)--(-0.9,19.1);
	\draw[line width=2pt, color=blue] (-1,17)--(0,18)--(-1,19);

\end{scope}	
\begin{scope}[xshift=600, yshift=0, yscale=1,xscale=-1] 
	\pvx[label=left:{\tiny $\langle a-1\rangle$}](v01) at (1,19) {};
	\pvx[](v03) at (0,18) {};
	\pvx[label=left:{\tiny $\langle a \rangle$}](v02) at (2,18) {};
	\pvx[](v04) at (1,17) {};
	\pvx[](v05) at (-1,17) {};
	\pvx[](v06) at (0,16) {};
	
	\node[] (d2)  at (-0.95,15.8)  {$\Ddots$};
	\node[] (d2)  at (-0.8,20.1)  {$\udots$};
	
	\pvx[](v07) at (-2,16) {};
	\pvx[label=right:{\tiny $\langle n\rangle $}](v08) at (-3,15) {};
	\pvx[label=right:{\tiny $\langle n+1\rangle$}](v09) at (-4,14) {};
	\pvx[](v10) at (-1,15) {};
 	\pvx[label=left:{\tiny $\langle a,n\rangle $}](v11) at (-2,14) {};
    
    \pvx[label=left:{\tiny $\langle a,n+1\rangle$}](v14) at (-3,13) {};	
	\pvx[](v15) at (0,20) {};	
	\pvx[](v16) at (-1,19) {};	
	
	\pvx[label=left:{\tiny $\langle 4\rangle$}](zerohat) at (-4,12) {};
	\draw[line width=1pt] (v14)--(zerohat);
	\draw[line width=2pt, color=red] (-4,11.9)--(-2.8,13.1);
	\draw[line width=2pt, color=blue] (-4.1,12)--(-3.1,13);
	
	\draw[line width=1pt] (v01)--(v02);
	\draw[line width=1pt] (v03)--(v04);		
	\draw[line width=1pt] (v05)--(v06);
	\draw[line width=1pt] (v01)--(v03);
	\draw[line width=1pt] (v03)--(v05);
	\draw[line width=1pt] (v02)--(v04);
	\draw[line width=1pt] (v04)--(v06);
	
	\draw[line width=1pt] (v07)--(v08);
	\draw[line width=1pt] (v08)--(v09)--(v14);
	\draw[line width=1pt] (v10)--(v11);
	\draw[line width=1pt] (v07)--(v10);
	\draw[line width=1pt] (v08)--(v11);
	
	\draw[line width=1pt] (v14)--(v11);
	\draw[line width=1pt] (v15)--(v16);
	\draw[line width=1pt] (v15)--(v01);
	\draw[line width=1pt] (v16)--(v03);

    \draw[line width=2pt, color=red] (-2.8,13.1)--(-1.95,13.95)--(-3,15);
    \draw[line width=2pt, color=red]
    (-2.9,14.9)--(-1.9,15.9);
	\draw[line width=2pt, color=blue] (-3,13)--(-4,14)--(-2,16);

	\draw[line width=2pt, color=red] (-0.9,16.9)--(0.2,18)--(-0.9,19.1);
	\draw[line width=2pt, color=blue] (-1,17)--(0,18)--(-1,19);

\end{scope}	

\end{tikzpicture}
\end{center}
\caption{The three subcases (a), (b), and (c) of an extension, with no twist on $\mL^t$. Here $\sigma_1$ is in red, and $\sigma_2$ is in blue.}
\label{fig.twisted}
\end{figure}

\begin{center}
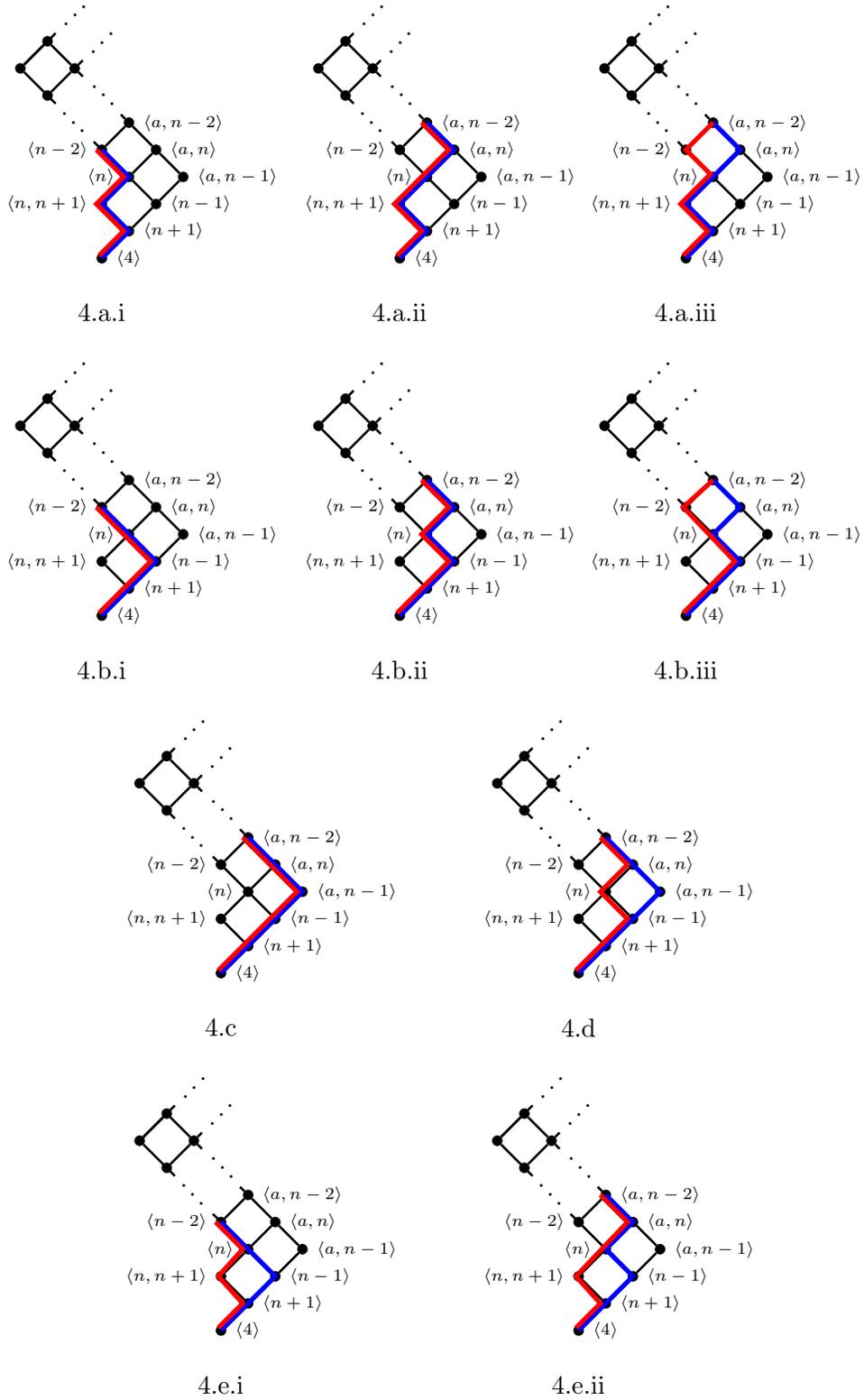
\begin{figure}
\begin{tikzpicture}
\begin{scope}[xshift=0, yshift=0, scale=0.4] 
	\pvx[label=left:{}](a-1) at (0,10) {};	
	\pvx[label=left:{}](a) at (-1,9) {};
	\pvx[label=right:{}](a1) at (1,9) {};
	\pvx[label=right:{\tiny $\langle a,n-2\rangle$}](k-1) at (3,7) {};
	\pvx[label=right:{\tiny $\langle a,n \rangle$}](k) at (4,6) {};
	\pvx[label=right:{\tiny $\langle a,n-1\rangle$}](k1) at (5,5) {};
	\pvx[label=right:{\tiny $\langle n-1 \rangle$}](3) at (4,4) {};
	\pvx[label=left:{}](ab) at (0,8) {};
	\pvx[label=left:{\tiny $\langle n-2 \rangle$}](ak-1) at (2,6) {};
	\pvx[label=left:{\tiny $\langle n \rangle$}](ak) at (3,5) {};	
	\pvx[label=right:{\tiny $\langle n+1 \rangle$}](2hat) at (3,3) {};
	\pvx[label=right:{\tiny $\langle 4 \rangle$}](1hat) at (2,2) {};	
	\pvx[label=left:{\tiny $\langle n,n+1 \rangle$}](dfs) at (2,4) {};

 	\draw[line width=1pt] (a1)--(a-1)--(a) -- (0.3,7.7); 
 	\draw[line width=1pt] (1.7,6.3)--(3) -- (k1) -- (k) -- (k-1);

 	\draw[line width=1pt] (0.3,10.3)--(a);
  	\draw[line width=1pt] (1.3,9.3)--(a1);	
  	\draw[line width=1pt] (1.3,8.7)--(a1);	
  	\draw[line width=1pt] (2.7,7.3)--(k-1);		
  	\draw[line width=1pt] (ab)--(a1);		 	
  	\draw[line width=1pt] (ak-1)--(k-1);		 	  	
  	\draw[line width=1pt] (ak)--(k);
  	\draw[line width=1pt] (3)--(2hat);
  	\draw[line width=1pt] (1hat)--(2hat);
  	\draw[line width=1pt] (ak)--(dfs)--(2hat);
  	
 	\node[](dd) at (0.9,6.75) {$\Ddots$};	
	\node[](dd) at (2,7.75) {$\Ddots$};
	\node[](uu1) at (1.75,10) {$\udots$};	
	\node[](uu1) at (0.75,11) {$\udots$};		

	\node[](qq) at (2,0) {4.a.i};
	
	\draw[line width=2pt, color=blue] (2,2)--(3,3)--(2,4)--(3,5)--(2,6);
	\draw[line width=2pt, color=red] (1.9,2.1)--(2.8,3)--(1.8,4)--(2.8,5)--(1.8,6);
	
\end{scope}		
\begin{scope}[xshift=125, yshift=0, scale=0.4] 
	\pvx[label=left:{}](a-1) at (0,10) {};	
	\pvx[label=left:{}](a) at (-1,9) {};
	\pvx[label=right:{}](a1) at (1,9) {};
	\pvx[label=right:{\tiny $\langle a,n-2\rangle$}](k-1) at (3,7) {};
	\pvx[label=right:{\tiny $\langle a,n \rangle$}](k) at (4,6) {};
	\pvx[label=right:{\tiny $\langle a,n-1\rangle$}](k1) at (5,5) {};
	\pvx[label=right:{\tiny $\langle n-1 \rangle$}](3) at (4,4) {};
	\pvx[label=left:{}](ab) at (0,8) {};
	\pvx[label=left:{\tiny $\langle n-2 \rangle$}](ak-1) at (2,6) {};
	\pvx[label=left:{\tiny $\langle n \rangle$}](ak) at (3,5) {};	
	\pvx[label=right:{\tiny $\langle n+1 \rangle$}](2hat) at (3,3) {};
	\pvx[label=right:{\tiny $\langle 4 \rangle$}](1hat) at (2,2) {};	
	\pvx[label=left:{\tiny $\langle n,n+1 \rangle$}](dfs) at (2,4) {};

 	\draw[line width=1pt] (a1)--(a-1)--(a) -- (0.3,7.7); 
 	\draw[line width=1pt] (1.7,6.3)--(3) -- (k1) -- (k) -- (k-1);

 	\draw[line width=1pt] (0.3,10.3)--(a);
  	\draw[line width=1pt] (1.3,9.3)--(a1);	
  	\draw[line width=1pt] (1.3,8.7)--(a1);	
  	\draw[line width=1pt] (2.7,7.3)--(k-1);		
  	\draw[line width=1pt] (ab)--(a1);		 	
  	\draw[line width=1pt] (ak-1)--(k-1);		 	  	
  	\draw[line width=1pt] (ak)--(k);
  	\draw[line width=1pt] (3)--(2hat);
  	\draw[line width=1pt] (1hat)--(2hat);
  	\draw[line width=1pt] (ak)--(dfs)--(2hat);
  	
 	\node[](dd) at (0.9,6.75) {$\Ddots$};	
	\node[](dd) at (2,7.75) {$\Ddots$};
	\node[](uu1) at (1.75,10) {$\udots$};	
	\node[](uu1) at (0.75,11) {$\udots$};		

	\draw[line width=2pt, color=blue] (2,2)--(3,3)--(2,4)--(4,6)--(3,7);
	\draw[line width=2pt, color=red] (1.9,2.1)--(2.8,3)--(1.8,4)--(3.8,6)--(2.8,7);

	\node[](qq) at (2,0) {4.a.ii};

\end{scope}	
\begin{scope}[xshift=245, yshift=0, scale=0.4]  	
	\pvx[label=left:{}](a-1) at (0,10) {};	
	\pvx[label=left:{}](a) at (-1,9) {};
	\pvx[label=right:{}](a1) at (1,9) {};
	\pvx[label=right:{\tiny $\langle a,n-2\rangle$}](k-1) at (3,7) {};
	\pvx[label=right:{\tiny $\langle a,n \rangle$}](k) at (4,6) {};
	\pvx[label=right:{\tiny $\langle a,n-1\rangle$}](k1) at (5,5) {};
	\pvx[label=right:{\tiny $\langle n-1 \rangle$}](3) at (4,4) {};
	\pvx[label=left:{}](ab) at (0,8) {};
	\pvx[label=left:{\tiny $\langle n-2 \rangle$}](ak-1) at (2,6) {};
	\pvx[label=left:{\tiny $\langle n \rangle$}](ak) at (3,5) {};	
	\pvx[label=right:{\tiny $\langle n+1 \rangle$}](2hat) at (3,3) {};
	\pvx[label=right:{\tiny $\langle 4 \rangle$}](1hat) at (2,2) {};	
	\pvx[label=left:{\tiny $\langle n,n+1 \rangle$}](dfs) at (2,4) {};

 	\draw[line width=1pt] (a1)--(a-1)--(a) -- (0.3,7.7); 
 	\draw[line width=1pt] (1.7,6.3)--(3) -- (k1) -- (k) -- (k-1);

 	\draw[line width=1pt] (0.3,10.3)--(a);
  	\draw[line width=1pt] (1.3,9.3)--(a1);	
  	\draw[line width=1pt] (1.3,8.7)--(a1);	
  	\draw[line width=1pt] (2.7,7.3)--(k-1);		
  	\draw[line width=1pt] (ab)--(a1);		 	
  	\draw[line width=1pt] (ak-1)--(k-1);		 	  	
  	\draw[line width=1pt] (ak)--(k);
  	\draw[line width=1pt] (3)--(2hat);
  	\draw[line width=1pt] (1hat)--(2hat);
  	\draw[line width=1pt] (ak)--(dfs)--(2hat);
  	
 	\node[](dd) at (0.9,6.75) {$\Ddots$};	
	\node[](dd) at (2,7.75) {$\Ddots$};
	\node[](uu1) at (1.75,10) {$\udots$};	
	\node[](uu1) at (0.75,11) {$\udots$};		

	\draw[line width=2pt, color=blue] (2,2)--(3,3)--(2,4)--(4,6)--(3,7);
	\draw[line width=2pt, color=red] (1.9,2.1)--(2.8,3)--(1.8,4)--(2.9,5.1)--(2,6)--(3,7);

	\node[](qq) at (2,0) {4.a.iii};

\end{scope}		
\begin{scope}[xshift=0, yshift=-150, scale=0.4] 
	\pvx[label=left:{}](a-1) at (0,10) {};	
	\pvx[label=left:{}](a) at (-1,9) {};
	\pvx[label=right:{}](a1) at (1,9) {};
	\pvx[label=right:{\tiny $\langle a,n-2\rangle$}](k-1) at (3,7) {};
	\pvx[label=right:{\tiny $\langle a,n \rangle$}](k) at (4,6) {};
	\pvx[label=right:{\tiny $\langle a,n-1\rangle$}](k1) at (5,5) {};
	\pvx[label=right:{\tiny $\langle n-1 \rangle$}](3) at (4,4) {};
	\pvx[label=left:{}](ab) at (0,8) {};
	\pvx[label=left:{\tiny $\langle n-2 \rangle$}](ak-1) at (2,6) {};
	\pvx[label=left:{\tiny $\langle n \rangle$}](ak) at (3,5) {};	
	\pvx[label=right:{\tiny $\langle n+1 \rangle$}](2hat) at (3,3) {};
	\pvx[label=right:{\tiny $\langle 4 \rangle$}](1hat) at (2,2) {};	
	\pvx[label=left:{\tiny $\langle n,n+1 \rangle$}](dfs) at (2,4) {};

 	\draw[line width=1pt] (a1)--(a-1)--(a) -- (0.3,7.7); 
 	\draw[line width=1pt] (1.7,6.3)--(3) -- (k1) -- (k) -- (k-1);

 	\draw[line width=1pt] (0.3,10.3)--(a);
  	\draw[line width=1pt] (1.3,9.3)--(a1);	
  	\draw[line width=1pt] (1.3,8.7)--(a1);	
  	\draw[line width=1pt] (2.7,7.3)--(k-1);		
  	\draw[line width=1pt] (ab)--(a1);		 	
  	\draw[line width=1pt] (ak-1)--(k-1);		 	  	
  	\draw[line width=1pt] (ak)--(k);
  	\draw[line width=1pt] (3)--(2hat);
  	\draw[line width=1pt] (1hat)--(2hat);
  	\draw[line width=1pt] (ak)--(dfs)--(2hat);
  	
 	\node[](dd) at (0.9,6.75) {$\Ddots$};	
	\node[](dd) at (2,7.75) {$\Ddots$};
	\node[](uu1) at (1.75,10) {$\udots$};	
	\node[](uu1) at (0.75,11) {$\udots$};		

	\node[](qq) at (2,0) {4.b.i};
	
	\draw[line width=2pt, color=blue] (2,2)--(4,4)--(2,6);
	\draw[line width=2pt, color=red] (1.9,2.1)--(3.8,4)--(1.8,6);
	
\end{scope}		
\begin{scope}[xshift=125, yshift=-150, scale=0.4] 

	\pvx[label=left:{}](a-1) at (0,10) {};	
	\pvx[label=left:{}](a) at (-1,9) {};
	\pvx[label=right:{}](a1) at (1,9) {};
	\pvx[label=right:{\tiny $\langle a,n-2\rangle$}](k-1) at (3,7) {};
	\pvx[label=right:{\tiny $\langle a,n \rangle$}](k) at (4,6) {};
	\pvx[label=right:{\tiny $\langle a,n-1\rangle$}](k1) at (5,5) {};
	\pvx[label=right:{\tiny $\langle n-1 \rangle$}](3) at (4,4) {};
	\pvx[label=left:{}](ab) at (0,8) {};
	\pvx[label=left:{\tiny $\langle n-2 \rangle$}](ak-1) at (2,6) {};
	\pvx[label=left:{\tiny $\langle n \rangle$}](ak) at (3,5) {};	
	\pvx[label=right:{\tiny $\langle n+1 \rangle$}](2hat) at (3,3) {};
	\pvx[label=right:{\tiny $\langle 4 \rangle$}](1hat) at (2,2) {};	
	\pvx[label=left:{\tiny $\langle n,n+1 \rangle$}](dfs) at (2,4) {};

 	\draw[line width=1pt] (a1)--(a-1)--(a) -- (0.3,7.7); 
 	\draw[line width=1pt] (1.7,6.3)--(3) -- (k1) -- (k) -- (k-1);

 	\draw[line width=1pt] (0.3,10.3)--(a);
  	\draw[line width=1pt] (1.3,9.3)--(a1);	
  	\draw[line width=1pt] (1.3,8.7)--(a1);	
  	\draw[line width=1pt] (2.7,7.3)--(k-1);		
  	\draw[line width=1pt] (ab)--(a1);		 	
  	\draw[line width=1pt] (ak-1)--(k-1);		 	  	
  	\draw[line width=1pt] (ak)--(k);
  	\draw[line width=1pt] (3)--(2hat);
  	\draw[line width=1pt] (1hat)--(2hat);
  	\draw[line width=1pt] (ak)--(dfs)--(2hat);
  	
 	\node[](dd) at (0.9,6.75) {$\Ddots$};	
	\node[](dd) at (2,7.75) {$\Ddots$};
	\node[](uu1) at (1.75,10) {$\udots$};	
	\node[](uu1) at (0.75,11) {$\udots$};		

	\draw[line width=2pt, color=blue] (2,2)--(4,4)--(3,5)--(4,6)--(3,7);
	\draw[line width=2pt, color=red] (1.9,2.1)--(3.8,4)--(2.8,5)--(3.8,6)--(2.8,7);

	\node[](qq) at (2,0) {4.b.ii};

\end{scope}		
\begin{scope}[xshift=245, yshift=-150, scale=0.4] 
	\pvx[label=left:{}](a-1) at (0,10) {};	
	\pvx[label=left:{}](a) at (-1,9) {};
	\pvx[label=right:{}](a1) at (1,9) {};
	\pvx[label=right:{\tiny $\langle a,n-2\rangle$}](k-1) at (3,7) {};
	\pvx[label=right:{\tiny $\langle a,n \rangle$}](k) at (4,6) {};
	\pvx[label=right:{\tiny $\langle a,n-1\rangle$}](k1) at (5,5) {};
	\pvx[label=right:{\tiny $\langle n-1 \rangle$}](3) at (4,4) {};
	\pvx[label=left:{}](ab) at (0,8) {};
	\pvx[label=left:{\tiny $\langle n-2 \rangle$}](ak-1) at (2,6) {};
	\pvx[label=left:{\tiny $\langle n \rangle$}](ak) at (3,5) {};	
	\pvx[label=right:{\tiny $\langle n+1 \rangle$}](2hat) at (3,3) {};
	\pvx[label=right:{\tiny $\langle 4 \rangle$}](1hat) at (2,2) {};	
	\pvx[label=left:{\tiny $\langle n,n+1 \rangle$}](dfs) at (2,4) {};

 	\draw[line width=1pt] (a1)--(a-1)--(a) -- (0.3,7.7); 
 	\draw[line width=1pt] (1.7,6.3)--(3) -- (k1) -- (k) -- (k-1);

 	\draw[line width=1pt] (0.3,10.3)--(a);
  	\draw[line width=1pt] (1.3,9.3)--(a1);	
  	\draw[line width=1pt] (1.3,8.7)--(a1);	
  	\draw[line width=1pt] (2.7,7.3)--(k-1);		
  	\draw[line width=1pt] (ab)--(a1);		 	
  	\draw[line width=1pt] (ak-1)--(k-1);		 	  	
  	\draw[line width=1pt] (ak)--(k);
  	\draw[line width=1pt] (3)--(2hat);
  	\draw[line width=1pt] (1hat)--(2hat);
  	\draw[line width=1pt] (ak)--(dfs)--(2hat);
  	
 	\node[](dd) at (0.9,6.75) {$\Ddots$};	
	\node[](dd) at (2,7.75) {$\Ddots$};
	\node[](uu1) at (1.75,10) {$\udots$};	
	\node[](uu1) at (0.75,11) {$\udots$};		

	\draw[line width=2pt, color=blue] (2,2)--(4,4)--(3,5)--(4,6)--(3,7);
	\draw[line width=2pt, color=red] (1.9,2.1)--(3.8,4)--(1.9,6)--(3,7);

	\node[](qq) at (2,0) {4.b.iii};

\end{scope}	

\begin{scope}[xshift=50, yshift=-300, scale=0.4] 
	\pvx[label=left:{}](a-1) at (0,10) {};	
	\pvx[label=left:{}](a) at (-1,9) {};
	\pvx[label=right:{}](a1) at (1,9) {};
	\pvx[label=right:{\tiny $\langle a,n-2\rangle$}](k-1) at (3,7) {};
	\pvx[label=right:{\tiny $\langle a,n \rangle$}](k) at (4,6) {};
	\pvx[label=right:{\tiny $\langle a,n-1\rangle$}](k1) at (5,5) {};
	\pvx[label=right:{\tiny $\langle n-1 \rangle$}](3) at (4,4) {};
	\pvx[label=left:{}](ab) at (0,8) {};
	\pvx[label=left:{\tiny $\langle n-2 \rangle$}](ak-1) at (2,6) {};
	\pvx[label=left:{\tiny $\langle n \rangle$}](ak) at (3,5) {};	
	\pvx[label=right:{\tiny $\langle n+1 \rangle$}](2hat) at (3,3) {};
	\pvx[label=right:{\tiny $\langle 4 \rangle$}](1hat) at (2,2) {};	
	\pvx[label=left:{\tiny $\langle n,n+1 \rangle$}](dfs) at (2,4) {};

 	\draw[line width=1pt] (a1)--(a-1)--(a) -- (0.3,7.7); 
 	\draw[line width=1pt] (1.7,6.3)--(3) -- (k1) -- (k) -- (k-1);

 	\draw[line width=1pt] (0.3,10.3)--(a);
  	\draw[line width=1pt] (1.3,9.3)--(a1);	
  	\draw[line width=1pt] (1.3,8.7)--(a1);	
  	\draw[line width=1pt] (2.7,7.3)--(k-1);		
  	\draw[line width=1pt] (ab)--(a1);		 	
  	\draw[line width=1pt] (ak-1)--(k-1);		 	  	
  	\draw[line width=1pt] (ak)--(k);
  	\draw[line width=1pt] (3)--(2hat);
  	\draw[line width=1pt] (1hat)--(2hat);
  	\draw[line width=1pt] (ak)--(dfs)--(2hat);
  	
 	\node[](dd) at (0.9,6.75) {$\Ddots$};	
	\node[](dd) at (2,7.75) {$\Ddots$};
	\node[](uu1) at (1.75,10) {$\udots$};	
	\node[](uu1) at (0.75,11) {$\udots$};		

	\node[](qq) at (2,0) {4.c};
	
	\draw[line width=2pt, color=blue] (2,2)--(5,5)--(3,7);
	\draw[line width=2pt, color=red] (1.9,2.1)--(4.8,5)--(2.8,7);
	
\end{scope}		
\begin{scope}[xshift=200, yshift=-300, scale=0.4] 

	\pvx[label=left:{}](a-1) at (0,10) {};	
	\pvx[label=left:{}](a) at (-1,9) {};
	\pvx[label=right:{}](a1) at (1,9) {};
	\pvx[label=right:{\tiny $\langle a,n-2\rangle$}](k-1) at (3,7) {};
	\pvx[label=right:{\tiny $\langle a,n \rangle$}](k) at (4,6) {};
	\pvx[label=right:{\tiny $\langle a,n-1\rangle$}](k1) at (5,5) {};
	\pvx[label=right:{\tiny $\langle n-1 \rangle$}](3) at (4,4) {};
	\pvx[label=left:{}](ab) at (0,8) {};
	\pvx[label=left:{\tiny $\langle n-2 \rangle$}](ak-1) at (2,6) {};
	\pvx[label=left:{\tiny $\langle n \rangle$}](ak) at (3,5) {};	
	\pvx[label=right:{\tiny $\langle n+1 \rangle$}](2hat) at (3,3) {};
	\pvx[label=right:{\tiny $\langle 4 \rangle$}](1hat) at (2,2) {};	
	\pvx[label=left:{\tiny $\langle n,n+1 \rangle$}](dfs) at (2,4) {};

 	\draw[line width=1pt] (a1)--(a-1)--(a) -- (0.3,7.7); 
 	\draw[line width=1pt] (1.7,6.3)--(3) -- (k1) -- (k) -- (k-1);

 	\draw[line width=1pt] (0.3,10.3)--(a);
  	\draw[line width=1pt] (1.3,9.3)--(a1);	
  	\draw[line width=1pt] (1.3,8.7)--(a1);	
  	\draw[line width=1pt] (2.7,7.3)--(k-1);		
  	\draw[line width=1pt] (ab)--(a1);		 	
  	\draw[line width=1pt] (ak-1)--(k-1);		 	  	
  	\draw[line width=1pt] (ak)--(k);
  	\draw[line width=1pt] (3)--(2hat);
  	\draw[line width=1pt] (1hat)--(2hat);
  	\draw[line width=1pt] (ak)--(dfs)--(2hat);
  	
 	\node[](dd) at (0.9,6.75) {$\Ddots$};	
	\node[](dd) at (2,7.75) {$\Ddots$};
	\node[](uu1) at (1.75,10) {$\udots$};	
	\node[](uu1) at (0.75,11) {$\udots$};		

	\draw[line width=2pt, color=blue] (2,2)--(5,5)--(3,7);
	\draw[line width=2pt, color=red] (1.9,2.1)--(3.8,4)--(2.8,5)--(3.8,6)--(2.8,7);

	\node[](qq) at (2,0) {4.d};

\end{scope}		
\begin{scope}[xshift=50, yshift=-450, scale=0.4] 
	\pvx[label=left:{}](a-1) at (0,10) {};	
	\pvx[label=left:{}](a) at (-1,9) {};
	\pvx[label=right:{}](a1) at (1,9) {};
	\pvx[label=right:{\tiny $\langle a,n-2\rangle$}](k-1) at (3,7) {};
	\pvx[label=right:{\tiny $\langle a,n \rangle$}](k) at (4,6) {};
	\pvx[label=right:{\tiny $\langle a,n-1\rangle$}](k1) at (5,5) {};
	\pvx[label=right:{\tiny $\langle n-1 \rangle$}](3) at (4,4) {};
	\pvx[label=left:{}](ab) at (0,8) {};
	\pvx[label=left:{\tiny $\langle n-2 \rangle$}](ak-1) at (2,6) {};
	\pvx[label=left:{\tiny $\langle n \rangle$}](ak) at (3,5) {};	
	\pvx[label=right:{\tiny $\langle n+1 \rangle$}](2hat) at (3,3) {};
	\pvx[label=right:{\tiny $\langle 4 \rangle$}](1hat) at (2,2) {};	
	\pvx[label=left:{\tiny $\langle n,n+1 \rangle$}](dfs) at (2,4) {};

 	\draw[line width=1pt] (a1)--(a-1)--(a) -- (0.3,7.7); 
 	\draw[line width=1pt] (1.7,6.3)--(3) -- (k1) -- (k) -- (k-1);

 	\draw[line width=1pt] (0.3,10.3)--(a);
  	\draw[line width=1pt] (1.3,9.3)--(a1);	
  	\draw[line width=1pt] (1.3,8.7)--(a1);	
  	\draw[line width=1pt] (2.7,7.3)--(k-1);		
  	\draw[line width=1pt] (ab)--(a1);		 	
  	\draw[line width=1pt] (ak-1)--(k-1);		 	  	
  	\draw[line width=1pt] (ak)--(k);
  	\draw[line width=1pt] (3)--(2hat);
  	\draw[line width=1pt] (1hat)--(2hat);
  	\draw[line width=1pt] (ak)--(dfs)--(2hat);
  	
 	\node[](dd) at (0.9,6.75) {$\Ddots$};	
	\node[](dd) at (2,7.75) {$\Ddots$};
	\node[](uu1) at (1.75,10) {$\udots$};	
	\node[](uu1) at (0.75,11) {$\udots$};		

	\draw[line width=2pt, color=blue] (2,2)--(4,4)--(2,6);
	\draw[line width=2pt, color=red] (1.9,2.1)--(2.8,3)--(1.9,4) -- (2.8,5) -- (1.8,6);

	\node[](qq) at (2,0) {4.e.i};

\end{scope}	

\begin{scope}[xshift=200, yshift=-450, scale=0.4] 
	\pvx[label=left:{}](a-1) at (0,10) {};	
	\pvx[label=left:{}](a) at (-1,9) {};
	\pvx[label=right:{}](a1) at (1,9) {};
	\pvx[label=right:{\tiny $\langle a,n-2\rangle$}](k-1) at (3,7) {};
	\pvx[label=right:{\tiny $\langle a,n \rangle$}](k) at (4,6) {};
	\pvx[label=right:{\tiny $\langle a,n-1\rangle$}](k1) at (5,5) {};
	\pvx[label=right:{\tiny $\langle n-1 \rangle$}](3) at (4,4) {};
	\pvx[label=left:{}](ab) at (0,8) {};
	\pvx[label=left:{\tiny $\langle n-2 \rangle$}](ak-1) at (2,6) {};
	\pvx[label=left:{\tiny $\langle n \rangle$}](ak) at (3,5) {};	
	\pvx[label=right:{\tiny $\langle n+1 \rangle$}](2hat) at (3,3) {};
	\pvx[label=right:{\tiny $\langle 4 \rangle$}](1hat) at (2,2) {};	
	\pvx[label=left:{\tiny $\langle n,n+1 \rangle$}](dfs) at (2,4) {};

 	\draw[line width=1pt] (a1)--(a-1)--(a) -- (0.3,7.7); 
 	\draw[line width=1pt] (1.7,6.3)--(3) -- (k1) -- (k) -- (k-1);

 	\draw[line width=1pt] (0.3,10.3)--(a);
  	\draw[line width=1pt] (1.3,9.3)--(a1);	
  	\draw[line width=1pt] (1.3,8.7)--(a1);	
  	\draw[line width=1pt] (2.7,7.3)--(k-1);		
  	\draw[line width=1pt] (ab)--(a1);		 	
  	\draw[line width=1pt] (ak-1)--(k-1);		 	  	
  	\draw[line width=1pt] (ak)--(k);
  	\draw[line width=1pt] (3)--(2hat);
  	\draw[line width=1pt] (1hat)--(2hat);
  	\draw[line width=1pt] (ak)--(dfs)--(2hat);
  	
 	\node[](dd) at (0.9,6.75) {$\Ddots$};	
	\node[](dd) at (2,7.75) {$\Ddots$};
	\node[](uu1) at (1.75,10) {$\udots$};	
	\node[](uu1) at (0.75,11) {$\udots$};		

	\draw[line width=2pt, color=blue] (2,2)--(4,4)--(3,5)--(4,6)--(3,7);
	\draw[line width=2pt, color=red] (1.9,2.1)--(2.8,3)--(1.9,4) -- (3.8,6) -- (2.8,7);

	\node[](qq) at (2,0) {4.e.ii};

\end{scope}	

\end{tikzpicture}
\caption{The subcases in a turn with $\mL^t$ and $\mL^{t+1}$ twisted by $\tau$. Here $\tau(\sigma_1)$ is in red and $\tau(\sigma_2)$ is in blue.}
\label{fig:TurnCasesAppendix}
\end{figure}
\end{center}

\end{document}